\newtheorem{thm}{Theorem}[section]
\newtheorem{cor}[thm]{Corollary}
\newtheorem{prop}[thm]{Proposition}
\newtheorem{lem}[thm]{Lemma}
\newtheorem{claim}[thm]{Claim}
\theoremstyle{definition}
\newtheorem{defn}[thm]{Definition}
\newtheorem{con}[thm]{Construction}
\newtheorem{exmp}[thm]{Example}
\theoremstyle{remark}
\newtheorem{rem}[thm]{Remark}
\let\c@equation\c@thm
\numberwithin{equation}{section}
\newcommand{\F}{\mathbb{F}} % fancy field name
\newcommand{\Z}{\mathbb{Z}}   %integers
\newcommand{\p}{\mathbb{P}}   %projective plane
\newcommand{\Q}{\mathbb{Q}}
\newcommand{\N}{\mathbb{N}}
\newcommand{\D}{\mathbb{D}}
\newcommand{\So}{\mathbb{S}}
\newcommand{\sE}{\mathcal{E}}
\newcommand{\sF}{\mathcal{F}}
\newcommand{\sG}{\mathcal{G}}
\newcommand{\sH}{\mathcal{H}}
\newcommand{\sL}{\mathcal{L}}
\newcommand{\sK}{\mathcal{K}}
\newcommand{\sS}{\mathcal{S}}
\newcommand{\sV}{\mathcal{V}}
\newcommand{\sW}{\mathcal{W}}
\newcommand{\lig}{\mathfrak{g}}
\newcommand{\co}{\mathcal{O}}
\newcommand{\M}{\mathbb{M}}  
\newcommand{\ql}{\overline{\Q_\ell}}
\newcommand{\A}{\mathbb{A}}
\newcommand{\X}{\mathbb{X}}
\newcommand{\lit}{\mathfrak{t}}\newcommand{\lio}{\mathfrak{o}}
\newcommand{\fq}{\mathbb{F}_q}
\newcommand{\pha}{\phantom{ }}
\newcommand{\bim}{\bold{I^-}}
\newcommand{\bium}{\bold{I^-_u}}
\newcommand{\biu}{\bold{I_u}}
\newcommand{\bius}{\textbf{I}^s_\textbf{u}}
\newcommand{\bi}{\bold{I}}
\newcommand{\bsl}{\backslash}
\newcommand{\hk}{\bold{H_K}}
\newcommand{\gk}{\bold{G_K}}
\newcommand{\glnk}{\bold{GL_{n,K}}}
\newcommand{\go}{\bold{G_\co}}
\newcommand{\gkw}{\bold{G_K}_{,w}}
\newcommand{\gkv}{\bold{G_K}_{,v}}
\newcommand{\gksw}{\bold{G_K}_{,\le w}}
\newcommand{\gksi}{\bold{G_K}_{,s_i}}
\newcommand{\gkwp}{\bold{G_K}_{,w'}}
\newcommand{\gkswp}{\bold{G_K}_{,\le w'}}
\newcommand{\gkswpp}{\bold{G_K}_{,\le w''}}
\newcommand{\eglnk}{\widetilde{\bold{GL_{n,K}}}}
\newcommand{\wdo}{\dot{w}}
\newcommand{\sdo}{\dot{s}}
\newcommand{\extw}{\widetilde{W}}
\newcommand{\gm}{\mathbb{G}_m}
\newcommand{\extsL}{\widetilde{\sL}}
\newcommand{\ldswl}{_{\sL} D( \le w)_\sL}
\newcommand{\conp}{\star_+}
\newcommand{\conn}{\star_-}
\newcommand{\sC}{\mathcal{C}}
\newcommand{\ulextw}{\underline{\extw}}
\newcommand{\ulextd}{\underline{D}}
\newcommand{\lpdl}{_{\sL'} D_ \sL}
\newcommand{\lpdull}{_{\sL'} \underline{D}_ \sL}
\newcommand{\lppdl}{_{\sL''} D_ \sL}
\newcommand{\ldl}{_{\sL} D_ \sL}
\newcommand{\ldlm}{_{\sL} D_ \sL^-}
\newcommand{\ldlo}{_{\sL} D_ \sL ^\circ}
\newcommand{\ludlo}{_{\sL} \underline{D}_ \sL ^\circ}
\newcommand{\lpudl}{_{\sL'} \underline{D}_ \sL}
\newcommand{\ludl}{_{\sL} \underline{D}_ \sL}
\newcommand{\lpewul}{_{\sL'} \underline{\extw}_ \sL}
\newcommand{\wlewl}{_{w\sL} \extw_ \sL}
\newcommand{\lppewulp}{_{\sL''} \underline{\extw}_ {\sL'}}
\newcommand{\ewlo}{\extw_\sL^\circ}
\newcommand{\ewlpo}{\extw_{\sL'}^\circ}
\newcommand{\ewho}{\extw_H^\circ}
\newcommand{\ewwlo}{\extw_{w\sL}^\circ}
\newcommand{\lpdlm}{_{\sL'} D^-_ \sL}
\newcommand{\lppdlm}{_{\sL''} D^-_ \sL}
\newcommand{\lpdelm}{_{\sL'} D^-_{ \widetilde{\sL}}}
\newcommand{\lpmdlmm}{_{\sL'^{-1}} D^-_{\sL^{-1}}}
\newcommand{\lpmdlm}{_{\sL'^{-1}} D_{\sL^{-1}}}
\newcommand{\lpppdlppm}{_{\sL'''} D_{\sL''}^-}
\newcommand{\lppdlp}{_{\sL''} D_{\sL'}}
\newcommand{\lppdlpm}{_{\sL''} D^-_{\sL'}}
\newcommand{\lpdsl}{_{\sL'} D _{s\sL}}
\newcommand{\lpdwl}{_{\sL'} D _{w\sL}}
\newcommand{\wldvl}{_{w\sL} D (v)_{\sL}}
\newcommand{\lpdwlm}{_{\sL'} D^- _{w\sL}}
\newcommand{\lpdslm}{_{\sL'} D ^-_{s\sL}}
\newcommand{\lpdul}{_{\sL'}{\ulad} \pha _\sL}
\newcommand{\lpdulk}{_{\sL'}{\ulad} \pha _{\sL,k}}
\newcommand{\lpduel}{_{\sL'}{\ulad} \pha _{\widetilde{\sL}}}
\newcommand{\lppdul}{_{\sL''}{\ulad} \pha _\sL}
\newcommand{\lppdulp}{_{\sL''}{\ulad} \pha _{\sL'}}
\newcommand{\ltlm}{_{\sL} \Theta^- _{\sL}}
\newcommand{\lptlm}{_{\sL'} \Theta^- _{\sL}}
\newcommand{\lptlpm}{_{\sL'} \Theta^- _{\sL'}}
\newcommand{\lpptlppm}{_{\sL''} \Theta^- _{\sL''}}
\newcommand{\ltulm}{_{\sL} \underline{\Theta}^- _{\sL}}
\newcommand{\lptulm}{_{\sL'} \underline{\Theta}^- _{\sL}}
\newcommand{\lptulmpp}{_{\sL'} \underline{\Theta}^- _{\sL''}}
\newcommand{\avltlm}{\Av_! \pha \ltlm}
\newcommand{\avlptlm}{\Av_! \pha \lptlm}
\newcommand{\avlptlpm}{\Av_! \pha \lptlpm}
\newcommand{\avltulm}{\Av_! \pha \ltulm}
\newcommand{\gmr}{\gm^{\rot}}
\newcommand{\gmc}{\gm^{\cen}}
\newcommand{\barr}{\overline{R}}
\newcommand{\bSo}{\overline{\mathbb{S}}}
\newcommand{\wu}{\underline{w}}
\newcommand{\bbb}{\Bun_{0', \infty'}}
\newcommand{\bub}{\Bun_{0, \infty'}}
\newcommand{\bbu}{\Bun_{0', \infty}}
\newcommand{\buu}{\Bun_{0, \infty}}
\newcommand{\bcu}{\Bun_{\hat{0}, \infty}}
\newcommand{\bsu}{\Bun_{0_s, \infty}}
\newcommand{\bsuw}{\Bun_{0_s, \infty}^w}
\newcommand{\bujw}{\Bun_{0, J_\infty}^w}
\newcommand{\bujsw}{\Bun_{0, J_\infty}^{\le w}}
\newcommand{\bujn}{\Bun_{0, J^n}}
\newcommand{\bujnw}{\Bun_{0, J^n}^w}
\newcommand{\bujnsw}{\Bun_{0, J^n}^{\le w}}
\newcommand{\bbjsw}{\Bun_{0', J_\infty}^{\le w}}
\newcommand{\bbjw}{\Bun_{0', J_\infty}^{w}}
\newcommand{\bbjv}{\Bun_{0', J_\infty}^{v}}
\newcommand{\bbbe}{\Bun_{0', \infty'}^{e}}
\newcommand{\bbbw}{\Bun_{0', \infty'}^{w}}
\newcommand{\bbbwp}{\Bun_{0', \infty'}^{w'}}
\newcommand{\bbbsw}{\Bun_{0', \infty'}^{\le w}}
\newcommand{\bbbgw}{\Bun_{0', \infty'}^{\ge w}}
\newcommand{\bubw}{\Bun_{0, \infty'}^{w}}
\newcommand{\bubgw}{\Bun_{0, \infty'}^{\ge w}}
\newcommand{\buuw}{\Bun_{0, \infty}^{w}}
\newcommand{\buuwpp}{\Bun_{0, \infty}^{w''}}
\newcommand{\buusw}{\Bun_{0, \infty}^{\le w}}
\newcommand{\buuswp}{\Bun_{0, \infty}^{\le w'}}
\newcommand{\buuswpp}{\Bun_{0, \infty}^{\le w''}}
\newcommand{\buuswppp}{\Bun_{0, \infty}^{\le w'''}}
\newcommand{\buuwppsin}{\Bun_{0, \infty}^{w''s_{i_n}}}
\newcommand{\buuwppy}{\Bun_{0, \infty}^{w''y}}
\newcommand{\hec}{\mathcal{H}_{0,\infty}}
\newcommand{\hecb}{\mathcal{H}_{0',\infty}}
\newcommand{\hecwa}{\mathcal{H}_{0,\infty}^w}
\newcommand{\hecswa}{\mathcal{H}_{0,\infty}^{\le w}}
\newcommand{\hecw}{\mathcal{H}_{0,\infty ,w}}
\newcommand{\hecsw}{\mathcal{H}_{0,\infty, \le w}}
\newcommand{\hecswpa}{\mathcal{H}_{0,\infty}^{\le w'}}
\newcommand{\hecbwf}{\mathcal{H}_{0',\infty, w_4}}
\newcommand{\hecbs}{\mathcal{H}_{0',\infty, s}}
\newcommand{\hecbswpp}{\mathcal{H}_{0',\infty, \le w''}}
\newcommand{\hecswswpa}{\mathcal{H}_{0,\infty, \le w}^{\le w'}}
\newcommand{\hecswpswppa}{\mathcal{H}_{0,\infty, \le w'}^{\le w''}}
\newcommand{\hecbswswpa}{\mathcal{H}_{0',\infty, \le w}^{\le w'}}
\newcommand{\hecbswpswppa}{\mathcal{H}_{0',\infty, \le w'}^{\le w''}}
\newcommand{\hecwwpa}{\mathcal{H}_{0,\infty, w}^{w'}}
\newcommand{\hecswwpa}{\mathcal{H}_{0,\infty, \le w}^{w'}}
\newcommand{\hecwswpa}{\mathcal{H}_{0,\infty, w}^{\le w'}}
\newcommand{\jinf}{J_\infty}
\newcommand{\ra}{\rightarrow}
\newcommand{\la}{\leftarrow}
\newcommand{\sur}{\twoheadrightarrow}
\newcommand{\inj}{\hookrightarrow}
\newcommand{\rasim}{\xrightarrow{\sim}}
\newcommand{\xla}{\xleftarrow}
\newcommand{\xra}{\xrightarrow}
\DeclareMathOperator{\Hom}{Hom}
\DeclareMathOperator{\End}{End}
\DeclareMathOperator{\Ext}{Ext}
\DeclareMathOperator{\Ad}{Ad}
\DeclareMathOperator{\Spec}{Spec}
\DeclareMathOperator{\chara}{char}
\DeclareMathOperator{\Sym}{Sym}
\DeclareMathOperator{\Supp}{Supp}
\DeclareMathOperator{\Ch}{Ch}
\DeclareMathOperator{\Av}{Av}
\DeclareMathOperator{\Ind}{Ind}
\DeclareMathOperator{\Res}{Res}
\DeclareMathOperator{\Pic}{Pic}
\DeclareMathOperator*{\ulalim}{\underleftarrow\lim}
\DeclareMathOperator*{\uralim}{\underrightarrow\lim}
\DeclareMathOperator*{\ulad}{\underleftarrow D}
\DeclareMathOperator{\pt}{pt}
\DeclareMathOperator{\Fr}{Fr}
\DeclareMathOperator{\ext}{ext}
\DeclareMathOperator{\pure}{pure}
\DeclareMathOperator{\HOM}{HOM}
\DeclareMathOperator{\RHom}{\textbf{R} \kern -2pt \Hom}
\DeclareMathOperator{\Rhom}{\textbf{R}\kern -2pt \hom}
\DeclareMathOperator{\RuHom}{\textbf{R} \underline{\kern -0pt \Hom}}
\DeclareMathOperator{\IC}{IC}
\DeclareMathOperator{\ICu}{\underline{IC}}
\DeclareMathOperator{\rot}{rot}
\DeclareMathOperator{\cen}{cen}
\DeclareMathOperator{\alge}{alg}
\DeclareMathOperator{\Delu}{\underline{\Delta}}
\DeclareMathOperator{\nabu}{\underline{\nabla}}
\DeclareMathOperator{\Gr}{Gr}
\DeclareMathOperator{\Lemma}{Lemma}
\DeclareMathOperator{\gmod}{gmod}
\DeclareMathOperator{\tH}{H}
\DeclareMathOperator{\SB}{SB}
\DeclareMathOperator{\bSB}{\overline{SB}}
\DeclareMathOperator{\red}{red}
\DeclareMathOperator{\xiu}{\underline{\xi}}
\DeclareMathOperator{\Mu}{\underline{\M}}
\DeclareMathOperator{\sFu}{\underline{\sF}}
\DeclareMathOperator{\Bun}{Bun}
\newcommand{\esF}{\widetilde{\mathcal{F}}}
\newcommand{\esG}{\widetilde{\mathcal{G}}}
\newcommand{\eR}{\widetilde{R}}
\newcommand{\eM}{\mathbb{\widetilde{M}}}
\newcommand{\eT}{\widetilde{T}}
\newcommand{\elit}{\widetilde{\lit}}
\newcommand{\ePsi}{\widetilde{\Psi}}
\newcommand{\eD}{\widetilde{D}}
\DeclareMathOperator{\exi}{\widetilde{\xi}}
\DeclareMathOperator{\eIC}{\widetilde{IC}}
\DeclareMathOperator{\eICu}{\widetilde{\ICu}}
\DeclareMathOperator{\eTh}{\widetilde{\Theta}}
\DeclareMathOperator{\eDe}{\widetilde{\Delta}}
\DeclareMathOperator{\ena}{\widetilde{\nabla}}
\newcommand{\ebi}{\widetilde{\bold{I}}}
\newcommand{\egk}{\widetilde{\bold{G_K}}}
\newcommand{\ego}{\widetilde{\bold{G_\co}}}
\newcommand{\egksw}{\widetilde{\bold{G_K}_{,\le w}}}
\newcommand{\ebim}{\widetilde{\bold{I^-}}}
\newcommand{\ledl}{_{\sL} \widetilde{D}_ \sL}
\newcommand{\lpedl}{_{\sL'} \widetilde{D}_ \sL}
\newcommand{\lptulpm}{ _{\sL'}\underline{\Theta}\pha^- _{\sL'}}
\newcommand{\ldet}{\sL_{det}}
\newcommand{\usF}{\underline{\sF}}
\newcommand{\usG}{\underline{\sG}}
\newcommand{\eMu}{\underline{\widetilde{\mathbb{M}}}}
\newcommand{\ldswlo}{_{\sL} D( \le w)^{\circ}_\sL}
\newcommand{\lcswlo}{_{\sL} C( \le w)^{\circ}_\sL}
\newcommand{\lclo}{_{\sL} C^{\circ}_\sL}
\newcommand{\lculo}{ _{\sL}\underline{C}\pha^{\circ}_\sL}
\newcommand{\lcuswlo}{_{\sL} \underline{C}( \le w)^{\circ}_\sL}
\newcommand{\ldulo}{ _{\sL}\underline{D}^{\circ}_\sL}
\newcommand{\lduswlo}{_{\sL} \underline{D}( \le w)^{\circ}_\sL}
\title{Endoscopy for affine Hecke categories}
\author{Yau Wing Li}
\begin{document}

\begin{abstract}
We show that the neutral block of the affine monodromic Hecke category for a reductive group is monoidally equivalent to the neutral block of the affine Hecke category for its endoscopic group. The semisimple complexes of both categories can be identified with the generalized Soergel bimodules via the Soergel functor. We extend this identification of semisimple complexes to the neutral blocks of the affine Hecke categories by the technical machinery developed by Bezrukavnikov and Yun.

\end{abstract}
\address{University of Melbourne, School of Mathematics and Statistics}
\email{albert.li2@unimelb.edu.au}
\date{\today}
\maketitle
\tableofcontents

\section{Introduction}

\subsection{Hecke category}
Let $G$ be a connected split reductive group over $\fq$, $T$ be a maximal split torus inside a Borel subgroup $B$ and $W$ be the Weyl group of $G$. The Hecke algebra $\sH_q(W)$ is the ring of $\ql$-valued functions on the double cosets $B(\fq) \bsl G(\fq) /B(\fq)$, that is the bi-$B(\fq)$-invariant functions on $G(\fq)$. The convolution product gives the multiplication structure. The Hecke algebra is ubiquitous in representation theory. For instance, it is related to the representations of finite groups of Lie type \cite{DL,L2} and the characters of the Verma modules in the BGG category $\co$ via Kazhdan-Lusztig canonical basis \cite{BB,BK,Hu,KL}. 

The affine Hecke algebra (or the Iwahori-Hecke algebra) is a generalization of the Hecke algebra, where the loop group $\gk$ and the Iwahori subgroup $\bi$ take the roles of $G$ and $B$ respectively. This can be thought of as a quantization of the group algebra of the extended affine Weyl group $\extw$. The affine Hecke algebra is a central object in the local geometric Langlands program. It governs the irreducible admissible representations of $G(\fq((t)))$ with Iwahori-invariant vectors. 

The affine Hecke category categorifies the affine Hecke algebra. We consider $\bi$-equivariant constructible sheaves instead of $\bi$-equivariant functions. Under the sheaf-function dictionary of Grothendieck, we can associate a sheaf with a function by taking the trace of Frobenius. To adapt the six functors formalism, it is better to consider the derived category of equivariant sheaves $D^b_m(\bi \bsl \gk / \bi)$, which is a monoidal category. The affine Hecke category and its relatives play extremely important roles in the geometric representation theory. They are related to the representations of the quantum group at a root of unity and coherent sheaves on the Springer resolution  \cite{ABG}.

%When $V$ is a representation of $G(\fq)$, the convolution product gives an $\sH_q(W)$-action on the $B(\fq)$-invariant subspce of $V$, which is much smaller than $V$ in general. In this way, the Hecke algebra comes into the study of the unipotent principal series representation of finite groups of Lie type \cite{DL}. 

%The Hecke algebra also plays a role in knot theory. A fundamental idea is the standard generators of $\sH_q(W)$ labelled by the simple reflections satisfy the braid relations. Hence there is a surjective morphism from the group algebra of the braid groupto the Hecke algebra. 

\subsection{Results}
%Let $\gk$ be the loop group of $G$ and $\bi$ be the Iwahori subgroup. The affine Hecke category $D^b_m(\bi \bsl \gk / \bi)$ is defined as the $\bi$-equivariant derived category of complexes of sheaves with $\ql$-coefficients on the affine flag variety $\gk / \bi$ whose cohomology sheaves are mixed in the sense of Deligne \cite{D}. The convolution product provides $D^b_m(\bi \bsl \gk / \bi)$ a monoidal structure. Under the sheaf-function dictionary of Grothendieck, the corresponding algebra contains the affine Hecke algebra $\sH_q(\extw)$, where $\extw$ is the extended affine Weyl group of $G$. In this sense, $D^b_m(\bi \bsl \gk / \bi)$ categorifies $\sH_q(\extw)$.

The affine Hecke category has monodromic counterparts. Let $\biu$ be the pro-unipotent radical of $\bi$. Suppose $\sL'$ and $\sL$ are two rank one character sheaves over $T$. We can consider the $\biu$-equivariant derived category of mixed complexes of sheaves on $\gk/ \biu$ whose monodromy under left (resp. right) $T$-action is $\sL'$ (resp. $\sL^{-1}$). Denote the category as $\lpdl$. For any two sheaves in $\lppdlp$ and $\lpdl$, one can define their convolution product, which is a sheaf in $\lppdl$. In particular, $\ldl$ is a monoidal category. $\ldl$ can be decomposed into several smaller (not necessarily monoidal) subcategories. Denote the identity block of the decomposition as $\ldlo$. 

\begin{thm}
Let $G$ be a connected split reductive group over $\F_q$, and $T$ be a maximal split torus inside a Borel subgroup $B$. Assume that $\sL$ is a character sheaf over $T$ and $H$ is the endoscopic group for $\sL$. Let $\hk$ and $\bi_H$ be the loop group and the Iwahori subgroup of $H$ respectively.

Then there is a canonical monoidal equivalence of triangulated categories, 
\[\Psi_{\sL}^\circ : D^b_m(\bi_H \bsl \hk / \bi_H)^\circ \rasim \pha _\sL D_{\sL} ^\circ\]
preserving the standard objects, the IC sheaves and the costandard objects, i.e. $\Psi_{\sL}^\circ$ sends $\Delta(w)_H, \IC(w)_H, \nabla(w)_H$ to $\Delta(w)_{\sL}, \IC(w)_{\sL}, \nabla(w)_{\sL}$ respectively for all $w \in \widetilde{W_\sL^\circ}=\widetilde{W_H^\circ}$.  $D^b_m(\bi_H \bsl \hk / \bi_H)^\circ$ is the identity block of $D^b_m(\bi_H \bsl \hk / \bi_H)$. In other words, it contains the sheaves supported on the identity connected component of $\bi_H \bsl \hk / \bi_H$. 
\end{thm}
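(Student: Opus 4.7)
The plan is to combine two ingredients: first, a Soergel-type functor on each side realizing the semisimple parts inside a category of generalized Soergel bimodules; and second, the Bezrukavnikov--Yun reconstruction of the full triangulated Hecke category from its semisimple part together with the structure of a monodromy-completed deformation.

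Step 1 (Soergel identification of semisimple parts). I would set up Soergel functors $\mathbb{V}_\sL \colon \ldlo \to \mathrm{SBim}_{\widetilde{W_\sL^\circ}}$ and $\mathbb{V}_H \colon D^b_m(\bi_H \bsl \hk / \bi_H)^\circ \to \mathrm{SBim}_{\widetilde{W_H^\circ}}$, each valued in generalized Soergel bimodules over the appropriate coinvariant ring. By hypothesis the Coxeter data and simple reflections of $\widetilde{W_\sL^\circ}$ and $\widetilde{W_H^\circ}$ coincide, and the reflection representations agree up to a canonical identification, so the target bimodule categories are canonically isomorphic. Both $\mathbb{V}_\sL$ and $\mathbb{V}_H$ are monoidal and fully faithful on semisimple complexes (this should have been established in an earlier section of the paper), and they send $\Delta(s), \nabla(s), \IC(s)$ for each simple affine reflection $s$ to the corresponding Bott--Samelson bimodule with the appropriate shifts. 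Composing $\mathbb{V}_\sL$ with a quasi-inverse of $\mathbb{V}_H$ on the image therefore yields a monoidal equivalence between the semisimple subcategories of the two Hecke categories that matches $\Delta(s)_H, \nabla(s)_H, \IC(s)_H$ with $\Delta(s)_\sL, \nabla(s)_\sL, \IC(s)_\sL$.

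Step 2 (Extension via Bezrukavnikov--Yun). To pass from an equivalence of semisimple parts to one of the whole triangulated blocks, I invoke the machinery in the completed setting. The completed versions $\lhdlo$ and $\widehat{D}_H^\circ$ are governed by certain dg-enhancements of the homotopy categories of Bott--Samelson objects, and the monoidal equivalence of Step 1 extends unambiguously to these completed categories by Bott--Samelson generation and compatibility with convolution. Specializing the universal monodromy parameter on each side, via the appropriate quotient by the augmentation ideal, recovers the desired equivalence $\Phi_\sL^\circ$ on the original triangulated blocks. The preservation of $\Delta(w), \nabla(w), \IC(w)$ for an arbitrary $w \in \widetilde{W_\sL^\circ} = \widetilde{W_H^\circ}$ then follows by choosing a reduced expression $w = s_1 \cdots s_\ell$ and using monoidality to convolve the rank-one data, plus the fact that $\IC(w)$ is characterized as the unique perverse simple constituent supported on the closure of the corresponding stratum.

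The main obstacle is Step 2. One must verify that the Bezrukavnikov--Yun reconstruction applies uniformly on both sides and respects both the monoidal structure and the perverse $t$-structure, and one must control the specialization from the completed categories to the original ones so that it behaves compatibly on both sides. The genuinely subtle point is the matching of the universal monodromy parameter on the $\sL$-side (where the nontrivial character sheaf twists the equivariance) with the trivial parameter on the $H$-side; this is precisely the information packaged by the endoscopic datum and is what makes the equivalence canonical rather than merely an abstract existence statement.
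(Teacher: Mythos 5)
Your high-level outline (Soergel functor on each side, then reconstruct the triangulated block via Bezrukavnikov--Yun) is the right strategy, but Step 1 is stated as a given when it is in fact the core difficulty of the paper, and Step 2 describes a mechanism that is different from (and vaguer than) what the paper uses.

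On Step 1: in the finite (reductive group) case of Lusztig--Yun, the Soergel functor is $\Hom^\bullet$ out of the \emph{maximal} IC sheaf, but on the affine flag variety $\gk/\biu$ there is no maximal orbit and hence no such object inside the Hecke category itself. The paper's main technical contribution is to define maximal IC sheaves as \emph{negative sheaves} on the stack $\Bun_{0,\infty}$ of $G$-bundles on $\p^1$ with unipotent reductions at $0,\infty$, together with a coalgebra-type structure built from the averaging functor $\Av_!$ (Sections 3--6). The Soergel functor is then $\M_\xi(\sF) = \Hom^\bullet({}_{\sL'}\Theta^-_{\sL'}\star_- \xi, \sF)$, where the convolution of a negative sheaf and a positive sheaf, the stalk computation via hyperbolic localization, and the resulting monoidal structure all need to be built from scratch. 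Saying this ``should have been established earlier'' is true, but it is exactly the content one has to supply; without it the proposal has no starting point. A second point you underplay: the target must be $S$-modules where $S$ records not only the two copies of $H^*_T(\pt)$ but also the Chern class $z$ of the determinant line bundle (equivalently, one works with the central $\gm$-extension $\widetilde{\gk}$). The paper explicitly notes that full faithfulness of the Soergel functor \emph{fails} over $R\otimes R$ alone, so this is not a cosmetic detail and is not the same as the monodromy-parameter subtlety you flag.

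On Step 2: the paper does not pass through completed (pro-unipotent free-monodromic) categories nor specialize a universal monodromy parameter along an augmentation ideal. Instead it uses the realization-functor formalism of \cite[Appendix B]{BY}: for each $w$, the category $\lcswlo$ of pure weight-zero complexes sits inside $\ldswlo$, there is a realization functor $\rho_{\le w}: K^b(\lcswlo) \to \ldswlo$, and $\rho_{\le w}$ descends to an equivalence $K^b(\lcswlo)/K^b(\lcswlo)_0 \xra{\sim} \ldswlo$, where $K^b(\cdot)_0$ is the subcategory of complexes that become null-homotopic after forgetting the mixed structure. This is done on both the $G$-side and the $H$-side, composed with the Soergel identification of $\lcswlo$ with $\bSB_m(\ewlo)(\le w)$, and then the two sides are matched because $\ewlo$ and $\widetilde{W_H^\circ}$ have the same Coxeter data and reflection representation (after the central extension). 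Finally one takes a limit over $w$. Your completed-category route is a genuinely different (and more delicate) piece of machinery; if you want to use it, you would need to construct the completed monodromic categories in the affine setting, prove Bott--Samelson generation there, and carefully control the specialization functor on both sides --- none of which is supplied. The realization-functor route sidesteps all of this, at the cost of working truncation-by-truncation in $w$ and verifying compatibility of $\rho_{\le w}$ for $w \le w'$.
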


In fact $\Psi_{\sL}^\circ$ satisfies more properties which we cover in Theorem \ref{mainthm}. We also prove the analogous statement for the central extension of the loop group in Theorem \ref{mainthm2}. In this case, $\sL$ is a character sheaf over $T$, but not over its central extension. The similar statement for the reductive group case was proved by Lusztig and Yun in \cite{LY}.

\subsection{Potential application to local geometric Langlands program}
In \cite{Be}, Bezrukavnikov proved the case of the unipotently ramified local geometric Langlands correspondence. More precisely, he proved the equivalence between the affine Hecke category associated with a reductive group $G$ over $\overline{\fq}$ and the DG category of equivariant coherent sheaves on the derived Steinberg variety of the Langlands dual group $G^L$. He then conjectured the tamely ramified local geometric Langlands correspondence, which is a similar statement for the affine monodromic Hecke category and the DG category of equivariant coherent sheaves on the twisted derived Steinberg variety. See Conjecture 58 in \textit{loc.cit.}. 

When $G$ is simple and of adjoint type, it can be shown that the twisted derived Steinberg variety for $G^L$ is isomorphic to the derived Steinberg variety for $H^L$. Therefore the conjecture is equivalent to the equivalence of non-neutral blocks via Bezrukavnikov's equivalence. This will be discussed in a forthcoming joint work with Gurbir Dhillon, Zhiwei Yun, and Xinwen Zhu.

%The associativity constraint of non-neutral blocks gives a 3-cocycle of $\Omega$ valued in the group $\{ \pm 1\}$, where $\Omega$ is the group of the minimal representatives with respect to the block decomposition. When $G$ is simple and of adjoint type, it can be shown that the twisted derived Steinberg variety for $G^L$ is isomorphic to the derived Steinberg variety for $H^L$. Therefore the conjecture is equivalent to the equivalence of non-neutral blocks via Bezrukavnikov's equivalence. Moreover, proving that such equivalence exists is equivalent to proving that the 3-cocycle is cohomologically trivial. This will be discussed in a forthcoming paper.

\subsection{Method of proof}
To explain the proof, we recall how the reductive group case is proved in \cite{LY}. Their proof makes extensive use of the monodromic Soergel functor $\M$ which maps a semisimple complex $\sF$ to the $H_{T}^*(pt)$-bimodule of homomorphisms from a maximal IC sheaf to $\sF$. As a special case when $\sL$ is trivial, $\M$ is the same as the global sections functor. Via this functor, the IC sheaves can be identified with generalized Soergel bimodules. The general DG model construction in \cite{BY} is then used to extend this equivalence to the neutral blocks of derived categories.
 
The main difficulty in extending the above argument in \cite{LY} to the loop group case is that there are no IC sheaves with maximal support, hence the above definition of $\M$ does not apply. The main contribution of this paper is to consider monodromic sheaves on $\buu$, where $\buu$ is the moduli stack classifying $G$-bundles on $\p^1$ along with a Borel reduction at zero and infinity. There is a unique maximal orbit in each block of $_{\sL'} \extw _\sL$. We can define the maximal IC sheaves as the IC sheaves which are supported on these orbits. This allows us to define $\M$ for the loop group case. When $\sL$ is trivial, the maximal IC sheaf is the constant sheaf. In this special case, H\"arterich proved in \cite{Ha} that the subcategory of semisimple complexes can be realized as the Soergel bimodules of the affine Weyl group via taking equivariant cohomology. 

%Firstly, the definition of maximal IC sheaf has to be modified. In the reductive group case, every block has a maximal element because of finiteness of the Weyl group. Maximal IC sheaf can be simply defined as the IC sheaf with the maximal support. However, such object does not exist in the derived category of $\biu$-equivariant $\ql$-sheaves on affine flag variety. One way to get around this is to consider $\bium$-equivariant $\ql$-sheaves, where $\bium$ is the pro-unipotent radical of opposite Iwahori subgroup. 

To prove that $\M$ is a monoidal functor, we construct a coalgebra structure on each maximal IC sheaf $\Theta$. Note that we cannot define the two-fold convolution product of $\Theta$ directly. Instead, we consider the convolution product $\Theta \star \Av_!\Theta$, where $\Av_!$ is the functor averaging sheaves along $\biu$-orbits. The key in constructing the coalgebra map from $\Theta$ to $\Theta \star \Av_!\Theta$ boils down to computing stalks of $\Av_!\Theta$. These are computed in Proposition \ref{avstalk} by applying Braden's hyperbolic localization in \cite{Br}.

%This appearance of $\bium$-equivariance introduces another difficulty. The maximal IC sheaf $\Theta$ is now left $\bium$-equivariant and right $\biu$-equivariant, hence the two-fold convolution product of $\Theta$ cannot be defined directly. Our substitution is the convolution product $\Theta \star \Av_!\Theta$, where $\Av_!$ is the averaging functor which takes left $\bium$-equivariant sheaves to pro-objects in the category of left $\biu$-equivariant sheaves. To obtain the coalgebra map from $\Theta$ to $\Theta \star \Av_!\Theta$, information about the stalks of $\Av_!\Theta$ is needed. It is done in Proposition \ref{avstalk} by applying Braden's hyperbolic localization in \cite{Br}.

%Here is the table of content. Section \ref{sect2} provides basic notations. The affine monodromic Hecke categories are defined in Section \ref{sect3}, and their convolution products are discussed in Section \ref{sect4}. Some results in \cite{LY} are recapped and generalized in Section \ref{sect5}. Most proofs are omitted because of similarity to the reductive group case. 

%Section \ref{sect6} is the most crucial part of this article. Maximal IC sheaf $\Theta$ and averaging functor are defined. Moreover, $\Theta$ is given a coalgebra structure. 

%The results in Section \ref{sect6} implies the affine monodromic Soergel functor $\M$ is a monoidal functor. IC sheaves are identified with generalized Soergel bimodules in section \ref{sect7} and \ref{sect8}. Section \ref{sect9} concludes the proof. 

\subsection{Acknowledgements}
The author would like to thank his PhD advisor Zhiwei Yun for introducing the subject and problem. Gratitude is expressed to Gurbir Dhillon, Davis Dougal, Mark Goresky, and Xinwen Zhu for stimulating mathematical discussions. Additionally, the author would like to thank the anonymous referee for pointing out some mistakes. This material is based upon work supported by the University of Melbourne, IAS School of Mathematics, the National Science Foundation under Grant No. DMS-1926686, and the ARC grant FL200100141.

\section{Notations and conventions}\label{sect2}

We follow most of the notations and conventions in \cite{LY}.

\subsection{Frobenius}\label{sectFrob}
Throughout the article, $k= \overline{\F_q}$ is a fixed algebraic closure of $\F_q$ and $\ell$ is a prime different from $p=\chara(k)$. Denote pt to be $\Spec(\F_q)$.

For any Artin stack $X$ over $\fq$, we denote by $D^b_m(X)$ the derived category of \'etale $\ql$-complexes on $X$ whose cohomology sheaves are mixed. When $J$ is an algebraic group over $\fq$ acting on a scheme $Y$ over $\fq$, $D^b_{m}([J \backslash Y])$ is the $J$-equivariant derived category of \'etale $\ql$-complexes on $Y$ whose cohomology sheaves are mixed. In this case, we sometimes write $D^b_{m,J}(Y)$ instead.

For any Artin stack $X$ over $\fq$, denote $\Fr_X$ to be the geometric Frobenius of $X$. The subscript will be suppressed whenever the context is clear. 

For any Artin stack $X$ over $\fq$, the projection map $X_k:=X \times_{\pt}\Spec(k) \ra X$ induces a pullback functor $ \omega: D^b_m(X) \ra D^b_c(X_k)$. Let $\sF, \sG \in D^b_m(X)$. Define the following two types of homomorphisms between $\sF$ and $\sG$,
\[\ext^i(\sF, \sG):=\Hom_{D^b_m(X)}(\sF, \sG[i]),\] 
\[\Ext^i(\sF, \sG):=\Hom_{D^b_c(X _k)}(\omega \sF, \omega \sG[i]).\]

Note that the latter module carries a Frobenius action. We distinguish the corresponding derived functors by $\Rhom$ and $\RHom$ respectively. Unless otherwise specified, $\RHom(\sF,\sG)$ always means $\RHom(\omega \sF, \omega \sG)$.

We note the following well-known exact sequence, which plays a significant role in establishing a connection between the mixed category and the non-mixed category (see \cite[Section 5.1.2.5]{BBD} for details). 

\begin{lem}\label{mixnonmix}
	Let $A$ and $B$ be two mixed complexes. We have the following exact sequence:
	\[0 \ra \Hom^{-1}(\omega A, \omega B)_{\Fr} \ra \ext^0(A,B) \ra \Ext^0(\omega A, \omega B)^{\Fr} \ra 0.\]
\end{lem}	

\begin{cor}
	Let $A$ and $B$ be two mixed perverse sheaves. The $\ql$-vector space $\ext^0(A,B)$ is a subspace of $\Ext^0(\omega A, \omega B)$.
\end{cor}

Let $M$ be a graded module over a graded algebra $A$. Denote $M[1]$ to be the graded module with $M[1]_i=M_{i+1}$.

Suppose Frobenius acts on $A$ and $M$ is a graded $A$-module with a compatible Frobenius action $\Fr_M$, that is $\Fr_A(a)\Fr_M(m)=\Fr_M(am)$. Define $M(1)$ to be the same underlying $A$-module with the twisted Frobenius action $\Fr_{M(1)}(m)=q\Fr_{M}(m)$.

Fix a square root of $q$ in $\ql$. We define $M \langle 1 \rangle = M[1](1/2)$. This convention applies to both mixed sheaves and graded modules with Frobenius action.

%From the definition of $D^b_{(T \times T, \sL \boxtimes \sL'),m}(\biu \bsl \gk / \biu) ^\circ$, it is clear that 

%Let $Q$ be the root lattice of $G$, and 

%Let $D'$ be the subcategory of $D^b_{(T \times T, \sL \boxtimes \sL'),m}(\biu \bsl \gk / \biu)$, consists of s whose support are on the orbits whose indexes are $W \ltimes Q$. It is clear that it is a sub-monoidal category. 

%Claim: $D'$ is isomorphic to 

%Without loss of generality, we may and will assume $G$ is simply connected. This assumption implies $\extw$ is a Coxeter group. 
%It is indexed by $D^b_{(T \times T, \sL \boxtimes \sL'),m}(\biu \bsl \gk / \biu) $.

\subsection{Rank one character sheaves}
Recall the definition of rank one character sheaves.

\begin{defn}Let $J$ be an algebraic group over $\fq$. A rank one character sheaf over $J$ is a triple $(\sL, \iota, \phi)$ where $\sL$ is a rank one local system on $J$,   $\iota: \sL_e \xrightarrow{\sim}\ql$ is the rigidification of stalk at the identity element $e$ of $J$ and $\phi :m^*\sL\xrightarrow{\sim} \sL \boxtimes \sL$ is an identification between the pullback of $\sL$ along $m$, the multiplication map of $J$, and the exterior product of $\sL$, satisfying the associativity and unital axioms (see \cite[Appendix A]{Y2} for the details). Denote $\Ch (J)$ to be the isomorphism class of rank one character sheaves over $J$. 
\end{defn}

By abuse of notation, we write $\sL$ instead of $(\sL, \iota, \phi)$. Details about rank one character sheaves can be found in \cite[Section 2]{LY} and \cite{Y2}. We mention two important properties of character sheaves here. Firstly, the automorphism group of $\sL$ is trivial when $J$ is connected. Secondly, when $J$ is a split torus over $\fq$, $\Ch(J)$ can be identified with $\Hom(J(\fq), \ql)=J^L(\ql)$, where $J^L$ is the Langlands dual of $J$ over $\ql$.

%There is a criterion to determine whether a character sheaf on a subgroup can be extended to the whole group.

%\begin{lemma}
%For a reductive algebraic group $J$ over $\fq$ with maximal torus $T'$ and $\sL \in \Ch(T')$. Then $\sL$ can be extended to $\widetilde{\mathcal{L}}\in \Ch(J)$ if and only if for every coroot $\alpha^\vee$ of $J$, the pullback $(\alpha^{\vee})^*\sL$ is trivial. 
%\end{lemma}

\subsection{Group theory}
\subsubsection{Loop group}
Let $G$ be a connected split reductive group over $\F_q$. Fix a pair of opposite Borel subgroups $B$ and $B^-$. Let $T$ be the intersection of $B$ and $B^-$.  Denote $U$ and $U^-$ as the unipotent radical of $B$ and $B^-$ respectively. Recall the functor of points of some group ind-schemes associated with $G, B, B^-, U, U^-, T$. Let $A$ be a $\F_q$-algebra.

\begin{enumerate}[{$(1)$}]
\item Loop group $\bold{G_K}$ is a group ind-scheme. It sends $A$ to $G(A((t)))$;

\item Positive loop group $\go$ is a group scheme. It sends $A$ to $G(A[[t]])$;

\item Standard Iwahori subgroup $\bold{I}$ is a group scheme. It sends $A$ to the elements of $G(A[[t]])$ whose image is in $B(A)$ when evaluating $t$ at zero.

\item Standard opposite Iwahori subgroup $\bold{I^-}$ is a group ind-scheme. It sends $A$ to the elements of $G(A[t^{-1}])$ whose image is in $B^-(A)$ when evaluating $t^{-1}$ at zero.

\item Pro-unipotent radical of standard Iwahori subgroup $\bold{I_u}$ is a group scheme. It sends $A$ to the elements of $G(A[[t]])$ whose image is in $U(A)$ when evaluating $t$ at zero.

\item Pro-unipotent radical of standard opposite Iwahori subgroup $\bold{I_u^-}$ is a group ind-scheme. It sends $A$ to the elements of $G(A[t^{-1}])$ whose image is in $U^-(A)$ when evaluating $t^{-1}$ at zero.

%\item $\bold{T_K}$, which sends a $\F_q$ algebra $A$ to $T(A((t)))$.

%\item $\bold{T_\co}$, which sends a $\F_q$ algebra $A$ to $T(A[[t]]))$.
\end{enumerate}

%Except for the loop group, any other group scheme in the list has a natural strict ind-variety structure over $\fq$. 

The affine flag variety $\gk / \bi$ and the enhanced affine flag variety $\gk / \biu$ play a central role in this paper. We will also work with the affine Grassmannian $\Gr_G= \gk / \go$ in Section \ref{sect7}. Note that these ind-schemes may have a non-reduced structure.

\subsubsection{Extended affine Weyl group}
Let $\extw$ be the extended affine Weyl group associated to $\gk$ and $\le$ be the standard Bruhat order of $\extw$. The extended affine Weyl group $\extw$ is the semidirect product $W \ltimes \X_*(T)$ of the finite Weyl group $W$ and the cocharacter lattice $\X_*(T)$ of $T$. Define $\bold{T_K}$ be the loop group of $T$ and $\bold{T_\co}$ be the positive loop group of $T$. We view $\bold{T_K}$ and $\bold{T_\co}$ as subgroup scheme of $\gk$. The extended affine Weyl group $\widetilde{W}$ is isomorphic to $N_{\gk}(\bold{T_K})(\fq) / \bold{T_\co}(\fq)$ as an abstract group, where $N_{\gk}(\bold{T_K})$ is the normalizer of $\bold{T_K}$ in $\gk$. 
We fix a set of coset representatives $\{\wdo\} \subset N_\gk(\bold{T_K})(\fq)$, where $\wdo$ is the coset representative of $w \in \extw$.

For $s$ a simple reflection in $\extw$, let $P_s$ be the standard parahoric subgroup corresponding to $s$. Denote the pro-unipotent radical of $P_s$ as $\bius$ and the Levi subgroup as $L_s$. 

Define $M(w,w')$ to be the subset $\{x \in \extw: x \ge uv$ for any $u\le w$ and $v \le w'\}$ for any $w, w' \in \extw$.

%After taking reduced structure, they become an ind-variety. 

%The natural projection $\pi:\gk / \biu \ra \gk / \bi$ is a $T$-torsor in Zariski topology because of the existence of big open cell. \cite{BY} lemma 2.3.2 

%Sometimes by abuse of notation we may write 

\subsection{Geometry}
All varieties in this article refer to quasi-projective schemes of finite type over $\fq$. In particular varieties can be non-reduced. For example, $\Spec(\fq[x]/(x^2))$ is a variety in our sense. The fact that $D^b(X)$ is equivalent to $D^b(X^{\red})$ implies no information is lost when passing to the reduced structure. 

In a triangulated category $\sC$, we say that $E \in \sC$ has a "filtration" whose subquotients are given by $Q_1, Q_2, \cdots, Q_n \in \sC$ if there exists a sequence of objects $0=F_0,F_1,\cdots , F_n=E$ in $\sC$ such that we can find distinguished triangles $F_{i-1} \ra F_i \ra Q_i \ra F_{i-1}[1]$ for any $1 \le i \le n$.  We use quotation marks to distinguish this notion of filtration from the usual one in abelian categories.

\subsubsection{Affine flag variety}

The left multiplication of $\bi$ induces a stratification on $\gk / \bi$. The strata are indexed by $\extw$ by the Bruhat decomposition. Each stratum contains a unique $\wdo$.  We denote $\bi \wdo \bi$ as $\gkw$. It is well known that $\gkw / \bi$ is an affine space of dimension $\ell(w)$ and $\gkw/ \bi \subset \overline{\gkwp/ \bi}$ if and only if $w \le w'$, where the overline stands for the closure. We denote $\overline{\gkw}$ by $\gksw$.

\subsubsection{Moduli space of G-bundles over $\p^1$} \label{modsec}

Let $\Bun_{G,0',\infty'}$ be the moduli stack classifying the data of $(\mathcal{E}, F_{0'}, F_{\infty'})$, where $\mathcal{E}$ is a $G$-bundle on $\p^1$, $F_{0'}$ is a $B$-reduction at zero and $F_{\infty'}$ is a $B^-$-reduction at infinity. For instance, when $G=GL_n$, $\Bun_{G,0',\infty'}$ classifies rank $n$ vector bundles on $\p^1$ together with a complete flag at zero and infinity respectively. 

There is a natural map from $\Bun_{G,0',\infty'}$ to $\Bun_{G,\p^1}=: \Bun_G$ by forgetting the two Borel reductions. Via this map we may view $\Bun_{G,0',\infty'}$ as a $(G/B \times G/B^-)$-fibration over $\Bun_{G}$. Since $\Bun_G$ is locally of finite type and smooth (see \cite[Proposition 4.4.6 and Corollary 4.5.2]{Beh}), so is $\Bun_{G,0',\infty'}$. Since we will be working with a fixed $G$, in the remainder of the paper we will write $\Bun_{0', \infty'}$ instead of $\Bun_{G,0',\infty'}$.

$\gk / \bi$ can be interpreted as the moduli space of $G$-bundles $\mathcal{E}$ on $\p^1$ with a Borel reduction at $0$ and a trivialization of $\mathcal{E}$ on $\p^1 \bsl \{0\}$ (see \cite[Section 1.1]{G1}). From this, we know that there is a natural map from $\gk / \bi$ to $\bbb$. This map induces a natural isomorphism $[\bim \bsl \gk / \bi] \simeq \bbb$ (see \cite[Theorem 1.3]{LS} and \cite[Theorem 2.3.7]{Z} ). From the Birkhoff decomposition, $[\bim \bsl \gk / \bi]$ is naturally stratified and the strata are indexed by $\extw$. For $w \in \extw$, let $\bbbw$ be the locally closed substack of $\bbb$ corresponding to the stratum $w$. $\bbbw$ is of codimension $\ell(w)$ in $\bbb$. In particular, $\bbbe$ is an open substack of $\bbb$.

It is well known that $\bbbw$ is in the closure of $\bbbwp$ if and only if $w \geq w'$. See the discussion before Lemma 6 in \cite{F} for a proof. Let $\bbbgw$ be the union of $\bbbwp$ for $w' \ge w$. It is the closure of $\bbbw$. Let $\bbbsw$ be the union of $\bbbwp$ for $w' \le w$. It is an open substack of $\bbb$. 

There are some variants of $\bbb$. Let $\bub$ (resp. $\bbu$, $\buu$) be the moduli stack of $G$-bundles on $\p^1$ with a $U$-reduction (resp. a $B$-reduction, a $U$-reduction) at zero and a $B^-$-reduction (resp. a $U^-$-reduction, a $U^-$-reduction) at infinity. There are natural smooth maps from each variant to $\bbb$ as a $U$-reduction gives a $B$-reduction naturally. In particular, $\buu$ is a $T \times T$-bundle over $\bbb$, which implies that $\buu$ is smooth. The stratification of $\bbb$ gives rise to a stratification of each variant. Define $\bubw$ as the stratum indexed by $w$ of $\bub$. Similarly, we can define $\buuw$, $\bubgw$ etc. 

We will also consider bundles with parahoric level structures. Let $\bcu$ be the moduli stack of $G$-bundle on $\p^1$ with a full level structure at zero and a $U^-$-reduction at infinity. That is, $\bcu$ classifies the data of $(\sE, F_\infty, \tau)$, where $\sE$ is a $G$-bundle on $\p^1$, $F_\infty$ is a $U^-$-reduction at infinity and $\tau$ is a trivialization of the restriction of $\sE$ to the formal disc at zero, which is identified with Spec $\fq[[t]]$. Notice that $\gk$ naturally acts on $\bcu$ from the right (see \cite[Construction 4.2.2]{Y1}).

Let $\bsu$ be the quotient stack of $\bcu$ by $\bius$, that is the moduli stack of $G$-bundles on $\p^1$ with a $\bius$-level structure at zero and a $U^-$-reduction at infinity. From its construction, we know that $\bsu$ is locally of finite type (\cite[Corollary 4.2.6]{Y1}) and it is naturally stratified with the index set $\{w: ws <w\}$. Let $\bsuw$ be the stratum corresponding to $w$. 

\begin{exmp}
	When $G=GL_2$ and $s$ is the non-affine simple reflection, $\bsu$ classifies the data of $(\sE_0 \supset \sE_1 \supset \sE_2=\sE_0 \otimes \co_{\p^1}(-\infty), \gamma, \gamma_0, \gamma_1)$, where $\co_{\p^1}(-\infty)$ is the ideal sheaf of $\infty$ in $\p^1$, $\sE_0$ and $\sE_1$ are vector bundles of rank $2$ on $\p^1$ such that $\sE_0 / \sE_1$ is of length $1$, $\gamma_i$ are isomorphisms $\sE_i/ \sE_{i+1} \xra{\sim} \delta_\infty$ for $i=0,1$, $\gamma$ is an isomorphism $\sE_{0}/ \sE_{0}\otimes \co_{\p^1}(-0) \xra{\sim} \delta_0^{\oplus 2}$, $\delta_0$ and $\delta_\infty$ are the skyscraper sheaf at $0$ and $\infty$ respectively.
\end{exmp}

\section{Affine monodromic Hecke categories}\label{sect3}

In this section, we introduce our main players in this paper $\lpdl$ and $\lpdlm$, the categories of positive sheaves and negative sheaves respectively. The former one is usually called affine monodromic Hecke categories. We give examples at the end. The monodromic sheaves are also discussed in \cite[Section 2]{BFO} and \cite[Section 5]{L}. 

%$D^b_{T \times T, \sL \boxtimes \sL'}(\bium \bsl \gk / \biu)$. When both $\sL$ and $\sL'$ are trivial, it reduces to  $D^b_{m,\bi}(\gk / \bi)$ and  $D^b_{m,\bim} (\gk / \bi)$. 

\subsection{Construction of $\lpdl$}\label{positive}

Let $X$ be a scheme of finite type over $\fq$, $J$ be a connected group acting on $X$ and $\sL$ a character sheaf on $J$. Lusztig and Yun defined a $\ql$-linear triangulated category $D^b_{(J,\sL),m}(X)$ in Section $2.5$ in \cite{LY}. It is a full subcategory of $D^b_{m}([\tilde{J} \bsl X])$ for some  \'etale covering $\tilde{J}$ of $J$. This construction can be generalized to the case when $X$ is a stack of finite type over $\fq$. Lusztig and Yun used it for the stack $U \bsl G /U$ in their paper. 

For each $w \in \extw$, there exists a normal, finite codimensional subgroup $J_w$ of  $\bi$ such that it is contained in $\biu$ and acts trivially on $\gksw / \biu$. Replacing $J_{w'}$ by their common intersection, we may assume $J_{w'} \subset J_w$ for $w \le w'$. Hence $D^b_m(\biu \bsl \gksw / \biu)$ can be defined as $D^b_m((\biu/J_w) \bsl \gksw / \biu)$, which is independent of the choice of $J_w$. We apply the construction of Lusztig and Yun to $(\biu/J_w) \bsl \gksw / \biu$ and obtain $D^b_{(T \times T, \sL \boxtimes \sL'),m}(\biu \bsl \gksw / \biu)$ for any two character sheaves $\sL$, $\sL'$ on $T$. To simplify the notation, we define $_{\sL'} D(\le w )_{\sL}:=D^b_{(T \times T, \sL' \boxtimes \sL^{-1}),m}(\biu \bsl \gksw / \biu)$ and $_{\sL'} D(w)_{\sL}:=D^b_{(T \times T, \sL' \boxtimes \sL^{-1}),m}(\biu \bsl \gkw / \biu)$.

%it the mixed equivariant derived category in the usual sense.

%The fact that $D^b_{J,m}(X)$ is a full subcategory of  $D^b(X)$ consists of complexes whose cohomology sheaf is constant along $J$-orbit implies the definition is independent of the choice of $J_w$ 

Once we define the monodromic equivariant derived categories for finitely many strata, we could define the monodromic equivariant derived categories for the affine flag variety 

\[\lpdl= \operatorname{2-}\uralim_{\\w\in\extw}\pha _{\sL'} D(\le w )_{\sL},\] to be the inductive $2$-limit of $_{\sL'} D(\le w )_{\sL}$ with respect to the inductive system of pushforward functors of closed embedding $i_{w,w'}:\biu \bsl \gksw / \biu \ra \biu \bsl \gkswp / \biu$ for $w \le w'$. Objects in this category are called \textbf{positive sheaves}. 

When $\sL$ and $\sL'$ are trivial, $\lpdl$ coincides with the usual Hecke category $D^b_m(\bi \bsl \gk / \bi)$ in \cite[Section 3]{BY} up to the difference between Kac-Moody group and the loop group.

%More generally, we can define $D^b_{(T \times T, \sL' \boxtimes \sL^{-1}),m}(\biu \bsl \bigcup_{w \in A} \gksw / \biu)$ for any finite set $A \subset \extw$ and denote it by $_{\sL'} D(\le A)_{\sL}$. For any two finite sets $A_1, A_2 \subset \extw$, we say $A_1 \le A_2$ if  $\bigcup_{w \in A_1} \gksw \subset \bigcup_{w \in A_2} \gksw$.
%
%Once we define the monodromic equivariant derived categories for finitely many strata, we could define the monodromic equivariant derived categories for the affine flag variety 
%
%\[\lpdl= \operatorname{2-}\uralim_{A \subset \extw}\pha _{\sL'} D(\le A)_{\sL}, \] 
%to be the inductive $2$-limit of $_{\sL'} D(\le A)_{\sL}$ with respect to the inductive system of pushforward functors of closed embedding $i_{A_1,A_2}:\biu \bsl \bigcup_{w \in A_1} \gksw / \biu \ra \biu \bsl \bigcup_{w \in A_2} \gksw / \biu$ for $A_1 \le A_2$. Objects in this category are called \textbf{positive sheaves}. 
 
\subsection{Construction of $\lpdlm$}
We study monodromic sheaves on $\bbbsw$.

Since $\bbbsw$ is a scheme of finite type quotient by an algebraic group \cite[Lemma 6]{F}, the category $D^b_m(\bbbsw)$ and its monodromic counterparts $_{\sL'} D(\le w)^-_{\sL}:=$ 
 $D^b_{(T \times T, \sL' \boxtimes \sL^{-1}),m}(\buusw)$ can be defined via the construction by Lusztig and Yun. 

For each pair $w' \le w$, the open embedding $j_{w,w'}: \buuswp \ra \buusw$ induces a functor of derived categories %$j_{w,w'}^*: D^b_{(T \times T, \sL \boxtimes \sL'),m}(\bium \bsl \gkswm / \biu) \ra D^b_{(T \times T, \sL \boxtimes \sL'),m}(\bium \bsl \gkswpm / \biu)$.
$j_{w,w'}^*: \pha_{\sL'} D(\le w)^-_{\sL} \ra  \pha _{\sL'} D(\le w')^-_{\sL}$. These functors form a projective system of triangulated categories. Let 
\[\lpdlm=   2-\ulalim_{\\w\in\extw} \pha_{\sL'} D(\le w)^-_{\sL}\]
%\[D^b_{(T \times T, \sL \boxtimes \sL'),m}(\bium \bsl \gk / \biu) =   2-\ulalim_{\\w\in\extw}D^b_{(T \times T, \sL \boxtimes \sL'),m}(\bium \bsl \gkswm / \biu)\]
be the projective $2$-limit of this system.

Concretely, objects in this category are $(\sF_w, \chi_{w,w'})$, where $\sF_w$ is a complex of sheaves on $\buusw$ with the prescribed monodromy and $\chi_{w,w'}$ is an isomorphism from $\sF_w$ to $j_{w,w'}^*\sF_{w'}$ such that $j_{w,w'}^*\chi_{w',w''} \circ \chi_{w,w'}= \chi_{w,w''}$ whenever $w \le w' \le w''$. The morphism between $(\sF_w, \chi_{w,w'})$ and $(\sG_w, \psi_{w,w'})$ is a family of morphisms $\phi_w: \sF_w \ra \sG_w$ such that $\psi_{w,w'} \circ \phi_w = \phi_{w'} \circ \chi_{w,w'}$ for any $w \le w'$. We call objects in this category  \textbf{negative sheaves}. 

\begin{rem}
	We do not claim that $\lpdlm$ is a triangulated category. Despite that the cohomology of a complex in the derived category $_{\sL'} D(\le w)^-_{\sL}$ is constructible, the complex itself can be very complicated. We do not know whether taking projective limit of such complexes is exact. Nevertheless, most of our computations involve only finitely many strata. Therefore, we only use the fact that $_{\sL'} D(\le w)^-_{\sL}$ is triangulated. 
\end{rem}

\subsection{Examples}

Let us mention some objects in the $2$-limit categories defined in the previous subsections. 

\subsubsection{Positive sheaves}

It is clear that $_{\sL'} D(w)_{\sL}$ is zero unless $\sL'= w\sL$. For $w \in \extw$ and its lifting $\wdo$, taking stalks at $\wdo$ induces an equivalence from  $_{w \sL} D(w)_{\sL}$ to $D^b_{(T \times T, w\sL \boxtimes \sL),m}(\wdo T) \simeq D^b_{\Gamma(w),m}(\wdo)$, where $\Gamma(w)=\{(wt,t)\} \subset T \times T$. Denote $C(\wdo)_\sL^+$ to be any sheaf corresponding to the constant sheaf $\ql \langle \ell(w) \rangle$ on $\wdo$ under this equivalence. Note that $C(\wdo)_\sL^+$ is unique up to a scalar. %For different choices of lifting $\wdo$ and $\wdo'$, $C(\wdo)_\sL^+$ is non-canonically isomorphic to  $C(\wdo')_\sL^+$.

From the local system $C(\wdo)_\sL^+$, we can construct other related sheaves in $_{\sL'} D(\le w )_{\sL}$ such as the standard sheaf $\Delta(\wdo)_\sL^+ :=i_{w,!}C(\wdo)_\sL^+$, the costandard sheaf $ \nabla(\wdo)_\sL^+:=i_{w,*}C(\wdo)_\sL^+$ and the intersection cohomology sheaf $\IC(\wdo)_\sL^+:=i_{w,!*}C(\wdo)_\sL^+$, where $i_w$ is the open embedding from $\biu \bsl \gkw / \biu$ to its closure $\biu \bsl \gksw / \biu$. The same notations are used to denote their images in $\lpdl$, and they are called \textbf{positive standard sheaf}, \textbf{positive costandard sheaf} and \textbf{positive IC sheaf} respectively. Let $\delta_\sL$ be the positive standard sheaf corresponding to the identity element $e$. Notice that $\omega\Delta(\wdo)_\sL^+$, $\omega\IC(\wdo)_\sL^+$ and $\omega\nabla(\wdo)_\sL^+$ are isomorphic for any lifting $\wdo$. We denote these isomorphism classes under $\omega$ by $\underline{\Delta}(w)_\sL^+$, $\underline{\IC}(w)_\sL^+$ and $\underline{\nabla}(w)_\sL^+$ respectively.

%Since closed embedding sends perverse sheaves to perverse sheaves, it makes sense to talk about perverse sheaves in $_{\sL'} D_{\sL}$ and perverse cohomology. The positive IC sheaf introduced in the last paragraph is perverse, but not necessarily for positive standard sheaf and positive costandard sheaf.

\subsubsection{Negative sheaves}

Denote $D^b_{(T \times T, \sL' \boxtimes \sL^{-1}),m}(\buuw)$ as $_{\sL'}D(w)^-_{\sL}$. We can define the constant sheaf $C(\wdo)_\sL^- \in \pha _{\sL'}D(w)^-_{\sL}$, whose stalk at $\wdo$ is $\ql \langle -\ell(w) \rangle$. Note that $_{\sL'}D(w)^-_{\sL}$ is zero unless $\sL'= w\sL$.  

Since $!$-extension, $*$-extension and IC-extension from open subsets are well-behaved with respect to open restriction, we can define the \textbf{negative standard sheaf} $\Delta(\wdo)_\sL^-:=j_{w,!}C(\wdo)_\sL^-$, the \textbf{negative costandard sheaf} $\nabla(\wdo)_\sL^-:=j_{w,*}C(\wdo)_\sL^-$, and the \textbf{negative IC sheaf} $\IC(\wdo)_\sL^-:=j_{w,!*}C(\wdo)_\sL^-$ in $\lpdlm$, where $j_w$ is the embedding from $\buuw$ to $\buu$. Similar to positive sheaves, we denote the isomorphism classes of images of above sheaves under $\omega$ by $\underline{\Delta}(w)_\sL^-$, $\underline{\IC}(w)_\sL^-$ and $\underline{\nabla}(w)_\sL^-$ respectively.

%To define standard objects $\Delta(\wdo)_\sL^-$, costandard objects $\nabla(\wdo)_\sL^-$ and IC objects $\IC(\wdo)_\sL^-$ in $\lpdlm$, we only need the fact that $!$-extension, $*$-extension and IC-extension from open subset are well-behaved with respect to open restriction. 

As an illustration, we construct $\IC(\wdo)_\sL^-$ explicitly and show that it is well-defined. For any $w'' \ge w' \ge w$, we have a chain of embeddings of stacks of finite type, $\buuw \xrightarrow{j_{w}^-} \buuswp \xrightarrow{j_{w',w''}}  \buuswpp$. Since $j_{w',w''}$ is an open embedding, $(j_{w',w''})^*((j_{w',w''} \circ j_{w}^-)_{!*}(C(\wdo)_\sL^-))$ is canonically isomorphic to $(j_{w}^-)_{!*}(C(\wdo)_\sL^-))$ via the restriction map. Therefore the projective system $(j_{w}^-)_{!*}(C(\wdo)_\sL^-)$ yields a well-defined complex supported on the connected component of $\buu$ containing the $w$-stratum. We define $\IC(\wdo)_\sL^-$ as the extension by zero of $(j_{w}^-)_{!*}(C(\wdo)_\sL^-)$ to $\buu$. 

\subsection{Basic operations}
The construction of sheaf hom $\RuHom$ in \cite[Section 2.8]{LY} can be easily generalized to affine case. By viewing negative sheaves as a projective system of complexes on $\gk/ \biu$ (see the discussion below Claim \ref{welldef}), $\RuHom(\sF, \sG)$ is also well-defined for $\sF \in \pha \lpdlm$, $\sG \in \pha \lpdl$. 

We define the renormalized Verdier dualities $\D^+$ and $\D^-$ on $\lpdl$ and $\lpdlm$ respectively as follows. 

For $\sF \in \pha \lpdl$, suppose its support is contained in a finite type subvariety $X \subset \gk / \biu$. Define $\D^+(\sF) :=\D_{X}(\sF)[2 \dim(T)]$, where $\D_{X}$ is the usual Verdier duality for the scheme $X$. It is independent of the choice of $X$. Moreover, it is constructible along $\biu$-orbit and its monodromy is reversed. Hence $\D^+$ is a contravariant functor from $\lpdl$ to $\lpmdlm$. We write $\lpdl^{\le 0}$ (resp. $\lpdl^{\ge 0}$) for the full subcategory of $\lpdl$ generated by $\sF \in \pha \lpdl$ such that $\sF [\dim(T)]$ is in $^p D^{ \le 0}(\gk / \biu)$ (resp. $^p D^{ \ge 0}(\gk / \biu)$). Since pushforward under closed embedding is perverse exact, it is justified to define $^p D^{ \le 0}(\gk / \biu)$ and $^p D^{ \ge 0}(\gk / \biu)$. 

For $\sF \in \pha \lpdlm$, define $\D^-(\sF):=\D_{\buu}(\sF)[-2 \dim(T)]$, where $\D_{\buu}$ is the standard Verdier duality on $\buu$.
Note that it is well-defined because open restriction commutes with the standard Verdier duality. 

Let $\lpdl^{-, \le 0}$ (resp. $\lpdl^{-, \ge 0}$) be the full subcategory of $\lpdlm$ generated by objects $\sF \in \pha \lpdlm$ such that $\sF [\dim(T)]$ is in $^pD^{ \le 0}(\buu)$ (resp. $^p D^{ \ge 0}(\buu)$). Since taking restriction to an open subset is perverse exact, it makes sense to define $^p D^{ \le 0}(\buu)$ and $^p D^{ \ge 0}(\buu)$. More precisely, $(\sF_w)_{w \in \extw} \in \pha ^p D^{ \le 0}(\buu)$ if and only if $\sF_w \in  \pha ^p D^{ \le 0}(\buusw)$. Similarly, $(\sF_w)_{w \in \extw} \in \pha ^p D^{ \ge 0}(\buu)$ if and only if $\sF_w \in  \pha ^p D^{ \ge 0}(\buusw)$.

The renormalized Verdier duality $\D^+$ (resp. $\D^-$) is an involutive anti-equivalence of categories between $\lpdl$ and $\lpmdlm$ (resp. $\lpdlm$ and $\lpmdlmm$).

Starting from this point, we define perverse sheaves to be the objects in 
$\lpdl^{\le 0} \cap \pha \lpdl^{\ge 0}$ or $\lpdl^{-, \le 0} \cap \pha \lpdl^{-, \ge 0}$. By our degree shift convention, $\D^+$ and $\D^-$ preserve perverse sheaves. Both positive and negative IC sheaves are perverse in this sense. 

%Suppose $\sF, \sG \in \pha \lpdlm$. Let $\sF_{\le w}$, $\sG_{\le w}$ be their restrictions in $ \lpdswlm$ respectively. A homomorphism between $\sF$ and $\sG$ is a compatible system of homomorphisms between $\sF_{\le w}$ and $\sG_{\le w}$. %Note that $\sF \ra \sG \ra \sH \xra{[1]}$ is a distinguished triangle if $\sF_{\le w} \ra \sG_{\le w} \ra \sH_{\le w} \xra{[1]}$ is a distinguished triangle in $ \lpdswlm$ for any $w \in \extw$. 

\section{Convolution product}\label{sect4}
In this section, we give the definition of convolution products between the sheaves of different types. 

\subsection{Convolution product of two positive sheaves}

The convolution product of the affine monodromic Hecke categories is defined in a similar way to \cite[Section 3]{LY} and \cite[Section 4]{MV} by taking both infinite-dimensional varieties and monodromies into account. 

%For any $w, w' \in \extw$, denote $M(w,w')$ to be the subset $\{x \in \extw: x \ge uv$ for any $u\le w$ and $v \le w'\}$. 

Let $\sF \in \pha_{\sL''} D(\le w )_{\sL'}$ and $\sG \in \pha_{\sL'} D(\le w')_{\sL}$. Choose $w'' \in M(w,w')$. Consider the following maps \[
\gksw / \biu \times \gkswp /  \biu \xla{p} \gksw /J_{w'} \times  \gkswp /  \biu \xra{q} \gksw /J_{w'} \times^{\biu / J_{w'}} \gkswp /  \biu . \]
\[\gksw /J_{w'} \times^{\biu / J_{w'}} \gkswp /  \biu \xra{\phi_{w,w'}^+}\gksw /J_{w'} \times^{\bi / J_{w'}} \gkswp /  \biu \xra{\mu_{w,w'}^+} \gkswpp / \biu \]

Here $p, q$ are the canonical quotient maps and $\mu_{w,w'}^+$ is the multiplication map. Since $J_{w'}$ is normal in $\bi$, there is a natural action of $T$ on $\gksw /J_{w'} \times^{\biu / J_{w'}} \gkswp /  \biu$ by setting $t(g_1, g_2)= (g_1t^{-1}, tg_2)$. Let $\phi_{w,w'}^+$ be the quotient map of this action. 

Since $p^*(\sF \boxtimes \sG)$ is $\biu / J_{w'}$-equivariant, we can find a sheaf $ \sF \widetilde{\boxtimes} \sG$ on $\gksw /J_{w'} \times^{\biu / J_{w'}} \gkswp /  \biu$ such that $p^*(\sF \boxtimes \sG) \cong q^*(\sF \widetilde{\boxtimes} \sG)$. Similarly, $\sF \widetilde{\boxtimes} \sG$ has a natural $T$-equivariant structure under the aforementioned $T$ action, hence it descends to a sheaf $\sF {\odot} \sG$ on  $\gksw /J_{w'} \times^{\bi / J_{w'}} \gkswp /  \biu$.

$(\mu_{w,w'}^+)_*(\sF \odot \sG)$ is $\biu$-equivariant and hence can be viewed as an object in $\lppdl$. One can check that it is independent of the choice of $w, w', w'', J_{w'}$. Observe that both leftmost and rightmost $T$-monodromic structures are preserved under the above operations. Hence we can define a functor $\conp: \pha_{\sL''} D(\le w)_{\sL'} \times  \pha_{\sL'} D(\le w')_{\sL} \ra \pha_{\sL''} D_{\sL}$ by sending $(\sF, \sG)$ to $\sF \star_+ \sG:=(\mu_{w,w'}^+)_*(\sF \odot \sG)$.

It is straightforward to generalize the above construction to $\sF$ and $\sG$ with general supports by replacing $\gksw / \biu$ with the finite union $\bigcup_{w \in A}\gksw / \biu$ for some finite subset $A \subset \extw$. Therefore we can define $\conp: \pha_{\sL''} D_{\sL'} \times  \pha_{\sL'} D_{\sL} \ra \pha_{\sL''} D_{\sL}$ using a similar formula.

Since all the above maps are of finite type, the base change theorem shows that $\conp$ is associative. It is also clear that $\delta_\sL$ is the monoidal unit for $\conp$. 

\begin{rem}\label{disconsup}
	As the referee pointed out, there may not exist a $w \in \extw$ such that $\sF \in \pha_{\sL'} D(\le w)_{\sL}$ for $\sF \in \pha \lpdl$. This is due to the fact that $(\extw, \le)$ is not a directed set. Nevertheless, $\sF$ can be written as a finite direct sum of $\sF_w$ for some $\sF_w \in \pha_{\sL'} D(\le w)_{\sL}$ and $w \in \extw$. All constructions and proofs in the paper can be reduced to the case when $\sF \in \pha_{\sL'} D(\le w)_{\sL}$ for some $w \in \extw$. A similar phenomenon occurs in other categories. To simplify our notation, we may focus on cases where the sheaf support lies in a connected component of the affine flag variety (and its analog) throughout the paper.
\end{rem}

\subsection{Convolution product of negative sheaves and positive sheaves}

We also need to define the convolution product of negative sheaves and positive sheaves. In the case of trivial monodromies, it is known as the Hecke modification in the literature (see \cite[Section 2]{G2}). Most properties of this special case can be generalized to monodromic cases without much difficulty. 

Let $\hec$ be the Hecke stack of $\buu$ with a modification at zero. That is, $\hec$ classifies the data of $(\mathcal{E}^1, F_{0}^1, F_{\infty}^1, \mathcal{E}^2, F_{0}^2, F_{\infty}^2, \beta)$, where $(\mathcal{E}^i, F_{0}^i, F_{\infty}^i) \in \buu$ for $i=1,2$ and $\beta$ is an isomorphism $\mathcal{E}^1|_{\p^1-\{0\}} \simeq \mathcal{E}^2|_{\p^1-\{0\}}$ sending $F_{\infty}^1$ to $F_{\infty}^2$.

Let $pr_1, pr_2$ be the two projections from $\hec$ to $\buu$. The fibers of $pr_1$ (or $pr_2$) are isomorphic to $\gk / \biu$. Hence we can view $\hec$ as a twisted product of $\buu$ and $\gk / \biu$. Let $\hecswa$ (resp. $\hecwa$) be the inverse image $pr_1^{-1}(\buusw)$ (resp. $pr_1^{-1}(\buuw)$). 

The stratification of $\gk / \biu$ induces a stratification of $\hec$. Denote the substack corresponding to $\gksw / \biu$ (resp. $\gkw / \biu$) as $\hecsw$ (resp. $\hecw$). $\hecsw$ can be realized as a twisted product of $\buu$ and $\gksw / \biu$. For any $w,w' \in \extw$, let $\hecswswpa$ be the intersection of $\hecsw$ and $\hecswpa$. Similarly, we can define $\hecwwpa$, $\hecwswpa$ and $\hecswwpa$.

Let $\hecb$ be the quotient of $\hec$ by the $T$-action on the $U$-reduction at zero of the first $G$-bundle. That is, $\hecb$ classifies the data of $(\mathcal{E}^1, F_{0'}^1, F_{\infty}^1, \mathcal{E}^2, F_{0}^2, F_{\infty}^2, \beta)$, where $F_{0'}^1$ is a $B$-reduction of $\mathcal{E}^1$ and the rest is same as $\hec$. Since $\hecswswpa$ is stable under the $T$-action, we can define $\hecbswswpa$ as the quotient of $\hecswswpa$ by $T$, which is a substack of $\hecb$. 

By abuse of notation, we denote the projections from $\hecb$ to $\bbu$ (resp. $\buu$) by $pr_1$ (resp. $pr_2$) respectively. We also use the same notation when restricted to a substack. 

The definition of convolution product of a negative sheaf $\sF$ and a positive sheaf $\sG$ is as follows.

%\begin{lemma}
%Let $\mu^-: \bium \bsl \gk \times \gk / \biu \ra \bium \bsl \gk / \biu$ be the multiplication map and $s$ be a simple reflection corresponds to root $\alpha$. Then $\mu(\bium \bsl \gkwm \times \gkss / \biu)$ is the union of $ \bium \bsl \gkwm / \biu$ and $ \bium \bsl \gkwsm / \biu$.
%\end{lemma}

%\begin{proof}
%$\gkss=\sdo \bi \cup U_{-\alpha}(a) \bi$??
%\end{proof}

%Let $\mu^-: \bium \bsl \gk / \biu \times \biu \bsl \gk / \biu \ra \bium \bsl \gk / \biu$ be the multiplication map. For any $w, w' \in \extw$, by above lemma?? in geometry, we know there exists $w'' \in \extw$ such that $pr_1((\mu^-)^{-1}(\bium \bsl \gkswm / \biu) \cap pr_2^{-1}(\biu \bsl \gkswp / \biu))$ is contained in $\bium \bsl \gkswppm / \biu$ where $pr_1, pr_2$ are the projections from $\bium \bsl \gk / \biu \times \biu \bsl \gk / \biu$ to $\bium \bsl \gk / \biu$ and $\biu \bsl \gk / \biu$ respectively.%By the construction of $w''$, it is obvious that $\mu^{-1}(\bium \bsl \gkswm / \biu) \subset \bium \bsl \gkswppm / \biu \times \biu \bsl \gkswp / \biu$. Call this subset $X_{w,w',w''}$.

Let $\sF=(\sF_w)_{w \in \extw} \in \pha_{\sL''}D^-_{\sL'}$, $\sG \in  \pha_{\sL'}D(\le w')_{\sL}$. For each $w \in \extw$, we choose $w'' \in M(w,w'^{-1})$ and $w''' \in M(w'',w')$. Consider the following maps

\[\buuswpp \xleftarrow{pr_1} \hecswpswppa \xrightarrow{q} \hecbswpswppa \xrightarrow{pr_2} \buuswppp.\]

Since $\sG$ is $\biu$-equivariant, we can form the twisted product $\sF_{w''} \widetilde{\boxtimes} \sG$ on $\hecswpswppa$. It descends to a sheaf $\sF_{w''} \odot \sG$ on $\hecbswpswppa$. Hence we obtain a sheaf $pr_{2*} (\sF_{w''} \odot \sG) \in \pha_{\sL''}D(\le w''')^-_{\sL}$. It is not hard to check that its restriction to $\buusw$ is independent to the choice of $w', w'', w'''$. They form a projective system when $w$ runs through $\extw$. From this, we obtain a negative sheaf and denote it as $\sF \star_- \sG \in \pha_{\sL''}D^-_{\sL}$. We can similarly define $\sF \star_- \sG$ when both $\sF$ and $\sG$ have disconnected support (see Remark \ref{disconsup}). It is clear that $\star_-$ is a functor from $_{\sL''}D^-_{\sL'} \times \pha _{\sL'}D_{\sL}$ to $_{\sL''}D^-_{\sL}$.

By the base change theorem, we have $\sF \conn (\sG \conp \sH) \simeq (\sF \conn \sG) \conn \sH$ for $\sF \in \pha\lpppdlppm$, $\sG \in \pha\lppdlp$, $\sH \in \pha\lpdl$.

%$\sF \in {\ulad}^b_{(T \times T, \sL \boxtimes \sL'),m}(\gk / \biu)$ to ${\ulad}^b_{(T \times T, \sL \boxtimes \sL'),m}(\biu \bsl \gk / \biu)$. 

%\begin{tikzcd}[row sep=large]
%& \biu \bsl \gk \times^ \biu \gk / \biu \arrow[dl, "\pi_1^+"'] \arrow[dr, "\pi_2^+"] \arrow[r, "\phi^+"] & \biu \bsl \gk \times^ \bi \gk / \biu \arrow[r, "\mu^+"] &\biu \bsl \gk / \biu\\
%\biu \bsl \gk / \biu   & & \biu \bsl \gk / \biu
%\end{tikzcd}

%\begin{tikzcd}%[column sep=small]
%& \bium \bsl \gk \times^ \biu \gk / \biu \arrow[dl, "\pi_1^-"'] \arrow[dr, "\pi_2^-"] \arrow[r, "\phi^-"] & \bium \bsl \gk \times^ \bi \gk / \biu \arrow[r, "\mu^-"] &\bium \bsl \gk / \biu\\
%\bium \bsl \gk / \biu   & & \biu \bsl \gk / \biu
%\end{tikzcd}

%For $\sF \in D^b_{(T \times T, \sL \boxtimes \sL'),m}(\bium \bsl \gk / \biu)$, $\sG \in D^b_{(T \times T, \sL' \boxtimes \sL''),m}(\biu \bsl \gk / \biu)$

\subsection{Definition of $\extw_{\sL}$ and $\extw_{\sL}^\circ$}\label{endodef}

Lusztig defined $\extw_{\sL}^\circ$ attached to any $\sL\in \Ch(T)$ in \cite[Section 2]{L}.\footnote{Lusztig used $W^x$ instead of $\extw_{\sL}^\circ$. Here we follow the notation in \cite{LY}.} Since this group plays an important role in this paper, we recall its construction. First, we construct the root system attached to $\sL\in \Ch(T)$. 

Let $Q^\vee$ be the coroot lattice of $G$, $\Phi ^\vee_\sL$ is the subset of coroots in $\Phi ^\vee(G, T)$ such that the restriction of $\sL$ to those coroots are trivial. Let $\Phi_\sL$ be the corresponding subset of roots. We form the endoscopic group $H=H_{\sL}$ using the root datum $(\X^*(T), \X_*(T), \Phi_\sL, \Phi^\vee_\sL)$. Note that $H$ and $G$ share the same maximal torus, but $H$ is not a subgroup of $G$ in general. $W_\sL^\circ$ is the Weyl group of this root system. It is a normal subgroup of $W_\sL$, the stabilizer of $\sL$ in the finite Weyl group $W$ of $G$.

To define (real) affine coroots of $G$, we shall fix a non-trivial central extension $\egk$ of the loop group $\gk$ (see Section \ref{cent} and \cite[Section 6.2.2]{Y1}). The adjoint map $\Ad: G \ra GL(\lig)$ provides us a natural choice of a central extension. The rotation torus $\gmr$ naturally acts on $\egk$. The semidirect product $\gmr \ltimes \egk$ is an affine Kac-Moody group of $G$ and $\gmr \ltimes \eT$ is a maximal torus, where $\eT$ is the central extension of $T$ in $\egk$. 

The affine roots are characters of the maximal torus $\gmr \ltimes \eT$ acting on the Lie algebra of $\gmr \ltimes \egk$. We call an affine root to be real if the corresponding eigenspace is not contained in the loop group of torus. Each real affine root $\alpha$ is associated with a one-dimensional unipotent group $U_{\alpha}$ in $\gmr \ltimes \egk$. There is a central isogeny $\phi_{\alpha}$ from $SL_2$ to $G_{\alpha}$, where $G_{\alpha}$ is the subgroup of $\gmr \ltimes \egk$ generated by $U_{\alpha}$ and $U_{-\alpha}$. The real affine coroot $\alpha^\vee$ is obtained by restricting $\phi_{\alpha}$ to the standard torus $\gm$ of $SL_2$. One can verify that $\alpha^\vee$ is inside the subspace $\X_*(\eT) \subset \X_*(\gmr \ltimes \eT)$. Moreover, each $\alpha^\vee$ gives rise to a reflection $s_{\alpha^\vee}$ in $\extw$. 	We denote $\widetilde{\Phi} ^\vee$ to be set of all real affine coroots. 

The character sheaf $\sL \in \Ch(T)$ can be considered as a character sheaf on $\eT$. We also denote it by $\sL$. We define $\widetilde{\Phi} ^\vee_\sL$ to be the subset of real affine coroots $\{\alpha^\vee \in \widetilde{\Phi} ^\vee: \alpha^{\vee*}\sL$ is trivial$\}$. Note that $\alpha^\vee$ is in $\widetilde{\Phi} ^\vee_\sL$ if and only if its finite part is in $\Phi^\vee_\sL$. In particular, $\widetilde{\Phi} ^\vee_\sL$ is independent of the choice of the central extension $\egk$. 

Let $\ewlo$ be the subgroup of $\extw$ generated by the reflections corresponding to the affine coroots in $\widetilde{\Phi} ^\vee_\sL$. The results in \cite[Chapter V, \textsection 3]{Bo} show that $\ewlo$ is a Coxeter group. On the other hand, the affine Weyl group of $H$, $\ewho=W_H \ltimes Q_H^\vee$, is a subgroup of $\widetilde{W}=W \ltimes \X_*(T)$. By the construction, we know that $\ewlo=\ewho$ and they are generated by the same set of reflections.

It is not hard to prove that $\extw_{\sL}^\circ$ is a normal subgroup in $\extw_{\sL}$, the stabilizer of $\sL$ in $\extw$. Here the action of $\extw$ on $\Ch(T)$ is defined to factor through $W$. Note that $\extw_{\sL}$ is isomorphic to $W_\sL \ltimes \X_*(T)$. Henceforth we fix $\lio$, a $W$-orbit of $\Ch(T)$. 
\begin{defn}
Let $\sL, \sL' \in \lio$. Define $_{\sL'} \extw_\sL :=\{ w \in \extw: w(\sL)=\sL'\}= \pha _{\sL'} W_\sL \ltimes \X_*(T)$ and define $_{\sL'} \ulextw_\sL := \pha _{\sL'} \extw_\sL / \extw_{\sL}^\circ = \extw_{\sL'}^\circ \bsl \pha _{\sL'} \extw_\sL$. Elements in this coset space are called the \textbf{blocks} of $\lio$. 
\end{defn}

\begin{exmp}\label{example}
Let $G=Sp_{2n}$. The roots of $G$ are $\{\pm L_i \pm L_j\} \cup \{ \pm 2L_i \}$ and the coroots are $\{\pm e_i \pm e_j\} \cup \{ \pm e_i \}$, where $L_i$'s are orthonormal and $e_i$ is the dual basis. The cocharacter lattice is spanned by the coroots and the character lattice is dual to the cocharacter lattice. There exists an order $2$ character sheaf $\sL$, which is fixed by the Weyl group of $G$, with $\Phi^\vee_\sL $ equal to the set of all long coroots $\{\pm e_i \pm e_j\}$. Hence $\Phi_\sL $ is the set of all short roots  $\{\pm L_i \pm L_j\}$. In this case, the endoscopic group $H_\sL$ is $SO_{2n}$. Moreover $\extw_{\sL}^\circ$ is an index four normal subgroup of $\extw_{\sL} = \extw$, that is there are four blocks of $\lio$. 
\end{exmp}

\section{Basic properties of Hecke categories}\label{sect5}
Most results in Sections $3$ and $4$ of \cite{LY} remain true for positive sheaves by replacing the groups with their affine analogs. Since the arguments are similar to that in \cite{LY}, we do not reproduce the arguments for positive sheaves and only give a sketch of the proof for negative sheaves. Let us first introduce the analogs of the lemmas used in \cite{LY} for our cases. 

\begin{lem}\label{Verdier}
For $\sF \in \pha \lppdlp$, $\sG \in \pha \lppdlpm$ and $\sH \in \pha \lpdl$,   there are canonical isomorphisms 
\[ \D^+(\sF \conp \sH) \cong \D^+ (\sF) \conp \D^+(\sH), \qquad \D^-(\sG \conn \sH) \cong \D^- (\sG) \conn \D^+(\sH).\] 
\end{lem}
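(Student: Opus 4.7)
The plan is to follow the template of Lemma 4.8 in \cite{LY}, adapted to the affine monodromic setting. Both convolution products are defined as proper pushforwards of a twisted external tensor product, so the two ingredients needed are: (i) standard Verdier duality commutes with pushforward along proper maps, and (ii) the Künneth-type identity $\D(\sF \boxtimes \sH) \cong \D(\sF) \boxtimes \D(\sH)$ on schemes, which descends along the smooth quotient maps entering the definition of $\odot$, up to predictable dimension shifts. The renormalizations $[2\dim T]$ and $[-2\dim T]$ in $\D^+$ and $\D^-$ are designed precisely so that these shifts cancel on both sides of the asserted isomorphism; in particular, the monodromy-inversion automatic under $\D$ matches the target categories $\lpmdlppm$ and $\lpmdlmm$.

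For the first isomorphism, I would unravel $\sF \conp \sH = (\mu^+_{w,w'})_*(\sF \odot \sH)$. The map $\mu^+_{w,w'}$ is proper, being the restriction of the ind-proper multiplication on the affine flag variety to the closed sub-ind-scheme indexed by $w'' \in M(w,w')$, so $\D \circ (\mu^+_{w,w'})_* \cong (\mu^+_{w,w'})_* \circ \D$. It then suffices to produce an isomorphism $\D(\sF \odot \sH) \cong \D(\sF) \odot \D(\sH)$ of the correct degree. This is obtained by descending the standard Künneth duality along the maps $p$, $q$, and $\phi^+_{w,w'}$ used to build $\odot$; the only nontrivial shift comes from the $T$-quotient $\phi^+_{w,w'}$, and it is exactly absorbed by the $[2\dim T]$ appearing in the definition of $\D^+$ on each factor. $T$-equivariance of every step ensures that the resulting isomorphism respects the monodromic structure.

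For the second isomorphism, replace the affine flag picture by the Hecke diagram
\[ \buuswpp \xleftarrow{pr_1} \hecswpswppa \xrightarrow{q} \hecbswpswppa \xrightarrow{pr_2} \buuswppp. \]
The map $pr_2$ is proper on each piece $\hecbswpswppa$, since its fibers are modeled on the proper sub-ind-scheme $\gksw/\biu$ sitting in a fixed stratum; hence $\D \circ pr_{2*} \cong pr_{2*} \circ \D$. The same Künneth-plus-descent argument then gives $\D(\sG_{w''} \odot \sH) \cong \D(\sG_{w''}) \odot \D(\sH)$. The asymmetric appearance of $\D^-$ on the negative factor $\sG$ and $\D^+$ on the positive factor $\sH$ reflects exactly that $\sG$ lives on the $\buu$-side, contributing the $[-2\dim T]$ of $\D^-$, while $\sH$ lives on the loop-group side, contributing the $[+2\dim T]$ of $\D^+$; these shifts combine to match the $[-2\dim T]$ of $\D^-$ applied to the result $\sG \conn \sH \in \pha \lppdlm$. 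Compatibility with the projective limit defining $\sG \conn \sH$ follows because the open restrictions $j_{w,w'}^*$ commute with $\D^-$.

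The main obstacle is the careful bookkeeping of degree shifts and Tate twists across the three layers of quotients (by the normal subgroup $J_{w'}$, by the finite-dimensional quotient $\biu/J_{w'}$, and by the torus $T$). Once the conventions are set, the proof reduces to standard formal manipulations in the six-functor formalism: proper base change, Künneth duality, and smooth descent. The particular renormalizations $\pm 2\dim T$ in $\D^\pm$ are forced on us precisely by this shift computation, which is why the asymmetric pairing $(\D^-, \D^+)$ in the second statement is exactly right.
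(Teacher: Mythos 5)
Your proof is correct and takes essentially the same approach as the paper, whose entire proof is the single observation that the relevant convolution pushforward maps are proper; you spell out the K\"unneth, descent, and shift bookkeeping that the paper leaves implicit. One small imprecision (which also appears in the paper's one-line proof): $\gksw/\biu$ is a $T$-torsor over the proper Schubert variety $\gksw/\bi$ and hence is not itself proper; the properness of $\mu_{w,w'}^+$ and of $pr_2$ really comes from their fibers being contained in $\gksw/\bi$ after the $T$-quotients built into $\odot$ and $\hecb$, so your phrase ``modeled on the proper sub-ind-scheme $\gksw/\biu$'' should read $\gksw/\bi$.
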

\begin{proof}
 Note that $\gksw / \bi$ is proper. 
\end{proof}

\begin{lem}
\begin{enumerate}[{$(1)$}]
\item If $\sF \in  \pha\pha \lppdlp$, $\sG \in \pha \lppdlpm$ and $\sH \in \pha \lpdl$ are semisimple complexes, that is their images under $\omega$ are direct sums of shifted simple non-mixed perverse sheaves, then $\sF \conp \sH$ and $\sG \conn \sH$ are also semisimple.
\item  If $\sF \in  \pha \lppdlp$, $\sG \in \pha \lppdlpm$ and $\sH \in \pha \lpdl$ are pure of weight zero, then so are $\sF \conp \sH$ and $\sG \conn \sH$.
\end{enumerate}
\end{lem}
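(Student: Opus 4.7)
The plan is to apply the Beilinson--Bernstein--Deligne decomposition theorem (for part (1)) and Deligne's Weil II theorem on weights under proper pushforward (for part (2)) to the morphism appearing in the construction of each convolution product. In both cases, the argument reduces to two inputs: (a) the twisted product $\sF \odot \sH$ (resp.\ $\sG_{w''} \odot \sH$) on the convolution stack is semisimple (resp.\ pure of weight zero), and (b) the final pushforward morphism is proper.

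For the positive--positive convolution $\sF \conp \sH = (\mu_{w,w'}^+)_*(\sF \odot \sH)$, input (a) follows from the K\"unneth formula and smooth descent: the external product $\sF \boxtimes \sH$ on $\gksw/\biu \times \gkswp/\biu$ is semisimple (resp.\ pure of weight zero) by K\"unneth, and the maps $p$, $q$, $\phi_{w,w'}^+$ in Section \ref{positive} are torsors under connected groups ($\biu/J_{w'}$ and $T$), along which smooth pullback and descent preserve both properties (the renormalized shift $\langle \cdot\rangle$ is weight-neutral). Input (b), that $\mu_{w,w'}^+$ is proper once $w''$ is chosen large enough so that the image lies in $\gkswpp/\biu$, is a standard fact in the theory of convolutions of affine Schubert varieties: the source is a finite-type scheme over the projective base $\gksw/\bi$ with fibers a closed subvariety of the projective variety $\gkswp/\bi$ twisted by $T$, and the map lands in a finite-type closed stratum. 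Hence BBD gives (1) and Deligne's theorem gives (2) in the $\conp$ case.

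For the mixed convolution $\sG \conn \sH$, defined via $pr_{2*}(\sG_{w''} \odot \sH)$ on the diagram $\buuswpp \xleftarrow{pr_1} \hecswpswppa \xrightarrow{q} \hecbswpswppa \xrightarrow{pr_2} \buuswppp$, input (a) is established by exactly the same descent argument applied to this diagram. Input (b), the properness of $pr_2$, follows because its fibers are closed subvarieties of $\gkswp/\bi$ (the modification direction being bounded by $w'$), which is projective. Finally, semisimplicity (resp.\ weight-zero purity) is preserved coherently across the projective system defining $\sG \conn \sH$: each finite-type term $(\sG \conn \sH)_w$ is obtained by open restriction from a pushforward already known to be semisimple and pure, and open restriction preserves both properties.

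The principal technical obstacle is the careful bookkeeping in the infinite-dimensional setting: because $\biu$ is pro-unipotent of infinite codimension in $\gk$, the twisted products and quotients must be consistently realized on finite-type models via the normal subgroups $J_{w'}$ before the classical decomposition and purity theorems can be invoked. Once this reduction is made, the argument is a direct transcription of the corresponding finite-dimensional statement in \cite{LY}, in keeping with the paper's general strategy of reducing loop group arguments to their reductive counterparts.
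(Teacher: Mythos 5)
Your proof is correct and is an expanded version of the paper's (extremely terse) argument: the paper simply cites the previous lemma, whose one-line proof invokes properness of the relevant pushforward, and implicitly applies the decomposition theorem and Weil II together with the compatibility of semisimplicity and purity with smooth descent along the finite-type quotient models. Your write-up makes explicit exactly these ingredients (K\"unneth for the external product, descent along the $\biu/J_{w'}$ and $T$-torsors $p$, $q$, $\phi^+_{w,w'}$, properness of $\mu^+_{w,w'}$ and $pr_2$, and the reduction to finite-type models), so the two arguments coincide in substance; one small imprecision is in the properness justification, where it is cleaner to note directly that the fibers of $\mu^+_{w,w'}$ are closed in the projective variety $\gksw/\bi$ (the first factor determines the second up to $\biu$), rather than phrasing it in terms of the $T$-twisted $\gkswp/\bi$.
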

\begin{proof}
The same reason as the previous lemma. 
\end{proof}

\begin{lem}\label{stdstd}
Let $w_1, w_2, w_3, w_4 \in \extw$ such that $\ell(w_1)+\ell(w_2)= \ell(w_1w_2)$ and $\ell(w_3)-\ell(w_4)=\ell(w_3w_4)$.

Then we have canonical isomorphisms 
\[ \Delta (\wdo_1)^+_{w_2\sL} \conp \Delta (\wdo_2)^+_{\sL} \cong \Delta (\wdo_1\wdo_2)^+_{\sL},  \qquad  \nabla (\wdo_1)^+_{w_2\sL} \conp \nabla (\wdo_2)^+_{\sL} \cong \nabla (\wdo_1\wdo_2)^+_{\sL},\] 
\[\Delta (\wdo_3)^-_{w_4\sL} \conn \Delta (\wdo_4)^+_{\sL} \cong \Delta (\wdo_3\wdo_4)^-_{\sL},  \qquad  \nabla (\wdo_3)^-_{w_4\sL} \conn \nabla (\wdo_4)^+_{\sL} \cong \nabla (\wdo_3\wdo_4)^-_{\sL}.\]

\end{lem}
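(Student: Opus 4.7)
The plan is to prove all four identities by restricting to the relevant open orbits, where the length hypotheses force the convolution map to be an isomorphism of the correct dimension, and then extending to the closure. Using Lemma 5.1 (compatibility of $\star_+$ and $\star_-$ with the renormalized dualities $\D^+$ and $\D^-$) together with the standard identifications $\D^+\Delta(\wdo)^+_\sL \cong \nabla(\wdo)^+_{\sL^{-1}}$ and $\D^-\Delta(\wdo)^-_\sL \cong \nabla(\wdo)^-_{\sL^{-1}}$ (which follow from the renormalization conventions and the fact that $\D$ interchanges $!$-extension and $*$-extension while reversing monodromy), the two costandard identities follow from the two standard identities by dualizing both sides and replacing $\sL$ by $\sL^{-1}$. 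It therefore suffices to prove the two $\Delta$-convolution isomorphisms.

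For the positive case, I work on the twisted product $\gk_{,w_1}/J_{w_2} \times^{\bi/J_{w_2}} \gk_{,w_2}/\biu$ appearing in the definition of $\star_+$. Under $\ell(w_1)+\ell(w_2) = \ell(w_1w_2)$, the standard Bruhat-decomposition argument for the affine flag variety shows that the restriction of the multiplication map $\mu^+_{w_1,w_2}$ to this open substack is an isomorphism onto $\gk_{,w_1 w_2}/\biu$. Since $\Delta(\wdo_i)^+$ is the $!$-extension of $C(\wdo_i)^+$ from $\gk_{,w_i}/\biu$, base change along the open inclusion identifies $\Delta(\wdo_1)^+ \odot \Delta(\wdo_2)^+$ with the $!$-extension of the external product from the open twisted product, and $(\mu^+_{w_1,w_2})_*$ produces $\Delta(\wdo_1 \wdo_2)^+_\sL$. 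The monodromic identification uses that $\wdo_1 \wdo_2 \in N_\gk(\bold{T_K})(\fq)$ is a valid lift of $w_1 w_2$, and that the rigidifications of $C(\wdo_1)^+$ and $C(\wdo_2)^+$ compose under the $T$-equivariant structure of the twisted product to the rigidification of $C(\wdo_1 \wdo_2)^+$.

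For the negative case, I use the Hecke-stack description of $\star_-$ from Section 4.2, so the external product $\Delta(\wdo_3)^-_{w_4\sL} \odot \Delta(\wdo_4)^+_\sL$ is supported on the locally closed substack of $\hecb$ corresponding to a bundle of type $w_3$ modified by a $w_4$-type modification. The central geometric claim is that, under $\ell(w_3) - \ell(w_4) = \ell(w_3 w_4)$, the projection $pr_2$ restricts to an isomorphism from this substack onto $\buu^{w_3 w_4}$. The dimension count is automatic: the substack has dimension $\dim \buu - \ell(w_3) + \ell(w_4) = \dim \buu - \ell(w_3 w_4) = \dim \buu^{w_3 w_4}$. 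The isomorphism itself is proved by induction on $\ell(w_4)$: the base case $w_4 = s$, a simple affine reflection, is a direct $SL_2$-level computation using the parahoric $P_s$ and the moduli stack $\bsu$ (a bundle of type $w_3$ modified generically by an $s$-type modification produces a unique bundle of type $w_3 s$ when $\ell(w_3 s) = \ell(w_3) - 1$), and the inductive step follows from the already-established positive case of this lemma together with the associativity isomorphism $\sF \star_- (\sG \star_+ \sH) \cong (\sF \star_- \sG) \star_+ \sH$. Once the isomorphism is established, $pr_{2,*} = pr_{2,!}$ on this stratum, so the convolution is the $!$-extension of the correct rank-one local system on $\buu^{w_3 w_4}$, namely $\Delta(\wdo_3 \wdo_4)^-_\sL$, the monodromy matching via the same rigidification argument as in the positive case.

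The main obstacle I anticipate is the geometric isomorphism claim in the negative case: that $pr_2$ restricted to the relevant substack is an isomorphism, not merely a dominant map. The dimension count excludes positive-dimensional fibers, but one must separately verify that $pr_1 \times pr_2$ has trivial fibers over $\buu^{w_3} \times \buu^{w_3 w_4}$, equivalently that the modification $\beta$ is uniquely determined by the source and target bundles in this stratum. The length-subtraction condition translates into the Bruhat-type identity $\bium \wdo_3 \biu \cdot \biu \wdo_4 \biu = \bium \wdo_{3}\wdo_4 \biu$ inside $\gk$, which is the key technical input; this is where the reduction to simple affine reflections via $P_s$ is essential. Tracking the monodromic twists $\sL$ and $w_4\sL$ through the Hecke modification is routine once the geometric isomorphism is in place, using the standard rigidification formalism.
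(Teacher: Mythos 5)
Your proof is correct and identifies precisely the key geometric input the paper uses, namely that $pr_2\colon \hecbwf^{w_3} \to \buu^{w_3 w_4}$ is an isomorphism when $\ell(w_3)-\ell(w_4)=\ell(w_3w_4)$, filling in the details the paper's one-line proof omits (the duality reduction for the costandard identities, the Bruhat-decomposition argument for the positive case, and the induction on $\ell(w_4)$ via the parahoric $P_s$ and the associativity $\sF \conn (\sG \conp \sH)\cong (\sF\conn\sG)\conp\sH$). This is the same approach as the paper, carried out more explicitly.
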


\begin{proof}
It follows from the fact that $\hecbwf^{w_3} \ra \buu^{w_3w_4}$ is a biregular morphism if $\ell(w_3)-\ell(w_4)=\ell(w_3w_4)$. To prove the fact, we write $w_4$ in a reduced word expression $s_{i_1}s_{i_2}\cdots s_{i_n}$. By induction, it suffices to show that $\hecbs^{w} \ra \buu^{ws}$ is a biregular morphism if $ws<w$. It follows from Birkhoff decomposition (c.f. \cite[Section 3]{F}).
\end{proof}

\begin{lem}\label{inverse}
For $w \in \extw$, there are isomorphisms 
\[ \Delta (\wdo^{-1})^+_{w\sL} \conp \nabla (\wdo)^+_{\sL} \cong \Delta (e)^+_{\sL} \cong  \nabla (\wdo^{-1})^+_{w\sL} \conp \Delta (\wdo)^+_{\sL}.\] 
When $w$ is a simple reflection of $\extw$, these isomorphisms can be chosen canonically. Therefore the functor $(-) \conp \Delta (\wdo)^+_{\sL}$ is an equivalence from $\lpdwl$ to $ \lpdl$ with inverse given by $(-) \conp  \nabla (\wdo^{-1})^+_{w\sL}$. Similarly, the functor $(-) \conn \Delta (\wdo)^+_{\sL}$ is an equivalence from $\lpdwlm$ to $ \lpdlm$ with the inverse given by $(-) \conn  \nabla (\wdo^{-1})^+_{w\sL}$.
\end{lem}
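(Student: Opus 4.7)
The plan is to reduce, via Lemma \ref{stdstd} and associativity of $\conp$, to the case of a simple reflection $w = s$, where the statement can be checked by a direct computation on a parahoric $\p^1$-fiber. Concretely, I would fix a reduced expression $w = s_1 \cdots s_n$ with compatible liftings so that both $\wdo = \sdo_1 \cdots \sdo_n$ and $\wdo^{-1} = \sdo_n^{-1} \cdots \sdo_1^{-1}$ are length-additive. Iterating Lemma \ref{stdstd} then expresses $\Delta(\wdo^{-1})^+_{w\sL}$ as an $n$-fold $\conp$-product of the $\Delta(\sdo_i^{-1})^+$ and $\nabla(\wdo)^+_\sL$ as an $n$-fold $\conp$-product of the $\nabla(\sdo_i)^+$. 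So once the base case $\Delta(\sdo^{-1})^+_{s\sL} \conp \nabla(\sdo)^+_\sL \cong \Delta(e)^+_\sL$ is known for every simple reflection $s$, together with the unit property of $\Delta(e)^+_\sL$, the first isomorphism of the lemma follows by telescoping the innermost pair and inducting on $\ell(w)$.

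For the base case, I would compute the convolution directly inside the parahoric $P_s$. The twisted product $\bi \sdo^{-1} \bi \times^{\bius} \bi \sdo \bi$ maps under multiplication into $\bi \subset P_s$, and running through the convolution diagram of Section \ref{sect4} reduces the claim to a stalk calculation on the $\p^1$-fiber of $P_s/\bius$. On this $\p^1$, the sheaf $\Delta(\sdo^{-1})^+ \widetilde{\boxtimes} \nabla(\sdo)^+$ is built from a $j_!$ and a $j_*$ extension of the appropriate character sheaf, and the usual distinguished triangle $j_!\ql_{\A^1} \to \ql_{\p^1} \to i_*\ql_{\{\infty\}}$, tensored with the monodromic local system pulled back from $T$, forces all nonzero cohomology of the multiplication pushforward to concentrate at the basepoint $e$ in degree zero, yielding $\Delta(e)^+_\sL$. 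The companion isomorphism $\nabla(\sdo^{-1})^+_{s\sL} \conp \Delta(\sdo)^+_\sL \cong \Delta(e)^+_\sL$ follows from the symmetric computation, interchanging the roles of $j_!$ and $j_*$.

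With both displayed isomorphisms in hand, replacing the pair $(w,\sL)$ by $(w^{-1}, w\sL)$ yields all four one-sided cancellations. Combined with the fact that $\Delta(e)^+_\sL$ is the unit for both $\conp$ and $\conn$ (a formal consequence of the definitions, since the identity stratum contributes no Hecke modification), associativity then gives, for any $\sF \in \lpdwl$ and $\sG \in \lpdl$,
\[
\sF \conp \Delta(\wdo)^+_\sL \conp \nabla(\wdo^{-1})^+_{w\sL} \cong \sF, \qquad \sG \conp \nabla(\wdo^{-1})^+_{w\sL} \conp \Delta(\wdo)^+_\sL \cong \sG,
\]
proving the positive equivalence; the identical argument with the outermost $\conp$ replaced by $\conn$ yields the negative equivalence.

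The main obstacle I anticipate is the monodromic bookkeeping in the base case: on the $\p^1$-fiber inside $P_s/\bius$ both characters $\sL$ and $s\sL$ are at play, and one must verify that under the rigidifications fixed in Section \ref{sect3} the twisted product $\Delta(\sdo^{-1})^+ \widetilde{\boxtimes} \nabla(\sdo)^+$ carries exactly the compatibility needed for the multiplication pushforward to produce the rigidified unit $\Delta(e)^+_\sL$, rather than a nontrivial scalar twist of it. Everything else is a formal telescoping argument.
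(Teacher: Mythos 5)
The paper's own proof is a one-line citation of Lemma 3.5 in \cite{LY}, and your proposal reconstructs essentially that argument: reduce via Lemma \ref{stdstd} and telescoping to a simple reflection, then verify the base case by computing the pushforward along the multiplication map over the parahoric $\p^1$. This is the standard route and it works in the loop group setting exactly as in the reductive case, so the overall approach matches the intended proof.

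A few of the details as you have written them are imprecise, however. First, the multiplication image of $\bi\sdo^{-1}\bi \times^{\bius} \bi\sdo\bi$ is all of $P_s$, not just $\bi$; what you actually need to prove is that the \emph{pushforward} $\mu_*(\Delta(\sdo^{-1})^+ \odot \nabla(\sdo)^+)$ is supported on the identity stratum, i.e.\ that the compactly supported cohomology of the fiber over any point of $\gkss/\biu$ vanishes while over $\gkse/\biu$ it is a single $\ql$ in degree zero. Second, the relevant $\p^1$ is $P_s/\bi$ (equivalently $\overline{\gkss}/\bi$), not a fiber of $P_s/\bius$; it is worth being careful here because $\bius\subset\biu$ in this paper's conventions and $P_s/\bius$ is the Levi, not a $\p^1$-bundle. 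Third, the triangle $j_!\ql_{\A^1}\to\ql_{\p^1}\to i_*\ql_{\{\infty\}}$ you invoke is only directly applicable when $s\in\ewlo$; when $s\notin\ewlo$ one has $\Delta(\sdo)\cong\nabla(\sdo)\cong\IC(\sdo)$ and the fiber computation must instead exploit that the relevant rank-one local system has nontrivial monodromy around the boundary point, so its $H^*_c$ vanishes on the open stratum for a different reason. None of these affects the validity of the strategy — the monodromic bookkeeping you flag is exactly what LY's Lemma 3.5 handles, and the canonicity does come out once the rigidifications are tracked — but they would need to be corrected in a full write-up.
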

\begin{proof}
See Lemma $3.5$ in \cite{LY}.
\end{proof}

\begin{lem} \label{equiv}
Let $s \in \extw$ be a simple reflection and $s \notin \ewlo$. \begin{enumerate}[{$(1)$}]
\item The natural maps $\Delta (\sdo)^+_{\sL} \ra \IC (\sdo)^+_{\sL} \ra \nabla (\sdo)^+_{\sL}$ are isomorphisms. 
\item The functor $(-) \conp \IC (\sdo)^+_{\sL}: \pha \lpdsl \ra  \pha \lpdl$ is an equivalence and the inverse is given by  $(-) \conp \IC (\sdo^{-1})^+_{\sL}$.
\item The functor $(-) \conn \IC (\sdo)^+_{\sL}:\pha  \lpdslm \ra  \pha \lpdlm$ is an equivalence and the inverse is given by  $(-) \conn \IC (\sdo^{-1})^+_{\sL}$.
\item For any $w \in \pha _{\sL'} \extw _{s\sL}$, the equivalence $(-) \conp \IC (\sdo)^+_{\sL}$ sends $\Delta (\wdo)^+_{s\sL}, \IC (\wdo)^+_{s\sL}$, $\nabla (\wdo)^+_{s\sL}$ to the isomorphism classes $\Delta (\wdo\sdo)^+_{\sL}, \IC (\wdo\sdo)^+_{\sL}, \nabla (\wdo\sdo)^+_{\sL}$ respectively. 
\item For any $w \in \pha _{\sL'} \extw _{s\sL}$, the equivalence $(-) \conn \IC (\sdo)^+_{\sL}$ sends $\Delta (\wdo)^-_{s\sL}, \IC (\wdo)^-_{s\sL}$, $\nabla (\wdo)^-_{s\sL}$ to the isomorphism classes $\Delta (\wdo\sdo)^-_{\sL}, \IC (\wdo\sdo)^-_{\sL}, \nabla (\wdo\sdo)^-_{\sL}$ respectively. 

\end{enumerate}
\end{lem}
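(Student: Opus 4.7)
The plan is to adapt the proof of Lemma 4.3 in \cite{LY} to the loop group setting. The key geometric observation is that the closed subspace $\biu\bsl\gkss/\biu$ is already finite-dimensional: writing $L_s$ for the Levi of the standard parahoric $P_s\supset\bi$ associated to $s$ and $B_s=B\cap L_s$, the decomposition $\biu=U_{B_s}\cdot U_{P_s}$ (with $U_{P_s}\subset\biu$ the pro-unipotent radical of $P_s$) gives a canonical identification $\biu\bsl\gkss/\biu\simeq U_{B_s}\bsl L_s/U_{B_s}$. Under this identification, the monodromic category $_{s\sL}D(\le s)_\sL$ becomes the monodromic Hecke category of the rank-one reductive group $L_s$, and the hypothesis $s\notin\ewlo$ translates into the corresponding hypothesis of \cite[Lemma 4.3]{LY} for $L_s$.

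First I would prove (1) using this reduction: the assertion that $\Delta(\sdo)^+_\sL\to\IC(\sdo)^+_\sL\to\nabla(\sdo)^+_\sL$ are isomorphisms is then exactly \cite[Lemma 4.3(1)]{LY}. Concretely, both $i_e^*\IC(\sdo)^+_\sL$ and $i_e^!\IC(\sdo)^+_\sL$ vanish on the closed stratum $\biu\bsl\gke/\biu$: the $T$-monodromy forced by $\sL$ on the open stratum restricts to a nontrivial $T$-character on the closed stratum precisely because $s\notin\ewlo$, and a nontrivial character sheaf has vanishing compactly supported cohomology along the affine fiber. The costalk vanishing follows symmetrically, using that $s\notin\ewlo$ if and only if $s\notin\widetilde W^\circ_{s\sL}$ (these reflection groups being conjugate via $s$ in $\extw$).

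Given (1), parts (2) and (3) are formal consequences of Lemma \ref{inverse}: with $w=s$, the equivalence $(-)\conp\Delta(\sdo)^+_\sL$ has inverse $(-)\conp\nabla(\sdo^{-1})^+_{s\sL}$, and applying (1) both at $\sL$ and at $s\sL$ rewrites these as $(-)\conp\IC(\sdo)^+_\sL$ and $(-)\conp\IC(\sdo^{-1})^+_{s\sL}$ respectively; the $\conn$-analogue of Lemma \ref{inverse} yields (3). For parts (4) and (5), the $\Delta$ and $\nabla$ formulas follow directly from Lemma \ref{stdstd} when $\ell(ws)=\ell(w)+1$, and from applying the inverse equivalence to the relation $\Delta(\wdo\sdo)^+_\sL\conp\IC(\sdo^{-1})^+_{s\sL}\simeq\Delta(\wdo)^+_{s\sL}$ (itself a consequence of Lemma \ref{stdstd}) when $\ell(ws)=\ell(w)-1$. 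The IC case then follows because $(-)\conp\IC(\sdo)^+_\sL$ preserves semisimple complexes and in particular sends simple perverse objects to perverse simples, so the image of $\IC(\wdo)^+_{s\sL}$ must be the unique simple perverse sheaf with the correct support and open-stratum restriction, namely $\IC(\wdo\sdo)^+_\sL$.

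The main obstacle is the stalk/costalk vanishing in (1): matching the algebraic condition $s\notin\ewlo$ precisely to the nontriviality of a specific monodromic character on the closed $T$-orbit. Once the local rank-one reduction to $L_s$ is carried out carefully, all remaining assertions are formal bookkeeping with Lemmas \ref{inverse} and \ref{stdstd}, together with standard properties of convolution with $\p^1$-fibered IC sheaves.
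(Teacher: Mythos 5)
Your proposal follows the same overall structure as the paper's argument (reduce (2),(3) to Lemma \ref{inverse}, get the $\Delta$ and $\nabla$ cases of (4),(5) from Lemma \ref{stdstd} depending on the sign of $\ell(ws)-\ell(w)$, then identify the IC image), and your explicit rank-one reduction for (1) via the identification $\biu\bsl\gkss/\biu\simeq U_{B_s}\bsl L_s/U_{B_s}$ together with the observation $s\notin\ewlo\Leftrightarrow s\notin\widetilde{W}^\circ_{s\sL}$ is a correct way to carry over the corresponding statement from the finite-type case; the paper does not spell this out and simply appeals to the transfer of [LY, Section 3] results.

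However, the step identifying the image of the IC sheaf in parts (4) and (5) has a real gap. You write that $(-)\conp\IC(\sdo)^+_\sL$ ``preserves semisimple complexes and \emph{in particular} sends simple perverse objects to perverse simples.'' That ``in particular'' is a non sequitur: preservation of semisimple complexes (Lemma 5.2) combined with the fact that the functor is an equivalence only gives that $\IC(\wdo)^+_{s\sL}$ goes to an indecomposable semisimple complex, i.e.\ to \emph{some shift} $\IC(v)^+_\sL[n]$, and nothing yet controls the shift $n$. The paper closes this by proving perverse $t$-exactness: every object of $_{\sL'}D^{-,\le0}_{s\sL}$ with finite-type support is a successive extension of shifted standards $\Delta(\wdo')^-_{s\sL}[n]\otimes M$ with $n\ge0$, and since $\Delta\conn\IC(\sdo)^+_\sL$ is again standard of the same perverse degree, the functor sends $D^{-,\le0}$ to $D^{-,\le0}$; Verdier duality gives the $D^{\ge0}$ direction, and $t$-exactness plus equivalence then yields that perverse simples go to perverse simples with no shift. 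Alternatively, you could keep your route but make the shift-elimination explicit: the restriction of $\IC(\wdo)^+_{s\sL}\conp\IC(\sdo)^+_\sL$ to the open stratum $ws$ of its support is computed by the $\Delta$ (equivalently $\nabla$) formula to be $C(\wdo\sdo)^+_\sL$ sitting in perverse degree zero, which forces $n=0$ and $v=ws$. As written, though, you invoke the open-stratum restriction only as a criterion to pick out \emph{which} simple perverse sheaf appears, while already assuming the image is perverse; that assumption is exactly what needs an argument.
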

\begin{proof}
The proof of statements $(1),(2)$, and $(4)$ follows from Proposition $5.2$ in \cite{LY}. It also follows from the block decomposition (see Proposition \ref{blockdec} below). 
Statement $(3)$ follows from statement $(1)$ and Lemma \ref{inverse}. For statement $(5)$, if $\ell(w)= \ell(ws)+1$, then $\Delta (\wdo)^-_{s\sL} \conn \IC (\sdo)^+_{\sL} \cong\Delta (\wdo)^-_{s\sL} \conn \Delta (\sdo)^+_{\sL} \cong \Delta (\wdo\sdo)^-_{\sL}$ by Lemma \ref{stdstd}. If $\ell(w)= \ell(ws)-1$, then $\Delta (\wdo)^-_{s\sL} \conn \IC (\sdo)^+_{\sL} \cong \Delta (\wdo)^-_{s\sL} \conn \nabla (\sdo)^+_{\sL} \cong \Delta (\wdo\sdo)^-_{\sL}$ by the same lemma. The same argument works for costandard sheaves. It remains to prove that the functor sends $\IC (\wdo)^-_{s\sL}$ to $\IC (\wdo\sdo)^-_{\sL}$. 

By Lemma \ref{Verdier}, we know that $\sF:= \IC (\wdo)^-_{s\sL} \conn \IC (\sdo)^+_{\sL}$ is Verdier self-dual. To prove it is an IC sheaf, it suffices to check its stalk on each stratum satisfying a certain cohomology degree bound. The stalk can be computed via taking homomorphism between $\sF$ and costandard sheaves. Since the equivalence preserves costandard sheaves, we know that $\sF$ is an IC sheaf. From the fact that there is a nonzero map from $\Delta (\wdo\sdo)^-_{\sL} \cong \Delta (\wdo)^-_{s\sL} \conn \IC (\sdo)^+_{\sL}$ to $\IC (\wdo)^-_{s\sL} \conn \IC (\sdo)^+_{\sL}$, the latter sheaf has to be $\IC (\wdo\sdo)^-_{\sL}$.
\end{proof}

%Let $\busu$ be the moduli stack of $G$-bundles on $\p^1$ with a $U$-reduction at infinity and a $\bius$-level structure at zero. That is, $\busu$ classifies the data of $()$ 

Let $s$ be a simple reflection in $\extw$ such that $s \in \ewlo$. There is a \textit{canonical} object $\IC(s)^+_\sL \in \pha \ldl$ with an isomorphism from its stalk at $e$ and $\ql \langle 1 \rangle$ (see \cite[Section 3.7]{LY}). Moreover, $\sL$ can be extended to $\extsL$ on $L_s$. Similar to the construction of negative sheaves, we define $\lpdelm$ to be $D^b_{(T \times L_s, \sL' \boxtimes \extsL ^{-1}), m}(\bsu)$. Let $\pi_s$ be the natural map from $\buu$ to $\bsu$. It can be regarded as a $\p^1$-fibration. The construction in \cite[Section 2.6]{LY} gives two adjoint pairs $(\pi_s^*, \pi_{s*})$ and $(\pi_{s*}, \pi_s^!)$, where $\pi_{s*}: \pha \lpdlm \ra \pha \lpdelm$ and $\pi_s^! \cong \pi_s^* \langle 2 \rangle : \pha \lpdelm \ra \pha \lpdlm$. 

\begin{lem} \label{proj}
Let $ \sL, \sL' \in \lio$ and $s$ be a simple reflection in $\extw$ such that $s \in \ewlo$. There is a canonical isomorphism of endo-functors
\[(-) \conn \IC(s)^+_\sL \cong \pi_s^* \pi_{s*}(-) \langle 1 \rangle \cong \pi_s^! \pi_{s*}(-) \langle -1 \rangle \in \End (\lpdlm).\]
As a result, $(-) \conn \IC(s)^+_\sL$ is right adjoint to itself. Similar statement also holds for positive sheaves. 
\end{lem}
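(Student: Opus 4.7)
The plan is to compute $(-) \conn \IC(s)^+_\sL$ geometrically via the Hecke correspondence and identify it with $\pi_s^* \pi_{s*}(-) \langle 1 \rangle$ by base change, and then deduce self-adjointness from the adjunction chain. The assumption $s \in \ewlo$ is what lets $\sL$ extend to a character sheaf $\widetilde{\sL}$ on $L_s$, and it guarantees that the closure $\gkss/\biu \cong P_s/\biu$ — a $T$-torsor over $P_s/\bi \cong \p^1$ — supports a rank-one monodromic local system compatible with the monodromies prescribed by $\sL$. Since this closure is smooth, $\IC(s)^+_\sL$ will be the clean $\langle 1 \rangle$-shifted extension of this local system, with no nontrivial IC-correction; in particular it pulls back, up to the shift $\langle 1 \rangle$, from a rank-one local system on $\bsu$ under the projection.

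Next, I would set up the convolution diagram. Since $\IC(s)^+_\sL$ is supported on the $\le s$ stratum, the convolution $\sF \conn \IC(s)^+_\sL$ factors through the Hecke substack of modifications of type $\le s$ at $0$ and its appropriate $T$-quotient. The key geometric observation is that this $T$-quotient is the fiber product $\buu \times_{\bsu} \buu$, where both maps to $\bsu$ are given by $\pi_s$; equivalently, two $\biu$-level structures at $0$ are related by a $P_s$-modification if and only if they refine the same parahoric level structure, i.e., give the same point of $\bsu$ modulo the $L_s$-equivariance.

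Applying base change on this Cartesian square, I would argue as follows. Since $\IC(s)^+_\sL$ corresponds up to $\langle 1 \rangle$ to the pullback of a rank-one local system from $\bsu$, the descended twisted product $\sF \odot \IC(s)^+_\sL$ on the fiber product becomes $pr_1^* \sF \langle 1 \rangle$. Base change then gives $\sF \conn \IC(s)^+_\sL = pr_{2*} pr_1^* \sF \langle 1 \rangle \cong \pi_s^* \pi_{s*} \sF \langle 1 \rangle$, which is the first claimed isomorphism; the second follows immediately from $\pi_s^! \cong \pi_s^* \langle 2 \rangle$. For the self-adjointness, the adjunctions $\pi_s^* \dashv \pi_{s*} \dashv \pi_s^!$ together with $\pi_s^! = \pi_s^* \langle 2 \rangle$ yield
\[
\Hom(\pi_s^* \pi_{s*} \sF \langle 1 \rangle, \sG) \cong \Hom(\pi_{s*} \sF, \pi_{s*} \sG \langle -1 \rangle) \cong \Hom(\sF, \pi_s^* \pi_{s*} \sG \langle 1 \rangle).
\]

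The hard part will be the geometric identification of the $T$-quotient of the Hecke stack at type $\le s$ with the fiber product $\buu \times_{\bsu} \buu$. This is a concrete statement about how $\biu$-level structures, $\bius$-level structures, and $P_s$-modifications at $0$ interact, and it requires carefully matching the $T \times T$-equivariance on $\buu$ against the $T \times L_s$-equivariance on $\bsu$ so that the Cartesian identification respects the monodromic structures and the resulting isomorphism is canonical.
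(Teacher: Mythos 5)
Your approach is the same one the paper takes: the paper's proof is simply a citation to Lemma~3.8 of \cite{LY} (the finite reductive analogue), and that lemma is proved exactly by the base-change-along-a-correspondence argument you sketch, transplanted here to the Hecke stack over $\buu$. The structure is right --- extend $\sL$ to $\extsL$ on $L_s$ using $s\in\ewlo$, recognize $\IC(s)^+_\sL$ as a shifted local system on the closed stratum (one quibble: ``clean'' is the wrong word, since cleanness usually means $j_!\cong j_{!*}\cong j_*$, which occurs precisely in the complementary case $s\notin\ewlo$; here the local system \emph{extends}), identify the $T$-quotiented Hecke substack of $\le s$-modifications with the correspondence $\buu \leftarrow \buu\times_{\bsu}\buu \rightarrow \buu$ computing $\pi_s^*\pi_{s*}$, and conclude by base change; the self-adjointness chain via $\pi_s^*\dashv\pi_{s*}\dashv\pi_s^!$ and $\pi_s^!\cong\pi_s^*\langle 2\rangle$ is correct as written. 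The one thing to be careful about in the deferred geometric step (which you rightly flag as the content of the lemma) is exactly what you allude to at the end: the $T$-quotient $q$ in the definition of $\conn$ turns the $U$-reduction of the first bundle into a $B$-reduction, so the Cartesian square must be set up so that this, together with the $T\times L_s$-monodromic structure on $\bsu$ absorbing the local system $\IC(s)^+_\sL$, matches on the nose --- your criterion ``same parahoric level structure modulo $L_s$-equivariance'' is the right heuristic, but the actual fiber product is over $\bsu$ itself (with $\pi_s$ a smooth $\p^1$-fibration), not over any further $L_s$-quotient, and for an affine $s$ one must also check that $\bsu$'s $\bius$-level structure correctly remembers the two lattices related by the modification.
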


\begin{proof}
See Lemma $3.8$ in \cite{LY}.
\end{proof}

Fix $w$ such that $ws <w$ and let $\wdo$ be a representative of $N_{\gk}(\bold{T_K})(\fq) / \bold{T_\co}(\fq)$. As in Section $3.10$ in \cite{LY}, we define $C(\wdo)^-_{\extsL} \in D^b_{(T \times L_s, \sL' \boxtimes \extsL ^{-1}), m}(\bsuw)$ such that its stalk at $\wdo$ is $\ql \langle-\ell(w) \rangle$. Then we can define $\Delta(\wdo)^-_{\extsL}$, $\IC(\wdo)^-_{\extsL}$, and $\nabla(\wdo)^-_{\extsL}$ similarly. The isomorphism classes of their images under $\omega$ are denoted by $\underline{\Delta}(w)_{\extsL}^-$, $\underline{\IC}(w)_{\extsL}^-$, and $\underline{\nabla}(w)_{\extsL}^-$ respectively.

\begin{lem} \label{pbic}
Let $s$ be a simple reflection in $\extw$ such that $s \in \ewlo$. Suppose $\ell(w) > \ell(ws)$, then $\pi_s^*\IC(\wdo)_{\extsL}^- \cong \IC(\wdo)^-_\sL.$
\end{lem}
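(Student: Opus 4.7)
My plan is to exploit the fact that $\extpi_s$ is smooth of relative dimension one---encoded in the adjunction $\pi_s^! \cong \pi_s^* \langle 2 \rangle$ from Lemma~\ref{proj}---together with the principle that smooth pullback commutes with $j_{!*}$ up to a perverse-degree shift. The shift in question is precisely what the $\max$ in the normalization of $C(\wdo)^-_{\extsL}$ is designed to absorb.

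First I would analyse the stratifications. Under the hypothesis $\ell(w) > \ell(ws)$, the stratum $\bsuw \subset \bsu$ has codimension $\min\{\ell(w),\ell(ws)\} = \ell(ws)$, while $\buuw \subset \buu$ has codimension $\ell(w) = \ell(ws)+1$. Combined with the relative dimension of $\extpi_s$, this codimension count shows that $\extpi_s$ induces a canonical identification $\buuw \to \bsuw$ sending the distinguished representative $\wdo$ on the source to its counterpart on the target; this is the geometric content of the assumption $\ell(w) > \ell(ws)$.

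Next I would compare the stalks at $\wdo$. The normalization $C(\wdo)^-_{\extsL}|_{\wdo} = \ql\langle \max\{-\ell(w),-\ell(ws)\}\rangle = \ql\langle -\ell(ws)\rangle$ absorbs exactly the shift introduced by pullback along a smooth morphism of relative dimension one, yielding the stalk $\ql\langle -\ell(w)\rangle$ that matches $C(\wdo)^-_\sL|_{\wdo}$. The monodromy data pull back correctly because $T \subset L_s$ and $\extsL|_T = \sL$.

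Finally I would invoke the commutation of smooth pullback with $j_{!*}$: writing both sides as IC extensions of their respective $C$-sheaves, smoothness gives $\extpi_s^* j_{w,!*} C(\wdo)^-_{\extsL} \cong j_{w,!*} \extpi_s^* C(\wdo)^-_{\extsL}$, reducing the claim to the stalk and stratum comparisons above. The main technical obstacle is the stratum-level verification, ensuring that the adjacent stratum $\buu^{ws}$ does not contribute spuriously to the support of $\extpi_s^* \IC(\wdo)_{\extsL}^-$; handling this is where the hypothesis $\ell(w) > \ell(ws)$ plays its decisive role.
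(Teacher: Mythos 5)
The overall strategy --- exploit smoothness of $\extpi_s$ together with the commutation of smooth pullback and middle extension --- is indeed the substance of the paper's one-line proof (which refers to Lemma~3.10 of \cite{LY} and notes that $\extpi_s$ is a smooth $\p^1$-fibration, so $\extpi_s^*[1]$ is perverse $t$-exact). However, two of your concrete steps contain genuine errors.

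First, the stalk comparison is wrong. You assert that pullback along a smooth morphism of relative dimension one introduces a shift $\langle 1\rangle$ which the $\max$ in the normalization of $C(\wdo)^-_{\extsL}$ is ``designed to absorb,'' converting $\ql\langle -\ell(ws)\rangle$ into $\ql\langle -\ell(w)\rangle$. But ordinary $*$-pullback does not shift stalks: for any $x$ one has $(\extpi_s^*\sF)_x \cong \sF_{\extpi_s(x)}$, with no $[\,\cdot\,]$ or $(\,\cdot\,)$ attached. The shift you want to invoke occurs only for the perverse-normalized pullback $\extpi_s^*[1]$ (as in the paper's proof) or for $\extpi_s^!=\extpi_s^*\langle 2\rangle$ (as in Lemma~\ref{proj}); it is not a feature of $\extpi_s^*$ itself, which is what the statement of Lemma~\ref{pbic} uses. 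As written, your stalk match fails.

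Second, and more seriously, the support analysis at the end is backwards. You claim the ``main technical obstacle'' is to rule out a spurious contribution from $\buu^{ws}$. In fact $\extpi_s^{-1}(\bsuw)=\buuw\cup\buu^{ws}$, and since $\ell(ws)<\ell(w)$ the stratum $\buu^{ws}$ has codimension $\ell(ws)$ while $\buuw$ has codimension $\ell(w)=\ell(ws)+1$; so $\buu^{ws}$ is the \emph{open dense} part of $\extpi_s^{-1}(\bsuw)$ (it is the generic locus of the $\p^1$-bundle, and $\buuw$ is the codimension-one section). Consequently, the commutation of smooth pullback with $j_{!*}$ reads $\extpi_s^*\,j_{w,!*}\,C(\wdo)^-_{\extsL}\cong j'_{!*}\,\extpi_s^*C(\wdo)^-_{\extsL}$ where $j'$ is the locally closed embedding of $\extpi_s^{-1}(\bsuw)=\buuw\cup\buu^{ws}$, \emph{not} the embedding of $\buuw$ alone. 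Your assertion that $\extpi_s^*j_{w,!*}C(\wdo)^-_{\extsL}\cong j_{w,!*}\extpi_s^*C(\wdo)^-_{\extsL}$ (with $j_w:\buuw\hookrightarrow\buu$ on the right) is therefore incorrect: the base change for $j_{!*}$ changes the embedding, and the pullback is really a middle extension from the larger stratum $\buu^{ws}$. Any correct argument has to engage with this, not argue it away.

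So while the plan is sound in outline, the execution misses the two points that actually carry the content of the lemma: the correct perverse normalization of the pullback functor, and the correct identification of the open dense stratum of $\extpi_s^{-1}(\bsuw)$.
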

\begin{proof}
See Lemma $3.10$ in \cite{LY}. Note that $\pi_s$ is a smooth $\p^1$-fibration, hence $\pi_s^*$ sends (simple) perverse sheaves to (simple) perverse sheaves up to a shift. 
\end{proof}

\begin{prop}\label{parity}
Let $w \in \extw$, $\sL \in \lio$ and $v \in \pha \wlewl$. Let $i_v$ be the embedding of $\biu \bsl \gkv / \biu$ into $\biu \bsl \gk / \biu$.
\begin{enumerate}[{$(1)$}]
\item The complexes $i_v^*\IC(\wdo)^+_\sL$ and $i_v^!\IC(\wdo)^+_\sL$ are pure of weight zero in $\wldvl$. 
\item $i_v^*\ICu(\wdo)^+_\sL$ and $i_v^!\ICu(\wdo)^+_\sL$ are isomorphic to direct sums of $\underline{C}(v)^+_\sL[n]$ for $n \equiv \ell(w)-\ell(v) \mod 2$. 
\end{enumerate}
\end{prop}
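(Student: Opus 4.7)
I would prove both parts simultaneously by induction on $\ell(w)$, exhibiting $\IC(\wdo)^+_\sL$ as a direct summand of a controlled convolution whose stalks are manifestly pure of weight zero with the correct parity. The base case $w=e$ is immediate because $\IC(\wdo)^+_\sL$ is a shift of the constant sheaf supported on the identity stratum.

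For the inductive step, choose a simple reflection $s$ with $\ell(ws)<\ell(w)$ and set $w'=ws$. When $s\notin \ewlo$, Lemma \ref{equiv} tells us that $(-)\conp \IC(\sdo)^+_\sL$ is an equivalence sending $\IC(\wdo')^+_{s\sL}$ to $\IC(\wdo)^+_\sL$. Since this convolution is realized by pushforward along a biregular multiplication map (cf.\ Lemma \ref{stdstd}), purity of weight zero is preserved and the stratum indexing shifts $v'\mapsto v=v's$ with $\ell(v)=\ell(v')\pm 1$, exactly matching the length jump $\ell(w)-\ell(w')=1$ so that the parity condition is preserved. When $s\in \ewlo$, I would invoke the positive-sheaf analog of Lemma \ref{proj}, which identifies $\IC(\wdo')^+_{s\sL}\conp \IC(\sdo)^+_\sL$ with $\extpi_s^*\extpi_{s*}\bigl(\IC(\wdo')^+_{s\sL}\bigr)\langle 1\rangle$, where $\extpi_s$ is the smooth $\p^1$-fibration to the parahoric quotient. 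Smooth pullback $\extpi_s^*$ and proper pushforward $\extpi_{s*}$ preserve purity, the decomposition theorem guarantees $\extpi_{s*}$ is semisimple on pure complexes, and the resulting cohomological shifts of $\pm 1$ combine with $\langle 1\rangle$ so that the stalks and costalks of the convolution on any stratum $\gkv/\biu$ are pure of weight zero and concentrated in degrees $n\equiv \ell(w)-\ell(v)\pmod 2$.

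To conclude, one observes that $\IC(\wdo)^+_\sL$ appears as a direct summand of $\IC(\wdo')^+_{s\sL}\conp \IC(\sdo)^+_\sL$, with complementary summands supported in strata indexed by $w''<w$; this is a standard consequence of the decomposition theorem applied to the Bott-Samelson-style convolution. Purity and parity pass to direct summands, proving (1). For (2), purity forces semisimplicity after applying $\omega$; since $\gkv/\biu$ is an affine space and the only indecomposable local system compatible with the prescribed $(T\times T,\sL'\boxtimes \sL^{-1})$-monodromy on it is $\underline{C}(v)^+_\sL$, the stalks and costalks are then direct sums of $\underline{C}(v)^+_\sL[n]$ with $n$ of the required parity.

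The main obstacle I anticipate is the precise push-pull identification in the case $s\in \ewlo$: establishing the positive-sheaf analog of Lemma \ref{proj} on the infinite-type affine flag variety and tracking the Tate twists carefully so that $\langle 1\rangle$ exactly accounts for $\ell(w)-\ell(w')=1$ without an off-by-one parity error. Once this formula is in place, the rest of the argument is a routine iteration of the decomposition theorem for pure complexes, mirroring the reductive-group proof of the analogous statement in \cite{LY}.
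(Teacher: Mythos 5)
Your argument is correct and takes essentially the same route as the paper, which for this proposition simply cites the reductive-group case \cite[Proposition 3.11]{LY}: the standard induction on $\ell(w)$ realizing $\IC(\wdo)^+_\sL$ as a direct summand of a Bott--Samelson-style convolution, splitting the inductive step according to whether the chosen simple reflection $s$ lies in $\ewlo$, using the equivalence of Lemma~\ref{equiv} in the non-$\ewlo$ case and the parahoric push-pull description (the positive analog of Lemma~\ref{proj}) plus the decomposition theorem in the $\ewlo$ case. The worry you flag about establishing the positive-sheaf push-pull identity on the affine flag variety is not a genuine gap — it is exactly the formula proved in \cite[Lemma 3.8]{LY} in the form needed, and the paper's opening remarks of Section~\ref{sect5} explain why those arguments carry over after replacing groups with their affine analogs; the Tate-twist bookkeeping you outline ($\langle 1\rangle$ matching the length jump of $1$) is the correct way to track parity.
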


\begin{proof}
See Proposition $3.11$ in \cite{LY}. 
\end{proof}

For a block $\beta \in \pha_{\sL'} \ulextw_\sL$, the restriction of the Bruhat order of $\extw$ gives a partial order $\le$ in $\beta$. The following proposition is proved by Lusztig in \cite[Section 2]{L}. 

\begin{prop}\label{Lusztig}[Lusztig]
There is a unique minimal element $w^\beta$ in each block $\beta \in \pha_{\sL'} \ulextw_\sL$ and it is characterized by sending every positive affine coroot in $\widetilde{\Phi}^\vee_\sL$ to positive affine coroots in $\widetilde{\Phi}^\vee$. 
\end{prop}

\begin{lem}\label{minelt}
Let $\beta \in \pha _{\sL'} \ulextw_\sL$ and $\gamma \in \pha _{\sL''} \ulextw_{\sL'}$, then $w^\gamma w^\beta= w^{\gamma \beta}$.
\end{lem}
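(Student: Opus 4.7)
The plan is to use the explicit characterization of the minimal element $w^\beta$ recalled just before the lemma: among all elements of the block $\beta \in {}_{\sL'}\underline{\extw}_\sL$, the element $w^\beta$ is the unique one that carries every positive coroot of $\widetilde{\Phi^\vee_\sL}$ to a positive coroot of $\widetilde{\Phi^\vee}$. So the strategy is to verify that the product $w^\gamma w^\beta$ lies in the correct block $\gamma\beta$ and then to check that it sends positive coroots of $\widetilde{\Phi^\vee_\sL}$ to positive coroots of $\widetilde{\Phi^\vee}$; uniqueness then forces $w^\gamma w^\beta = w^{\gamma\beta}$.

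First I would note that the block is correctly located: since $w^\beta(\sL)=\sL'$ and $w^\gamma(\sL')=\sL''$, we have $w^\gamma w^\beta(\sL)=\sL''$, so $w^\gamma w^\beta \in {}_{\sL''}\extw_\sL$, and its image in $\widetilde{W_{\sL''}^\circ}\bsl {}_{\sL''}\extw_\sL$ is visibly $\gamma\beta$.

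The main step is the coroot positivity check, which I would split into two observations. First, for any $\alpha^\vee \in \widetilde{\Phi^\vee_\sL}$, the triviality of $\sL|_{\alpha^\vee}$ combined with $w^\beta(\sL)=\sL'$ gives the triviality of $\sL'|_{w^\beta(\alpha^\vee)}$; hence $w^\beta$ bijects $\widetilde{\Phi^\vee_\sL}$ onto $\widetilde{\Phi^\vee_{\sL'}}$. By the characterization of $w^\beta$, it sends positive coroots of $\widetilde{\Phi^\vee_\sL}$ to positive coroots of $\widetilde{\Phi^\vee}$; these are automatically positive coroots of the subsystem $\widetilde{\Phi^\vee_{\sL'}}$. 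Second, the characterization of $w^\gamma$ guarantees that $w^\gamma$ sends each such positive coroot of $\widetilde{\Phi^\vee_{\sL'}}$ to a positive coroot of $\widetilde{\Phi^\vee}$. Composing, $w^\gamma w^\beta$ sends every positive coroot of $\widetilde{\Phi^\vee_\sL}$ to a positive coroot of $\widetilde{\Phi^\vee}$, which is exactly the defining property of $w^{\gamma\beta}$.

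There is no real obstacle here; the only mildly delicate point is the first observation that the Weyl-type action $w^\beta$ intertwines $\widetilde{\Phi^\vee_\sL}$ with $\widetilde{\Phi^\vee_{\sL'}}$, which I would verify at the level of the pairing of $\sL$ with cocharacters (using $w^\beta(\sL)=\sL'$). Everything else is a formal consequence of Lusztig's characterization of minimal block representatives recalled in the excerpt, together with the uniqueness of such a minimal element in each block.
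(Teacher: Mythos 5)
Your proof is correct. The paper itself gives no argument here, merely citing Corollary 4.3 of Lusztig--Yun, but your reconstruction is exactly the natural one based on the characterization of $w^\beta$ that the paper recalls immediately before the lemma (minimality $\Leftrightarrow$ sends positive coroots of $\widetilde{\Phi^\vee_\sL}$ to positive coroots of $\widetilde{\Phi^\vee}$). The one delicate point --- that $w^\beta$ carries $\widetilde{\Phi^\vee_\sL}$ into $\widetilde{\Phi^\vee_{\sL'}}$ --- you flag correctly and justify the right way: for an affine coroot $\alpha^\vee$ with finite part $\bar\alpha^\vee$, triviality of $\sL|_{\bar\alpha^\vee}$ together with $\bar w^\beta(\sL)=\sL'$ gives triviality of $\sL'|_{\bar w^\beta(\bar\alpha^\vee)}$ by equivariance of the pairing, and the $\extw$-action on affine coroots does not affect the finite part beyond the $W$-action; combined with the fact that positivity in the subsystem is inherited from $\widetilde{\Phi^\vee}$, the composition argument then goes through and uniqueness of the minimal representative finishes it. This is a clean, self-contained substitute for the citation.
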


\begin{proof}
See Corollary $4.3$ in \cite{LY}. 
\end{proof}

\begin{lem}\label{conjmin}
	Let $\beta \in \pha_{\sL'} \ulextw_\sL$. Then $w \mapsto w^\beta w w^{\beta,-1}$ gives an isomorphism of Coxeter groups $\ewlo$ to $\ewlpo$. 
\end{lem}

\begin{proof}
	See Corollary $4.4$ in \cite{LY}.
\end{proof}

\begin{defn}
Let $\beta \in \pha _{\sL'} \ulextw_\sL$ and $w \in \beta$. There is a unique $v \in \ewlo$ such that $w= w^\beta v$. Define $\ell_\beta(w) := \ell_{\sL}(v)$, the length of $v$ in the Coxeter group $\ewlo$. Define the partial order $\le_\beta$ on the block $\beta$ as follows. Suppose $w' \in \beta$ and $v' \in \ewlo$ be its corresponding element as above, define $w \le_\beta w'$ if and only if $v \le_{\ewlo} v'$.
\end{defn}

\begin{rem}\label{wellconj}
	Thanks to Lemma \ref{conjmin}, we observe that the partial order $\le_{\beta}$ and the length function defined by right multiplication of $w^\beta$ coincide with those defined by left multiplication of $w^\beta$.
\end{rem}

\begin{prop}\label{order}
For $w,w' \in \beta$, if $w \le_\beta w'$, then  $w \le w'$.
\end{prop}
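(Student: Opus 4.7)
The plan is to use the reflection-chain characterization of the Bruhat order in a Coxeter group: $u \le u'$ if and only if there exists a sequence $u = u_0, u_1, \ldots, u_n = u'$ with $u_{i+1} = u_i t_i$ for a reflection $t_i$ and $\ell(u_{i+1}) > \ell(u_i)$. First I would apply this characterization inside $\ewlo$. Writing $w = w^\beta v$ and $w' = w^\beta v'$ with $v \le_{\ewlo} v'$, I obtain a reflection chain $v = v_0, v_1, \ldots, v_n = v'$ in $\ewlo$ with $v_{i+1} = v_i t_i$, each $t_i$ a reflection in $\ewlo$, and $\ell_{\ewlo}$ strictly increasing along the chain.

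Next, I would lift this chain to a reflection chain from $w^\beta v$ to $w^\beta v'$ inside $\extw$. Since $\widetilde{\Phi}^\vee_\sL \subset \widetilde{\Phi}^\vee$, each $t_i$ is still a reflection in $\extw$, corresponding to a positive affine coroot $\gamma_i^\vee \in \widetilde{\Phi}^\vee_\sL$. The tool I would invoke is the standard reflection length formula for a Coxeter group: for a reflection $t$ along a positive coroot $\gamma^\vee$, one has $\ell(ut) > \ell(u)$ if and only if $u(\gamma^\vee) > 0$. Applied inside $\ewlo$, the step $v_i < v_{i+1}$ forces $v_i(\gamma_i^\vee) > 0$ in $\widetilde{\Phi}^\vee_\sL$. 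By the characterization of $w^\beta$ recalled just before the statement, $w^\beta$ sends positive coroots in $\widetilde{\Phi}^\vee_\sL$ to positive coroots in $\widetilde{\Phi}^\vee$, so $(w^\beta v_i)(\gamma_i^\vee) > 0$ in $\widetilde{\Phi}^\vee$. Re-applying the reflection length formula in $\extw$ then yields $\ell_{\extw}(w^\beta v_{i+1}) > \ell_{\extw}(w^\beta v_i)$, hence $w^\beta v_i < w^\beta v_{i+1}$ in the Bruhat order of $\extw$.

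Chaining these relations produces $w = w^\beta v_0 < w^\beta v_1 < \cdots < w^\beta v_n = w'$ in $\extw$, which is exactly the desired conclusion $w \le w'$. The main subtlety, and the only place one has to be careful, is the compatibility of positive systems: the positive coroots of $H$ used to control lengths inside $\ewlo$ must be a subset of the positive coroots of $G$ inside $\widetilde{\Phi}^\vee$, so that the reflection length formula transfers between the two groups without a sign discrepancy. This is the standard compatible choice of Borel for $H$ sitting in the root datum of $G$, and it is implicitly built into the way $w^\beta$ is characterized in the text preceding the proposition. The remaining ingredient is the classical reflection length formula for affine Coxeter groups, which is standard.
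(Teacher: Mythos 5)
Your proof is correct and takes essentially the same approach as Lemma 4.8 of \cite{LY}, which the paper cites for this result: lift a reflection chain from $\ewlo$ to $\extw$ via the positivity-preserving characterization of $w^\beta$ and the reflection length formula. The compatibility of positive systems $\widetilde{\Phi}^{\vee,+}_\sL = \widetilde{\Phi}^\vee_\sL \cap \widetilde{\Phi}^{\vee,+}$ that you flag as the main subtlety is indeed the crucial point, and it holds because the fundamental alcove of $\widetilde{W_H}$ contains that of $\extw$.
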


\begin{proof}
See Lemma $4.8$ in \cite{LY}. 
\end{proof}

\begin{rem}\label{orderdif}
	The converse of Proposition \ref{order} is false. The restriction of Bruhat order $\le_{\extw}$ to $\ewlo$ could be different from the Bruhat order $\le_{\ewlo}$ of $\ewlo$. For instance, this phenomenon is observed in Example \ref{example}.
\end{rem}

\begin{lem}\label{parityrel}
	Let $\beta \in \pha \lpewul$ and $x \in \beta$. We have $\ell_\beta(x) \equiv \ell(x)+ \ell(w^\beta) \mod 2$. 
\end{lem}

\begin{proof}
	By the definition of $\ell_\beta$, it suffices to check for the case when $\beta$ is the neutral block. The statement boils down to proving that the two length functions $\ell_{\sL}(x)$ and $\ell(x)$ have same parity for $x \in \ewlo$. When $x$ is a simple reflection in $\ewlo$, it acts as a (possibly affine) reflection on the root space. Note that the parity of $\ell(x)$ is determined by the effect of $x$ on the orientation of the root space. Since $x$ is a reflection, $\ell(x)$ has to be odd. For general $x \in \ewlo$, we choose a word expression of simple reflections in $\ewlo$ for $x$. The statement now follows from the fact that $\ell(w_1w_2) \equiv \ell(w_1)+ \ell(w_2) \mod 2$ for any $w_1,w_2 \in \extw$ and induction on the length of the word. 
\end{proof}

\begin{defn} \label{blockdef}
Let $\beta \in \pha _{\sL'} \ulextw_\sL$.

\begin{enumerate}[{$(1)$}]
\item  Denote $ \pha _{\sL'} \ulextd ^\beta _\sL$ to be the full triangulated subcategory generated by $\{ \underline{\Delta}(w)^+_\sL \}_{w \in \beta}$. It consists of sheaves whose cohomology is supported on $w \in \beta$. Denote $\lpdl^\beta \subset \pha\lpdl$ to be the preimage of $\lpudl^\beta$ under $\omega$. $\lpdl^\beta$ (resp. $\lpudl^\beta$) is called a block of $\lpdl$ (resp. $\lpudl$). 

\item If $\beta$ is the unit coset $\ewlo$, we say $\ldl^\beta$  (resp. $\ludl^\beta$) is the \textbf{neutral block} and denote it by $\ldlo$ (resp. $\ludlo$).  
\end{enumerate}
\end{defn}

\begin{prop}\label{blockdec}
There is a direct sum decomposition of the triangulated category \[\lpdl = \bigoplus_{\beta \in \pha \lpewul} \pha \lpdl^\beta.\]
\end{prop}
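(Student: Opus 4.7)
The plan is to prove the direct-sum decomposition of the triangulated category by establishing the two conditions needed: (i) every object $\sF \in \lpdl$ decomposes as $\sF = \bigoplus_\beta \sF_\beta$ with $\sF_\beta \in \lpdl^\beta$, and (ii) there are no nonzero morphisms between objects of distinct $\lpdl^\beta$'s. Both statements reduce to a single combinatorial fact about the restriction of the affine Bruhat order to the monodromy-compatible set $\pha _{\sL'}\extw_\sL$.

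The crucial combinatorial lemma I would prove first is: if $w \in \beta \subset \pha _{\sL'}\ulextw_\sL$ and $v \in \pha _{\sL'}\extw_\sL$ satisfies $v \le w$ in the Bruhat order of $\extw$, then $v \in \beta$ as well. That is, inside $\pha _{\sL'}\extw_\sL$ one cannot descend in the Bruhat order across distinct blocks. I would prove this by induction on $\ell(w) - \ell(v)$ using the subword/exchange property of Coxeter groups: a single Bruhat-descent $w \ra w'$ corresponds to removing a simple reflection from a reduced expression, and by analyzing which such descents preserve membership in $\pha _{\sL'}\extw_\sL$ and invoking the characterization of $w^\beta$ recalled in Lemma \ref{minelt} (it sends positive coroots of $\widetilde{\Phi}^\vee_\sL$ to positive coroots of $\widetilde{\Phi}^\vee$), one sees that the permitted descents factor through reflections in $\ewlo$, which preserve the block by definition.

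Given this lemma, statement (ii) follows rapidly. Reduce to simple perverse generators $\IC(\wdo)^+_\sL$ and $\IC(\dot{w}')^+_\sL$ for $w \in \beta, w' \in \beta'$ with $\beta \ne \beta'$, and compute $\Ext$-groups by localizing to stalks and costalks on common strata of $\overline{\gkw/\biu} \cap \overline{\gkwp/\biu}$. Off of $\pha _{\sL'}\extw_\sL$ both sheaves vanish by the monodromy constraint; on any common stratum $v \in \pha _{\sL'}\extw_\sL$, the lemma forces $v \in \beta \cap \beta' = \varnothing$, which is absurd. Hence all $\Ext^i$'s vanish. Statement (i) then follows by applying open-closed distinguished triangles to the stratified support of $\sF$, which by the lemma is canonically block-partitioned, and splitting them using (ii).

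The main obstacle is the combinatorial lemma. Because the ambient Bruhat order can differ substantially from the block-internal order $\le_\beta$ (Remark \ref{orderdif}), the subword analysis must be done carefully, tracking how simple reflections move within and between cosets $\extw_\sL^\circ \bsl \extw_\sL$. The relevant Coxeter-theoretic input comes from the structure of $\ewlo$ established in \cite{Bo} together with Lusztig's block analysis in \cite[Section 2]{L}; once this combinatorial step is in hand, the categorical decomposition is formal.
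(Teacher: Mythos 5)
Your proposed key lemma is false: the blocks of $\pha_{\sL'}\extw_\sL$ are \emph{not} downward-closed in the ambient Bruhat order of $\extw$. For a concrete counterexample take $\sL' = \sL$, so $e \in \pha_{\sL}\extw_\sL$, and let $\beta$ be any non-neutral block (these exist whenever $\ewlo \neq \extw_\sL$, e.g.\ in Example \ref{example} for $G = Sp_{2n}$ where there are four blocks). Then $w^\beta \neq e$, hence $e < w^\beta$ and $e \in \pha_\sL\extw_\sL$, but $e$ lies in the neutral block $\ewlo$, not in $\beta$. Indeed if every block were Bruhat-downward closed, the unique Bruhat-minimum $e$ would lie in all of them simultaneously, which is absurd. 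So ``one cannot descend in the Bruhat order across distinct blocks'' is exactly what does happen, and this is the content of Remark \ref{orderdif}: the ambient order $\le$ restricted to a block is strictly coarser than $\le_\beta$, and Bruhat descents inside $\pha_{\sL'}\extw_\sL$ through reflections of $\extw_\sL$ that lie outside $\ewlo$ leave the block.

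What actually underlies the block decomposition is a sheaf-theoretic cleanness, not a combinatorial confinement of supports. Even though the closure of $\gkw/\biu$ for $w \in \beta$ genuinely meets strata $\gkv/\biu$ with $v \in \pha_{\sL'}\extw_\sL$ outside $\beta$, the restrictions $i_v^*\IC(\wdo)^+_\sL$ and $i_v^!\IC(\wdo)^+_\sL$ vanish for such $v$ (compare the fact invoked in the proof of Proposition \ref{reflect}: the costalk of $\ICu(x)$ at $y$ is nonzero only if $y \le_{\beta'} x$, where $\beta'$ is the block of $x$, which in particular forces $y \in \beta'$). The geometric source is Lemma \ref{equiv}(1): for a simple reflection $s \notin \ewlo$ the canonical map $\Delta(\sdo)^+_\sL \to \nabla(\sdo)^+_\sL$ is already an isomorphism, so IC extension across such a wall is clean and contributes nothing to stalks beyond the wall. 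Convolution with these clean $\IC(s)^+_\sL$ gives equivalences (Lemma \ref{equiv}(2)--(5)) carrying IC, standard and costandard sheaves to their counterparts on the other side, and iterating these equivalences reduces the cross-block $\Ext$-vanishing to a trivial case. That, not a downward-closedness of blocks, is the mechanism behind Proposition 4.11 of \cite{LY}, to which the paper's proof defers; a purely combinatorial argument in the style you propose cannot recover it.
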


\begin{proof}
See Proposition $4.11$ in \cite{LY}. 
\end{proof}

\begin{lem}\label{vanstalk}
	Let $\beta \in \pha \lpewul$ and $w \in \beta$. Then $i_v^*\ICu(w)_{\sL}$ and $i_v^!\ICu(w)_{\sL}$ vanish unless $v \in \beta$ and $v \leq_{\beta} w$.
\end{lem}

\begin{proof}
	See Lemma $4.14$ in \cite{LY}. 
\end{proof}

The "filtration" mentioned in the next lemma is in the sense of triangulated categories.

\begin{lem} \label{filter}
	Let $\beta \in \pha \lpewul$ and $w \in \beta$. The IC sheaf $\ICu(w)_\sL$ is a successive extension of $\Delu(v)[n]$ for $v \in \beta$ and $n\in \Z$ such that $v \leq_{\beta} w$. Similarly, the standard sheaf $\Delu(w)$ is a successive extension of $\ICu(v)[n]$ for $v \in \beta$ and $n\in \Z$ such that $v \leq_{\beta} w$. 
\end{lem}	

\begin{proof}
	For the first statement, the Schubert stratification induces a "filtration" of $\ICu(w)_{\sL}$ whose subquotients are given by $i_{v!}i_v^*\ICu(w)_{\sL}$ for $v \le w$. Lemma \ref{vanstalk} shows that the subquotient $i_{v!}i_v^*\ICu(w)_{\sL}$ is zero unless $v \in \beta$ and $v \leq_{\beta} w$.
	
	We prove the second statement by induction on $\ell_{\beta}(w)$. From the first statement, we know that $\Delu(w)$ has a "filtration" whose subquotients are given by $\Delu(v)[-]$ and $\ICu(w)$ for $v <_{\beta} w$. By the induction hypothesis, $\Delu(v)[-]$ is a successive extension of $\ICu(u)[-]$ for $u \leq_{\beta} v$. By using the transitivity of $\leq_{\beta}$, we can conclude the proof of the second statement.
\end{proof}	

%\begin{lem}\label{filter2}
%	Let $w \in \beta$. Let $\sF$ be a perverse sheaf such that $i_v^*\sF$ vanishes unless $v \leq_{\beta} w$. Then $\sF$ is a successive extension of $\ICu(v)$ for $v \in \beta$ such that $v \leq_{\beta} w$. In particular, $\Delu(w)$ is a successive extension of $\ICu(v)$ for $v \in \beta$ such that $v \leq_{\beta} w$.
%\end{lem}
%
%\begin{proof}
%	We induct on the length of $\sF$ in the abelian category of perverse sheaves. When the length of $\sF$ is one, $\sF$ is an IC sheaf and the statement is trivial. 
%\end{proof}

\begin{prop}\label{minconv}
	Let $\beta \in \pha _{\sL'} \ulextw_\sL$ and $\dot{w}^{\beta}$ be a lifting of $w^{\beta}$. 
	\begin{enumerate}[{$(1)$}]
		\item The natural maps $\Delta(\dot{w}^{\beta})_{\sL}^+ \ra \IC(\dot{w}^{\beta})_{\sL}^+ \ra \nabla(\dot{w}^{\beta})_{\sL}^+$ are isomorphisms.
		
		\item  Let $\gamma \in \pha _{\sL''} \ulextw_{\sL'}$. Then the functor $(-) \conp \IC(\dot{w}^{\beta})_{\sL}^+: \pha \lppdlp^\gamma \ra \pha \lppdl^{\gamma \beta}$ is an equivalence and the inverse is given by  $(-) \conp \IC(\dot{w}^{\beta,-1})_{\sL'}^+$. The analogous statement is true for left convolution with $\IC(\dot{w}^{\beta})_{\sL}^+$.
		
		\item  The functor $(-) \conn \IC(\dot{w}^{\beta})_{\sL}^+: \pha \lppdlpm \ra  \pha \lppdlm$ is an equivalence and the inverse is given by  $(-) \conn \IC(\dot{w}^{\beta,-1})_{\sL'}^+$. 
		
		\item The equivalence $(-) \conp \IC(\dot{w}^{\beta})_{\sL}^+$ sends $\Delta (\wdo)^+_{\sL'}, \IC (\wdo)^+_{\sL'}$, $\nabla (\wdo)^+_{\sL'}$ to the isomorphism classes $\Delta (\wdo\dot{w}^{\beta})^+_{\sL}, \IC (\wdo\dot{w}^{\beta})^+_{\sL}, \nabla (\wdo\dot{w}^{\beta})^+_{\sL}$ respectively. The analogous statement is true for left convolution with $\IC(\dot{w}^{\beta})_{\sL}^+$.
		
		\item The equivalence $(-) \conn \IC(\dot{w}^{\beta})_{\sL}^+$ sends  $\Delta (\wdo)^-_{\sL'}, \IC (\wdo)^-_{\sL'}$, $\nabla (\wdo)^-_{\sL'}$ to the isomorphism classes $\Delta (\wdo\dot{w}^{\beta})^-_{\sL}, \IC (\wdo\dot{w}^{\beta})^-_{\sL}, \nabla (\wdo\dot{w}^{\beta})^-_{\sL}$ respectively. 

	\end{enumerate}
\end{prop}

\begin{proof}
	The statements are obvious when $\ell(w^\beta)=0$. By Lemma \ref{stdstd}, we can assume $w^{\beta}$ is in the affine Weyl group. For the rest of the proof, see Lemma \ref{equiv} and Proposition 5.2 in \cite{LY}.
\end{proof}

\begin{defn}

For any $\beta \in \pha _{\sL'} \ulextw_\sL$, we call $\sF \in \pha \lpdlm$ (resp. $\lpdl$) a \textbf{maximal} (resp. \textbf{minimal}) \textbf{IC sheaf} if $\omega \sF$ is isomorphic to $\underline{\IC}(w^\beta)_{\sL}^-$ (resp. $\underline{\IC}(w^\beta)_{\sL}^+$).

A \textbf{rigidified (neutral) maximal IC sheaf} is a pair $(\ltlm, \epsilon_\sL)$ where $\ltlm \in \pha \ldlm$ is a maximal IC sheaf and $\epsilon_\sL: \pha \ltlm \ra \delta_\sL$ is a non-zero morphism (see the discussion below Claim \ref{welldef}).
\end{defn}

Here, both $\ltlm$ and $\delta_\sL$ are viewed as projective systems of complexes on $\gk / \biu$. Rigidified maximal sheaves clearly exist and are unique up to unique isomorphism. 

From this point onwards, we fix a rigidified maximal IC sheaf for each neutral block and denote it by $\ltlm$. By abuse of notation, we will use $\lptlm$ to denote a maximal sheaf such that $\omega \pha \lptlm$ is isomorphic to $\underline{\IC}(w^\beta)_{\sL}^-$.

%The proof of the next two propositions is very similar to the reductive group analog, except for using induction instead of backward induction on length in Lemma \ref{stalk}. 

\begin{prop}
Let $\beta \in \pha \lpewul$ and $\gamma \in \pha \lppewulp$. Let $w \in \beta$.
\begin{enumerate}[{$(1)$}]
\item The convolution $\ICu(w^\gamma)^-_{\sL'} \conn \ICu(w)^+_\sL$ is isomorphic to a direct sum of shifts of $\ICu(w^{\gamma \beta})^-_\sL$.
\item The perverse cohomology $^pH^i(\ICu(w^\gamma)^-_{\sL'} \conn \ICu(w)^+_\sL)$ vanishes unless $-\ell_\beta(w) \le i \le \ell_\beta(w)$.
\item The perverse cohomology of $\ICu(w^\gamma)^-_{\sL'} \conn \ICu(w)^+_\sL$ in extremal degrees are isomorphic to $\ICu(w^{\gamma \beta})^-_\sL$. That is \[^pH^{ -\ell_\beta(w)}(\ICu(w^\gamma)^-_{\sL'} \conn \ICu(w)^+_\sL)\cong \ICu(w^{\gamma \beta})^-_\sL \cong \pha ^pH^{\ell_\beta(w)}(\ICu(w^\gamma)^-_{\sL'} \conn \ICu(w)^+_\sL) .\]
\end{enumerate}
\end{prop}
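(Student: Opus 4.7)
The overall strategy is induction on $k := \ell_\beta(w)$, combining BBD purity with a Soergel-style analysis of right-convolution by $\IC(s)^+_\sL$ for simple $s \in \ewlo$. Set $F := \ICu(w^\gamma)^-_{\sL'} \conn \ICu(w)^+_\sL$. By the purity lemma of Section \ref{sect5} and BBD, $F$ is pure of weight zero and semisimple; by Proposition \ref{blockdec} it lives in the negative block indexed by $\gamma\beta$. Hence one may write
\[ F \cong \bigoplus_{w' \in \gamma\beta,\,n \in \Z} m_{w',n}\,\ICu(w')^-_\sL \langle n \rangle, \]
and the three parts of the proposition amount to pinning down these multiplicities.

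For the base case $k = 0$ (so $w = w^\beta$), I would exploit Lemma \ref{minelt} together with the characterization of minimum block elements (sending positive coroots of $\widetilde{\Phi^\vee_\sL}$ into positive coroots of $\widetilde{\Phi^\vee}$) to verify that the hypothesis of Lemma \ref{stdstd} is satisfied, giving $\Delta(w^\gamma)^-_{\sL'} \conn \Delta(w^\beta)^+_\sL \cong \Delta(w^{\gamma\beta})^-_\sL$ and analogously for costandards. Composing the natural maps $\Delta \to \ICu \to \nabla$ on each factor yields a factorization $\Delta(w^{\gamma\beta})^-_\sL \to F \to \nabla(w^{\gamma\beta})^-_\sL$ of the canonical morphism, which identifies the restriction of $F$ to the open stratum $\buu^{w^{\gamma\beta}}$ with the rank-one local system of $\ICu(w^{\gamma\beta})^-_\sL$ in degree zero. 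Semisimplicity and purity of $F$ then force $F \cong \ICu(w^{\gamma\beta})^-_\sL$, settling all three parts when $k = 0$.

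For the inductive step, pick a simple reflection $s \in \ewlo$ with $\ell_\beta(ws) = k - 1$. A Soergel-type expansion, proved via Lemma \ref{proj}, the parity statements of Proposition \ref{parity}, and Verdier self-duality, yields
\[ \ICu(ws)^+_\sL \conp \IC(s)^+_\sL \cong \ICu(w)^+_\sL \oplus \bigoplus_{w'' <_\beta ws} P_{w''}, \]
where each $P_{w''}$ is a direct sum of shifts of $\ICu(w'')^+_\sL$ with $\ell_\beta(w'') \le k - 2$. Convolving on the left by $\ICu(w^\gamma)^-_{\sL'}$ and using associativity expresses
\[ \bigl(\ICu(w^\gamma)^-_{\sL'} \conn \ICu(ws)^+_\sL\bigr) \conn \IC(s)^+_\sL \cong F \oplus \bigoplus_{w''} \bigl(\ICu(w^\gamma)^-_{\sL'} \conn P_{w''}\bigr). \]
By the induction hypothesis, $\ICu(w^\gamma)^-_{\sL'} \conn \ICu(ws)^+_\sL$ is a semisimple sum concentrated in perverse degrees $[-(k{-}1), k{-}1]$ with extremal terms $\ICu(w^{\gamma\beta})^-_\sL$, and induction also controls the $\ICu(w^\gamma)^-_{\sL'} \conn P_{w''}$ within strictly smaller ranges. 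By Lemma \ref{proj}, right-convolution $(-) \conn \IC(s)^+_\sL \cong \pi_s^*\pi_{s*}(-) \langle 1 \rangle$ enlarges the perverse-degree range by at most $\pm 1$, which yields (1) and (2). For (3), the extremal contributions at $\pm k$ come solely from convolving the extremal $\ICu(w^{\gamma\beta})^-_\sL$ pieces (at degrees $\pm(k{-}1)$) further with $\IC(s)^+_\sL$ and extracting the top/bottom piece, which by Verdier self-duality of $F$ and the $\p^1$-bundle structure of $\pi_s$ both come out to be $\ICu(w^{\gamma\beta})^-_\sL$.

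The main obstacle lies in the degree/multiplicity bookkeeping in the inductive step: verifying that no $\ICu(w')^-_\sL$ with $w' \neq w^{\gamma\beta}$ in $\gamma\beta$ appears at the extremal cohomological degrees $\pm k$, and explicitly identifying the extremal pieces of $\ICu(w^{\gamma\beta})^-_\sL \conn \IC(s)^+_\sL$ via Lemma \ref{proj}. This combines the parity constraints of Proposition \ref{parity}, Verdier self-duality of $F$, and the explicit $\p^1$-fibration structure of $\pi_s$, and is the technical heart of the proof.
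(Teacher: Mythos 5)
Your proposal takes a genuinely different inductive parameter from the paper: you induct on $\ell_\beta(w)$ (length inside the Coxeter group $\ewlo$) and peel off simple reflections of $\ewlo$, with base case $w = w^\beta$. The paper instead inducts on $\ell(w)$ (length in $\extw$), with base case $w = e$; it factors $w = w's$ with $s$ a simple reflection \emph{of $\extw$}, and treats the cases $s \in \ewlo$ and $s \notin \ewlo$ separately, using Lemma \ref{equiv} for the latter and the perversity of $\ICu(w')^+ \conp \ICu(s)^+$ plus the decomposition theorem for the former.

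There is a genuine gap in your inductive step. You need a simple reflection $s$ of the Coxeter group $\ewlo$ with $\ell_\beta(ws) = k-1$, and then you invoke $\IC(s)^+_\sL$, Lemma \ref{proj}, the parahoric $\pi_s$, and the $\p^1$-fibration structure. But all of these are only defined (and the lemmas only proved) for $s$ a simple reflection \emph{of $\extw$} that happens to lie in $\ewlo$. A simple reflection of $\ewlo$ need not be simple in $\extw$ — Example \ref{example} with $G = Sp_{2n}$, $H = SO_{2n}$ already illustrates this, as $\widetilde{W_H^\circ}$ is generated by reflections along short roots of $G$, not all of which are $\extw$-simple. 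Conversely, there may be no $\extw$-simple reflection inside $\ewlo$ that reduces $\ell_\beta(w)$: the $\extw$-simple reflections lowering $\ell(w)$ can all lie outside $\ewlo$. So the key "Soergel-type expansion" step is not available as written. The standard fix — conjugating by a minimum block element via Lemma \ref{conj} to turn the chosen $s$ into an $\extw$-simple reflection in $\extw^\circ_{x\sL}$, as is done in the proof of Theorem \ref{avglue} — is not invoked in your argument, and would require tracking the change of character sheaf and block. The paper avoids this entirely by organizing the induction around $\ell(w)$ so that only $\extw$-simple reflections ever appear. A secondary issue is that your base case relies on $\ell(w^\gamma) - \ell(w^\beta) = \ell(w^{\gamma\beta})$ to apply Lemma \ref{stdstd}, which you assert follows from the positivity characterization of minimum block elements; this needs to be verified rather than asserted.
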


\begin{proof}
We prove the statements simultaneously by induction on $\ell(w)$. If $\ell(w)=0$, the statements follow from Proposition \ref{minconv} as $w$ is a minimal element. If $\ell(w)=1$, there exists a length-zero element $x \in \extw$ such that $w=xs$ is a simple reflection in $\extw$. By Proposition \ref{minconv}, we know that $\ICu(w)^+_\sL$ is isomorphic to $\ICu(x)^+_{s\sL} \star \ICu(s)^+_\sL$. The length zero case implies that we may assume $w=s$ is a simple reflection in $\extw$.  

If $s \notin \ewlo$, then $\ell_\beta(s)=0$ and $w^\gamma s= w^\beta$ by Lemma \ref{minelt}. The statements follow from Lemma \ref{equiv}.

If $s \in \ewlo$, by combining Lemma \ref{proj}, Lemma \ref{pbic} and the projection formula, we get \[\ICu(w^\gamma)^-_{\sL} \conn \ICu(s)^+_\sL \cong \ICu(w^\gamma)^-_{\sL}[1] \oplus \ICu(w^\gamma)^-_{\sL}[-1],\] which implies the above statements. 

For $\ell(w) >1$, write $w=w's$ for some simple reflection $s$ such that $\ell(w)=\ell(w')+1$. The same argument in the proof of Lemma $6.3$ in \cite{LY} (or Lemma \ref{stdicper} below) shows $\ICu(w')^+_{s\sL} \conp \ICu(s)^+_\sL$ is perverse. The decomposition theorem implies $\ICu(w)^+_\sL$ is a direct summand of $\ICu(w')^+_{s\sL} \conp \ICu(s)^+_\sL$. Hence it suffices to show the analogous statements hold for $\ICu(w')^+_{s\sL} \conp \ICu(s)^+_\sL$
and they follow from induction. The key observation is that we have the equality $\ell_{\beta}(w)=\ell_{\beta s }(w')$ when $s \notin \ewlo$, and $\ell_{\beta}(w)=\ell_{\beta}(w')+1$ otherwise. Here $\beta s$ refers to the block that  contains $w'$.
\end{proof}

\begin{prop}\label{stalk}
Let $\beta \in \pha \lpewul$ and $w \in \beta$. For $j_w: \buuw \inj \buu$, we have $j_w^*\ICu(w^\beta)_\sL^- \cong \underline{C}(w)_\sL^-[\ell_\beta(w)]$ and $j_w^!\ICu(w^\beta)_\sL^- \cong \underline{C}(w)_\sL^-[-\ell_\beta(w)]$. 
\end{prop}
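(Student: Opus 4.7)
The plan is to establish both the stalk and costalk formulas by induction on $\ell_\beta(w)$. The base case $\ell_\beta(w)=0$ (so $w=w^\beta$) is immediate from the construction of $\ICu(w^\beta)_\sL^-$ as the intermediate extension $j_{w^\beta,!*}\underline{C}(w^\beta)_\sL^-$: the $*$- and $!$-restrictions to the top stratum $\buu^{w^\beta}$ simply recover $\underline{C}(w^\beta)_\sL^-$, matching $\ell_\beta(w^\beta)=0$.

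For the inductive step on the stalk formula, take $w\in\beta$ with $\ell_\beta(w)\ge 1$. Writing $w=w^\beta v$ for $v\in\ewlo$ with $\ell_\beta(w)=\ell_{\ewlo}(v)$, pick a reduced expression for $v$ and peel off a rightmost simple reflection $s\in\ewlo$, so that $w':=ws\in\beta$ has $\ell_\beta(w')=\ell_\beta(w)-1$; Proposition \ref{order} then gives $\ell(w)=\ell(w')+1$ in the standard Bruhat order, so in particular $j_{w'}^*\ICu(w^\beta)_\sL^-\cong\underline{C}(w')_\sL^-[\ell_\beta(w)-1]$ by the inductive hypothesis. Now combine Lemma \ref{proj} with the preceding proposition (applied to $\gamma=\beta$ and the element $s$ in the neutral block of $\pha_\sL\ulextw_\sL$, whose product with $\beta$ is again $\beta$) to obtain
\[
\pi_s^*\pi_{s*}\ICu(w^\beta)_\sL^-\langle 1\rangle\;\cong\;\ICu(w^\beta)_\sL^-\conn\ICu(s)_\sL^+\;\cong\;\ICu(w^\beta)_\sL^-[1]\oplus\ICu(w^\beta)_\sL^-[-1]
\]
in $\ldlm$ (absorbing Tate twists into $\langle\cdot\rangle$). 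Applying $j_w^*$ equates two shifted copies of the desired stalk with the stalk of $\pi_s^*\pi_{s*}\ICu(w^\beta)_\sL^-$ at $w$. The latter is computed by proper smooth base change along the $\p^1$-fibration $\pi_s$: the fiber $L_s/T$ over $\pi_s(w)\in\bsu^w$ meets $\buu^w$ in an $\A^1$ and $\buu^{w'}$ in a single point, so the inductive hypothesis controls the fiber stalk at the closed point while the $\A^1$-part carries the unknown $j_w^*\ICu(w^\beta)_\sL^-$. Matching perverse cohomologies on both sides then forces $j_w^*\ICu(w^\beta)_\sL^-\cong\underline{C}(w)_\sL^-[\ell_\beta(w)]$.

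The costalk formula follows from the stalk formula via the renormalized Verdier duality $\D^-$: it is an anti-equivalence $\ldlm\simeq\pha_{\sL^{-1}}D^-_{\sL^{-1}}$ that swaps $j_w^*$ with $j_w^!$ and sends $\ICu(w^\beta)_\sL^-$ to $\ICu(w^\beta)_{\sL^{-1}}^-$. Feeding the already-established stalk formula, with $\sL$ replaced by $\sL^{-1}$, through $\D^-$ therefore yields $j_w^!\ICu(w^\beta)_\sL^-\cong\underline{C}(w)_\sL^-[-\ell_\beta(w)]$.

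The main obstacle is the proper base change calculation at the heart of the inductive step: pinning down the $L_s/T\cong\p^1$ fiber structure of $\pi_s$ over a point of $\bsu^w$, its intersections with the cells $\buu^w$ and $\buu^{w'}$, and carefully tracking the monodromic $T\times T$-data together with the correct Tate twists through the pushforward. The other inputs—Lemma \ref{proj}, the preceding proposition on extremal perverse cohomologies of convolutions with maximal IC sheaves, and Verdier duality—are essentially ready-made tools, so the geometric content is concentrated in this fiber-level computation.
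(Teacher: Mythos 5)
Your proposal contains a genuine gap in the inductive step: the element $s$ you peel off is a simple reflection of the Coxeter group $\ewlo$, which in general is \emph{not} a simple reflection of $\extw$. For such $s$ the parahoric $P_s$, the Levi $L_s$, the moduli stack $\bsu$ and the smooth $\p^1$-fibration $\pi_s$ are simply not defined, so Lemma~\ref{proj} — whose hypothesis is precisely that $s$ is a simple reflection \emph{in} $\extw$ lying in $\ewlo$ — cannot be invoked, and the whole proper-base-change computation over $\bsuw$ has no meaning. Lemma~\ref{conj} in the paper exists exactly because of this phenomenon: a simple reflection of $\ewlo$ must first be conjugated by a suitable minimal element before it becomes a simple reflection of $\extw$. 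For the same reason, your claim that Proposition~\ref{order} yields $\ell(w)=\ell(w')+1$ is not justified: that proposition only gives $w'\le w$ (hence $\ell(w')<\ell(w)$), not that the lengths differ by exactly one; as an element of $\extw$, a ``simple'' reflection of $\ewlo$ typically has length $>1$.

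The paper's proof sidesteps both problems by inducting on the Bruhat length $\ell(w)$ in $\extw$ rather than on $\ell_\beta(w)$, and at each step choosing a simple reflection $s$ of $\extw$ (not of $\ewlo$) with $\ell(w)=\ell(ws)+1$. Two cases then arise. If $s\notin\ewlo$, the block changes ($ws$ lies in a block with minimal element $w^\beta s$), and Lemma~\ref{equiv} together with the fact that right convolution by $\ICu(s)^+_\sL$ is an equivalence reduces to the shorter word $ws$, whose stalk is known by induction on $\ell$. If $s\in\ewlo$, the costalk at $\wdo$ and at $\wdo\sdo$ agree by Lemma~\ref{pbic} (the negative IC sheaf is pulled back along the $\p^1$-fibration $\extpi_s$, hence constant along its fibers), and then the relation $\ell_\beta(w)=\ell_\beta(ws)+1$ closes the induction — a one-line argument, rather than the fiber-level proper base change computation you sketch. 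Your Verdier duality reduction of the costalk formula to the stalk formula is correct and matches the paper.
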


\begin{proof}
The second statement can be derived by applying Verdier duality to the first statement.

We prove the first statement for all $\sL$ in $\lio$ by induction on $\ell(w)$. If $\ell(w)=0$, the statement follows from the fact that $w=w^\beta$. Suppose the statement holds for any $w$ with $\ell(w) <n$, we are going to show it also holds for elements of length $n$. Let $w \in \extw$ of length $n$ and $s$ be a simple reflection such that $\ell(w)=\ell(ws)+1$. 

Lemma \ref{minelt} implies that if $s \notin \ewlo$, then $w^\beta s$ is also a minimal element in a block. By Lemma \ref{equiv}, we establish $\ICu(w^\beta)^-_{\sL'} \conn \ICu(s)^+_\sL \cong \ICu(w^\beta s)^-_{\sL}$ and $\underline{\nabla}(w)^-_{\sL'} \conn \ICu(s)^+_\sL \cong \underline{\nabla}(w s)^-_{\sL}$. The same lemma demonstrates that $(-) \conn \ICu(s)^+_\sL$ is an equivalence. Since the stalk can be computed from $\Hom(\ICu(w^\beta)^-_{\sL'},\underline{\nabla}(w)^-_{\sL'})$, the statement follows from the induction hypothesis for $ws$. 

If $s \in \ewlo$, Lemma $\ref{pbic}$ implies the stalks of $\ICu(w^\beta)_\sL^-$ at $\wdo$ and $\wdo \sdo$ are isomorphic. The latter one is known by the induction hypothesis. The fact that $\ell_\beta(w)=\ell_\beta(ws)+1$ finishes the proof. 
\end{proof}

For restrictions of maximal IC sheaves on other strata, we have the following proposition. 

\begin{prop}\label{stalk2}
Let $\beta \in \pha \lpewul$ and $w \notin \beta$. For $j_{w}: \buuw \inj \buu$, we have $j_{w}^*\ICu(w^\beta)_\sL^- \cong j_{w}^!\ICu(w^\beta)_\sL^- \cong 0$. 
\end{prop}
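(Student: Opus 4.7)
The plan is to dispatch two trivial cases first. If $w \notin \pha_{\sL'} \extw_\sL$ (i.e.\ $w\sL \neq \sL'$), the monodromic category $\pha_{\sL'} D(w)^-_\sL$ is zero, so both stalks vanish. If $w \not\ge w^\beta$ in the Bruhat order of $\extw$, then $\buuw$ lies outside the support $\overline{\buu^{w^\beta}}$ of $\ICu(w^\beta)^-_\sL$, and again both stalks vanish. The $j_w^!$-statement then reduces to the $j_w^*$-statement by Verdier duality $\D^-$, which interchanges the two types of stalks and sends $\ICu(w^\beta)^-_\sL$ to $\ICu(w^\beta)^-_{\sL^{-1}}$; since $\extw_{\sL^{-1}}^\circ = \ewlo$, the block structure, and hence the hypothesis $w \notin \beta$, is unchanged.

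We may therefore assume $w \in \pha_{\sL'} \extw_\sL$, $w \ge w^\beta$, and $w \notin \beta$, and prove $j_w^* \ICu(w^\beta)^-_\sL = 0$ by induction on $\ell(w)$, uniformly over all $\sL, \sL' \in \lio$ and all blocks. The base case $\ell(w) = \ell(w^\beta)$ is excluded by $w \notin \beta$. For the inductive step, we fix a simple reflection $s$ with $\ell(ws) < \ell(w)$ and split into two cases. In Case A ($s \notin \ewlo$), Lemma \ref{equiv} yields the equivalence $(-) \conn \IC(\sdo)^+_\sL : \pha_{\sL'} D^-_{s\sL} \rasim \pha_{\sL'} D^-_\sL$, sending $\ICu(w^\beta s)^-_{s\sL}$ to $\ICu(w^\beta)^-_\sL$ and whose inverse sends $\nabla(\wdo)^-_\sL$ to $\nabla(\wdo\sdo)^-_{s\sL}$, so $j_w^* \ICu(w^\beta)^-_\sL = 0$ iff $j_{ws}^* \ICu(w^\beta s)^-_{s\sL} = 0$. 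Since $s \notin \ewlo$ forces $\alpha_s^\vee \notin \widetilde{\Phi}^\vee_\sL$, the simple reflection $s$ sends positive coroots in $\widetilde{\Phi}^\vee_\sL$ to positive coroots in $\widetilde{\Phi}^\vee_{s\sL} = s(\widetilde{\Phi}^\vee_\sL)$, whence $w^\beta s$ satisfies Lusztig's characterization as the minimum of its block $\beta^* \in \pha_{\sL'} \ulextw_{s\sL}$. The block bijection $\beta \mapsto \beta^*$ induced by right multiplication by $s$ gives $ws \notin \beta^*$, and the induction hypothesis applied to $(ws, \beta^*, s\sL)$ closes Case A.

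In Case B ($s \in \ewlo$), blocks are right $\ewlo$-cosets, so $ws \notin \beta$ as well, and by induction $j_{ws}^* \ICu(w^\beta)^-_\sL = 0$. To transport this vanishing to $\wdo$, we compute $\ICu(w^\beta)^-_\sL \conn \IC(\sdo)^+_\sL$ in two ways. Lemma \ref{proj} identifies it with $\extpi_s^* \extpi_{s*} \ICu(w^\beta)^-_\sL \langle 1 \rangle$, a $\extpi_s$-pullback whose stalks are constant along the $\extpi_s$-fiber containing $\wdo$ and $\wdo\sdo$; on the other hand, combining Lemmas \ref{proj} and \ref{pbic} with the projection formula (exactly as in the proof of the preceding proposition) yields
\[
\ICu(w^\beta)^-_\sL \conn \IC(\sdo)^+_\sL \simeq \ICu(w^\beta)^-_\sL[1] \oplus \ICu(w^\beta)^-_\sL[-1].
\]
Equating the stalks of the two sides at $\wdo$ and $\wdo\sdo$ produces
\[
(j_w^* \ICu(w^\beta)^-_\sL)[1] \oplus (j_w^* \ICu(w^\beta)^-_\sL)[-1] \simeq (j_{ws}^* \ICu(w^\beta)^-_\sL)[1] \oplus (j_{ws}^* \ICu(w^\beta)^-_\sL)[-1];
\]
the right-hand side is zero by induction, forcing $j_w^* \ICu(w^\beta)^-_\sL = 0$. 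The hardest step will be the positivity-based minimality verification in Case A; after that, the rest is bookkeeping parallel to the proof of Proposition \ref{stalk}.
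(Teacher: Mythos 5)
Your proof is correct, and it takes the same route as the paper's. The paper's proof of this proposition is just the one line ``The argument in \ref{stalk} applies'', so you have essentially reconstructed the intended argument, carrying over the induction on $\ell(w)$ from Proposition~\ref{stalk} and spelling out the two cases according to whether the simple reflection $s$ with $ws<w$ lies in $\ewlo$. A few small points of comparison with how it would actually be executed in the paper: (a) the Verdier duality reduction $j_w^!\leftrightarrow j_w^*$ and the vanishing off the support $\overline{\buu^{w^\beta}}$ and off $\pha_{\sL'}\extw_\sL$ are indeed needed and you state them correctly; (b) your verification that $w^\beta s$ is again a block minimum when $s\notin\ewlo$, via Lusztig's positive-coroot criterion and $\alpha_s^\vee\notin\widetilde{\Phi}^\vee_\sL$, is exactly the justification behind the sentence ``then $w^\beta s$ is also a minimal element in a block'' in the proof of Proposition~\ref{stalk}, so this is the right thing to say there; (c) in Case B the paper invokes Lemma~\ref{pbic} directly (``the stalks of $\ICu(w^\beta)^-_\sL$ at $\wdo$ and $\wdo\sdo$ are isomorphic''), whereas you route through the convolution $\ICu(w^\beta)^-_\sL\conn\IC(\sdo)^+_\sL$ and the direct-sum decomposition; these arguments use the same ingredients (Lemmas~\ref{proj} and \ref{pbic}) and reach the same conclusion. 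One minor bookkeeping caveat, which does not affect correctness: when you apply the induction hypothesis to $ws$, the hypothesis $ws\geq w^{\beta^*}$ need not hold; when it fails you must fall back on the support-vanishing trivial case rather than the IH proper, so it would be cleaner to state the induction as proving the vanishing for all $w\notin\beta$ (with the two trivial cases subsumed into the statement), rather than ``assuming them away'' before the induction starts.
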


\begin{proof}
The argument in \ref{stalk} applies. 
\end{proof}

\section{Maximal IC sheaves}\label{sect6}

In this section we introduce the averaging functor of maximal IC sheaf $\Av_! \pha \lptlm$ and compute its stalks. 
%The computation allows us to define a coalgebra structure on $\Av_! \pha \lptlm$.

\subsection{Averaging}\label{sectav}

The goal in this subsection is to define an averaging functor, which is similar to that in \cite[Section 4.4]{BY}. %, from $ {\ulad}^b_{(T \times T, \sL \boxtimes \sL'),m}(\gk / \biu)$ to ${\ulad}^b_{(T \times T, \sL \boxtimes \sL'),m}(\biu \bsl \gk / \biu)$. 

%In order to average the maximal IC sheaf (see section ??), we also need the notion of 

Let $J_w$ be a normal subgroup of $\bi$ chosen in Section \ref{positive} and $m_{J_w}: \biu / J_w \times \gksw / \biu \ra \gksw / \biu$ be the left multiplication map. It is a morphism between finite-dimensional varieties. We define the functor $\Av_{J_w,\le w,!}:D^b_m(\gksw / \biu) \ra D^b_m(\gksw / \biu)$ by sending $\sF$ to $m_{J_w,!}(\ql \boxtimes \sF) \langle 2 \dim(\biu/J_w) \rangle$. 

\begin{claim}
	The functor $\Av_{J_w,\le w,!}$ is independent of the choice of $J_w$. Hence we write $\Av_{\biu,\le w,!}$ for the functor. 
\end{claim}

\begin{proof}
Let $J_w'$ be another choice. Without loss of generality, we may assume $J_w' \subset J_w$. We have the following commutative diagram. 
\[
\begin{tikzcd}[row sep=large]
	\biu /J_w' \times  \gksw / \biu \arrow[d, "p \times id"] \arrow[r, "m_{J_w'}"] & \gksw / \biu  \\
	\biu  /J_w \times  \gksw / \biu  \arrow[ru, "m_{J_w}"'] & 
\end{tikzcd}\]
The claim follows from the fact that $p \times id$ is an affine space fibration of dimension $2 \dim(J_w/ J_w')$.
\end{proof}

\begin{claim}\label{avtrans}
The functor $\Av_{\biu,\le w,!}$ commutes with $*$-pullback. In other words, $i_{w,w'}^* \circ \Av_{\biu,\le w',!}$ is canonically isomorphic to $\Av_{\biu,\le w,!} \circ i_{w,w'}^*$ for $w \le w'$, where $i_{w,w'}:\gksw / \biu \ra \gkswp / \biu$ is the closed embedding. Moreover, the isomorphism is compatible with $w \le w' \le w''$. 
\end{claim}

\begin{proof}
For the first statement, we choose $J_w$ to be the same as $J_{w'}$ without loss of generality. Then we apply the proper base change theorem to the following Cartesian diagram,    
%for any $w \le w'$ and $\sF \in D^b_m(\gkswp / \biu)$, $i^*(\Av_{\biu,\le w',!}(\sF))$ is canonically isomorphic to $\Av_{\biu,\le w,!}(i^*(\sF))$ and 
\[
\begin{tikzcd}[row sep=large]
	\biu /J_{w'} \times  \gksw / \biu \arrow[d, "id \times i_{w,w'}"] \arrow[r, "m_{J_{w'}}"] & \gksw / \biu \arrow[d, "i_{w,w'}"]\\
	\biu / J_{w'}  \times \gkswp / \biu \arrow[r, "m_{J_{w'}}"]& \gkswp / \biu.
\end{tikzcd}\]
The second statement can be proved similarly.
\end{proof}

%As a result, ${\ulalim}\Av_{\biu, \le w!}: {\ulad}^b_{(T \times T, \sL \boxtimes \sL'),m}(\gk / \biu) \ra {\ulad}^b_{(T \times T, \sL \boxtimes \sL'),m}( \gk / \biu)$ is well defined. Here 
%\[ {\ulad}^b_{(T \times T, \sL \boxtimes \sL'),m}(\gk / \biu):=2-\ulalim_{\\w\in\extw}D^b_{(T \times T, \sL \boxtimes \sL'),m}(\gksw / \biu)\]
%is a projective system of triangulated categories with pullback functors $i_{w,w'}^*$. 

We recall the definition of $2-$limit of $D^b_{(T \times T, \sL' \boxtimes \sL^{-1}),m}(\gksw / \biu)$ before defining the averaging functor.

%2-\ulalim_{\\w\in\extw}

\begin{defn}\label{twolimit}
Denote the category ${\ulad}^b_{(T \times T, \sL' \boxtimes \sL^{-1}),m}(\gk / \biu)$ as the limit of the projective system of triangulated categories $D^b_{(T \times T, \sL' \boxtimes \sL^{-1}),m}(\gksw / \biu)$ with pullback functors $i_{w,w'}^*$. Concretely, the objects in this category are $(\sF_w, \chi_{w,w'})$, where $\sF_w$ is a complex of sheaves on $\gksw / \biu$ with the prescribed monodromy and $\chi_{w,w'}$ is an isomorphism from $\sF_w$ to $i_{w,w'}^*\sF_{w'}$ such that $i_{w,w'}^*\chi_{w',w''} \circ \chi_{w,w'}= \chi_{w,w''}$ whenever $w \le w' \le w''$. The morphism between $(\sF_w, \chi_{w,w'})$ and $(\sG_w, \psi_{w,w'})$ is a family of morphisms $\phi_w: \sF_w \ra \sG_w$ such that $\psi_{w,w'} \circ \phi_w = \phi_{w'} \circ \chi_{w,w'}$ for any $w \le w'$.
%We will denote this category by ${\ulad}^b_{(T \times T, \sL' \boxtimes \sL^{-1}),m}(\gk / \biu)$.

Similarly, we denote $\lpdul$ as the limit of the projective system of triangulated categories $D^b_{(T \times T, \sL' \boxtimes \sL^{-1}),m}(\biu \bsl \gksw / \biu)$ with $*$-pullback functors. Its objects and morphisms closely resemble those in ${\ulad}^b_{(T \times T, \sL' \boxtimes \sL^{-1}),m}(\gk / \biu)$, with the distinction that all sheaves and morphisms are required to be $\biu$-equivariant. 
\end{defn}

By utilizing Claim \ref{avtrans}, we are able to define the limit of functors ${\ulalim}\Av_{\biu, \le w!}: {\ulad}^b_{(T \times T, \sL' \boxtimes \sL^{-1}),m}(\gk / \biu)$ $\ra {\ulad}^b_{(T \times T, \sL' \boxtimes \sL^{-1}),m}( \gk / \biu)$. Any object in the image of ${\ulalim}\Av_{\biu, \le w!}$ has a natural $\biu$-equivariant structure. Therefore it makes sense to define the averaging functor as follows. 

\begin{defn}
Define $\Av'_!: {\ulad}^b_{(T \times T, \sL' \boxtimes \sL^{-1}),m}(\gk / \biu) \ra \pha \lpdul$ by remembering the $\biu$-equivariant structure. Explicitly, it sends $(\sF_w)_{w \in \extw}$ to $(m_{J_w,!}(\ql \boxtimes \sF_w) \langle 2 \dim(\biu/J_w) \rangle)_{w \in \extw}$. %Here \[ \lpdul:=2-\ulalim_{\\w\in\extw}D^b_{(T \times T, \sL' \boxtimes \sL^{-1}),m}(\biu \bsl \gksw / \biu),\] which can be defined similarly by using construction in Definition \ref{twolimit}.
\end{defn}

\begin{claim}\label{welldef}
For $w \le w' \in \extw$, the image of the composition map $\gksw / \biu \ra \gk / \biu \ra \buu$ is contained in $\buuswp$. 
\end{claim}

\begin{proof}
It reduces to the case when $w=w'$. After multiplying by a lift of a length-zero element, we can assume that $w$ is in the affine Weyl group. Let $w=s_{i_1}s_{i_2}\dots s_{i_n}$ be a reduced word expression. For each point $x \in \gkw$, we can write $x= x_{i_1}x_{i_2}\dots x_{i_n}$, where $x_i \in \gksi$. Suppose the image of $x$ is in the stratum $\buuwpp$ for some $w'' \in \extw$, we know that the image of $x x_{i_n}^{-1}$ is in either $\buuwpp$ or $\buuwppsin$ by the proof of Lemma \ref{stdstd}. Hence, we know the image of $e$ is in one of the strata $\buuwppy$ for some $y \le w$. Therefore $w'' \le w$. 
\end{proof}

Let $i_w^{w'}: \gksw / \biu \ra \buuswp$ be its restriction map. Since $i_w^{w'}$ is a map between two finite type stacks, it makes sense to $*$-pullback sheaves on $\buuswp$ to $\gksw / \biu$ via $i_w^{w'}$. For $\sF=(\sF_{w'}) \in \pha \lpdlm$, we first $*$-pullback $\sF_{w'}$ to a sheaf on $\gksw / \biu$ whenever $w \le w'$. When $w$ runs through $\extw$, we obtain a compatible system of sheaves on $\gk/ \biu$ and hence an object in the category ${\ulad}^b_{(T \times T, \sL' \boxtimes \sL^{-1}),m}(\gk / \biu)$. Therefore we obtain a functor $i^*$ from $\lpdlm$ to ${\ulad}^b_{(T \times T, \sL' \boxtimes \sL^{-1}),m}(\gk / \biu)$.

\begin{defn}
	We define the averaging functor $\Av''_!: \pha  \lpdlm \ra \pha \lpdul$ to be the composite $\Av'_! \circ i^*$. 
\end{defn}

By abuse of notation, we will use the same symbol $\Av_!$ to represent both averaging functors $\Av'_!$ and $\Av''_!$.

\subsection{Basic properties of the averaging functor} We write down some properties of the averaging functor $\Av_!$.

From the construction in Definition $\ref{twolimit}$, we know that $\lpdl$ can be viewed as a subcategory of $\lpdul$. The inclusion functor can be described explicitly. For any $\sF \in \pha \lpdl$ supported on $\gkswpp / \biu$, the image of $\sF$ is $(\sF_w, \chi_{w,w'})$ where $\sF_w$ is $i_{w,w'*} \sF$ for any $w \ge w''$. We reformulate this as the following lemma.   %Therefore we can average objects in $\lpdl$. 

\begin{lem}
Let $\sG \in \pha \lpdl$. $\Av_!\sG$ is canonically isomorphic to $\sG$ by viewing $\lpdl$ as a subcategory of $\lpdul$. 
\end{lem}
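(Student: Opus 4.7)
The plan is to unwind the definition of $\Av_!$ at each finite stratum $\gksw/\biu$ and use the standard fact that averaging a group-equivariant sheaf over the acting group (up to the appropriate cohomological shift) recovers the original sheaf.

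First I would fix $w \in \extw$ and work with the multiplication map $m_{J_w}: \biu/J_w \times \gksw/\biu \to \gksw/\biu$, together with the second projection $p_{J_w}: \biu/J_w \times \gksw/\biu \to \gksw/\biu$. The key observation is that $\biu$ is pro-unipotent, so $H_w := \biu/J_w$ is a unipotent group of dimension $N_w := \dim(\biu/J_w)$, hence geometrically isomorphic to $\A^{N_w}$. Given $\sG \in \pha \lpdl$, its pullback to $\gksw/\biu$ carries a canonical $\biu$-equivariant structure, which factors through $H_w$. This equivariance gives a canonical isomorphism $m_{J_w}^*\sG \cong p_{J_w}^*\sG = \ql \boxtimes \sG$. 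Consequently $m_{J_w,!}(\ql \boxtimes \sG) \cong m_{J_w,!}m_{J_w}^*\sG$.

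Next I would evaluate $m_{J_w,!}m_{J_w}^*\sG$ using the standard ``change of variables'' trick: the automorphism $\psi(h,x) = (h, h^{-1}x)$ of $H_w \times \gksw/\biu$ satisfies $m_{J_w} \circ \psi = p_{J_w}$, so that $m_{J_w,!}m_{J_w}^*\sG \cong p_{J_w,!}p_{J_w}^*\sG$. By the projection formula the latter equals $\sG \otimes p_{J_w,!}\ql$, and since $H_w$ is an affine space of dimension $N_w$ one has $p_{J_w,!}\ql_{H_w \times \gksw/\biu} = \ql_{\gksw/\biu}\langle -2N_w \rangle$. Combining yields a canonical isomorphism $m_{J_w,!}(\ql \boxtimes \sG)\langle 2N_w \rangle \cong \sG$, which is by construction independent of the choice of $J_w$ (one compares for $J_w' \subset J_w$ using the affine-space fibration already noted in the construction of $\Av_{\biu,\le w,!}$) and compatible with the inductive system in $w$ via the base-change isomorphisms $i_{w,w'}^* \circ \Av_{\biu,\le w',!} \cong \Av_{\biu,\le w,!} \circ i_{w,w'}^*$ already recorded above. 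Taking the 2-limit over $w \in \extw$ and remembering the $\biu$-equivariant structure gives the desired canonical isomorphism in $\lpdul$.

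The only mildly subtle points are bookkeeping: checking that the equivariance iso $m_{J_w}^*\sG \cong p_{J_w}^*\sG$ is compatible with the left and right $T$-monodromies $\sL'$ and $\sL^{-1}$ (it is, because the $T$-action commutes with $\biu$-multiplication on the left), and checking compatibility of the whole picture under $w \le w'$ and under change of $J_w$. Neither is a real obstacle, but they do require the same kind of Cartesian-diagram/base-change verifications used to define $\Av_!$ in the first place, so I would simply reference those diagrams rather than rewriting them.
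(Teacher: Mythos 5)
Your argument is correct and is precisely the unwinding of the definition that the paper leaves implicit (no proof is given in the text for this lemma). The one small imprecision is the parenthetical "because the $T$-action commutes with $\biu$-multiplication on the left": $T$ does not commute with $\biu$, it normalizes it; but this is exactly what is needed for the monodromic structure to descend through the averaging construction, so the conclusion stands.
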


\begin{lem}\label{avcom}
$\Av_!$ commutes with the convolution product. That is, $\Av_!(\sF \conp \sG)$ is canonically isomorphic to $\Av_!\sF \conp \sG$ for $\sF\in \pha \lppdlp$, $\sG \in \pha \lpdl$. 
\end{lem}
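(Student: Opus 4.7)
The plan is to reduce to finite-type substacks and apply proper base change. Fix a normal subgroup $J \subset \biu$ small enough to act trivially on the Schubert varieties in question, and choose representatives so that $\sF$ is supported on $\gksw/\biu$, $\sG$ on $\gkswp/\biu$, and $w'' \in M(w, w')$. Writing $m_J$ for the left-multiplication action $\biu/J \times (-) \to (-)$ on any of the Schubert varieties, the claim reduces to the identity
\[
m_{J,!}\bigl(\ql \boxtimes (\sF \conp \sG)\bigr) \,\cong\, \bigl(m_{J,!}(\ql \boxtimes \sF)\bigr) \conp \sG
\]
on $\gkswpp/\biu$; the Tate twist $\langle 2\dim(\biu/J)\rangle$ is symmetric and cancels.

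The geometric engine is the commutative square
\[
\begin{tikzcd}[column sep=large]
\biu/J \times \bigl(\gksw/J \times^{\bi/J} \gkswp/\biu\bigr) \arrow[r, "\mathrm{id}\times\mu^+"] \arrow[d, "m'"'] & \biu/J \times \gkswpp/\biu \arrow[d, "m_J"] \\
\gksw/J \times^{\bi/J} \gkswp/\biu \arrow[r, "\mu^+"'] & \gkswpp/\biu
\end{tikzcd}
\]
with $m'(h,(x,y)) = (hx, y)$; this is well-defined since $\biu$ preserves $\gksw$ from the left and commutes with the $\bi/J$-descent. The square is Cartesian by an explicit point check resting on associativity $h(xy)=(hx)y$. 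Since $\mu^+$ is proper on Schubert varieties, proper base change yields $m_{J,!} \circ (\mathrm{id}\times\mu^+)_* \cong \mu^+_* \circ m'_!$. Applying both sides to $\ql \boxtimes (\sF \odot \sG)$: the right-hand composition is $m_{J,!}\bigl(\ql \boxtimes (\sF \conp \sG)\bigr) = \Av_!(\sF \conp \sG)$, while the left-hand is $\mu^+_*\bigl((m_{J,!}(\ql\boxtimes\sF)) \odot \sG\bigr) = (\Av_!\sF) \conp \sG$ after the projection-formula identification $m'_!(\ql \boxtimes (\sF \odot \sG)) \cong (m_{J,!}(\ql \boxtimes \sF)) \odot \sG$, which expresses the fact that $m'$ only modifies the first factor.

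The main obstacle is verifying that this projection-formula identification descends cleanly through the $\biu/J$- and $\bi/J$-quotients built into the definition of $\odot$, and that the entire argument is compatible with the transition maps $i_{w_1,w_2}^*$ of the projective limit defining $\lpdul$. Both reduce to routine functoriality of proper base change under closed-embedding $*$-pullback and under torsor quotients, so the genuinely geometric content is concentrated in the Cartesian square above.
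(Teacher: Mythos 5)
Your proof is correct and elaborates exactly the base change that the paper invokes in a single line; the approach is the same. Two small calibrations in terminology, neither affecting correctness: the identity $m_{J,!}\circ(\mathrm{id}\times\mu^+)_*\cong\mu^+_*\circ m'_!$ is not literally proper base change but follows from properness of $\mu^+$ (so $*$-push equals $!$-push on both horizontal arrows) together with functoriality of $!$-pushforward under composition, which uses only commutativity of your square and not its Cartesianness; and the step $m'_!\bigl(\ql\boxtimes(\sF\odot\sG)\bigr)\cong\bigl(m_{J,!}(\ql\boxtimes\sF)\bigr)\odot\sG$ is a K\"unneth-plus-torsor-descent argument (lift along the $\bi/J$-torsor $\gksw/J\times\gkswp/\biu\to\gksw/J\times^{\bi/J}\gkswp/\biu$, apply K\"unneth, and descend, using that $m'$ commutes with the $\bi/J$-action because it touches only the left factor) rather than a projection formula.
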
 

\begin{proof}
This follows from the base change theorem.
\end{proof}

Let $\sF = (\sF_w)_{w \in \extw} \in \pha \lppdulp$ and $\sG \in \pha \lpdl$. We define the convolution product $\sF \conp \sG \in \pha \lppdul$ as follows. It suffices to construct $i_{\le w}^* (\sF \conp \sG) \in \pha \lppdul$ for any $w \in \extw$. Suppose $\sG \in \pha_{\sL'}D(\le w'')_{\sL}$, we choose $w' \in M(w, w''^{-1})$. The complex $i_{\le w}^*(\sF_{w'} \conp \sG)$ is independent of the choice of $w'$. We define $\sF \conp \sG$ to be the projective system $i_{\le w}^*(\sF_{w'} \conp \sG)$ for $w \in \extw$. 

\begin{cor}\label{avass}
Let $\sF  \in \pha \lppdlpm$ and $\sG \in \pha \lpdl$. Then $\Av_!(\sF \conn \sG)$ is canonically isomorphic to $(\Av_!\sF) \conp \sG$. In particular, $\Av_!(\lptlpm \conn \sG)$ is canonically isomorphic to $\avlptlpm \conp \sG$ for any $\sG \in \pha \lpdl$.
\end{cor}

\begin{proof}
Let $\sF = (\sF_w)_{w \in \extw} \in \pha \lppdlpm$ and $\sG \in \pha_{\sL'}D(\le w'')_{\sL}$. It suffices to prove that $i_{\le w}^* \Av_!(\sF \conn \sG)$ is canonically isomorphic to $i_{\le w}^* ((\Av_!\sF) \conp \sG)$ for each $w \in \extw$. Let $w' \in M(w, w''^{-1})$. We use the notation of Section \ref{sectav}. 

We have $i_{\le w}^* \Av_!(\sF \conn \sG)= i_{\le w}^* \Av'_!i^*(\sF \conn \sG)= i_{\le w}^* \Av'_!i_w^{w*}(\sF \conn \sG)$ by the definition of $\Av_!$. The choice of $w'$ implies that $i_w^{w*}(\sF \conn \sG)=i_w^{w*}(\sF_{w'} \conn \sG)$. Hence, $i_{\le w}^* \Av_!(\sF \conn \sG)=i_{\le w}^* \Av'_!i_w^{w*}(\sF_{w'} \conn \sG)$.

On the other hand, we have $i_{\le w}^* ((\Av_!\sF) \conp \sG)=i_{\le w}^* ((\Av'_! i_{w'}^{w'*}\sF) \conp \sG)$. The latter one is canonically isomorphic to $i_{\le w}^* (\Av'_! (i_{w'}^{w'*}\sF \conp \sG))$ by Lemma \ref{avcom}.

We claim that the functor $i^*$ commutes with the Hecke modification. In short, $i^*(\sF \conn \sG) = i^*\sF \conp \sG$. As a result, $i_{\le w}^*(i_{w'}^{w'*}(\sF_{w'}) \conp \sG) = i_w^{w*}(\sF_{w'} \conn \sG)$. Hence, $i_{\le w}^* \Av_!(\sF \conn \sG)$ is canonically isomorphic to $i_{\le w}^* ((\Av_!\sF) \conp \sG)$. 

Let us prove the claim. Let $\mu$ be the multiplication map from $\gk \times^\bi \gkswpp$ to $\gk$. Let $S$ be the preimage $\mu^{-1}(\gksw)$. The following commutative diagram is Cartesian, 

\[
\begin{tikzcd}[row sep=large]
	S / \biu \arrow[d, " i \widetilde{\times} id"] \arrow[r, "\mu"] & \gksw / \biu \arrow[d, "i"]\\
	 \hecbswpp \arrow[r, "pr_2"]& \buu,
\end{tikzcd}\]
where $ i \widetilde{\times} id$ is the restriction of the natural map from $\gk \times^\bi \gkswpp/ \biu$ to $\bium \bsl \gk \times^\bi \gkswpp/ \biu=\hecbswpp$. The diagram is commutative because both convolution and Hecke modification are induced by the multiplication map of $\gk$. Since both $\mu$ and $pr_2$ can be regarded as twisted products with $\gkswpp /\bi$, the diagram is Cartesian.

By applying the base change theorem to the Cartesian diagram above, we know that the functor $i^*$ commutes with the Hecke modification. 
\end{proof}

\begin{claim}\label{avuni}
Let $act:U \times X \ra X$ be an action of a smooth connected unipotent group $U$ on a variety $X$. Then the functor $act_!(\ql \boxtimes -) \langle 2 \dim U \rangle :D^b_m(X) \ra D^b_m(X/U)$ is left adjoint to the forgetful functor. 
\end{claim}	

\begin{proof}
It follows from the fact that $act^!(\sG) \langle -2 \dim U \rangle \cong act^*(\sG)  \cong \ql \boxtimes \sG$ for any $U$-equivariant complex $\sG$.
\end{proof}

\begin{cor}\label{Avfor}
$\Av_!$ is left adjoint to forgetful functor. In particular, we have $\Hom_{\biu \bsl \gk/ \biu}(\avlptlm, \sG)$ $\cong \Hom_{\gk/ \biu}(\pha \lptlm, \sG)$  for $\sG \in \pha \lpdl$.
\end{cor} 

%hat is, for any $\sF \in \pha  \lpdlm$ and $\sG \in \pha \lpdl$, we have a canonical isomorphism $\Hom_{\biu \bsl \gk/ \biu}(\Av_! \sF, \sG) \cong \Hom_{\gk/ \biu}(\sF, \sG)$.

\begin{proof}
	
Let $\sF \in {\ulad}^b_{(T \times T, \sL' \boxtimes \sL^{-1}),m}(\gk / \biu)$. Suppose $\sG$ is supported on $\gksw/ \biu$, we have 
\begin{align*}
	& \Hom_{\biu \bsl \gk/ \biu}(\Av_! \sF, \sG) &&\\
	\cong & \Hom_{\biu \bsl \gk/ \biu}(i_{\le w}^* \Av_! \sF, \sG) && \text{support assumption on } \sG \\
	=& \Hom_{\biu \bsl \gk/ \biu}(i_{\le w}^* \Av_! (i_w^{w*}\sF_w), \sG)  && \text{by construction of} \Av_!\\ 
	=&\Hom_{\biu \bsl \gk/ \biu}(\Av_! (i_w^{w*}\sF_w), \sG)  && \text{adjoint functors} 
\end{align*}
Similarly, we have $\Hom_{\gk/ \biu}(\sF, \sG) \cong \Hom_{\gk/ \biu}(i_w^{w*}\sF_w, \sG)$. Apply Claim \ref{avuni} to the case when $U=\biu/ J_w$ and $X= \gksw/ \biu$. 
\end{proof}

\begin{cor}
Recall that $\delta_\sL$ is the monoidal unit in $\ldl$. 
There is a canonical map $\epsilon_\sL: \avltlm \ra \delta_\sL$ which induces the identity map at stalk $e$. 
\end{cor}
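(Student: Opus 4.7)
The plan is to apply the adjunction of Lemma \ref{Avfor} to the rigidification morphism. By definition of a rigidified maximal IC sheaf, $\ltlm$ comes equipped with a canonical non-zero morphism $\epsilon_\sL : \ltlm \to \delta_\sL$ in the category of sheaves on $\gk/\biu$, uniquely pinned down by the requirement that it induce the identity on the stalk at $e$ (both stalks being canonically $\ql$). Under the adjunction
\[ \Hom_{\gk/\biu}(\ltlm,\delta_\sL) \;\cong\; \Hom_{\biu\bsl \gk/\biu}(\avltlm,\delta_\sL) \]
of Lemma \ref{Avfor}, this morphism corresponds to a canonical morphism $\avltlm \to \delta_\sL$, which we again denote $\epsilon_\sL$. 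Both the naturality and uniqueness of $\epsilon_\sL$ as a map in $\ldul$ are automatic from the adjunction.

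It remains to verify that this map induces the identity on the stalk at $e$. First, by Proposition \ref{stalk} applied to the neutral block $\beta = \ewlo$ (so that $w^\beta = e$ and $\ell_\beta(e)=0$), the $*$-restriction of $\ltlm$ to the identity stratum is canonically $\underline{C}(e)_\sL^- \cong \ql$, which coincides with the stalk of $\delta_\sL$ at $e$. Second, the formation of $\Av_!$ preserves the stalk at $e$: for any $J_w$ as in Section \ref{positive}, the fiber of $m_{J_w}: \biu/J_w \times \gksw/\biu \to \gksw/\biu$ over $e$ is $\biu/J_w \times \{e\}$, an affine space of dimension $\dim(\biu/J_w)$, and the shift $\langle 2\dim(\biu/J_w)\rangle$ in the definition of $\Av_!$ exactly cancels the top compactly supported cohomology of this affine space, giving a canonical isomorphism $(\avltlm)_e \cong (\ltlm)_e$. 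Under the adjunction, the counit unfolds this stalk isomorphism to the original identification, so $\epsilon_\sL$ restricts to the identity at $e$.

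The only potentially delicate point is ensuring that the stalk-at-$e$ computation of $\Av_!$ is compatible across the transition maps $i_{w,w'}$ in the projective-limit definition of $\ldul$ and $\ldlm$; this is guaranteed by the proper base change diagram recorded earlier in Section \ref{sect6}, together with the fact that $J_w$ can be chosen cofinally, so no further work is required.
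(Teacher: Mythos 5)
Your proof is correct and follows essentially the same route the paper takes implicitly: apply the adjunction of Lemma \ref{Avfor} to the rigidification morphism $\epsilon_\sL : \ltlm \to \delta_\sL$, and check that the stalk at $e$ is preserved. The paper records no proof of this corollary because it regards it as immediate from Lemma \ref{Avfor} together with Proposition \ref{avstalk} (the case $w=e$ of which already gives $i_e^*(\Av_!\ltulm) \cong \underline{C}(e)^+_\sL$); you instead re-derive the stalk-at-$e$ identification directly from the definition of $\Av_!$ by noting that the fiber of $m_{J_w}$ over $e$ is the affine space $\biu/J_w$ whose compactly supported cohomology is killed by the normalizing shift $\langle 2\dim(\biu/J_w)\rangle$. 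Either way the computation is the same; your version is slightly more self-contained but could have been shortened by citing Proposition \ref{avstalk} (and the lemma stating $\Av_!\sG \cong \sG$ for $\sG$ already in $\lpdl$, which gives $\Av_!\delta_\sL \cong \delta_\sL$) rather than unwinding $m_{J_w,!}$ by hand.
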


%First, notice that $\gkswm / \biu$ contains $\gksw / \biu$. It can be shown by induction and convolution. Faltings?? Kashiwara??

%For any partially ordered set $I$ and category $\sC$, we could view $I$ as a category whose objects are elements in $I$ and there is a unique morphism (resp. no morphisms) from $i'$ to $i$ if $i \le i'$ (resp. otherwise). Then we can define the category ${\ulalim}_I\sC$ as the category of functors from $I$ to $\sC$. % by viewing $I$ as a category whose morphisms whose objects are $(c_i \in \sC, \phi_{i,i'}:c_i \ra c_{i'}$ for any $i \le i')$ and morphisms from 

%In our situation, we take $I = (\extw , \le)$ and $\sC= D^b_{(T \times T, \sL \boxtimes \sL'),m}(\bium \bsl \gk / \biu) $. Denote the resulting category to be ${\ulad}^b_{(T \times T, \sL \boxtimes \sL'),m}(\bium \bsl \gk / \biu) $ and its objects are called \textbf{pro-negative sheaves}.

%Suppose now we have $\sG'=(\sG'_w)_{w \in \extw} \in {\ulad}^b_{(T \times T, \sL' \boxtimes \sL''),m}(\biu \bsl \gk / \biu)$,  $\sF \star_- \sG'_w$ is a negative sheaf for every $w \in \extw$. Morphism from $\sG'_w$ to $\sG'_{w'}$ induces a morphism from $\sF \star_- \sG'_w$ to $\sF \star_- \sG'_{w'}$. Hence $(\sF \star_- \sG'_w)_{w \in \extw}$ form a projective system and is a pro-negative sheaf. We denote it as $\sF \star_- \sG' \in {\ulad}^b_{(T \times T, \sL \boxtimes \sL''),m}(\bium \bsl \gk / \biu)$.

\subsection{Hyperbolic localization}

The stalks of $\avltlm$ can be computed via hyperbolic localization. To do so, we identify $\buu$ with $[\bium \bsl \gk / \biu]$ as in Section \ref{modsec} and make use of the loop rotation on $\gk$.

\subsubsection{Construction of the $\gm$-action}

We note that $\gm$ acts on $\gk$ by the loop rotation and we denote this action by $\rot$. Explicitly, when $a \in \gm$ and $x \in \gk$, $\rot(a)(x)$ is replacing the uniformizer $t$ in $x$ with $at$. 

%To apply the result in \cite{Br}, $\gk / \biu$ has to be truncated to a normal finite-dimensional variety.  

For any positive integer $n$, let $J^n$ be the unipotent group whose $A$-points are $ker(G(A[t^{-1}]) \ra G(A[t^{-1}]/(t^{-n})))$. As in Section \ref{modsec}, we can define $\bujn$ the moduli stack of $G$-bundles on $\p^1$ with a $J^n$-level structure at infinity and a $U$-reduction at zero. There is a natural map from $\bujn$ to $\bbb$. Let $\bujnw$ (resp. $\bujnsw$) be the preimage of $\bbbw$ (resp. $\bbbsw$). Being a bundle over a smooth and locally of finite type stack $\buu$, the stack $\bujn$ is also smooth and locally of finite type. It can be identified with $[J^n \bsl \gk / \biu]$ (c.f. \cite[Theorem 7]{F} or \cite[Section 2.4]{Y2}).

Fix $w \in \extw$ and $\wdo$ is a representative in the coset $N_{\gk}(\bold{T_K})(\fq) / \bold{T_\co}(\fq)$ corresponding to $w$ throughout this subsection. By \cite[Section 3]{F}, there exists a positive integer $n$ such that $\bujnsw$ is a variety. Fix such an $n$ and denote $J^n$ by $J_\infty$.  We construct a $\gm$-action $\phi_w$ on $\gk$ by declaring $\phi_w(a)(x):=\chi^\vee(a)\rot(a^{2h+1})(x)\lambda(a)$, where $\lambda(a)$ is defined to be $ (\rot(a^{2h+1})(\wdo))^{-1} \chi^\vee (a^{-1}) \wdo \in T$, $\chi^\vee$ is the sum of positive coroots and $h$ is the Coxeter number of $G$. Since both left and right $T$-action leave $J_\infty$, $\biu$, and $\gksw$ invariant, $\phi_w$ induces an action on $\bujsw$. Similarly, $\phi_w$ induces an action on $\bbjsw$, which we denote by $\phi_w'$. 

\begin{lem}\label{bunsm}
	Both $\bujsw$ and $\bbjsw$ are smooth varieties. In particular, they are normal.
\end{lem}

\begin{proof}
	As mentioned in Section \ref{modsec}, we know that $\buu$ is a smooth stack. Therefore the open substack $\buusw$ is also smooth. Since $\bujsw$ is an affine space bundle over $\buusw$, we know that $\bujsw$ is smooth. We can prove the statement for $\bbjsw$ similarly. 
\end{proof}

\subsubsection{Attracting set and repelling set}
It is clear that $\wdo T=[\jinf \bsl \jinf \wdo \bi / \biu]$ is fixed by $\phi_w$. Let $F$ be the connected component of fixed points of $\phi_w$ containing $\wdo T$. We want to find the attracting set $X_w^+:= \{x \in \bujsw: \lim_{a \ra 0} \phi_w(a)(x) \in F\}$ and the repelling set $X_w^-:= \{x \in \bujsw: \lim_{a \ra \infty} \phi_w(a)(x) \in F\}$. Similarly, we define $F'$ as the connected component of fixed points of $\phi_w'$ containing $w=[\jinf \bsl \jinf w \bi / \bi]$. We denote $X_w^{+'}$ as its attracting set and $X_w^{-'}$ as its repelling set.

\begin{lem}\label{hyperset2}
	\begin{enumerate}
	\item The fixed points of $\phi_w'$ are discrete. Moreover, they are indexed by $\{v: v \le w\}$.
	\item $X_w^{-'} = \bbjw$;
	\item $X_w^{+'} =[\jinf \bsl \jinf \gkw / \bi]=\gkw / \bi$.	
\end{enumerate}
\end{lem}

\begin{proof}
	Since $\chi^\vee(a) \lambda(a)$ belongs to $T$,  we can rewrite $\phi'_w(a)(x)$ as  $\Ad(\chi^\vee(a))(\rot(a^{2h+1})(x))$ for $x \in \bbjsw$. It follows that for any $v \le w$, the point $v=[\jinf \bsl \jinf v \bi / \bi]$ is fixed by $\phi_w'$. As $a$ tends to zero, $\Ad(\chi^\vee(a)) \circ  \rot(a^{2h+1})$ contracts $\biu$ to $e$. Therefore, $\phi'_w$ contracts $\bbjv$ to $v$ as $a \ra 0$. As $a^{-1}$ tends to zero, $\Ad(\chi^\vee(a))\circ \rot(a^{2h+1})$ contracts $\bium$ to $e$. Hence, $\phi'_w$ contracts $[\jinf \bsl \jinf \gkv / \bi]$ to $v$ as $a \ra \infty$. The statements now follow from Bruhat decomposition and Birkhoff decomposition. 
\end{proof}

% Notice that $[\jinf \bsl \jinf \gkw / \biu]$ can be naturally identified as $\gkw / \biu$, a $T$-torsor over $\biu \wdo \biu / \biu$. 

%By a dimensional counting argument, we know that the tangent spaces of $\biu \wdo \biu / \biu$ and $\bujw$ at $\wdo$ are linearly disjoint spanning subspaces of the tangent space of $\bujsw$ at $\wdo$. Since the two subsets are $\phi_w$-invariant, we know that 

We have the following results regarding $F$ and its corresponding attracting and repelling sets.

\begin{prop}\label{hyperset}
	\begin{enumerate}
		\item $\wdo T$ is a connected component of fixed points of $\phi_w$, that is $F=\wdo T$;
		\item $X_w^- = \bujw$;
		\item $X_w^+ =[\jinf \bsl \jinf \gkw / \biu]=\gkw / \biu$.	
	\end{enumerate}
\end{prop}

\begin{proof}
	Let $\pi$ be the projection from $\bujsw$ to $\bbjsw$. If a point $y \in \bujsw$ is in $F$ (resp. $X_w^{+}$, $X_w^{-}$), then $\pi(y)$ is in $F'$ (resp. $X_w^{+'}$, $X_w^{-'}$). The proof of Lemma \ref{hyperset2} shows that $X_w^+$ contains $[\jinf \bsl \jinf \gkw / \biu]$ and $X_w^-$ contains $\bujw$. The statements now follow from their analogs in Lemma \ref{hyperset2}.
\end{proof}

\subsubsection{Computation of stalks} We use hyperbolic localization to compute stalks of the averaging IC sheaf in this subsection. 

\begin{lem}\label{prodbase}
	Let $X$ be a variety and $x_0$ be a point in $X$. Let $A$ be a torus and $\sL$ be a rank one character sheaf on $A$. Let $e$ be the identity element of $A$. Suppose $\sF$ is a complex of sheaves on $A \times X$ such that it is $\sL$-equivariant with respect to the action of $A$ on the first coordinate, then the product of restriction maps of $\sF$ on $A \times x_0$ and $e \times X$ gives an isomorphism from $\sF$ to $\sF|_{A \times x_0} \boxtimes \sF|_{e \times X} = \sL \boxtimes \sF|_{e \times X} $.
\end{lem}

\begin{proof}%[Proof of Claim \ref{prodbase}]
	Let $act, pr: A \times (A \times X) \ra A \times X$ be the action and projection map respectively. From the assumption, we know that $a^*\sF \cong \sL \boxtimes pr^*\sF$ on $A \times A \times X$. The lemma follows from restricting this isomorphism to $A \times e \times X$. 
\end{proof} 

The following lemma is well-known and we will skip the proof.
\begin{lem}[Homotopy lemma] \label{homolem}
Let $act:\gm \times \A^n \ra \A^n$ be the standard scaling action. Let $\sF$ be a $\gm$-equivariant complex of sheaves on $\A^n$. Then $H^*_c(\A^n,\sF)$ is isomorphic to $i^!\sF$, where $i$ is the embedding of the origin into $\A^n$. 
\end{lem}

%Let $\pi: \bujsw \ra \buusw$ be the projection map. Then $\pi^* \pha \ltlm$ is $\phi_w$-equivariant. Hence hyperbolic localization is applicable to $\pi^* \pha \ltlm$. It states that $i_{\wdo T}^!i_{X^-_w}^*(\pi^* \pha \ltlm) \cong i_{\wdo T}^*i_{X^+_w}^!(\pi^* \pha \ltlm)$, where $i_{\wdo T}, i_{X^-_w}, i_{X^+_w}$ are the embeddings of $\wdo T, X^-_w, X^+_w$ into $\bujsw$ respectively.

\begin{prop}\label{avstalk}
Let $i_w$ be the embedding of $\gkw / \biu$ into $\gk / \biu$. When $w \in \ewlo$, $i_w^*(\Av_! \pha \ltulm)$ is isomorphic to $\underline{C}(w)^+_\sL[-\ell_{\sL}(w)]$.  When $w \notin \ewlo$, $i_w^*(\Av_! \pha \ltulm)$ vanishes. 
\end{prop}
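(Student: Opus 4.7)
The plan is to compute the stalks of $\Av_!\ltulm$ along $\gkw/\biu$ by passing from the ind-stack $\gk/\biu$ to the finite type scheme $\bujsw$ and invoking Braden's hyperbolic localization for the $\gm$-action $\phi_w$ constructed immediately above. First I would use that $\Av_!\ltulm$ is $\biu$-equivariant and that $\gkw/\biu$ is a single left $\biu$-orbit of $\wdo$; this reduces the problem to computing the stalk at $\wdo$, with the sheaf on $\gkw/\biu$ then determined up to the left $T$-monodromy encoded by $\underline{C}(w)^+_\sL$.

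Next I would identify the stalk $(\Av_!\ltulm)_{\wdo}$ with hyperbolic localization data for $\pi^*\ltulm$ at the $\phi_w$-fixed component $\wdo T$. By its very definition, $\Av_!$ replaces a stalk by the compactly supported cohomology of the $\biu$-orbit through that stalk, and inside $\bujsw$ this orbit is exactly the attracting cell $X^+_w = \gkw/\biu$ produced in the paper's construction. After equipping $\pi^*\ltulm$ with its natural $\phi_w$-equivariant structure, the contraction principle rewrites this orbit integral, up to a known shift, as $i_{\wdo T}^* (j_{X^+_w})^! \pi^*\ltulm$.

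Braden's theorem then supplies the isomorphism
\[i_{\wdo T}^* (j_{X^+_w})^! \pi^*\ltulm \;\cong\; i_{\wdo T}^! (j_{X^-_w})^* \pi^*\ltulm,\]
and the right hand side is fully computable. Indeed $X^-_w = \bujw$ and the projection $\bujw \to \buuw$ is a smooth affine fibration (a quotient by $\bium/J_\infty$), so $(j_{X^-_w})^* \pi^*\ltulm$ is the smooth pullback of $\ltulm|_{\buuw}$. By Proposition \ref{stalk2} this vanishes when $w\notin \ewlo$, which immediately yields the second half of the proposition. By Proposition \ref{stalk} it equals the smooth pullback of $\underline{C}(w)^-_\sL[\ell_\sL(w)]$ when $w\in \ewlo$, using $\ell_\beta(w)=\ell_\sL(w)$ for the neutral block $\beta = \ewlo$.

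For $w\in \ewlo$ it remains to apply $i_{\wdo T}^!$, which on a smooth pullback from $\buuw$ is $t$-exact and contributes only a shift by twice the relative dimension of $\wdo T$ inside $\bujw$. Combining this with the twist $\langle 2\dim(\biu/J_w)\rangle$ built into $\Av_!$ and with the shift from the contraction $p^+\colon X^+_w \to \wdo T$, the cumulative twist must collapse onto the single symbol $[-\ell_\sL(w)]$ appearing in the statement. The main obstacle is precisely this bookkeeping: one needs to pin down the relative dimensions of $\wdo T$ inside $X^-_w$, $X^+_w$ and $\bujsw$ (which differ by terms involving $\dim T$, $\ell(w)$ and $\dim(\biu/J_w)$), and to check that the half-Tate twist implicit in $\underline{C}(w)^+_\sL = \ql\langle \ell(w)\rangle$ and the shift $[\ell_\sL(w)]$ from Proposition \ref{stalk} combine with the contraction and averaging shifts to give exactly $\underline{C}(w)^+_\sL[-\ell_\sL(w)]$.
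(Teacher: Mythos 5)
Your approach is the same as the paper's (Braden hyperbolic localization for $\phi_w$ on $\bujsw$), but two of your intermediate steps are wrong in ways that do not cancel. First, the $\biu$-orbit of $\wdo$ in $\gk/\biu$ is $\biu\wdo\biu/\biu$, an $\ell(w)$-dimensional affine cell, \emph{not} the attracting set $X^+_w = \gkw/\biu=\bi\wdo\bi/\biu$, which is a $T$-torsor over that cell of dimension $\ell(w)+\dim T$. These differ by a $T$-factor, and the paper bridges the gap by a base change identifying $i_{\wdo}^!i_{\biu\wdo\biu/\biu}^*$ with $i_{\wdo}^*i_{\wdo T}^!i_{X^+_w}^*$; your proposal skips this step.

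Second, and more seriously, you have the $*$/$!$ decorations backwards in the contraction step. For the attracting set ($\lim_{a\to 0}\phi_w(a)x\in\wdo T$) the contraction principle gives $p^+_!\sG\cong (s^+)^!\sG$, so passing from the compactly supported cohomology to the fixed locus yields $i_{\wdo T}^!i_{X^+_w}^*(\pi^*\ltulm)$, not $i_{\wdo T}^*(j_{X^+_w})^!(\pi^*\ltulm)$. Applying Braden to the correct expression lands on $i_{\wdo T}^*i_{X^-_w}^!(\pi^*\ltulm)$, which after base change is $(\pi')^*i_{\buuw}^!\ltulm$ and hence $\underline{C}(w)^-_\sL[-\ell_\sL(w)]$ by Proposition \ref{stalk}; with your version you instead reach $(\pi')^*j_{\buuw}^*\ltulm=\underline{C}(w)^-_\sL[+\ell_\sL(w)]$, which is off by $[2\ell_\sL(w)]$ (and a Tate twist). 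This is not a shift you can recover in the ``bookkeeping'' paragraph: the further application of $i_{\wdo T}^!$ contributes a shift depending on $\dim T$ and $\dim\bujw$, not on $\ell_\sL(w)$, so the numbers cannot close. Once you start from $i_{\wdo T}^!i_{X^+_w}^*$, the remaining base changes are exactly those written in the paper's chain of equalities and the shifts cancel without any residual work.
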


\begin{proof}
	It suffices to show that the stalk at $\wdo$ is $\ql[-\ell_{\sL}(w)]$ when $w \in \ewlo$ and zero otherwise. We consider the following commutative diagram, 

\[\begin{tikzcd}
	\wdo \arrow[d, "d"] \arrow[r, "a"] & \wdo T  \arrow[d, "e"] \arrow[r,"b"] &  X_w^- \arrow[d, "f"] \arrow[r,"c"]  &  \buuw \arrow[d, "g"]\\
	\biu \wdo \biu \arrow[r, "h"]&X_w^+ \arrow[r, "i"] & \bujsw \arrow[r,"j"] & \buusw.
\end{tikzcd}\]
Here $c$ and $j$ are two torsors for the unipotent group $\biu / J_\infty$, and the other morphisms are closed embeddings. We have

%\begin{lem}
%Let $i:Z \ra X$ be the closed embedding of a variety $Z$ into a variety $X$. Let $G$ be a smooth algebraic group acting on $X$ such that $Z$ is $G$-invariant. Consider the following Cartesian diagram, 
%\[\begin{tikzcd}
%	Z \arrow[d, "p_Z"] \arrow[r, "i"] & X  \arrow[d,"p_X"] \\
%	\left [G\bsl Z \right] \arrow[r, "\bar{i}"]& \left [G \bsl X \right ].
%\end{tikzcd}\]
%For any sheaf $\sF$ on $[G \bsl X]$, there is a natural isomorphism from $p_Z^*\bar{i}^!\sF$ to $i^!p_X^*\sF$.
%\end{lem}
%\begin{proof}
%	By applying base change theorem to $p_X^*\sF$, we have a natural isomorphism $\bar{i}^!p_{X*}p_X^*\sF \xra{\sim} p_{Z*}i^!p_X^*\sF$. Composing with the natural transformation $\sF \ra p_{X*}p_X^*\sF$, we obtain a morphism $\bar{i}^! \sF \ra p_{Z*}i^!p_X^*\sF$, hence a morphism from $p_Z^*\bar{i}^!\sF$ to $i^!p_X^*\sF$. We claim that this morphism is an isomorphism. It suffices to check that on the stalk level. 
%\end{proof}
\begin{align*}
	i_{\wdo}^*(\Av_! \pha \ltulm) & = \tH^*_c(\biu \wdo \biu / \biu, \pha \ltulm) && \text{proper base change theorem} \\
	&= d^!h^*i^*j^*(\ltulm) && \text{Lemma } \ref{homolem} \\
	&= a^*e^!i^*j^*(\ltulm) && \text{Lemma } \ref{prodbase} \\
	&= a^*b^*f^!j^*(\ltulm) && \text{hyperbolic localization} \\
	&= a^*b^*f^*j^*(\ltulm)[2 \dim(\biu / J_\infty)] && f \text{ is smooth}  \\
	&= a^*b^*c^*g^*(\ltulm)[2 \dim(\biu / J_\infty)] \\
	&= a^*b^*c^*g^! (\ltulm) && g \text{ is smooth}  
	%&= i_{\wdo}^*i_{\wdo T}^*(\pi')^*\underline{C}(w)^-_\sL[-\ell_{\sL}(w)] && \text{Proposition \ref{stalk}} \\
	%&= \ql[-\ell_{\sL}(w)].
\end{align*}

The last term is $\ql[-\ell_{\sL}(w)]$ when $w \in \ewlo$ by Proposition \ref{stalk} and vanishes when $w \notin \ewlo$ by Proposition \ref{stalk2}. By Lemma \ref{bunsm}, we know that the variety $\bujsw$ is normal. Therefore, hyperbolic localization is applicable to the situation in Proposition \ref{hyperset}. 
\end{proof}

The same argument or Lemma \ref{avcom} shows that we can generalize the proposition to other maximal IC sheaves. 

\begin{prop} \label{stalkav}
Let $\beta \in \pha \lpewul$. Let $i_w$ be the embedding of $\gkw / \biu$ into $\gk / \biu$. When $w \in \beta$, $i_w^*(\Av_! \pha \ICu(w^\beta)_{\sL}^-)$ is isomorphic to $\underline{C}(w)^+_\sL[-\ell_{\sL}(w)]$.  When $w \notin \beta$, $i_w^*(\Av_! \pha \ICu(w^\beta)_{\sL}^-)$ vanishes. 
\end{prop}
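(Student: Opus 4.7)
The plan is to follow, essentially verbatim, the argument of Proposition \ref{avstalk}, with the observation that every ingredient of that proof except for the final stalk computation depends only on $w$ and not on the block $\beta$ or the specific maximal IC sheaf being averaged.

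First I would note that the $\gm$-action $\phi_w$ on $\gk$, its descent to $\bujsw$, and the analysis of the fixed-point locus and of the attracting/repelling sets $X_w^+ = [\jinf \bsl \jinf \gkw / \biu] = \gkw/\biu$ and $X_w^- = \bujw$ are constructed purely from $w$ (and the chosen lift $\wdo$). Next I would verify that $\pi^*\ICu(w^\beta)^-_\sL$ is $\phi_w$-equivariant — this is immediate because $\ICu(w^\beta)^-_\sL$ is $T$-monodromic on $\buu$ and the loop rotation, together with the conjugation by $\chi^\vee(a)$ and the translation by $\lambda(a) \in T$, act by automorphisms of $\bujsw$ whose action on the sheaf is controlled by the monodromy data. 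Then I would invoke Braden's hyperbolic localization to obtain the identification
\[
i_{\wdo T}^! i_{X_w^-}^*(\pi^*\ICu(w^\beta)^-_\sL) \;\cong\; i_{\wdo T}^* i_{X_w^+}^!(\pi^*\ICu(w^\beta)^-_\sL),
\]
and repeat the chain of base-change isomorphisms in the proof of Proposition \ref{avstalk} to conclude
\[
i_{\wdo}^*(\Av_!\,\ICu(w^\beta)^-_\sL) \;\cong\; i_{\wdo}^* i_{\wdo T}^* (\pi')^* i_{\buuw}^!(\ICu(w^\beta)^-_\sL).
\]

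At this final step the input is no longer about the neutral block: I plug in Proposition \ref{stalk}, which gives $j_w^! \ICu(w^\beta)^-_\sL \cong \underline{C}(w)_\sL^-[-\ell_\beta(w)]$ when $w \in \beta$, and Proposition \ref{stalk2}, which gives vanishing when $w \notin \beta$. This yields the stated formula on $\biu$-orbits and hence on the corresponding equivariant stacks. An alternative (also mentioned in the excerpt) would be to deduce the statement from Proposition \ref{avstalk} by using Lemma \ref{avcom} and Corollary \ref{avass} to rewrite $\Av_!\,\ICu(w^\beta)^-_\sL$ as the convolution of the averaged neutral-block maximal IC sheaf $\Av_!\,\underline{\Theta}^-_{w^\beta \sL}$ with an appropriate standard/IC sheaf supported on the stratum indexed by $w^\beta$; this is attractive because it makes the shift-by-length tracking transparent via Lemma \ref{stdstd} and Lemma \ref{equiv}.

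The main obstacle I anticipate is bookkeeping rather than substantive: one must check carefully that the $\phi_w$-equivariant structure on $\pi^*\ICu(w^\beta)^-_\sL$ is genuinely available (the character $\lambda(a)$ acts via the $T$-monodromy, and in the non-neutral block the right monodromy is $\sL$ while the left is $w^\beta \sL$, so the compatibility involves twisting by both character sheaves), and that the degree shift coming out of hyperbolic localization matches $-\ell_\beta(w)$ as predicted by Proposition \ref{stalk}. Once this is in place, the identification with $\underline{C}(w)^+_\sL[-\ell_\sL(w)]$ follows because the attracting stratum $\gkw/\biu$ is a $T$-torsor over $\biu \wdo \biu / \biu$ and the constant sheaf on it pulls back to $\underline{C}(w)^+_\sL$ after remembering the $\biu$-equivariance.
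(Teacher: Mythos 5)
Your proposal is correct and follows exactly the two routes the paper sketches: the paper's proof of Proposition \ref{stalkav} is the one-line remark that "the same argument or Lemma \ref{avcom}" generalizes Proposition \ref{avstalk}, and you have fleshed out the hyperbolic-localization argument as the primary route while correctly identifying the convolution-via-$\Av_!$-compatibility shortcut (Lemma \ref{avcom}, Corollary \ref{avass}, together with the fact that $\ICu(w^\beta)^-_{\sL}$ is the right convolution of the neutral-block maximal IC sheaf with a minimal IC sheaf, cf. Lemma \ref{equiv}) as the alternative. Both match the paper; the only point worth flagging is the notational caveat you already noticed, that the shift in the statement should be read as $\ell_\beta(w)$ (i.e.\ the length of $(w^\beta)^{-1}w$ in $\ewlo$), since $\ell_\sL$ is a priori only defined on $\ewlo$, and Propositions \ref{stalk}--\ref{stalk2} supply exactly this shift.
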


\begin{defn}
For $w \in \ewlo$, define $C(w)_\sL^\dagger := i_w^* \pha (\Av_! \pha \ltlm) \langle \ell_{\sL}(w) \rangle$. Proposition \ref{stalkav} shows that $\omega C(w)_\sL^\dagger$ is isomorphic to $\underline{C}(w)_\sL^+$. Define $\Delta(w)_\sL^\dagger := i_{w!}(C(w)_\sL^\dagger)$, $\nabla(w)_\sL^\dagger := i_{w*}(C(w)_\sL^\dagger)$, and $\IC(w)_\sL^\dagger := i_{w!*}(C(w)_\sL^\dagger)$. 
\end{defn}

\begin{lem}\label{chess}
	For $w \in \ewlo$, there is a unique map $\theta_w^\dagger: \Av_! \pha \ltlm \ra \IC(w)_\sL^\dagger \langle -\ell_{\sL}(w) \rangle$, whose restriction to $\gkw / \biu$ is the identity map of $C(w)_\sL^\dagger \langle -\ell_{\sL}(w) \rangle$.
\end{lem}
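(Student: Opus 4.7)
The plan is to compute $\Hom(\Av_!\ltlm, \IC(w)_\sL^\dagger\langle -\ell_\sL(w)\rangle)$ directly and show it is one-dimensional with the natural restriction-to-$\gkw/\biu$ map being an isomorphism onto $\Hom_{\gkw/\biu}(C(w)_\sL^\dagger\langle -\ell_\sL(w)\rangle, C(w)_\sL^\dagger\langle -\ell_\sL(w)\rangle) \cong \ql$; this will simultaneously yield existence and uniqueness of $\theta^\dagger_w$.

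I would begin with two standard reductions. By Lemma \ref{Avfor}, the adjunction identifies
\[\Hom(\Av_!\ltlm, \IC(w)_\sL^\dagger\langle -\ell_\sL(w)\rangle) \cong \Hom_{\gk/\biu}(\ltlm, \IC(w)_\sL^\dagger\langle -\ell_\sL(w)\rangle).\]
Since $\IC(w)_\sL^\dagger$ is supported on the finite-type variety $\gksw/\biu$, closed-embedding adjunction reduces the computation to a Hom space on $\gksw/\biu$ between $k^*\Av_!\ltlm$ and $j_{!*}C(w)_\sL^\dagger\langle -\ell_\sL(w)\rangle$, where $k:\gksw/\biu \hookrightarrow \gk/\biu$ is the closed embedding and $j:\gkw/\biu \hookrightarrow \gksw/\biu$ the open stratum inclusion.

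Next I would apply the open--closed distinguished triangle for $j$ (with closed complement $i$) to $k^*\Av_!\ltlm$ and take $\Hom(-, j_{!*}C(w)_\sL^\dagger\langle -\ell_\sL(w)\rangle)$. The resulting long exact sequence connects the target Hom to the open-stratum contribution $\ql$ (realized by restriction) and to boundary contributions $\Hom(i^*k^*\Av_!\ltlm, i^! j_{!*}C(w)_\sL^\dagger\langle -\ell_\sL(w)\rangle[n])$ for $n=-1,0$. The crucial step is to establish the vanishing of these boundary contributions. I would combine Proposition \ref{stalkav}, which gives $i_v^*\Av_!\ltlm \cong \underline{C}(v)_\sL^+[-\ell_\sL(v)]$ (up to Tate twist) for $v \in \ewlo$ and vanishing otherwise, with Proposition \ref{parity}'s parity and degree constraints on $i_v^!\IC(w)_\sL^+$. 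The defining property $i^!j_{!*}C \in \pha ^p D^{\ge 1}$, after the $\langle -\ell_\sL(w)\rangle$ twist, places the target in perverse degrees $\ge \ell_\sL(w)+1$, while $i_v^*\Av_!\ltlm$ on the stratum $\gkv/\biu$ lives in perverse degree $\ell_\sL(v)$; combined with the parity identity $\ell(u) \equiv \ell_\sL(u) \pmod{2}$ for $u \in \ewlo$ (each simple reflection of $\ewlo$ is a reflection in $\extw$, hence of odd length), the degree and parity constraints force the boundary Homs to vanish.

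The main obstacle will be the combinatorial input $\ell_\sL(v) < \ell_\sL(w)$ required for all $v \in \ewlo$ with $v \le w$ in the ambient Bruhat order of $\extw$. This monotonicity is not automatic since, by Remark \ref{orderdif}, the ambient Bruhat order and the block Bruhat order can differ; establishing it will likely require invoking Lusztig's structural analysis of $\ewlo$ as a reflection subgroup of $\extw$, or a Dyer-type length-comparison theorem for reflection subgroups. Once this monotonicity is in hand, the long exact sequence collapses to give both existence and uniqueness of $\theta^\dagger_w$ characterized by its restriction to $\gkw/\biu$.
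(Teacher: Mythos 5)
Your strategy — reduce to a Hom space over the closure of the top stratum, apply a single open--closed distinguished triangle, and kill the boundary contributions by a perverse-degree estimate — is sound and reaches the same conclusion as the paper, but by a genuinely different route. The paper instead filters $\IC(w)_\sL^\dagger$ by ambient length $\ell$, forms the associated spectral sequence, and (via Proposition~\ref{parity}, Proposition~\ref{avstalk}, and the parity $\ell_\sL(v)\equiv\ell(v)\bmod 2$) observes that the $E_1$-page is concentrated in degrees of one parity, so the spectral sequence degenerates ``like a chessboard''; the final degree count is delegated to \cite[Lemma~6.8]{LY}. Your version replaces the spectral sequence with a single triangle and a direct amplitude bound, which is more elementary but shifts the burden to the combinatorial fact you flag. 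That flag is exactly right: the vanishing of $\Hom^n\bigl(i^*\Av_!\,\ltlm, i^!\IC(w)_\sL^\dagger\langle -\ell_\sL(w)\rangle\bigr)$ for $n\le 1$ on the closed complement requires $\ell_\sL(v)<\ell_\sL(w)$ for every $v\in\ewlo$ with $v<w$ in the ambient Bruhat order. This does hold, and is precisely the content of Dyer's theorem for reflection subgroups: the Bruhat order on $(\ewlo,S_\sL)$ coincides with the restriction to $\ewlo$ of the Bruhat order on $\extw$, so $v<w$ in $\extw$ forces $v<_{\ewlo}w$ and hence $\ell_\sL(v)<\ell_\sL(w)$; note Proposition~\ref{order} gives only the easy direction $\le_\beta\Rightarrow\le$, so you do need to invoke Dyer (or an equivalent) for the converse. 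One small remark: the parity identity you invoke is essential to the paper's chessboard degeneration but is not actually used in your argument — the perverse-degree bound together with the Dyer-type monotonicity already forces the needed vanishing — and the Hom degrees you name for the boundary terms should be $n=0,1$ rather than $n=-1,0$ in the form of the triangle you use, though this is only an indexing slip.
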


\begin{proof}
	We mimic the proof of Theorem $3.4.1$ in \cite{BGS}. For any $n \in \N_{\ge 0}$, let $Y_n$ be the union of $\gkwp$ for all $w'$ such that $w' \le w$ and $\ell(w')=n$. With respect to this stratification, $\IC(w)_\sL^\dagger$ is filtered with the $n$-th subquotient $\bigoplus_{\ell(w')=n} i_{w'*}i_{w'}^! \IC(w)_\sL^\dagger$. This gives a spectral sequence whose $E_1$-page is $\bigoplus_{\ell(w')=n} \Hom^{\bullet}(i_{w'}^*\avltlm, i_{w'}^! \IC(w)_\sL^\dagger)$. We claim that the spectral sequence degenerates "like a chessboard". It suffices to show the degeneracy after base change to $k$. By Proposition \ref{parity} and Proposition \ref{avstalk}, $\Hom^{\bullet}(i_{w'}^*\avltulm, i_{w'}^! \underline{\IC}(w)_\sL^+)$ is the direct sum of $\Hom^{\bullet}(\underline{C}(w')_\sL^+[-\ell_\sL(w')], \underline{C}(w')_\sL^+[m])$, where $m \equiv \ell(w)-\ell(w') \mod 2$. Based on Lemma \ref{parityrel}, it is known that $\ell_\sL(w') \equiv \ell(w') \mod 2$, which implies the degeneracy of the spectral sequence. As a result, $M:=\Hom^{\bullet}(\avltlm, \IC(w)_\sL^\dagger)$ admits an increasing filtration $F_{\le n}$ by Fr-submodules whose associated graded piece $\Gr^F_nM$ is $\bigoplus_{\ell(w')=n}\Hom^{\bullet}(i_{w'}^*\avltlm, i_{w'}^! \IC(w)_\sL^\dagger)$. 
	
	After showing the existence of filtration, the argument in \cite[Lemma 6.8]{LY} implies both the existence and uniqueness of $\theta_w^\dagger$ by showing the quotient map $M \ra \Gr^F_nM$ is an isomorphism in degrees $\le 1$. 
	%shows that $Gr^F_nM$ is concentrated in degrees $ \ge 2$ for any $n < \ell(w)$. Therefore the
\end{proof}

\begin{lem} \label{daggerplus}
	 \begin{enumerate}[$(1)$]
		\item $\IC(e)_\sL^\dagger$ can be identified with $\delta_\sL$ canonically so that $\theta_e^\dagger$ coincides with the rigidification $\epsilon_\sL$. 
		\item When $s \in \extw$ is a simple reflection such that $s \in \ewlo$, there is a unique isomorphism $\iota_s : \IC(s)_\sL^\dagger \xra{\sim} \IC(s)^+_\sL$ so that $\iota_s \circ \theta_s^\dagger: \avltlm \ra \IC(s)^+_\sL \langle -1 \rangle$ restricts to the identity map on the stalks at $e$. Recall that $\IC(s)^+_\sL$ is introduced in Lemma \ref{proj}, and both stalks of $\avltlm$ and $\IC(s)^+_\sL \langle -1 \rangle$ at $e$ are equipped with an isomorphism with trivial $\Fr$-module $\ql$. 
	\end{enumerate}
\end{lem}

\begin{proof}
	See Lemma 6.9 in \cite{LY}.
\end{proof}

Hence, with the choice of a rigidified maximal IC sheaf, we can identify $\IC(s)_{\sL}^\dagger$ with $\IC(s)^+_{\sL}$.

\section{Characterization of averaging maximal IC sheaves}

In this section, we provide a characterization of maximal IC sheaves under averaging. For the sake of simplicity in notation, we omit the plus sign in positive sheaves throughout this section.

\subsection{Facts about general Coxeter group}

Let us review three facts about general Coxeter groups. Detailed proofs can be found in \cite{BB}. Let $(\sW,\sS)$ be a Coxeter group and $\ell$ be the length function. 

\begin{lem} \label{two}
	Let $u > v$ be two elements in $\mathcal{W}$ such that $\ell(u)=\ell(v)+2$. There exist exactly two elements $x_1$ and $x_2$ in $\mathcal{W}$ such that $u>x_i >v$ for $i=1,2$. 
\end{lem}

\begin{proof}
	See Lemma $2.7.3$ in \cite{BB}.
\end{proof}

\begin{lem}\label{ref}
	For any reflection $t$ in $\mathcal{W}$, there exists a palindromic reduced expression for $t$. 
\end{lem}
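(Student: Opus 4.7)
My plan is to start from an arbitrary reduced expression of $t$ and apply the Strong Exchange Condition to extract a reduced palindromic expression, by showing that the unique strong-exchange index lands exactly in the middle.

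I will use two standard facts. First, every reflection in a Coxeter group has odd length, since the sign character $w \mapsto (-1)^{\ell(w)}$ is a well-defined homomorphism sending each conjugate of a simple reflection to $-1$; thus $\ell(t) = 2k+1$ for some $k \ge 0$, and I fix a reduced expression $t = r_1 r_2 \cdots r_{2k+1}$. Second, I invoke the Strong Exchange Condition in its precise form: for a reduced expression $w = s_1 \cdots s_n$ and a reflection $t'$ satisfying $\ell(wt') < \ell(w)$, there is a unique index $i$ with $wt' = s_1 \cdots \hat{s_i} \cdots s_n$, and this $i$ is characterized by the conjugation identity $t' = s_n s_{n-1} \cdots s_{i+1} \cdot s_i \cdot s_{i+1} \cdots s_n$.

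I apply this to the reduced expression $r_1 \cdots r_{2k+1}$ of $t$ with the reflection $t$ itself; since $\ell(t \cdot t) = \ell(e) = 0 < \ell(t)$ this is legitimate. The Strong Exchange Condition then gives an index $i$ for which both $r_1 \cdots \hat{r_i} \cdots r_{2k+1} = e$ and $t = r_{2k+1} \cdots r_{i+1} \cdot r_i \cdot r_{i+1} \cdots r_{2k+1}$. Setting $a := r_1 \cdots r_{i-1}$, the first identity gives $r_{i+1} \cdots r_{2k+1} = a^{-1}$, so $t = a r_i a^{-1}$. Since $a$ is a prefix of a reduced expression, $\ell(a) = i-1$, and hence $2k+1 \le 2(i-1)+1$, forcing $i \ge k+1$. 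The second identity expresses $t$ as a conjugate of $r_i$ by $c := r_{2k+1} \cdots r_{i+1}$; this $c$ is a prefix of the reversed reduced expression $r_{2k+1} \cdots r_1$, itself reduced because $t = t^{-1}$, so $\ell(c) = 2k+1-i$ and $2k+1 \le 2(2k+1-i)+1$, forcing $i \le k+1$. Hence $i = k+1$, and with $a = r_1 \cdots r_k$ the word $r_1 r_2 \cdots r_k \cdot r_{k+1} \cdot r_k \cdots r_2 r_1$ is palindromic, represents $a \cdot r_{k+1} \cdot a^{-1} = t$, and has exactly $2k+1 = \ell(t)$ letters; it is therefore a reduced palindromic expression for $t$.

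The main technical point is pinning down the strong-exchange index $i$ from both sides: combining the deletion form of the Strong Exchange Condition with its conjugation characterization, together with the observation that $t = t^{-1}$ makes the reversed reduced expression also reduced, is what ultimately forces $i$ to be exactly the middle index.
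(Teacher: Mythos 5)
Your proof is correct. The paper itself does not prove Lemma~\ref{ref}: it simply cites \cite{BB} for the three Coxeter-group facts and moves on, so there is no in-paper argument to compare against. Your argument is a clean, self-contained derivation from the Strong Exchange Condition applied to the pair $(t,t)$, together with the parity of $\ell(t)$ coming from the sign character, and every step checks out: the uniqueness of the deleted index, the conjugation reformulation $t = r_{2k+1}\cdots r_{i+1}\,r_i\,r_{i+1}\cdots r_{2k+1}$, the two length bounds pinning $i=k+1$, and the final length count showing $r_1\cdots r_k r_{k+1} r_k\cdots r_1$ is reduced.

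One small streamlining worth noting: the deletion identity $e = r_1\cdots r_{i-1}r_{i+1}\cdots r_{2k+1}$ already forces $r_{2k+1}\cdots r_{i+1} = r_1\cdots r_{i-1}$ as group elements, so your $c$ equals your $a$. Since $a$ is a prefix of a reduced word of length $2k+1$ and $c$ is a prefix of the (also reduced) reversal, you get $\ell(a)=i-1$ and $\ell(a)=\ell(c)=2k+1-i$ directly, hence $i=k+1$ without invoking the bound $\ell(t)\le 2\ell(\cdot)+1$ at all. This is essentially the same argument, just with one fewer intermediate inequality.
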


\begin{proof}
	The statement can be proved by applying Theorem 1.4.3 in \cite{BB} to the case when $w=t$. Let $t=s_1s_2 \cdots s_k$ be a reduced word expression. The theorem states that there exists an index $i$ such that $s_1s_2 \cdots s_{i-1} s_{i+1} \cdots s_k$ equals to the identity. Therefore, we have $s_1s_2 \cdots s_{i-1}= s_k s_{k-1} \cdots s_{i+1}$ and $i-1=k-i$. As a result, $t=s_1s_2 \cdots s_{i-1} s_i s_{i-1 }\cdots s_2 s_1$ is a palindromic reduced expression. 
\end{proof}

\begin{defn} \label{defanti}
	Let $\sV$ be the set $\{(a,b)\in \sW^2: \ell(b)-\ell(a)=1$ and $a<b\}$.
	A $\ql$-valued function $f$ defined on $\sV$ is said to be \emph{anti-commutative} if it satisfies the following condition. For any $(v,u) \in \sW^2$ such that $\ell(u)=\ell(v)+2$ and $u>v$, we have \[f(v,x_1)f(x_1,u)+f(v,x_2)f(x_2,u)=0,\] where $x_1$ and $x_2$ are the two elements given in Lemma \ref{two}. 
\end{defn}

\begin{defn} \label{defconj}
Let $f$ and $g$ be two anti-commutative functions on $\sV$. We say $f$ is conjugated to $g$ if there exists an everywhere non-zero function $h$ on $\sW$ such that 
\[h(v)^{-1}f(v,u)h(u)=g(v,u)\]
for any $(v,u) \in \sV$.
\end{defn}

\begin{lem} \label{everynonzero}
	Let $f$ be an anti-commutative function on $\sV$. If $f(v,vs) \neq 0$ for any $v \in \sW$ and simple reflection $s$ in $\sW$ such that $v<vs$, then $f(v,u) \neq 0$ for any $(v,u) \in \sV$. 
\end{lem}

\begin{proof}
	We induct on the length $\ell(v)$. When $\ell(v)=0$, $u$ has to be a simple reflection in $\sW$.  Let $s_{i_1}s_{i_2} \dots s_{i_n}$ be a reduced word expression of $u$ in $\sW$. There exists an index $j$ such that $1 \le j \le n$ and $s_{i_1}s_{i_2} \dots s_{i_{j-1}} s_{i_{j+1}} \dots s_{i_n}$ is a reduced word expression of $v$. If $j$ equals to $n$, the assumption implies that $f(v,u)$ is non-zero. Hence we will assume $j<n$. We write $t:=s_{i_n}$. Then we have $ut< u$ and $vt< v$. 
	
	The function $f$ being anti-commutative implies $f(vt,v)f(v,u)+f(vt,ut)f(ut,u)=0$. By our induction hypothesis, we know that $f(vt,ut)$ is non-zero. Our assumption implies that $f(ut,u)$ is non-zero. Therefore $f(v,u)$ is non-zero. 
\end{proof}

\begin{prop} \label{conjfcn}
	Any two anti-commutative $\ql^*$-valued functions $f, g$ on $\sV$ are conjugate.
\end{prop}

\begin{proof}
	Let $\sW_{\le n}$ (resp. $\sW_n$) be the subset of $\sW$ containing elements of length not greater than $n$ (resp. equal to $n$). Let $\sV_{\le n}$ be the intersection of $\sW_{\le n-1} \times \sW_{\le n}$ and $\sV$.
	
	For every non-negative integer $n$, we will inductively construct a  function $h_{n}$ on $\sW_{\le n}$ such that $h_n(v)^{-1}f(v,u)h_n(u)=g(v,u)$ for any $(v,u) \in \sV_{\le n}$ and the restriction of $h_n$ on $\sW_{\le n-1}$ equals to $h_{n-1}$. 
	
	We set $h_0(e)=1$, where $e$ is the identity element in $\sW$. When $n$ equals to $1$, we set $h_1(s)=g(e,s)/f(e,s)$ for any simple reflection $s$ and $h_1(e)=1$. Now we assume that $n$ is greater than one and $h_{n-1}$ has already been constructed.
	
	For every $u \in \sW_n$, we select a reduced word expression $u:=s_{i_1}s_{i_2}\dots s_{i_n}$. Let $y=us_{i_n} \in \sW_{n-1}$. We set $h_n(u)$ to be $g(y,u)/f(y,u)h_{n-1}(y)$, and set $h_n(v)=h_{n-1}(v)$ for $v \in \sW_{n-1}$. We will show that $h_n$ satisfies the condition. 
	
	For any $(v,u) \in \sV_{\le n}$, there exists an index $j$ such that $v=s_{i_1}s_{i_2}\dots s_{i_{j-1}} s_{i_{j+1}} \dots s_{i_n}$. If $j=n$, then $v$ and $y$ coincide. By our assignment, we know that $h_n(v)^{-1}f(v,u)h_n(u)=g(v,u)$. When $j$ and $n$ are distinct, let $t$ be $s_{i_n}$. Since $vt<v$, we have the equations $f(vt,v)f(v,u)=-f(vt,y)f(y,u)$ and $g(vt,v)g(v,u)=-g(vt,y)g(y,u)$. We divide the first equation by the second equation and obtain \[\frac{h_n(vt)}{h_n(v)} \times \frac{f(v,u)}{g(v,u)}= \frac{h_n(vt)}{h_n(y)} \times \frac{h_n(y)}{h_n(u)}.\] Therefore $h_n$ satisfies the condition. Finally, we take $h$ to be the union of the functions $h_n$ and it satisfies the condition in Definition \ref{defconj}. 
\end{proof}

\subsection{Homomorphisms between standard sheaves}\label{homstds} In this subsection, we study homomorphisms between standard sheaves.

\begin{lem}\label{conj}
	Let $s$ be a simple reflection in $\ewlo$, there exists a minimal element $x$ in $_{\sL'}\extw_\sL$ such that $x^{-1} s x$ is a simple reflection in $\extw$ and $x^{-1} s x \in \extw^\circ_{x \sL}$.
\end{lem}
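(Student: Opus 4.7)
The plan is to construct $x$ geometrically from the alcove arrangement of $V:=X_*(T)\otimes_\Z\R$. First I would identify $s$ with the reflection along an affine hyperplane $H_s\subset V$, which is a wall of the fundamental alcove $A^\circ$ of $\ewlo$. Since $\widetilde{\Phi^\vee_\sL}\subseteq\widetilde{\Phi^\vee}$, every $\ewlo$-reflection hyperplane is also an $\extw$-reflection hyperplane; for compatibly chosen positive systems one has $A\subseteq A^\circ$, where $A$ is the fundamental alcove of $\extw$. Consequently $A^\circ$ is tiled by $\extw$-alcoves, and because $H_s$ bounds $A^\circ$, at least one $\extw$-alcove $B\subseteq A^\circ$ is adjacent to $H_s$; I would pick any such $B$.

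Next I would let $x\in\extw$ be the unique element with $x^{-1}\cdot A=B$, using that $\extw$ acts simply transitively on alcoves. Then $x^{-1}\cdot A=B\subseteq A^\circ$, which by the Lusztig characterization recalled just before the lemma---$w^\beta$ sends positive coroots of $\widetilde{\Phi^\vee_\sL}$ to positive coroots of $\widetilde{\Phi^\vee}$, or equivalently $(w^\beta)^{-1}\cdot A\subseteq A^\circ$---forces $x$ to be the minimum element of its block, for $\sL':=x\sL$. Moreover, $H_s$ is a wall of $B=x^{-1}\cdot A$, so the bijection between walls of $x^{-1}\cdot A$ and walls of $A$ induced by $x$ carries $H_s$ to a wall of $A$; the reflection along this wall, being a conjugate of $s$ by $x$, is a simple reflection of $\extw$. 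The membership in $\extw^\circ_{x\sL}$ is then automatic, because any conjugate of $s\in\ewlo=\extw^\circ_\sL$ by $x$ lies in $x\ewlo x^{-1}=\extw^\circ_{x\sL}$.

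The hardest part is bookkeeping rather than geometry: I need to verify that the conjugate that becomes simple is exactly $x^{-1}sx$ as written in the lemma, and not its mirror $xsx^{-1}$. The two versions differ by $x\leftrightarrow x^{-1}$, corresponding to whether one takes $x^{-1}\cdot A=B$ or $x\cdot A=B$ as the distinguished alcove inside $A^\circ$, and to a right- versus left-coset convention for ``minimum.'' If the direct construction above produces the mirror conjugate, I would substitute $x$ by $x^{-1}$. As a cross-check, Lemma~\ref{ref} applied to the Coxeter group $\extw$ writes $s=y\,t\,y^{-1}$ with $t$ a simple reflection of $\extw$ and $\ell_{\extw}(y)=(\ell_{\extw}(s)-1)/2$, and the alcove construction above identifies the correct choice of $y$ so that $x\in\{y,y^{-1}\}$ is simultaneously minimal in its block.
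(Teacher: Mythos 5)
Your proof is correct in substance and arrives at the same element $x$ as the paper, but the route is genuinely different. The paper's argument is purely combinatorial: apply Lemma~\ref{ref} to get a palindromic reduced expression $s=s_{i_1}\cdots s_{i_k}\cdots s_{i_1}$ in $\extw$, invoke Lemma~4.6 of \cite{LY} (the formula expressing $\ell_{\sL}$ in terms of which letters of a reduced word lie in the running $\extw^\circ_{\sL_j}$) to conclude that the only ``block step'' is the middle one, and take $x$ to be the half-word; minimality of $x$ is then $\ell_\beta(x)=0$. You instead work in the alcove picture: locate an $\extw$-alcove $B\subseteq A^\circ$ sharing the wall $H_s$ with $A^\circ$, pick $x$ with $x^{-1}\cdot A=B$, read off minimality from the reformulation ``$w^\beta$ is the unique $w$ in its block with $w^{-1}\cdot A\subseteq A^\circ$,'' and note that the wall $x\cdot H_s$ of $A$ gives a simple reflection. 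Translating, your gallery from $A$ to $B$ \emph{is} the palindromic word of the paper, so the two proofs encode the same data; what you buy is that minimality of $x$ becomes transparent (no appeal to Lemma~4.6 of \cite{LY}), at the cost of having to supply the coroot-to-alcove dictionary yourself, since the paper only states the coroot form of Lusztig's characterization.

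Two small points need attention. First, ``$\extw$ acts simply transitively on alcoves'' is not true for a general reductive $G$; it is the non-extended affine Weyl group $W_{\aff}=W\ltimes Q^\vee$ that does, while $\extw$ acts transitively with stabilizer $\Omega$. This does not harm the argument (any $x$ with $x^{-1}\cdot A=B$ works, and the whole coset $\Omega x$ consists of elements with $x^{-1}\cdot A=B$, hence all minimal), but the phrasing should be corrected. Second, on the $x^{-1}sx$ versus $xsx^{-1}$ issue: your construction with $x^{-1}\cdot A=B$ produces $xsx^{-1}$ simple, not $x^{-1}sx$, and indeed so does the paper's own choice $x=s_{i_{k-1}}\cdots s_{i_1}$ (one checks $xsx^{-1}=s_{i_k}$, whereas $x^{-1}sx$ is not even short); the lemma as printed almost certainly has a typo, and the condition $x^{-1}sx\in\extw^\circ_{x\sL}$ only becomes the automatic statement $x\,\ewlo\,x^{-1}=\extw^\circ_{x\sL}$ for the conjugate $xsx^{-1}$. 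Your proposed fix of ``substitute $x$ by $x^{-1}$'' is not quite a no-op: $x^{-1}$ is minimal in $_{\sL}\extw_{x\sL}$, i.e. with the source character sheaf changed, so one would need to recheck that the form of the lemma actually used in the glueability proposition (where the new simple reflection is $u^{-1}su$ for $u$ the support of the minimal IC sheaf being convolved) is satisfied. Since the paper's proof has the identical mismatch, this is not a defect peculiar to your argument, but the hedge ``bookkeeping rather than geometry'' somewhat understates the care needed to see that the intended and proved statements agree.
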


\begin{proof}
	We note that $s$ is in the affine Weyl group. By Lemma \ref{ref}, write $s=s_{i_1}s_{i_2} \cdots s_{i_k} \cdots s_{i_2}s_{i_1}$ in a palindromic reduced expression in $\extw$. Let $\sL_j=s_{i_j}\cdots s_{i_1}(\sL)$ for $j \ge 1$.  Since $\ell_{\sL}(s)=1$, we know that $s_{i_j} \notin \extw^\circ_{\sL_{j-1}}$ for $j<k$ and $s_{i_k} \in \extw^\circ_{\sL_{k-1}}$ by Lemma $4.6$ in \cite{LY}. We can choose $x$ to be $s_{i_{k-1}}\cdots s_{i_2}s_{i_1}$, which is a minimal element by Lemma \ref{minelt}. 
\end{proof}

Lemma \ref{conj} is essential to the proofs presented in this section as it enables us to prove statements involving simple reflections in $\ewlo$ by reducing them to statements involving simple reflections in $\extw$. For instance, we have the following lemma. 

\begin{lem}\label{stdicper}
	Let $s$ be a simple reflection in $\ewlo$. Let $u,v \in \beta$ such that $u<_{\beta}us$ and $v<_{\beta}sv$. Then both $\Delu(u) \star \ICu(s)$ and $\ICu(s) \star \Delu(v)$ are perverse. Dually, both $\nabu(u) \star \ICu(s)$ and $\ICu(s) \star \nabu(v)$ are perverse. 
\end{lem}

\begin{proof}
	The second statement can be obtained by applying Lemma \ref{Verdier} to the first statement. When $\sL$ is trivial, the first statement follows from Lemma $3(b)$ in \cite{AB}. Their proof can be generalized directly to the monodromic case. We will only prove the perversity of $\Delu(u) \star \ICu(s)$; the other case can be proved similarly. 
	
	Let $x$ be a minimal element in Lemma \ref{conj} such that $x^{-1}sx$ is a simple reflection in $\extw$ and $x^{-1}sx \in \extw^\circ_{x \sL}$. Proposition \ref{minconv} states that the equivalences $(-)\star \ICu(x)$ and $\ICu(x^{-1}) \star (-)$ preserve the $t$-structure. Moreover, $\ICu(x^{-1}) \star (-) \star \ICu(x)$ sends $\Delu(u)$ to $\Delu(x^{-1}ux)$ and $\ICu(s)$ to $\ICu(x^{-1}sx)$. Hence $\Delu(u) \star \ICu(s)$ is perverse if and only if $\Delu(x^{-1}ux) \star \ICu(x^{-1}sx)$ is perverse. Combining Lemma \ref{minelt} and Lemma \ref{conjmin}, we conclude that $x^{-1}ux<_{x^{-1}\beta x}x^{-1}usx$. Therefore we may assume $s$ is a simple reflection in $\extw$. %Proposition \ref{minconv} states that the equivalence $\ICu(w^{\beta,-1})\star (-)$ sends $\Delu(u)$ to $\Delu(w^{\beta,-1}u)$. Therefore we may further assume that 
	
	Note that $(-)\star \ICu(s)$ is isomorphic to $\pi_s^{+*}\pi^{+}_{s*}(-)[1]$, where $\pi_s^{+}$ is the projection map from the affine flag variety $\gk/ \bi$ to the partial flag variety $\gk / P_s$ (See \cite[Section 3.7]{LY} and Lemma \ref{proj}). The composition $\pi_s^+ \circ i_u$ is a locally closed affine embedding, where $i_u$ is the embedding of the $u$-stratum to the affine flag variety $\gk/ \biu$. Since $\pi_s^+$ is smooth, projective, and of relative dimension one, we know that $\pi_s^{+*}\pi^{+}_{s*} \Delu(u)$ is perverse.
\end{proof}

%\begin{cor}
%	Let $u \in \extw$
%\end{cor}

\begin{lem}\label{std3}
	Let $u,v$ be two elements in $\ewlo$ such that $\ell_{\sL}(u)+\ell_{\sL}(v)=\ell_{\sL}(uv)$. Then $\Delu(u) \star \Delu(v)$ and $\Delu(uv)$ are isomorphic.
\end{lem}

\begin{proof}
	By using induction on $\ell_{\sL}(v)$, we can assume that $v=s$ is a simple reflection in $\ewlo$. Let $x$ be a minimal element in Lemma \ref{conj} such that $x^{-1}sx$ is a simple reflection in $\extw$ and $x^{-1}sx \in \extw^\circ_{x \sL}$. Lemma \ref{conjmin} implies that $x^{-1}ux<_{\ewlo}x^{-1}usx$. By Proposition \ref{order}, we have $x^{-1}ux<x^{-1}usx$. Consequently, $\Delu(x^{-1}ux) \star \Delu(x^{-1}sx)$ and $\Delu(x^{-1}usx)$ are isomorphic by Lemma \ref{stdstd}.
	
	Proposition \ref{minconv} asserts that the equivalence $\ICu(x^{-1}) \star (-) \star \ICu(x)$ sends $\Delu(u)$, $\Delu(s)$, and $\Delu(us)$ to $\Delu(x^{-1}ux)$, $\Delu(x^{-1}sx)$, and $\Delu(x^{-1}usx)$, respectively. Hence, $\Delu(u) \star \Delu(s)$ and $\Delu(us)$ are isomorphic.
\end{proof}

\begin{prop}\label{stdleft}
	For $w \in \extw$, both functors $(-) \star \Delu(w)$ and $\Delu(w) \star (-)$ are left exact. Dually, $(-) \star \nabu(w)$ and $\nabu(w) \star (-)$ are right exact.
\end{prop}	

\begin{proof}
	We will only prove that the functor $(-) \star \Delu(w)$ is left exact. The rest of the statement can be proved similarly. When $\ell(w)=0$, the statements follow from Proposition \ref{minconv}. Hence, we can assume $w$ is in the affine Weyl group. By Lemma \ref{stdstd}, it suffices to consider the case when $w=s$ is a simple reflection in $\extw$. When $s \notin \ewlo$, the statements follow from Lemma \ref{equiv}. 
	
	Now, assuming that $s \in \ewlo$, we first prove that $\nabu(u) \star \Delu(s)$ is perverse for $u \in \extw$ (compare with \cite[Lemma 4]{Be}). If $u<us$, we have the following distinguished triangle, $\nabu(u) \star \Delu(e) \ra \nabu(u) \star \Delu(s) \ra \nabu(u) \star \ICu(s)$. By Lemma \ref{stdicper}, we know that $\nabu(u) \star \Delu(s)$ is perverse. If $u>us$, $\nabu(u) \star \Delu(s)$ is isomorphic to $\nabu(us)$ by Lemma \ref{stdstd} and Lemma \ref{inverse}.
	
	To show that the functor $(-) \star \Delu(s)$ is left exact, it suffices to prove that the perverse cohomology of $\ICu(u) \star \Delu(s)$ concentrates on non-negative degrees for any $u \in \extw$. We prove this by inducting on the length of $u$. Let $Q$ be the cokernel of the natural morphism $ \ICu(u) \ra \nabu(u)$. Note that $Q$ has a filtration whose subquotients are given by $\ICu(v)$ for $v<u$ and $\ICu(u) \ra \nabu(u) \ra Q$ is a distinguished triangle. By the induction hypothesis, the perverse cohomology of $Q \star \Delu(s)$ concentrates on non-negative degrees. Since $\nabu(u) \star \Delu(s)$ is perverse, we know that the perverse cohomology of $\ICu(u) \star \Delu(s)$ concentrates on non-negative degrees by applying the functor $(-) \star \Delu(s)$ to the above distinguished triangle. 
\end{proof}	

\begin{lem}\label{ext}
	Let $\sF$ and $\sG$ be two perverse sheaves in $\lpdl$. The group $\Ext^j(\sF, \sG)$ vanishes if $j<0$. In particular, the group $\Ext^j(\Delu(w_1), \Delu(w_2))=0$ if $j <0$. 
\end{lem}

\begin{proof}
	By definition, the category of perverse sheaves is the heart of the perverse t-structure of $\lpdl$. The first statement follows from the definition of the t-structure of triangulated categories. For the second statement, we note that both $\Delu(w_1)$ and $\Delu(w_2)$ are perverse sheaves. 
\end{proof}

%\begin{prop}\label{parityhom}
%	Let $w_1,w_2 \in \beta$ and let $k=\ell_\beta(w_2)-\ell_\beta(w_1)$. The group $\Ext^j(\Delu(w_1),\Delu(w_2))$ vanishes when $j$ and $k$ have different parity. 
%\end{prop}
%
%\begin{proof}
%	Corollary \ref{filter} implies that $\Delu(w_2)$ is a successive extension of $\ICu(y)[n]$ for $y \leq_{\beta} w_2$ and $n \in \Z$ such that $n$ and $\ell_\beta(w_2)-\ell_\beta(y)$ have same parity. It suffices to show $\Ext^j(\Delu(w_1), \ICu(y)[n])$ vanishes for any such $y$ and $n$.  We observe that $\Ext^j(\Delu(w_1), \ICu(y)[n])$ is isomorphic to the $!$-restriction $i_{w_1}^{!}\ICu(y)[n+j]$. The $!$-restriction vanishes whenever $n+j \not\equiv \ell(y)-\ell(w_1) \mod 2$ because of Proposition \ref{parity}. The statement now follows from Lemma \ref{parityrel}, which states $\ell_\beta(y)-\ell_\beta(w_1) \equiv \ell(y)-\ell(w_1) \mod 2$. 
%\end{proof}

\begin{defn}\label{defv}
Let $V_n^{\beta}$ be the set of $(w_1,w_2) \in \beta^2$ such that $w_1 <_\beta w_2$ and $\ell_\beta(w_1)+1= \ell_\beta(w_2)=n$. Let $V^{\beta}$ be the union of $V_n^{\beta}$ for $n \in \N$. When $\beta$ is the neutral block, we write $V_n^\circ$ and $V^\circ$ instead of $V_n^{\beta}$ and $V^{\beta}$. 
\end{defn}

%Notice that there is a natural bijection from $V_n^\circ$ to $V_{n, \beta}$ by left multiplication of $w^\beta$ (which is different from right multiplication of $w^\beta$). 

\begin{prop}\label{reflect}
	Let $ w_1, w_2 \in \beta$. If $\Hom^0(\Delu(w_1)[-\ell_{\beta}(w_1)], \Delu(w_2)[-\ell_{\beta}(w_2)+1])$ is non-zero, then $(w_1,w_2) \in V^{\beta}$. 
\end{prop}

\begin{proof}
	Lemma \ref{filter} states that the sheaf $\Delu(w_2)$ is a successive "extension" of $\ICu(y)[n]$ for $y \leq_{\beta} w_2$ and $n \in \Z$. Therefore $\Hom^0(\Delu(w_1)[-\ell_{\beta}(w_1)], \ICu[y][n])$ has to be non-zero for some $y \leq_\beta w_2$ and $n \in \Z$. Lemma \ref{vanstalk} implies that $w_1 \leq_\beta y$ for any such $y$. The transitivity of $\le_\beta$ implies $w_1 \le_\beta w_2$. The length relationship follows from Lemma \ref{ext}.
\end{proof}

The converse of Proposition \ref{reflect} is true.

\begin{lem} \label{onedim}
	Let $(w_1,w_2) \in V^{\beta}$. Then $\Hom^0(\Delu(w_1), \Delu(w_2))=\ql$. In particular, the dimension of $\hom^0(\Delta(\dot{w_1})_\sL, \Delta(\dot{w_2 })_\sL)$ is either $0$ or $1$. 
	
	%The cone of a non-zero homomorphism is isomorphic to the pullback of a standard sheaf on $\gk / P_s$, where $P_s$ is the parahoric subgroup of $\gk$ corresponding to $s$. 
\end{lem}

\begin{proof}
	The second statement follows from Lemma \ref{mixnonmix} and the first statement. Let $w_i':= (w^{\beta})^{-1}w_i$ for $i=1,2$. By Proposition \ref{minconv}, we know that the group $\Hom^0(\Delu(w_1), \Delu(w_2))$ is isomorphic to $\Hom^0(\Delu(w_1'), \Delu(w_2'))$. Let $w_2'=s_{i_1}s_{i_2}\cdots s_{i_k}$ be a reduced expression in terms of simple reflections in $\ewlo$. Then there exists an index $j$ such that $w_1'=s_{i_1}s_{i_2}\cdots s_{i_{j-1}} s_{i_{j+1}} \cdots s_{i_k}$. Since left (right) convolution with $\Delu(s_{i_l})$ is an equivalence for $1 \le l \le k$ by Lemma \ref{inverse}, it suffices to show that $\Hom^0(\Delu(e)_\sL, \Delu(s_{i_j})_\sL)=\ql$. Let $x$ be a minimal element in Lemma \ref{conj} such that $x^{-1}s_{i_j}x$ is a simple reflection in $\extw$ and $x^{-1}s_{i_j}x \in \extw^\circ_{x \sL}$. By Proposition \ref{minconv}, we know that $\Hom^0(\Delu(e)_\sL, \Delu(s_{i_j})_\sL)=\Hom^0(\Delu(e)_{x \sL}, \Delu(x^{-1}s_{i_j}x)_{x \sL})$. This latter group is $\ql$, by a standard argument as in the usual affine Hecke category. 
\end{proof}

\begin{prop} \label{Vermainj}
	Let $(w_1,w_2) \in V^{\beta}$. Any non-zero map from the perverse sheaf $\Delu(w_1)$ to the perverse sheaf $\Delu(w_2)$ is injective. 
\end{prop}

\begin{proof}
	Based on Lemma \ref{onedim}, it suffices to show a non-zero morphism is injective. Similar to the proof in Lemma \ref{onedim}, we can assume $\beta$ is the neutral block. 
	
	Let $w_2=s_{i_1}s_{i_2}\cdots s_{i_k}$ be a reduced expression in terms of simple reflections in $\ewlo$. Then $w_1=s_{i_1}s_{i_2}\cdots s_{i_{j-1}} s_{i_{j+1}} \cdots s_{i_k}$  for some $j$. For simplicity, we write $u:=s_{i_1}s_{i_2}\cdots s_{i_{j-1}}$, $v:=s_{i_{j+1}} \cdots s_{i_k}$, and $s:=s_{i_j}$. Then $w_1=uv$ and $w_2=usv$. 
	
	We claim that a (every) non-zero map from $\Delu(u)$ to $\Delu(us)$ is injective. Let $x$ be a minimal element in Lemma \ref{conj} such that $x^{-1}sx$ is a simple reflection in $\extw$ and $x^{-1}sx \in \extw^\circ_{x \sL}$. Proposition \ref{minconv} states that the functors $(-)\star \ICu(x)$ and $\ICu(x^{-1}) \star (-)$ preserve $t$-structure. Moreover, $\ICu(x^{-1}) \star \Delu(u) \star \ICu(x) \cong \Delu(x^{-1}ux)$ and $\ICu(x^{-1}) \star \Delu(us) \star \ICu(x) \cong \Delu(x^{-1}uxx^{-1}sx)$. Therefore we may and will assume $s$ is a simple reflection in $\extw$ and $s \in \ewlo$. In this case, there is a distinguished triangle $\Delu(u) \ra \Delu(us) \ra \Delu(u) \star \ICu(s) \xra{[1]}$. Lemma \ref{stdicper} implies that $\Delu(u) \star \ICu(s)$ is perverse. Hence, the morphism from $\Delu(u)$ to $\Delu(us)$ is injective. Similarly, a (every) non-zero map from $\Delu(v)$ to $\Delu(sv)$ is injective.

	By applying $\Delu(u) \star (-) \star \Delu(v)$ to $\Delu(e) \ra \Delu(s) \ra \ICu(s) \xra{[1]}$, we obtain a distinguished triangle $\Delu(u) \star \Delu(v) \xra{\iota} \Delu(u) \star \Delu(s) \star \Delu(v) \ra \Delu(u) \star \ICu(s) \star \Delu(v) \xra{[1]} $. The first map $\iota$ can be viewed as a morphism from $\Delu(w_1)$ to $\Delu(w_2)$ by Lemma \ref{std3}. The goal is to show that $\iota$ is injective. Given that the first two terms are perverse, the injectivity of $\iota$ is equivalent to the perversity of $\Delu(u) \star \ICu(s) \star \Delu(v)$. From the distinguished triangle, we infer that the perverse cohomology of $\Delu(u) \star \ICu(s) \star \Delu(v)$ is concentrated in degrees $-1$ and $0$. Proposition \ref{stdleft} implies that $\Delu(u) \star \ICu(s) \star \Delu(v)$ is perverse. 
%	
%	
%		
%	Therefore, it is sufficient to demonstrate that $\Delu(u) \star \ICu(s) \star \Delu(v)$ remains invariant under the Verdier duality $\D$. This is true because 
%	\begin{align*}
%		\D( \Delu(u) \star \ICu(s) \star \Delu(v)) & \cong \D( \Delu(u) \star \ICu(s)) \star \nabu(v) && \text{Lemma } \ref{Verdier}\\
%		&\cong \Delu(u) \star \ICu(s) \star \nabu(v) && \Delu(u) \star \ICu(s)\text{ is perverse}  \\
%		&\cong \Delu(u) \star \D(\ICu(s) \star \Delu(v)) &&  \\
%		&\cong \Delu(u) \star \ICu(s) \star \Delu(v) && \ICu(s) \star \Delu(v) \text{ is perverse.}
%	\end{align*}
\end{proof}

\begin{rem}
	This proposition can be seen as the sheaf analogue of the result stating that any non-zero homomorphism between Verma modules is injective (c.f. \cite[Theorem 4.2]{Hu}).
\end{rem}

\begin{prop}\label{Vermavan}
	Let $w_1,w_2 \in \beta$ such that $\ell_\beta(w_1) \geq \ell_\beta(w_2)$ and $w_1 \neq w_2$. The group $\Ext^k(\Delu(w_1),\Delu(w_2))$ vanishes for any integer $k$.
\end{prop}

\begin{proof}
	When $k$ is negative, it follows from Lemma \ref{ext}. Now, we assume $k$ is non-negative. Given that convolving with the minimal sheaf $\ICu(w^\beta)$ is an equivalence, as stated in Proposition \ref{minconv}, it suffices to prove the case when $\beta$ is the neutral block.
	
	We induct on the length $n:=\ell_\beta(w_2)$. When $n=0$, the statement is trivial. Let $s_{i_1}s_{i_2} \dots s_{i_m}$ (resp. $s_{j_1}s_{j_2} \dots s_{j_n}$) be a reduced word expression of $w_1$ (resp. $w_2$) in $\ewlo$. Let $x$ be a minimal element in Lemma \ref{conj} such that $x^{-1}s_{j_n}x$ is a simple reflection in $\extw$ and $x^{-1}s_{j_n}x \in \extw^\circ_{x \sL}$. Since $\IC(x^{-1}) \star (-) \star \IC(x)$ is an equivalence, as stated in Proposition \ref{minconv}, $\Ext^k(\Delu(w_1),\Delu(w_2))$ is isomorphic to $\Ext^k(\Delu(x^{-1}w_1x),\Delu(x^{-1}w_2x))$. Because of Lemma \ref{conjmin}, we may and will assume $s_{j_n}$ is a simple reflection in $\extw$.
	
	If $\ell_{\beta}(w_1)>\ell_{\beta}(w_1s_{j_n})$, then $\Delu(w_1) \cong \Delu(w_1s_{j_n}) \star \Delu(s_{j_n})$ and $\Delu(w_2) \cong \Delu(w_2s_{j_n}) \star \Delu(s_{j_n})$. Since $(-) \star \Delu(s_{j_n})$ is an equivalence by Lemma \ref{inverse}, we know that the group $\Ext^k(\Delu(w_1),\Delu(w_2))$ is isomorphic to $\Ext^k(\Delu(w_1s_{j_n}),\Delu(w_2s_{j_n}))$. The latter group is zero by the induction hypothesis. Hence, we can assume $\ell_{\beta}(w_1)<\ell_{\beta}(w_1s_{j_n})$.
	
	In the proof of Proposition \ref{Vermainj}, we show that there are two exact sequences of perverse sheaves, 
	\[0 \ra \Delu(w_2s_{j_n}) \ra \Delu(w_2) \ra \Delu(w_2) \star \ICu(s_{j_n}) \ra 0, \label{eq:1} \tag{$1$}\]
	\[0 \ra \Delu(w_1) \ra \Delu(w_1s_{j_n}) \ra \Delu(w_1) \star \ICu(s_{j_n}) \ra 0. \label{eq:2} \tag{$2$}\]
	
	By applying $\Hom(\Delu(w_1),-)$ to \eqref{eq:1}, we get a long exact sequence \[0 \ra \Hom^0(\Delu(w_1),\Delu(w_2s_{j_n})) \ra \Hom^0(\Delu(w_1),\Delu(w_2)) \ra \Hom^0(\Delu(w_1),\Delu(w_2)\star \ICu(s_{j_n})) \ra \] 
	\[\cdots \ra \Ext^k(\Delu(w_1),\Delu(w_2s_{j_n})) \ra \Ext^k(\Delu(w_1),\Delu(w_2)) \ra \Ext^k(\Delu(w_1),\Delu(w_2)\star \ICu(s_{j_n})) \ra \cdots.\]
	
	Since $\ell_{\beta}(w_1) > \ell_{\beta}(w_2s_{j_n})$, we know that $\Ext^i(\Delu(w_1),\Delu(w_2s_{j_n}))=0$ for any $i \in \Z$ by the induction hypothesis. Hence, $\Ext^i(\Delu(w_1),\Delu(w_2))$ is isomorphic to  $\Ext^i(\Delu(w_1),\Delu(w_2)\star \ICu(s_{j_n}))$.
	
	By applying $\Hom(-,\Delu(w_2))$ to \eqref{eq:2}, we get a long exact sequence 
	\[0 \ra \Hom^0(\Delu(w_1)\star \ICu(s_{j_n}),\Delu(w_2)) \ra \Hom^0(\Delu(w_1s_{j_n}),\Delu(w_2)) \ra \Hom^0(\Delu(w_1),\Delu(w_2)) \ra \] 
	\[\cdots \ra \Ext^k(\Delu(w_1)\star \ICu(s_{j_n}),\Delu(w_2)) \ra \Ext^k(\Delu(w_1s_{j_n}),\Delu(w_2)) \ra \Ext^k(\Delu(w_1),\Delu(w_2)) \ra \cdots .\] 
	
	By the induction hypothesis, we know that $\Ext^i(\Delu(w_1s_{j_n}),\Delu(w_2))=0$ for any $i \in \Z$. Hence, $\Ext^i(\Delu(w_1)\star \ICu(s_{j_n}),\Delu(w_2))$ is isomorphic to $\Ext^{i-1}(\Delu(w_1),\Delu(w_2))$. 
	
	By Lemma \ref{proj}, $\Ext^i(\Delu(w_1),\Delu(w_2)\star \ICu(s_{j_n}))$ is isomorphic to $\Ext^i(\Delu(w_1)\star \ICu(s_{j_n}),\Delu(w_2))$. Therefore we conclude that $\Ext^i(\Delu(w_1),\Delu(w_2))$ is isomorphic to $\Ext^{i-1}(\Delu(w_1),\Delu(w_2))$ for any $i \in \Z$. Since $\Ext^{-1}(\Delu(w_1),\Delu(w_2))=0$, the proposition is proved.
\end{proof}

\begin{prop} \label{difftwo}
	Let $v,u \in \beta$ such that $v <_\beta u$ and $\ell_\beta(v)+2=\ell_\beta(u)$. The group $\Hom^0(\Delu(v),\Delu(u))$ is a one-dimensional $\ql$-vector space. 
\end{prop}

\begin{proof}
	Without loss of generality, we may assume $\beta$ is the neutral block. Let $s_{i_1}s_{i_2} \dots s_{i_n}$ be a reduced word expression for $u$ in $\ewlo$. Then there exist two indices $j$ and $k$ such that $1 \le j < k \le n$ and $v=s_{i_1}s_{i_2} \dots s_{i_{j-1}}s_{i_{j+1}} \dots s_{i_{k-1}}s_{i_{k+1}} \dots s_{i_n}$. For simplicity, let $w_1:=s_{i_1}s_{i_2} \dots s_{i_{j-1}}$,  $w_2:=s_{i_{j+1}} \dots s_{i_{k-1}}$, $w_3:=s_{i_{k+1}} \dots s_{i_n}$, $s:=s_{i_j}$, and $t:=s_{i_k}$. Then $u=w_1sw_2tw_3$ and $v=w_1w_2w_3$. Since both $\Delu(w_1) \star (-)$ and $(-) \star \Delu(w_3)$ are equivalences by Lemma \ref{inverse}, the group $\Hom^0(\Delu(v),\Delu(u))$ is isomorphic to $\Hom^0(\Delu(w_2),\Delu(sw_2t))$. Similar to Proposition \ref{Vermavan}, we may assume $t$ is a simple reflection in $\extw$. 
	
	In the proof of Proposition \ref{Vermainj}, we show that there are two exact sequences of perverse sheaves, 
	\[0 \ra \Delu(sw_2) \ra \Delu(sw_2t) \ra \Delu(sw_2) \star \ICu(t) \ra 0, \label{eq:3} \tag{$3$}\]
	\[0 \ra \Delu(w_2) \ra \Delu(w_2t) \ra \Delu(w_2) \star \ICu(t) \ra 0. \label{eq:4} \tag{$4$}\]

	Lemma \ref{onedim} implies that $\Hom(\Delu(w_2),\Delu(sw_2))$ is one-dimensional. By applying the functor $\Hom(\Delu(w_2),-)$ to the exact sequence \eqref{eq:3}, we have the following exact sequence
	\[0 \ra \Hom(\Delu(w_2),\Delu(sw_2)) \ra \Hom(\Delu(w_2),\Delu(sw_2t)) \ra \Hom(\Delu(w_2),\Delu(sw_2) \star \ICu(t)). \]
	Hence, it suffices to show that $\Hom(\Delu(w_2),\Delu(sw_2) \star \ICu(t))$ vanishes.
	
	By Lemma \ref{proj}, we know that $\Hom(\Delu(w_2),\Delu(sw_2) \star \ICu(t))$ is isomorphic to $\Hom(\Delu(w_2) \star \ICu(t), \Delu(sw_2))$. By applying the left-exact functor $\Hom(-,\Delu(sw_2))$ to the exact sequence \eqref{eq:4}, we know that $\Hom(\Delu(w_2) \star \ICu(t), \Delu(sw_2))$ embeds into $\Hom(\Delu(w_2t), \Delu(sw_2))$. The last group vanishes because of Proposition \ref{Vermavan}. 
\end{proof}

\begin{lem} \label{twobeta}
	Let $v,u \in \beta$ such that $v <_\beta u$ and $\ell_\beta(v)+2=\ell_\beta(u)$. There exist exactly two elements $x_1$ and $x_2$ such that $(v,x_i) \in V_\beta$ and $(x_i,u) \in V_\beta$ for $i=1,2$.
\end{lem}

\begin{proof}
 	Let $u':=(w^\beta)^{-1}u$ and $v':=(w^\beta)^{-1}v$. From Lemma \ref{two}, we know that there exist exactly two elements $y_1,y_2 \in \ewlo$ such that $v'<y_i<u'$ for $i=1,2$. Hence $x_i:=w^{\beta}y_i$ satisfy the condition. 
\end{proof}

\begin{lem}\label{cupinj}
	Let $x_1$ and $x_2$ be the two elements given in Lemma \ref{twobeta}. The composition map $\circ_i: \Hom^0(\Delu(v),\Delu(x_i)) \times \Hom^0(\Delu(x_i),\Delu(u)) \ra \Hom^0(\Delu(v),\Delu(u))$ is an isomorphism for $i=1,2$. 
\end{lem}

\begin{proof}
	For $i = 1,2$, Proposition \ref{Vermainj} implies that any morphism in $\Hom^0(\Delu(v),\Delu(x_i))$ is injective. Likewise, any morphism in $\Hom^0(\Delu(x_i),\Delu(u))$ is injective. By Proposition \ref{difftwo}, both the source and target spaces have dimension one. The statement follows from the fact that the composition of two injective maps is non-zero.
\end{proof}

\subsection{Monostalk complexes}
	Let $\sF \in \pha \lpdul$ and let $S$ be a subset of $\extw$. We denote $\sF_{S}$ as the extension by zero of $\sF$ on ${\bigcup_{w \in S}{\gkw /\biu}}$. When $S$ equals to $\{w\}$ (resp. $\{w' \in \extw: w' \le w\}$), we write $\sF_w$ (resp. $\sF_{\le w}$) instead of $\sF_S$.
	
	Let $\beta \in \pha \lpewul$ and $\extw^\beta_{\le n}$ be the elements $w$ in $\beta$ such that $\ell_\beta(w) \le n$. Let $cl(\extw^\beta_{\le n})$ be the subset of $w' \in \extw$ such that $w' \le w$ for some $w \in \extw^\beta_{\le n}$. Let $\extw^\beta_{n}$ be $cl(\extw^\beta_{\le n}) \bsl cl(\extw^\beta_{\le n-1})$.
	
	For any sheaf $\sF$ with support contained in $\beta$ and any subset $T$ in $\N_{\ge 0}$, let $\sF_T$ be $\sF_{\bigcup _{t \in T} \extw^\beta_{t}}$. When $T$ equals to the set $\{n\}$ (resp. $\{m \in \N_{\ge 0} : m \le n\}$, $\{m \in \N_{\ge 0} : n'< m \le n\}$), we write $\sF_n$ (resp. $\sF_{\le n}$, $\sF_{(n',n]}$) instead of $\sF_T$. 

We define $\lpdulk$ to be the non-mixed analog of $\lpdul$. There is a natural functor $\omega: \pha \lpdul \ra \pha \lpdulk$ (see Section \ref{sectFrob}).

\begin{defn}\label{definemono}
	Let $\sF$ be a complex in $\lpdulk$. We call $\sF$ to be a \emph{monostalk complex} with respect to $\beta$ if it satisfies the following properties:
	\begin{enumerate}
		\item $\sF_w = 0$ for $w \notin \beta$;
		\item $\sF_w \cong \Delu (w)_\sL[-\ell_\beta(w)]$ for $w \in \beta$;
	\end{enumerate}
\end{defn}

We say that $\sF \in \pha \lpdul$ is a monostalk complex if $\omega \sF$ is a monostalk complex in the non-mixed category $\lpdulk$. 

%\begin{itemize}
%	\item [(2')] $\sF_w \cong \Delu (w)_\sL[-\ell_\beta(w)]$ for $w \in \beta$;
%\end{itemize}

%It is clear that $\omega \sF$ is a monostalk complex when $\sF$ is a monostalk complex. 

\begin{prop}\label{dissupp}
	Let $\sF$ be a monostalk complex with respect to $\beta$. There is an isomorphism between $\sF_n$ and $\oplus_{w \in \extw^\beta_{n}} \sF_{w}$. Equivalently, $\sF_n$ is isomorphic to the direct sum $\oplus_{w \in \beta : \ell_\beta(w)=n} \sF_{w}$. 
\end{prop}

\begin{proof}
	First, we consider the case when $\sF$ is non-mixed. By successively applying the open-closed distinguished triangle  on $\sF_n$ with respect to the Schubert stratification, we establish the existence of a natural "filtration" of $\sF_n$ whose subquotients are given by $i_{w!}i_w^*\sF$ for $w \in \extw^\beta_{n}$, where $i_w$ is the embedding of $\gkw / \biu$ into $\gk /\biu$. It suffices to show that there are no non-trivial extensions between any $i_{w!}i_w^*\sF=\sF_w$ for $w \in \extw^\beta_{n}$, that is $\Ext^1(\sF_{w_1}, \sF_{w_2})=0$ for $w_1 \neq w_2 \in \extw^\beta_{n}$. It follows from Proposition \ref{Vermavan} and property (2) in Definition \ref{definemono}. The proof is similar for the mixed case. Because of Lemma \ref{mixnonmix}and Proposition \ref{Vermavan}, we know that $\ext^1(\sF_{w_1}, \sF_{w_2})=0$ for $w_1 \neq w_2$. 
\end{proof}	

\begin{prop}\label{deccan}
	Let $\sF$ be a monostalk complex with respect to $\beta$. There exists a unique decomposition of $\sF_n$ into a direct sum of shifted perverse sheaves $\oplus_{w \in \extw^\beta_{n}} \sF^{can}_w$ such that each $\sF^{can}_w$ is isomorphic to $\sF_w$ for any $w \in \extw^\beta_{n}$.
\end{prop}

\begin{proof}
	We first note that property $(2)$ in Definition \ref{definemono} implies that $\sF_n$ is a shifted perverse sheaf. By Proposition \ref{dissupp}, we know that $\sF_n$ is isomorphic to the direct sum of $\sF_w$ for $w \in \extw^\beta_{n}$. 
	
	We recall the following fact about splittings in a general abelian category: for any objects $A$ and $B$ in an abelian category, the set of splittings of a trivial extension $0 \ra A \ra E \ra B \ra 0$ is a $\Hom(B,A)$-torsor, where $E$ is isomorphic to the trivial extension $A \oplus B$. 
	
	To prove that there is a unique splitting of $\sF_n$ as the desired direct sum, it suffices to prove that $\Hom^0(\sF_{w_1},\sF_{w_2})$ vanishes for any distinct $w_1, w_2 \in \extw^\beta_{n}$. This follows from Proposition \ref{Vermavan}. 
	
	The proof is similar for the mixed case.
\end{proof}

\begin{lem}\label{injproj}
	Let $w \in \beta$ such that $\ell_{\beta}(w)=n$. Let $S$ be the set $\{w' \in cl(\extw^\beta_{\le n}): w' \ge w\}$. Let $j_{\ge w}$ be the open embedding of $\bigcup_{w' \in S}\gkwp /\biu$  into $\bigcup_{w' \in cl(\extw^\beta_{\le n})}\gkwp /\biu$. The natural morphism  $j_{\ge w !}j_{\ge w}^*\sF_n \ra \sF_n$ can be regarded as the embedding of $\oplus_{w' \in S \cap \extw^\beta_{n}} \sF^{can}_{w'}$ into $\sF_n$. Similarly, let $T$ be the set $\{w' \in cl(\extw^\beta_{\le n}): w' \le w\}$ and let $i_{\le w}$ be the closed embedding of $\bigcup_{w' \in T}\gkwp /\biu$  into $\bigcup_{w' \in cl(\extw^\beta_{\le n})}\gkwp /\biu$. The natural morphism $\sF_n \ra i_{\le w!} i_{\le w}^*\sF_n$ can be regarded as the projection map of $\sF_n$ onto $\oplus_{w' \in T \cap \extw^\beta_{n}} \sF^{can}_{w'}$.

\end{lem}

\begin{proof}
	For $w' \in \beta$ such that $\ell_{\beta}(w')=n$, the shifted perverse sheaf $j_{\ge w !}j_{\ge w}^*\sF_{w'}$ is isomorphic to $\sF_{w'}$ if $w' \ge w$, and it is zero otherwise. Similarly, the shifted perverse sheaf $i_{\le w!} i_{\le w}^*\sF_{w'}$ is isomorphic to $\sF_{w'}$ if $w' \le w$, and it is zero otherwise.
\end{proof}

\begin{lem}\label{extinj}
	Let $\sF$ be a monostalk complex with respect to $\beta$. For $j \le n-1$, the group $\Ext^{j}(\sF_{\le n-j-1},\sF_n)$ vanishes. Hence, the natural map $\Ext^j(\sF_{\le n-j}, \sF_n) \ra \Ext^j(\sF_{n-j}, \sF_n)$ is injective. The analogous statement for the mixed case holds, that is $\ext^{j}(\sF_{\le n-j-1},\sF_n)$ vanishes.
\end{lem}

\begin{proof}
	For every positive integer $m$, there exists an open-closed distinguished triangle $\sF_m \ra \sF_{\le m} \ra \sF_{\le m-1} \xra{[1]}$. It implies that $\sF_{\le m}$ is a successive extension of $\sF_k$ for $k \le m$. Since $\sF_m$ is of perverse degree $m$, the first statement follows from Lemma \ref{ext}. By applying $\Hom(-,\sF_n)$ to the distinguished triangle $\sF_{n-j} \ra \sF_{\le n-j} \ra \sF_{\le n-j-1} \xra{[1]}$, we obtain an exact sequence $\Ext^j(\sF_{\le n-j-1}, \sF_n) \ra \Ext^j(\sF_{\le n-j}, \sF_n) \ra \Ext^j(\sF_{ n-j}, \sF_n)$. The second statement follows from the vanishing of the group $\Ext^j(\sF_{\le n-j-1}, \sF_n)$. The mixed case follows from Lemma \ref{mixnonmix}. 
\end{proof}	

The distinguished triangle $\sF_n \ra \sF_{\le n} \ra \sF_{\le n-1} \xra{[1]}$ gives an element $\sE_n:={[\sF_{\le n}]}$ in the group $\Ext^1(\sF_{\le n-1}, \sF_n)$. By applying the functor $\Hom(-,\sF_n)$ to the triangle corresponding to $\sE_{n-1}$, we obtain the first row of the following diagram, which is an exact sequence.
\[\begin{tikzcd}[column sep = small]
	\ \Ext^1(\sF_{\le n-2}, \sF_n) \arrow[r] \arrow[d, equal,"Lemma \ref{extinj}"]
	& \Ext^1(\sF_{\le n-1}, \sF_n) \arrow[r, "\iota"]
	& \Ext^1(\sF_{n-1}, \sF_n) \arrow[r, "\partial"] \arrow[d,equal, "Proposition \ref{deccan}"] 
	& \Ext^2(\sF_{\le n-2}, \sF_n) \arrow[d, hook, "\phi"',"Lemma \ref{extinj}" ]  \\
	0 
	& \sE_n={[\sF_{\le n}]} \arrow[u, phantom, sloped, "\in"]
	& \bigoplus \Ext^1(\sF^{can}_{w'}, \sF^{can}_{w}) 
	& \Ext^2(\sF_{n-2}, \sF_n)
\end{tikzcd}\]

Here, the direct sum runs through all pairs $(w',w)$ in $V_n^{\beta}$. The first term vanishes because of Lemma \ref{extinj}. The third term is the direct sum of extension groups of individual shifted standard sheaves because of Proposition \ref{deccan}. By Lemma \ref{onedim}, each summand is a  one-dimensional $\ql$-vector space. The injectivity of the last vertical map $\phi$ follows directly from the second statement in Lemma \ref{extinj}. The boundary map $\partial$ is the cup product with the extension class $\sE_{n-1}$ in $\Ext^1(\sF_{\le n-2}, \sF_{n-1})$.

The injectivity of $\phi$ implies the following sequence is exact. 
\[0 \ra \Ext^1(\sF_{\le n-1}, \sF_n) \xra{\iota} \Ext^1(\sF_{n-1}, \sF_n) \xra{\phi \circ \partial} \Ext^2(\sF_{n-2}, \sF_n). \label{eq:fundamental} \tag{$\spadesuit$}\]

\begin{lem} \label{cup2}
	Let $\sE_{n-2,n-1}:=[ \sF_{\{n-2,n-1\}}] \in \Ext^1(\sF_{n-2}, \sF_{n-1})$ be the extension class corresponding to the open-closed distinguished triangle $\sF_{n-1} \ra \sF_{\{n-2,n-1\}} \ra \sF_{n-2} \xra{[1]}$. The composition map $\phi \circ \partial$ in \eqref{eq:fundamental} is the cup product with $\sE_{n-2,n-1}$. 
\end{lem}

\begin{proof}
	Let $j$ be the open embedding of ${\bigcup_{w \in \extw^\beta_{n-2} \cup \extw^\beta_{n-1}}{\gkw /\biu}}$ into ${\bigcup_{w \in  cl(\extw^\beta_{\le n-1})}{\gkw /\biu}}$. There is a natural transformation $T$ from the functor $j_!j^*$ to the identity functor $Id$. We observe that $j_!j^*\sF_{\le n-1}$ is canonically isomorphic to $\sF_{\{n-2,n-1\}}$. Likewise, we know that $j_!j^*\sF_{\le n-2}$ is canonically isomorphic to $\sF_{\le n-2}$. Hence, we have the following distinguished triangle morphism, 
	\[\begin{tikzcd}
		\	
		\sF_{n-1} \arrow[r] \arrow[d, equal]
		& \sF_{\{n-2,n-1\}} \arrow[r] \arrow[d,"T"]
		& \sF_{n-2} \arrow[r, "\sE_{n-2,n-1}"] \arrow[d, "T"] 
		& \sF_{n-1}{[1]} \arrow[d, equal]  \\
		\sF_{n-1} \arrow[r]
		& \sF_{\le n-1} \arrow[r]
		& \sF_{\le n-2} \arrow[r, "\sE_{n-1}"]
		& \sF_{n-1}{[1]}.
	\end{tikzcd}\]

The map $\phi$ is induced by $T$ in our construction, while the map $\partial$ is induced by $\sE_{n-1}$. The lemma is a consequence of the commutativity of the rightmost square in the diagram.
\end{proof}

\begin{defn}\label{decompcan2}
Let $\sF$ be a monostalk complex with respect to $\beta$. According to Proposition \ref{deccan}, the group $\Ext^1(\sF_{n-1},\sF_{n})$ is canonically isomorphic to $\bigoplus_{(w',w) \in V^{\beta}_{n}} \Ext^1(\sF_{w'}^{can},\sF_{w}^{can})$. We define $\pi_{w',w}=\pi_{w',w}^{\sF}$ as the projection map from $\Ext^1(\sF_{n-1},\sF_{n})$ onto the $(w',w)$-component $\Ext^1(\sF_{w'}^{can},\sF_{w}^{can})$ under this isomorphism. 
\end{defn}

\begin{lem}\label{restsimple}
	Let $w \in \beta$ and $s$ be a simple reflection in $\extw$ such that $s \in \ewlo$ and $ws>_\beta w$. Let $n:=\ell_{\beta}(ws)$. We denote $\sE_{w,ws}$ as the extension class in $\Ext^1(\sF_w,\sF_{ws})$ representing the open closed distinguished triangle $\sF_{ws} \ra \sF_{\{w,ws\}} \ra \sF_{w}$. Then $\pi_{w,ws}\sE_{n-1,n}$ is non-zero if and only if $\sE_{w,ws}$ is non-zero.
\end{lem}	

\begin{proof}
We use the notation of Lemma \ref{injproj}. Let $i$ be the closed embedding $i_{\le ws}$ and $j$ be the open embedding $j_{\ge w}$.
We have the following distinguished triangle morphisms, 
\[\begin{tikzcd}
	\	
		\sF_{n} \arrow[r]
	& \sF_{\{n-1,n\}} \arrow[r]
	& \sF_{n-1} \arrow[r, "\sE_{n-1,n}"]
	& \sF_{n}{[1]} \\
		j_!j^*\sF_{n} \arrow[r] \arrow[u] \arrow[d]
	& j_!j^*\sF_{\{n-1,n\}}  \arrow[r] \arrow[u] \arrow[d]
	& j_!j^*\sF_{n-1} \arrow[r]  \arrow[u] \arrow[d]
	& j_!j^*\sF_{n}{[1]} \arrow[u] \arrow[d] \\
		i_!i^*j_!j^*\sF_{n} \arrow[r]
	& i_!i^*j_!j^*\sF_{\{n-1,n\}} \arrow[r]
	& i_!i^*j_!j^*\sF_{n-1} \arrow[r]
	& i_!i^*j_!j^*\sF_{n}{[1]}.
\end{tikzcd}\]
We observe that the last row is canonically isomorphic to the open closed distinguished triangle $\sF_{ws} \ra \sF_{\{w,ws\}} \ra \sF_{w}$ because there does not exist an element $u$ in $\extw$ such that $w<u<ws$. 
The morphisms induce a correspondence \[\Hom(\sF_{n-1},\sF_n[1]) \la \Hom(j_!j^*\sF_{n-1}, j_!j^*\sF_n[1]) \ra \Hom(i_!i^*j_!j^*\sF_{n-1},i_!i^*j_!j^*\sF_n[1]).\] By Proposition \ref{deccan}, we can identify $\sF_w$ and $\sF_{ws}$ with $\sF_w^{can}$ and $\sF_{ws}^{can}$ up to a non-zero scalar. The correspondence and Lemma \ref{injproj} imply $\pi_{w,ws} \sE_{n-1,n}$ and $\sE_{w,ws}$ can be identified up to a non-zero scalar.
\end{proof}

The following lemma about the open closed distinguished triangle is well-known.

\begin{lem}\label{openclo}
	Let $X$ be a variety. Let $j:U \inj X$ be an open embedding and let $i:Z \inj X$ be the closed embedding of the complement of $U$ in $X$. Suppose $\sF$, $\sG$, and $\sH$ are three complexes on $X$ such that $i^*\sG=0$, $j^!\sH=0$, and $\sG \ra \sF \ra \sH \xra{[1]}$ is a distinguished triangle. Then the distinguished triangle is canonically isomorphic to $j_!j^*\sF \ra \sF \ra i_!i^* \sF \xra{[1]}$. The statement also holds for equivariant complexes and mixed complexes.
\end{lem}

\begin{proof}
	By applying the natural transformations $j_!j^* \ra Id \ra i_!i^*$ to the triangle $\sG \ra \sF \ra \sH \xra{[1]}$, we have the following distinguished triangle morphisms, 
	\[\begin{tikzcd}
		\	
		j_!j^*\sG \arrow[r,"\iota"]\arrow[d]
		& j_!j^*\sF \arrow[r]\arrow[d]
		& 0 \arrow[r] \arrow[d]
		& j_!j^*\sG{[1]} \arrow[d] \\
		\sG \arrow[r]  \arrow[d]
		&\sF \arrow[r]  \arrow[d]
		& \sH\arrow[r]   \arrow[d]
		& \sG{[1]} \arrow[d] \\
		0 \arrow[r]
		& i_!i^*\sF \arrow[r, "\pi"]
		& i_!i^*\sH \arrow[r]
		& 0.
	\end{tikzcd}\]
	We obtain a natural map from $j_!j^*\sF$ to $\sG$ by inverting $\iota$. Similarly, we obtain a natural map from $\sH$ to $i_!i^*\sF$ by inverting $\pi$. It is clear that these two maps are isomorphisms. Hence we get a canonical isomorphism between the triangles. 
\end{proof}

\begin{prop}\label{minfilt}
	Let $\sF$ be a monostalk complex with respect to $\beta$. Let $\xi$ be a minimal IC sheaf of the block $\gamma$. There exists a canonical isomorphism $\phi$ between the following distinguished triangles for any $n \ge 0$.
		\[\begin{tikzcd}
		\	
		\sF_{n}\star \xi \arrow[r] \arrow[d,"\phi_{n}"]
		& \sF_{\le n} \star \xi \arrow[r] \arrow[d,"\phi_{\le n}"]
		& \sF_{\le n-1} \star \xi \arrow[r, "\sE_{n} \star \xi"] \arrow[d,,"\phi_{\le n-1}"] 
		& \sF_{n}\star \xi {[1]}  \arrow[d]  \\
		(\sF \star \xi)_{n} \arrow[r]
		& (\sF \star \xi)_{\le n} \arrow[r]
		& (\sF \star \xi)_{\le n-1} \arrow[r, "(\sE \star \xi)_{n}"]
		& (\sF \star \xi)_{n}{[1]}.
	\end{tikzcd}\]
\end{prop}

\begin{proof}
	%For any integer $t$, let $\extw_t$ be the union $\bigcup_{\beta} \extw_t^{\beta}$ and $\extw_{\le t}$be the union $\bigcup_{t' \le t} \extw_t$. We observe that $\extw_t$ is stable under right multiplication of any minimal elements.
	For every integer $m$ greater than $n$, we have the distinguished triangle $\sF_{(n,m]} \ra \sF_{\le m} \ra \sF_{\le n}\xra{[1]}$. Hence $\sF_{(n,m]}\star \xi \ra \sF_{\le m}\star \xi \ra \sF_{\le n}\star \xi \xra{[1]}$ is a distinguished triangle. There is a "filtration" of $\sF_{(n,m]}\star \xi$ with subquotient $\sF_u \star \xi$ for $u \in \bigcup_{t \in (n,m]}\extw_t^{\beta}$. Since $\omega (\sF_u \star \xi)$ is isomorphic to $\Delu(uw^{\gamma})$, the $*$-stalk of complex $\sF_{(n,m]}\star \xi$ at the $w$-stratum is zero for any $w \in cl(\extw_{\le n}^{\beta\gamma})$. Similarly, the $!$-stalk of $\sF_{\le n}\star \xi $ at the $w$-stratum vanishes for any $w \in cl(\extw_{\le n}^{\beta\gamma})$. From Lemma \ref{openclo}, we know that $(\sF_{\le m}\star \xi)_{\le n}$ is canonically isomorphic to $(\sF_{\le n}\star \xi)$. Because of their canonicity, these isomorphisms are compatible for different $m$. As a result, we obtain that an canonical isomorphism from $ \sF_{\le n} \star \xi$ to $(\sF \star \xi)_{\le n}$. The statement follows from Lemma \ref{openclo}.	
\end{proof}

\begin{cor}\label{transport}
	We use the notation of Proposition \ref{minfilt}. Let $\sG$ be $\sF \star \xi$. The composite $\phi \circ \star \xi: \Ext^1(\sF_{n-1},\sF_n) \ra \Ext^1(\sF_{n-1} \star \xi, \sF_n \star \xi) \ra \Ext^1(\sG_{n-1}, \sG_{n})$ is an isomorphism. Moreover, it sends $\Ext^1(\sF_{w'}^{can},\sF_w^{can})$ to $\Ext^1(\sG _{w'w^{\gamma}}^{can},\sG_{ww^{\gamma}}^{can})$. The  statement holds when replacing $\ext^1$ with $\Ext^1$.
\end{cor}

\begin{proof}
	It follows from the argument in Proposition \ref{minfilt}.
\end{proof}

\subsection{Definition of glueable complexes}

\begin{defn}\label{defineglue}
	Let $\sF$ be a non-mixed monostalk complex with respect to $\beta$. We call $\sF$ to be a \emph{glueable complex} with respect to $\beta$ if it satisfies the following property:
	\begin{enumerate}
		\item[(3)] For any $w \in \beta$ and simple reflection $s \in \ewlo$ (not necessarily a simple reflection in $\extw$) such that $ws>_\beta w$, the composite $\pi_{w,ws} \circ \iota$ sends $\sE_{n}$ to a non-zero element in $\Ext^1(\sF_{w}^{can},\sF_{ws}^{can})$, where $\iota$ is the map stated in \eqref{eq:fundamental}.
	\end{enumerate}
\end{defn}

We say that $\sF$ is a glueable complex if $\omega \sF$ is a glueable complex in the non-mixed category.

\begin{exmp}
	When $\sL$ is trivial, the constant sheaf on a connected component of affine flag variety is glueable with respect to the block corresponding to that component. 
\end{exmp}

\subsection{Characterization of averaging maximal IC sheaves via glueable complexes}

\begin{thm}\label{avglue}
	$\Av_!\Xi$ is the unique (up to tensoring with a one-dimensional Fr-module) glueable complex and $\omega \Av_!\Xi$ is the unique glueable non-mixed complex with respect to $\beta$, where $\Xi$ is a maximal IC sheaf for $\beta$.
\end{thm}

The proof of this theorem follows from Sections \ref{sectavglue} and \ref{sectglueav}.

\subsection{Averaging maximal IC sheaves are glueable} \label{sectavglue}

\begin{prop}\label{ICglue}
	For any maximal IC sheaf $\Xi$ for $\beta$,  $\Av_!\Xi$ is glueable with respect to $\beta$.
\end{prop}

\begin{proof}
	By the definition of glueable complexes, it is sufficient to consider the non-mixed case. We know that $\Av_!\Xi$ is a monostalk complex by Proposition \ref{avstalk}. Let us now check $\sF$ satisfies property $(3)$ in Definition \ref{defineglue}. Let $w$ and $s$ be as in property (3). From Lemma \ref{conj}, there exists $u \in \extw$ such that $u$ is a minimal element and $u^{-1}su$ is a simple reflection in $\extw$. We write $u=s_{i_1}s_{i_2} \dots s_{i_m}$ as a reduced word expression. Lemma \ref{equiv} implies $\Xi \star \ICu(s_{i_1})$ is a maximal IC sheaf. Inductively, we know that $\Xi \star \ICu(s_{i_1}) \star \ICu(s_{i_2}) \dots \star \ICu(s_{i_m})$ is a maximal IC sheaf. We can identify $(\Av_! \Xi) \star \ICu(u)$ with $\Av_! (\Xi \star \ICu(u))$ canonically by Corollary \ref{avass}. By Corollary \ref{transport}, we know that property (3) holds for the tuple $(\sF=\Av_! \Xi, w, s)$ if and only if it holds for $(\sF=\Av_! (\Xi \star \ICu(u)), wu, u^{-1}su)$. Therefore we may and will assume $s$ is a simple reflection in $\extw$. Thanks to Lemma \ref{restsimple}, it suffices to show that $(\Av_! \Xi)_{\{w,ws\}}$ is a non-trivial extension of $(\Av_! \Xi)_{ws}$ and $(\Av_! \Xi)_{w}$.
	
	We define $\lpduel$ to be the limit of the projective system of triangulated categories $D^b_{(T \times L_s, \sL' \boxtimes \extsL^{-1}),m}(\biu \bsl \gksw / \bius)$ with $*$-pullback functors (see Definition \ref{twolimit}). Then we can define an averaging functor $\Av_{!,s}$ from $\lpdelm$ to $\lpduel$ by the construction similar to $\Av_!$. Let $\pi_s^{*}$ (resp. $\pi_s^{+,*}$) be the pullback functor from $\lpdelm$ to $\lpdlm$ (resp. $\lpduel$ to $\lpdul$).
	
	Lemma \ref{pbic} implies that $\Xi \cong \omega \pi_{s}^* \Xi'$ for some $\Xi' \in \pha \lpdelm$. By the base change theorem, we know that $\Av_! \pi_s^* \Xi'$ is canonically isomorphic to $\pi_s^{+,*} \Av_{!,s} \Xi'$. Therefore $\Av_!\Xi$ is a pullback from a (projective limit of) complex on $\gk /P_s$. Therefore $(\Av_! \Xi)_{\{w,ws\}}$ is a non-trivial extension of $(\Av_! \Xi)_{ws}$ and $(\Av_! \Xi)_{w}$.
\end{proof}

\subsection{Glueable objects are averaging maximal IC sheaves}\label{sectglueav}

We apply Definition \ref{defanti} to the Coxeter group $\ewlo$ and obtain the notion of anti-commutative functions on $V^\circ$. The concept can be generalized to non-neutral blocks. The definition of $V^\circ$ and $V^{\beta}$ are stated in Definition \ref{defv}.

\begin{defn} \label{defanti2}
	The left multiplication of $w^{\beta}$ induces a bijection between $V^\circ$ and $V^{\beta}$. We refer to a $\ql$-valued function $f$ on $V^{\beta}$ as \emph{anti-commutative} if it exhibits anti-commutativity under this bijection.  In other words, let $g$ be the function on $V^\circ$ such that $g(v,u):=f(w^{\beta,-1}v,w^{\beta,-1}u)$. The function $f$ is considered anti-commutative if and only if $g$ is.
\end{defn}

\begin{lem}
	We obtain the same definition of anti-commutative function on $V^{\beta}$ if we use the bijection induced by the right multiplication of $w^{\beta}$.
\end{lem}

\begin{proof}
	It follows from Lemma \ref{conjmin}.
\end{proof}

Let $\sF$ be a non-mixed monostalk sheaf with respect to $\beta$. Therefore $\sF_w$ is isomorphic to $\Delu (w)_\sL[-\ell_\beta(w)]$ for $w \in \beta$. 

For $w \in \beta$, the automorphism group of $\Delu(w)$ is $\ql^*$. We choose an identification of the stalk of $\Delu(w)$ at any point in $\gkw /\biu$ with $\ql$. Then we identify $\Ext^1(\sF_{n-1}, \sF_n) \cong \bigoplus_{(w',w) \in V^{\beta}_{n}} \Ext^1(\sF_{w'}^{can},\sF_{w}^{can})$ as the space of $\ql$-valued functions on $V_n^{\beta}$. Different identifications of stalks change the function by conjugation, that is $f(v,u)$ transforms to $h(v)^{-1}f(v,u)h(u)$ for some everywhere non-zero function $h$. Similarly, $\Ext^2(\sF_{n-2},\sF_n)$ can be viewed as the space of functions on the set $\{(v,u): \ell_{\beta}(u)=\ell_{\beta}(v)+2 $ and $u >_{\beta} v\}$. 

Via the exact sequence \eqref{eq:fundamental}, we can regard the extension class $\sE_{n} \in \Ext^1(\sF_{\le n-1}, \sF_n)$ as a function $f_n$ on $V_n^{\beta}$. We define $f$ to be the function on $V^{\beta}$ whose restriction on $V_n^{\beta}$ coincide with $f_n$. 

\begin{prop}\label{monoanti}
	Let $\sF$ be a non-mixed monostalk sheaf with respect to $\beta$. The corresponding function $f$ is anti-commutative. 
\end{prop}

\begin{proof}
	Lemma \ref{cup2} states that the map $\phi \circ \partial$ in \eqref{eq:fundamental} is the cup product with $\sE_{n-2,n-1} \in \Ext^1(\sF_{n-2}, \sF_{n-1})$. Since the cup product map can be regarded as homomorphism between perverse sheaves, it is well-behaved with respect to the direct summand. Let $u,v \in \beta$ such that $\ell_{\beta}(u)=\ell_{\beta}(v)+2$ and $u >_{\beta} v$. Let $x_1,x_2$ be the two elements such that $u>_{\beta}x_i>_{\beta}v$ for $i=1,2$. The exactness of \eqref{eq:fundamental} implies that 	$\pi_{v,x_1}(\sE_{n-2,n-1})\pi_{x_1,u}(\sE_{n-1,n})+\pi_{v,x_2}(\sE_{n-2,n-1})\pi_{x_2,u}(\sE_{n-1,n})$ vanishes, where $\pi_{\cdot, \cdot}$ is defined in Definition \ref{decompcan2}. Because of Lemma \ref{cupinj}, we know that the function $f_{n-1}(v,x_1)f_n(x_1,u)+f_{n-1}(v,x_2)f_n(x_2,u)$ vanishes. Hence the function $f$ is anti-commutative. 	
\end{proof}

\begin{lem}\label{everynonzero2}
	Let $\sF$ be a non-mixed glueable complex. The corresponding function $f$ is everywhere non-zero.
\end{lem}

\begin{proof}
	By definition of glueable complex, we know that $f(w,ws)$ is non-zero when $s$ is a simple reflection in $\ewlo$ and $w<_{\beta}ws$. Proposition \ref{monoanti} implies $f$ is anti-commutative. The statement follows from Lemma \ref{everynonzero}.
\end{proof}

\begin{prop}
Any two non-mixed glueable complexes $\sF$ and $\sG$ in the block $\beta$ are isomorphic. In particular, $\sF$ is isomorphic to $\omega \Av_! \Xi$, where $\Xi$ is a maximal IC sheaf in $\beta$. 
\end{prop}

\begin{proof}
We construct a family of isomorphisms $\phi_{\le n}: \sF_{\le n} \ra \sG_{\le n}$ by induction. For $n=0$, we choose an isomorphism from $\sF_0$ to $\sG_0$. Suppose now we have an isomorphism $\phi_{\le n-1}: \sF_{\le n-1} \ra \sG_{\le n-1}$. Consider the following diagram of distinguished triangles, 

\[\begin{tikzcd}
	\sF_{\le n-1}[-1] \arrow[d, "\phi_{\le n-1} \text{[-1]}"] \arrow[r,"\sE_n^{\sF}"] & \sF_n \arrow[d, dashed] \arrow[r] & \sF_{\le n} \arrow[d, dashed] \arrow[r,"{[1]}"] &  \pha\\
	\sG_{\le n-1}[-1] \arrow[r,"\sE_n^{\sG}"]& \sG_n \arrow[r] & \sG_{\le n} \arrow[r,"{[1]}"] & \pha.
\end{tikzcd}\]

%The isomorphism class of $\sF_{\le n}$ is determined by the map $\sE_n^{\sF}$.

Notice that $\sF_n$ and $\sG_n$ are canonically isomorphic to direct sums $\oplus_{w \in \extw_n^{\beta}} \sF_w^{can}$ and $\oplus_{w \in \extw_n^{\beta}} \sG_w^{can}$ respectively. In particular, they are both isomorphic to $\oplus_{w \in \extw_n^{\beta}} \Delu(w)$. 

Let $f$ and $g$ be the functions on $V^{\beta}$ corresponding to $\sF$ and $\sG$ respectively. The extension class of $\sF_{\le n}$ is determined by the map $\sE_n^{\sF}$, which, in turn, is determined by the function $f$ up to conjugacy. The function $f$ is not uniquely determined due to the presence of the non-trivial automorphism group of $\sF_w^{can}$, which is isomorphic to $\ql^*$. Likewise, the extension class of $\sG_{\le n}$ is determined by the function $g$ up to conjugacy. Lemma \ref{conjfcn} and Lemma \ref{everynonzero2} imply that $f$ is conjugated to $g$. Hence, we can construct a family of maps $\phi_w$ from $\sF_w^{can}$ to $\sG_w^{can}$ such that their direct sum $\phi := \oplus \phi_w: \sF_n \ra \sG_n$ ensures the commutativity of the first square. By the axioms of triangulated categories, we can construct a (non-canonical) isomorphism $\phi_n: \sF_{\le n} \xra{\sim} \sG_{\le n}$ extending $\phi_{\le n-1}$. 
\end{proof}

The proof for glueable sheaves in the mixed categories is similar, involving applying Lemma \ref{mixnonmix} multiple times. 

%\begin{lem}
%	Let $w_1, w_2 \in \ewlo$ such that $\ell_\sL(w_2)=\ell_\sL(w_1)+1$ and $w_1 <_{\ewlo} w_2$ (the Bruhat order for $\ewlo$). Let $M$ be a one-dimensinoal Fr-module. The dimension of  $\hom(\Delta(\dot{w_1})_\sL, \Delta(\dot{w_2})_\sL \otimes M)$ is at most one. Moreover, there exists a unique (up to isomorphism) $M$ such that the dimension of $\hom^0(\Delta(\dot{w_1})_\sL, \Delta(\dot{w_2})_\sL \otimes M)$ is one. 
%\end{lem}

%\begin{lem}
%	Suppose $\sF \in \pha \lpdl$ such that its Frobenius pullback $\omega \sF$ is isomorphic to $\underline{\Delta}(w)_\sL$, there exists a one-dimensinoal Fr-module $M$ such that $\sF \cong \Delta(\wdo)_\sL \otimes M$. 
%\end{lem}

%\begin{proof}
%	From the exact sequence (5.1.2.5) in \cite{BBD}, we have $\hom(\sF, \Delta(\wdo)_\sL)=\Hom(\omega \sF, \Delu(w)_\sL)^{\Fr}=\ql^{\Fr}$ . Take $M$ to be the dual of $\hom(\sF, \Delta(\wdo)_\sL)$.
%\end{proof}

\section{Convolution products of maximal sheaves}\label{conmax}

In this section, we construct morphisms between convolution products of maximal sheaves.

\begin{lem}\label{constd}
	Let $\beta \in \pha _{\sL} \extw _{\sL''}$ and $w \in \beta$. The convolution product $\lptulm \conn \pha _{\sL}\underline{\Delta}(w)_{\sL''}^+$ is canonically isomorphic to $\lptulmpp [-\ell_{\beta}(w)]$. The convolution product $\lptulm \conn \pha _{\sL}\nabu(w)_{\sL''}^+$ is canonically isomorphic to $\lptulmpp [\ell_{\beta}(w)]$.
\end{lem}

\begin{proof}
	We obtain the second statement by applying Verdier duality to the first statement. By Proposition \ref{minconv}, we may assume $w$ is in the affine Weyl group. By expressing $_{\sL}\underline{\Delta}(w)_{\sL''}^+$ as the product of positive standard sheaves, it reduces to the case when $w=s$ is a simple reflection in $\extw$. If $s \notin \ewlo$, then $s$ is the minimal element in its block. The first statement follows from Lemma \ref{equiv}. If $s \in \ewlo$, then the first statement follows from Lemma \ref{pbic}, the proper base change theorem and the fact that $\tH^*_c(\A^1)=\ql[2]$. 
\end{proof}

When $s \in \ewlo$, there is a distinguished triangle $ \underline{\Delta}(s)_\sL^+[-1] \ra \underline{\IC}(s)_\sL^+[-1] \ra \underline{\delta}_\sL \xra{[1]}$. To see this, we may assume that $s$ is a simple reflection of $\extw$ by Lemma \ref{conj}. Then the triangle becomes a standard one. Taking the long exact sequence for the convolution of $\lptulm$ with $ \underline{\Delta}(s)_\sL^+[-1] \ra \underline{\IC}(s)_\sL^+[-1] \ra \underline{\delta}_\sL \xra{[1]}$ shows that the restriction map $^pH^0(\lptulm \conn \underline{\IC}(s)_\sL^+[-1]) \ra \pha ^pH^0(\lptulm \conn \underline{\delta}_\sL)$ is an isomorphism.

\begin{lem}\label{unit}
There is a unique map $\alpha$ from $\ltlm$ to $\avltlm$ making the following diagram commute. 
\[\begin{tikzcd}%[row sep=large]
\ltlm \arrow[d, "\epsilon_\sL"] \arrow[r, "\alpha"] & \avltlm \arrow[d, "\epsilon_\sL "]\\
\delta_\sL \arrow[r, "="]& \delta_\sL
\end{tikzcd}\]	
\end{lem}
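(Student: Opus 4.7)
The plan is to compute $\Hom^0_{\gk/\biu}(\ltlm,\avltlm)$ directly via a stratification spectral sequence, in the spirit of the chess-board argument in Lemma~\ref{chess}, and to extract $\alpha$ from the fact that this Hom is one-dimensional in degree $0$.

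First I would record the relevant stalks. By Proposition~\ref{stalkav}, $\avltlm$ is supported on $\bigcup_{w\in\ewlo}\gkw/\biu$ with $i_w^*\avltlm\cong\underline{C}(w)^+_\sL[-\ell_\sL(w)]$, i.e.\ a rank-one local system in cohomological degree $+\ell_\sL(w)$. Pulling $\ltlm$ back to $\gk/\biu$ from $\buu$ via the smooth projection and applying Proposition~\ref{stalk}, $\ltlm$ has the same support, with $i_w^*\ltlm$ a rank-one local system in cohomological degree $-\ell_\sL(w)$ and the same $T\times T$-monodromy as $i_w^*\avltlm$. Since the monodromies agree, the monodromic Hom of these two stalks descends to the affine base $\gkw/\bi\cong\A^{\ell(w)}$, giving
\[
\Hom^q_{\gkw/\biu}(i_w^*\ltlm,\,i_w^*\avltlm)\ \cong\ H^{q-2\ell_\sL(w)}(\A^{\ell(w)},\ql),
\]
which is $\ql$ in degree $q=2\ell_\sL(w)$ and vanishes in every other degree.

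Next I would run the support-filtration spectral sequence for $\Hom^\bullet(\ltlm,\avltlm)$, organized, as in the proof of Lemma~\ref{chess}, by reducing to the finite-type approximations $\gksw/\biu$ and passing to the limit. The only stratum contributing in degrees $\le 1$ is $w=e$, yielding $E_1^{e,0}\cong\ql$; every other $w\in\ewlo\setminus\{e\}$ has $\ell_\sL(w)\ge 1$, so contributes only in cohomological degree $\ge 2$. Parity therefore forces all differentials into or out of $E_r^{\bullet,0}$ to vanish, and I obtain $\Hom^0(\ltlm,\avltlm)\cong\ql$, generated by a morphism $\alpha_0$ characterized by the property that its restriction to the stratum $\{e\}$ is the identity of the $e$-stalk.

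To conclude, post-composition with $\epsilon_\sL$ defines a $\ql$-linear map $\Hom^0(\ltlm,\avltlm)\to\Hom^0(\ltlm,\delta_\sL)\cong\ql\cdot\epsilon_\sL$; it sends $\alpha_0$ to $\epsilon_\sL\circ\alpha_0$, whose restriction to $\{e\}$ is $\epsilon_\sL|_{\{e\}}$ and hence a non-zero multiple of $\epsilon_\sL$. This map is therefore an isomorphism of one-dimensional spaces, and after scaling $\alpha_0$ appropriately one obtains a unique $\alpha$ with $\epsilon_\sL\circ\alpha=\epsilon_\sL$. The hardest part will be the rigorous setup of the support-filtration spectral sequence on the ind-stack $\gk/\biu$---expressing $\avltlm$ as a projective system on the $\gksw/\biu$ and justifying that the $E_1$ page is as claimed---but this is of the same nature as the chess-board spectral sequence in Lemma~\ref{chess} and should go through along the same lines.
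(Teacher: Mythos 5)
Your plan to compute $\Hom^0(\ltlm,\avltlm)$ and read off $\alpha$ from a one-dimensional Hom space is the right strategy, and it matches the paper's uniqueness argument at a high level. But the key stalk assertion on which your spectral sequence rests is false, and this is a genuine gap.

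You claim that $i_w^*\ltlm$ (the $*$-restriction to the Schubert cell $\gkw/\biu$) is a rank-one local system in a single cohomological degree, quoting Proposition~\ref{stalk}. However, Proposition~\ref{stalk} describes $j_w^*\ltlm$ on the \emph{Birkhoff} stratum $\buuw$, not $i_w^*\ltlm$ on the Schubert cell. The maximal IC sheaf $\ltlm$ is $\bium$-equivariant, hence constructible along the Birkhoff stratification; a single Schubert cell $\gkw/\biu$ meets all Birkhoff strata $\bium v\biu/\biu$ with $v\le w$, and on the intersection with the stratum $v$ the pullback of $\ltlm$ lives in cohomological degree $\ell(v)-\ell_\sL(v)$, which varies with $v$ once $\sL$ is non-trivial. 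So $i_w^*\ltlm$ is not concentrated in a single degree, and the $E_1$ page you compute is wrong. (Even in the trivial-$\sL$ case, where $\ltlm$ is the constant sheaf and $i_w^*\ltlm=\ql$ genuinely is a local system, it sits in degree $0$, not $-\ell_\sL(w)$; and the degree you assign to $i_w^*\avltlm$ is similarly off by $\ell(w)$.) The chess-board argument of Lemma~\ref{chess} applies precisely because \emph{both} $\Av_!\Theta$ and $\IC(w)^\dagger_\sL$ are $\biu$-equivariant, so both are Schubert-constructible; here only $\avltlm$ is, and your argument does not account for the $\bium$- versus $\biu$-equivariance mismatch.

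The paper's proof avoids the direct stalk comparison altogether. For the uniqueness, it evaluates each stratum term $\Hom^m(\ltulm, i_{w!}i_w^*\avltulm)\cong\Hom^m(\ltulm,\underline{\Delta}(w)[-\ell_\sL(w)])$ by convolving with $\underline{\nabla}(w^{-1})$: by Lemma~\ref{inverse} this is an equivalence carrying $\underline{\Delta}(w)$ to $\delta_\sL$, and by Lemma~\ref{constd} it carries $\ltulm$ to $\ltulm[\ell_\sL(w)]$. One is reduced to $\Hom^{m+2\ell_\sL(w)}(\ltulm,\underline{\delta}_\sL)$, which vanishes for $m\le1$ and $w\neq e$ because it is supported on the $e$-stratum where $\ltulm$ is a rigidified rank-one object in degree $0$. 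This convolution trick is exactly what replaces the stalk-by-stalk degree count you attempted, and it is essential for non-trivial $\sL$. For existence, the paper produces $\alpha$ canonically from the adjunction $\Av_!\dashv\mathrm{forget}$ of Lemma~\ref{Avfor} rather than normalizing a generator of the Hom space; your normalization approach would also work once the uniqueness is correctly established, but the gap above must be filled first, most naturally by importing the paper's convolution computation.
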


\begin{proof}
	We observe that $\avltlm$ is isomorphic to the pro-object $\ulalim i_{\le v*}i_{\le v}^*(\avltlm)$. By definition, a map from $\ltlm$ to the pro-object $\avltlm$ is equivalent to a family of compatible maps from $\ltlm$ to $i_{\le v*}i_{\le v}^*(\avltlm)$ for each $v$, which is the same as maps from $\avltlm$ to $i_{\le v*}i_{\le v}^*(\avltlm)$ by Corollary \ref{Avfor}. The adjoint pair $(i_{\le v*}, i_{\le v}^*)$ induces a canonical morphism from $\avltlm$ to $i_{\le v*}i_{\le v}^*(\avltlm)$. These morphisms are compatible and we let $\alpha : \pha \ltlm \ra \avltlm$ to be the corresponding morphism. By construction, $\alpha$ completes the commutative diagram. 

To prove uniqueness, it suffices to show that $\hom^0(\pha \ltlm, \avltlm)=\ql$. For $m \in \Z$ and $w \in \ewlo$, we have
\begin{align*}
	&\Ext^m(\pha \ltulm, i_{w!}i_w^*\avltulm) &\\ 
	=&  \Ext^m(\pha \ltulm, \underline{\Delta}(w)[-\ell_\sL(w)]) && \text{Proposition } \ref{avstalk}\\
	=&\Ext^m(\pha \ltulm \conn  \underline{\nabla}(w^{-1}), \delta_\sL[-\ell_\sL(w)]) && \text{Lemma } \ref{inverse}\\
	=&\Ext^m(\pha \ltulm [\ell_\sL(w)], \underline{\delta}_\sL[-\ell_\sL(w)]) && \text{Lemma } \ref{constd}\\
	=&\Ext^m(\underline{\delta}_\sL , \underline{\delta}_\sL[-2\ell_\sL(w)]). && \text{Proposition } \ref{stalk} 
\end{align*}

Because of Lemma \ref{ext}, we know that the last term vanishes  when $w \neq e$ and $m \le 1$. The Schubert stratification gives rise to a "filtration" on $i_{\le v*}i_{\le v}^*(\avltulm)$, whose subquotients are given by $i_{w!}i_w^*\avltulm$ for $w \le v$ such that $w \in \ewlo$. Therefore, the vanishing statement for $\Ext^m(\pha \ltulm, i_{w!}i_w^*\avltulm)$ implies that $\Hom^0(\pha \ltulm, i_{\le v*}i_{\le v}^*\avltulm)$ $\cong \Hom^0(\pha \ltulm, i_{e!}i_e^*\avltulm)$. Hence, we obtain
\begin{align*}
	\Hom^0(\pha \ltulm, \avltulm) & =  \ulalim \Hom^0(\pha \ltulm, i_{\le v*}i_{\le v}^*\avltulm) \\
	&=\Hom^0(\pha \ltulm, i_{e!}i_e^*\avltulm) \\
	&=\Hom^0(\pha \ltulm, \underline{\delta}_\sL)=\ql. 
\end{align*}

Thanks to Lemma \ref{mixnonmix}, the above vanishing statement holds for mixed sheaves analog, that is $\ext^m(\pha \ltlm, i_{w!}i_w^*\avltlm)$ vanishes for $w \neq e$ and $m \le 1$. The same argument shows that $\hom^0(\pha \ltlm, \avltlm)=\ql$.
\end{proof}

The following lemma involves the object $\ltlm \conn \avltlm$. Since the stalks of $\ltlm \conn \avltlm$ are infinite-dimensional, it does not live in the category $\ldlm$. Therefore we have to regard it as a pro-object of $\ldlm$. 

\begin{lem}\label{comulti}
There is a unique map $\chi$ from $\ltlm$ to $\ltlm \conn \avltlm$ such that the composition $\ltlm \xra{\chi} \pha  \ltlm \conn \avltlm \xra{id \conn \epsilon_\sL} \pha \ltlm \conn \delta_\sL= \pha \ltlm$ is the identity map.  Moreover the composition $\ltlm \xra{\chi} \pha \ltlm \conn \avltlm \xra{\epsilon_\sL \conn id} \delta_\sL \conn \avltlm = \avltlm$ is $\alpha$.
\end{lem}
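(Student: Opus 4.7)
The plan is to realize $\chi$ as the unique lift of $\mathrm{id}_{\ltlm}$ under the map
$(\mathrm{id} \conn \epsilon_\sL)_* \colon \Hom^0(\ltlm, \ltlm \conn \avltlm) \to \Hom^0(\ltlm, \ltlm)$, and to accomplish this by showing that this map is an isomorphism. The key step is to set $K := \mathrm{fib}(\epsilon_\sL \colon \avltlm \to \delta_\sL)$, fitting into a distinguished triangle $K \to \avltlm \to \delta_\sL \xra{[1]}$; convolving on the left with $\ltlm$ via $\conn$ gives a triangle $\ltlm \conn K \to \ltlm \conn \avltlm \to \ltlm$. Applying $\Hom^\bullet(\ltlm, -)$ and examining the associated long exact sequence, both the existence and uniqueness of $\chi$ reduce to the single vanishing $\Hom^m(\ltlm, \ltlm \conn K) = 0$ for $m = 0, 1$.

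For this vanishing I would filter $K$ by the strata indexed by the neutral block. By Proposition \ref{avstalk}, $i_e^* K = 0$ and $i_w^* K \cong \underline{C}(w)_\sL^+[-\ell_\sL(w)]$ for each $w \in \ewlo \setminus \{e\}$, so the open--closed triangles present $K$ as an iterated extension of pieces $i_{w!} i_w^* K \cong \underline{\Delta}(w)_\sL^+[-\ell_\sL(w)]$ (up to Frobenius twists that do not affect the Hom vanishing). Convolving each such piece on the left with $\ltlm$ and applying Lemma \ref{constd} yields $\ltulm[-2\ell_\sL(w)]$, so the corresponding associated graded Hom group is $\Ext^{m - 2\ell_\sL(w)}(\ltulm, \ltulm)$. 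Since $\ltulm$ is simple perverse and $\ell_\sL(w) \geq 1$ for $w \neq e$, the degree $m - 2\ell_\sL(w) \leq -1$ is strictly negative for $m = 0, 1$, and the vanishing follows from the perverse $t$-structure. Passing to the projective limit defining $\ldlm$ transports this vanishing to $K$ itself, as morphisms in the limit are compatible families over each $\buusw$.

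For the second assertion, bifunctoriality of $\conn$ together with the already-established identity $(\mathrm{id} \conn \epsilon_\sL) \circ \chi = \mathrm{id}_{\ltlm}$ gives
\[ \epsilon_\sL \circ (\epsilon_\sL \conn \mathrm{id}) \circ \chi \;=\; \epsilon_\sL \circ (\mathrm{id} \conn \epsilon_\sL) \circ \chi \;=\; \epsilon_\sL, \]
where the two occurrences of $\epsilon_\sL$ refer respectively to the rigidification $\ltlm \to \delta_\sL$ and to the counit $\avltlm \to \delta_\sL$, context determining which is which. The defining square of $\alpha$ from the previous lemma also gives $\epsilon_\sL \circ \alpha = \epsilon_\sL$. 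Since the previous lemma established $\Hom^0(\ltlm, \avltlm) \cong \ql$ and that postcomposition with the counit is a non-zero (hence injective) map to $\Hom^0(\ltlm, \delta_\sL) \cong \ql$, the two morphisms $(\epsilon_\sL \conn \mathrm{id}) \circ \chi$ and $\alpha$ must agree.

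The main technical obstacle I anticipate is the bookkeeping inside the projective-limit category $\ldlm$: the neutral block is infinite, so one must carefully argue that the stratum-by-stratum Hom vanishing propagates through the iterated open--closed triangles and through the limit. Beyond this technicality, the argument is essentially a formal consequence of Proposition \ref{avstalk}, Lemma \ref{constd}, and the perversity of $\ltulm$.
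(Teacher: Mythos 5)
Your proposal is correct and uses the same key ingredients as the paper: the strata filtration of $\avltlm$ computed via Proposition \ref{avstalk}, the identity $\ltulm \conn \underline{\Delta}(w)_\sL^+[-\ell_\sL(w)] \cong \ltulm[-2\ell_\sL(w)]$ from Lemma \ref{constd}, and the perversity of $\ltulm$. The organization differs slightly: you encode the perverse-degree estimate as a Hom vanishing $\Hom^{0,1}(\ltlm, \ltlm \conn K) = 0$ for the cofiber $K = \mathrm{fib}(\epsilon_\sL)$ and obtain $\chi$ as the unique lift of $\mathrm{id}$, whereas the paper directly computes that the lowest perverse cohomology of $j_{\le w''}^*(\ltulm \conn i_{\le w}^*\avltulm)$ is $j_{\le w''}^*\ltulm$ in degree zero and constructs $\chi_w$ as the perverse truncation embedding at degree $<1$; these are two faces of the same computation. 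The paper handles the infinite-dimensionality issue you flag in exactly the way you anticipate: it reduces to a unique $\chi_w : \ltlm \to \ltlm \conn i_{\le w}^*\avltlm$ at each finite level $w$ and passes to the limit of pro-objects, so your "technical obstacle" is real but routine. For the second assertion your functoriality-plus-injectivity argument is a bit cleaner than the paper's, which re-derives the isomorphism $\Hom^0(\ltulm, i_{\le w}^*\avltulm) \xra{\sim} \Hom^0(\ltulm, \underline{\delta}_\sL) = \ql$ via the open--closed triangle on $\gksw/\biu$ rather than simply citing the one-dimensionality already established in the previous lemma.
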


\begin{proof}
We observe that $\avltlm$ is isomorphic to the pro-object $\ulalim i_{\le v*}i_{\le v}^*(\avltlm)$. To construct a morphism from $\ltlm$ to $\ltlm \conn \avltlm$, it suffices to produce a family of compatible maps from $\ltlm$ to $\ltlm \conn i_{\le v*}i_{\le v}^*\avltlm$. Similar to the proof of Lemma \ref{unit}, we observe that $i_{\le v*}i_{\le v}^*(\avltlm)$ possesses a "filtration" with subquotients $i_{w!}i_w^*\avltlm$ for $w \le v$ such that $w \in \ewlo$. Hence, $\ltlm \conn i_{\le v*}i_{\le v}^*\avltlm$ has a "filtration" whose the subquotients are given by $\ltlm \conn i_{w!}i_w^*\avltlm$ for $w \le v$ such that $w \in \ewlo$.

Since $\omega i_{w!}i_w^*\avltlm$ is isomorphic to $\underline{\Delta}(w)[-\ell_\sL(w)]$, we know that the perverse degree of $\ltlm \conn i_{w!}i_w^*\avltlm$ is concentrated in degree $2 \ell_\sL(w)$ by Lemma \ref{constd}. Therefore the lowest perverse cohomology of $\ltlm \conn i_{\le v*}i_{\le v}^*(\avltlm)$ is $\ltlm \conn  i_{e!}i_e^*\avltlm = \pha \ltlm$ in degree zero. We define $\chi_v$ as the perverse truncation $^p \tau^{\le 0}$ applied to $\ltlm \conn i_{\le v*}i_{\le v}^*\avltlm$ for degrees less than one. By the construction, we know that $\chi_v$ is a family of maps from $\ltlm$ to $\ltlm \conn i_{\le v*}i_{\le v}^*\avltlm$. Hence we obtain a map $\chi$ from $\ltlm$ to $\ltlm \conn \avltlm$. 

Since the composite $(id \conn \epsilon_{\sL}) \circ \chi_{w}$ is the identity map when $*$-restricting at $e$, the morphism $(id \conn \epsilon_{\sL}) \circ \chi_{w}$ has to be the identity map as $\Hom^0(\ltlm, \pha \ltlm) =\ql$. Therefore the composite $(id \conn \epsilon_{\sL}) \circ \chi$ is the identity map. 

Because of Lemma \ref{ext}, we know that $\Ext^m(\ltlm, \pha \ltlm \conn i_{w*}i_{w}^*\avltlm)$ vanishes for $w \neq e$ and $m=0,1$. Therefore $\Hom^0(\ltlm, \ltlm \conn i_{\le v*}i_{\le v}^*\avltlm)$ is canonically isomorphic to $\Hom^0(\ltlm, \pha \ltlm \conn i_{e*}i_{e}^*\avltlm)$, which is a one-dimensional $\ql$-vector space. By requiring the composite morphism to be the identity, the map $\chi$ has to be unique. 

The second statement follows from the fact that $\hom^0(\pha \ltlm, \avltlm)=\ql$, which has been shown in the proof of Lemma \ref{unit}. 
\end{proof}

\begin{prop}\label{triple}
There is a natural isomorphism $\gamma: (\pha \ltlm \conn \avltlm) \conn \avltlm \ra \pha \ltlm \conn \Av_!( \pha \ltlm \conn  \avltlm)$ such that the following diagram commutes. 

\[\begin{tikzcd}[column sep=large]
\ltlm \arrow[d, "id"] \arrow[r, "\chi"] & \ltlm \conn \avltlm \arrow[r, "\chi \conn id "] & (\pha \ltlm \conn \avltlm) \conn \avltlm \arrow[d, "\gamma"] \\
\ltlm \arrow[r, "\chi"] & \ltlm \conn \avltlm \arrow[r, "id \conn \Av_!\chi"] &  \ltlm \conn \Av_!( \pha \ltlm \conn  \avltlm)
\end{tikzcd}\]	

Here we view all objects as pro-objects of $\ldlm$ in $pro_{\extw \times \extw}(\ldlm)$. 
\end{prop}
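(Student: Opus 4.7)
The plan is to construct $\gamma$ as a composite of canonical isomorphisms from Corollary \ref{avass} and the associativity of Section \ref{sect4}, then verify the diagram by post-composing both paths with a counit-type map and reducing to the uniqueness properties of $\chi$ from the preceding lemma.

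For the construction, start from the RHS. By Corollary \ref{avass}, applied to the negative sheaf $\ltlm$ and the $\biu$-biequivariant sheaf $\avltlm$, there is a canonical isomorphism $\Av_!(\ltlm \conn \avltlm) \cong \avltlm \conp \avltlm$ in $\ldul$. Combined with the base-change associativity $\sF \conn (\sG \conp \sH) \cong (\sF \conn \sG) \conp \sH$, this yields
\[ \ltlm \conn \Av_!(\ltlm \conn \avltlm) \cong \ltlm \conn (\avltlm \conp \avltlm) \cong (\ltlm \conn \avltlm) \conp \avltlm. \]
Because $\avltlm$ is $\biu$-biequivariant on the right, the outer $\conp$ coincides with $\conn$ on its right factor (both are pushforward along the same Hecke-stack multiplication map), so the right-hand side equals $(\ltlm \conn \avltlm) \conn \avltlm$. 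Inverting the composite defines $\gamma$, and the whole construction extends to pro-objects in the natural way.

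To verify commutativity, post-compose both candidate maps with
\[ \pi := \id_{\ltlm} \conn \Av_!(\id_{\ltlm} \conn \epsilon_\sL) \;:\; \ltlm \conn \Av_!(\ltlm \conn \avltlm) \ra \ltlm \conn \Av_!(\ltlm) = \ltlm \conn \avltlm. \]
Along the lower path, functoriality of $\Av_!$ and the counit identity $(\id \conn \epsilon_\sL) \circ \chi = \id_{\ltlm}$ from the preceding lemma force $\pi \circ (\id \conn \Av_! \chi) \circ \chi = \chi$. Along the upper path, unwinding the construction of $\gamma$ via naturality of Corollary \ref{avass} in the second factor and of associativity in the rightmost argument identifies $\pi \circ \gamma$ with $\id_{\ltlm \conn \avltlm} \conn \epsilon_\sL : (\ltlm \conn \avltlm) \conn \avltlm \ra \ltlm \conn \avltlm$; then bifunctoriality of $\conn$ together with the counit identity gives
\[ (\id_{\ltlm \conn \avltlm} \conn \epsilon_\sL) \circ (\chi \conn \id_{\avltlm}) \circ \chi = (\chi \conn \epsilon_\sL) \circ \chi = (\chi \conn \id_{\delta_\sL}) \circ (\id_{\ltlm} \conn \epsilon_\sL) \circ \chi = \chi. \]
Hence both paths agree after post-composition with $\pi$. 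To deduce the equality before post-composition, show that $\pi$ induces an injection on $\Hom^0(\ltlm, -)$: complete $\epsilon_\sL$ to the open--closed triangle with fibre $\eta_!\eta^*\avltlm$ (with $\eta$ the inclusion of the complement of the identity stratum), apply $\Av_!$ and then $\id_{\ltlm} \conn (-)$; the perverse-degree bound $\Hom^{<2}(\ltlm, \ltlm \conn \eta_!\eta^*\avltlm) = 0$ already used in the preceding lemma, extended via Corollary \ref{avass} to the iterated convolution, shows the kernel vanishes.

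The main obstacle will be the identification of $\pi \circ \gamma$ with $\id_{\ltlm \conn \avltlm} \conn \epsilon_\sL$ across the chain of canonical isomorphisms defining $\gamma$: this requires checking naturality of Corollary \ref{avass} in each argument and compatibility of the base-change associativity with $\epsilon_\sL$ on the rightmost factor, then carefully matching up the Hecke-stack descriptions on both sides. The injectivity of $\pi_*$ and all the remaining bookkeeping with pro-objects are routine adaptations of the perverse-degree vanishings already established in the previous two lemmas.
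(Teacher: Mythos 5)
Your proof is correct in outline and is a genuine variant of the paper's argument. The paper constructs $\gamma$ by restricting the canonical map $\alpha:\ltlm \ra \avltlm$ to truncations $(a,b)\in \extw\times\extw$, separately constructs the inverse, and then verifies commutativity of the diagram by observing that $\gamma$ preserves the perverse truncation at degree zero; both paths from $\ltlm$ into the target then land in the degree-$0$ perverse cohomology and agree. You instead build $\gamma^{-1}$ directly as the composite canonical isomorphism furnished by Corollary \ref{avass} and the base-change associativity $\sF\conn(\sG\conp\sH)\cong(\sF\conn\sG)\conn\sH$ (the $\conp$ in the paper's statement of associativity is a slip for $\conn$), which has the advantage that $\gamma$ is manifestly invertible, and then verify the square by post-composing with $\pi=\id\conn\Av_!(\id\conn\epsilon_\sL)$ and showing $\pi_*$ is injective on $\Hom^0(\ltlm,-)$. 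That injectivity is what the paper's remark about preserving the lowest perverse cohomology amounts to, so the two verifications are the same at bottom; your version is more explicit about exactly which vanishing is used.

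Two points you should spell out rather than gesture at. First, Corollary \ref{avass} is stated for $\sG\in\lpdl$, whereas $\avltlm$, $\eta_!\eta^*\avltlm$ and $\ltlm\conn\avltlm$ are pro-objects; you need to apply it at each truncation $i_{\le b}^*$ and check the resulting isomorphisms assemble over $\extw\times\extw$ — this is exactly the compatibility the paper checks by hand in its construction of $\gamma$. Second, the claimed vanishing $\Hom^0(\ltlm,\ltlm\conn\Av_!(\ltlm\conn\eta_!\eta^*\avltlm))=0$ does not literally reduce to the earlier $\Hom^{<2}(\ltlm,\ltlm\conn\eta_!\eta^*\avltlm)=0$; rather, writing the fibre of $\pi$ as $(\ltlm\conn\avltlm)\conn\eta_!\eta^*\avltlm$ and filtering by standard pieces, Lemma \ref{constd} gives graded pieces $\ltlm[-2\ell_\sL(w)-2\ell_\sL(w')]$ with $w'\neq e$, hence perverse degree $\ge 2\ell_\sL(w')\ge 2$, which yields the needed $\Hom^{<2}$ vanishing. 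With these two gaps closed, your argument is complete.
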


\begin{proof}

By Corollary \ref{avass} and taking limit, we know that $\avltlm \conp \avltlm$ is canonically isomorphic to $\Av_!( \pha \ltlm \conn \avltlm)$. Since the convolution product is associative, $(\pha \ltlm \conn \avltlm) \conn \avltlm$ is canonically isomorphic to $\pha \ltlm \conn (\avltlm \conp \avltlm)$. Therefore there is a natural isomorphism $\gamma: (\pha \ltlm \conn \avltlm) \conn \avltlm \ra \pha \ltlm \conn \Av_!( \pha \ltlm \conn  \avltlm)$. In particular, both complexes are isomorphic to $(\ltlm \conn \ulalim_{v} i_{\le v*}i_{\le v}^*\avltlm ) \conn \ulalim_{v'} i_{\le v'*}i_{\le v'}^*\avltlm$. For a fixed pair $(v,v')$, the last pro-complex has a "filtration" whose subquotients are given by $(\ltlm \conn i_{w*}i_{w}^*\avltlm) \conn i_{w'*}i_{w'}^*\avltlm$ for $w \le v$ and $w' \le v'$ such that $w,w' \in \ewlo$. By applying Lemma \ref{constd} twice, we know that the subquotient $(\ltlm \conn i_{w*}i_{w}^*\avltlm) \conn i_{w'*}i_{w'}^*\avltlm$ is of perverse degree $2\ell_{\sL} (w)+2\ell_{\sL}(w')$. Therefore the lowest perverse cohomology of $(\ltlm \conn \ulalim_{v} i_{\le v*}i_{\le v}^*\avltlm) \conn \ulalim_{v'} i_{\le v'*}i_{\le v'}^*\avltlm$ is $\ltlm$ in degree zero. Therefore, the diagram must commute up to a scalar. Taking the stalk at $e$ confirms that the diagram commutes.
\end{proof}

%For each pair $(a,b) \in \extw \times \extw$, $\alpha$ restricts to a map from $\ltlm$ to $i_{\le a}^*\avltlm$. Hence we get $\alpha \conn id: \pha \ltlm \conn i_{\le b}^*\avltlm \ra i_{\le a}^*\avltlm \conp i_{\le b}^*\avltlm$. Now choose $a' \in M(a,b)$. $\alpha \conn id$ can be thought as a map from $i_{\le a'}^*(\avltlm \conn i_{\le b}^*\avltlm)$ to $i_{\le a}^*\avltlm \conp i_{\le b}^*\avltlm$. One can check that it is well-behaved with respect to $a'$ and $(a,b)$, hence we get a map $\gamma: (\pha \ltlm \conn \avltlm) \conn \avltlm \ra \pha \ltlm \conn \Av_!( \pha \ltlm \conn  \avltlm)$. 

%The construction of the inverse of $\gamma$ is similar. Fix $(a,b) \in \extw \times \extw$ and choose $a' \in M(a, b^{-1})$. As in the previous paragraph, we have a map from $i_{\le a}^*(i_{\le a'}^* \avltlm \conp i_{\le b}^*\avltlm)$ to $i_{\le a}^*(\avltlm \conp i_{\le b}^*\avltlm)$. It is an isomorphism and its inverse is well-behaved with respect to $a'$ and $(a,b)$. From this, we get a map in the opposite direction. One can check that it is the inverse of $\gamma$.

%The proof is completed by noting that $\gamma$ preserves the lowest perverse cohomology, that is the perverse truncation at degree zero.

After examining the properties of $\avltlm$, we discuss the maximal IC sheaves in non-neutral blocks. Let $\sL, \sL' \in \lio$, $\beta \in \pha \lpewul$, $\xi$ be a minimal IC sheaf for $\beta$. Let $\epsilon (\xi)$ be the canonical map $\lptlpm \conn \xi \ra \delta_{\sL'} \conp \xi= \xi$. By abuse of notation, the same symbol $\epsilon (\xi)$ will also be used for the canonical map $\avlptlpm \conn \xi \ra \delta_{\sL'} \conp \xi= \xi$.

\begin{lem}\label{inter}
Let $\beta \in \pha \lpewul$ be a block and $\xi$ be a minimal IC sheaf for $\beta$. There exists a unique isomorphism $\tau(\xi): \xi \conp \avltlm \xra{\sim} \avlptlpm \conp \xi$ such that the following diagram commutes. 
\[
\begin{tikzcd}[row sep=large]
\xi \conp \avltlm \arrow[d, "id \conp \epsilon_\sL"] \arrow[rr, "\tau(\xi)"] & & \avlptlpm \conp \xi \arrow[d, "\epsilon(\xi)"]\\
\xi \conp \delta_\sL \arrow[r, "="]& \xi \arrow[r, "="] & \delta_{\sL'}\conp \xi.
\end{tikzcd}\]
\end{lem}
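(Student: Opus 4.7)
The plan is to identify both sides of the desired isomorphism with a common glueable object (in the sense of Theorem \ref{avglue}) attached to the block $\beta$, and then pin down the one-dimensional Frobenius twist using the commutativity constraint.

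First, I would rewrite the right-hand side via Corollary \ref{avass}: $\avlptlpm \conp \xi \cong \Av_!(\lptlpm \conn \xi)$. The proposition preceding Proposition \ref{stalk}, applied with $w = w^\beta$ (so $\ell_\beta(w^\beta) = 0$) and with the neutral block on the left (so $w^\gamma = e$), identifies $\lptlpm \conn \xi$ with a maximal IC sheaf for $\beta$ in $\pha _{\sL'} D^-_\sL$, up to a one-dimensional Frobenius twist. Hence $\avlptlpm \conp \xi$ is, up to such a twist, the averaging of a maximal IC sheaf for $\beta$, and by Theorem \ref{avglue} its non-mixed image is the unique glueable object with respect to $\beta$.

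Next, I would show that $\xi \conp \avltlm$ is glueable with respect to $\beta$ in the non-mixed category. The key input is the extremal property of $w^\beta$: it sends positive coroots in $\widetilde{\Phi}^\vee_\sL$ to positive coroots in $\widetilde{\Phi}^\vee$, which implies the length identity $\ell(w^\beta v) = \ell(w^\beta) + \ell(v)$ for all $v \in \ewlo$, together with $\ell_\beta(w^\beta v) = \ell_\sL(v)$. Combined with Proposition \ref{avstalk} on the support of $\avltlm$, proper base change identifies the $*$-restriction of $\xi \conp \avltlm$ at the stratum for each $u = w^\beta v \in \beta$ with the required form $\underline{C}(u)_\sL^+[-\ell_\beta(u)]$. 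Vanishing on strata outside $\beta$ is obtained by combining the parity statement of Proposition \ref{parity} with the above length identity, ensuring that the a priori contributions from smaller Bruhat cells $w' \le w^\beta$ in the support of $\xi$ all cancel on strata not in $\beta$. Condition (3) of glueability follows from naturality and Lemma \ref{onedim}. By Theorem \ref{avglue}, $\xi \conp \avltlm$ is then isomorphic (up to Frobenius twist) to the same unique glueable object $\omega \Av_! \Xi$.

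Combining the two preceding paragraphs yields an isomorphism $\xi \conp \avltlm \cong \avlptlpm \conp \xi$ up to tensor with a one-dimensional Frobenius module $M$. The commutativity constraint pins down $M$: both compositions $id \conp \epsilon_\sL$ and $\epsilon(\xi)$, restricted to the stratum corresponding to $w^\beta$, reduce to the identity of the rigidified rank-one sheaf $C(\wdo^\beta)^+_\sL$, forcing the twist to be trivial and determining a unique scaling for $\tau(\xi)$. Uniqueness then follows since any two candidate isomorphisms differ by an automorphism of $\xi \conp \avltlm$ restricting to the identity on the top stratum, and such an automorphism must be the identity. The main obstacle will be the glueability check for $\xi \conp \avltlm$ in the second paragraph, in particular ruling out unwanted contributions to strata outside $\beta$ arising from the smaller Bruhat cells in the support of $\xi = \IC(\wdo^\beta)^+_\sL$; the parity statement of Proposition \ref{parity} together with the minimality of $w^\beta$ are essential here.
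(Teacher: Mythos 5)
Your overall strategy matches the paper's: both sides are identified as glueable objects with respect to $\beta$, Theorem \ref{avglue} yields the isomorphism, and the commutativity constraint pins it down together with a $\Hom^0=\ql$ computation. The paper's proof is terse (it just says "it is easy to check that both $\xi \conp \avltlm$ and $\avlptlpm \conp \xi$ are glueable") and your elaboration of why $\avlptlpm \conp \xi \cong \Av_!(\lptlpm\conn\xi)$ is the averaging of a maximal IC sheaf via Corollary \ref{avass} and the unnamed proposition preceding Proposition \ref{stalk} is correct and is the right reduction for that side.

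However, your treatment of $\xi \conp \avltlm$ contains a genuine misconception. You write of "a priori contributions from smaller Bruhat cells $w' \le w^\beta$ in the support of $\xi$" that need to "cancel" on strata outside $\beta$, and you call this the main obstacle, proposing to handle it with Proposition \ref{parity}. There are no such contributions. Because $\xi \in \pha \lpdl^\beta$ and $w^\beta$ is the unique Bruhat-minimal element of $\beta$ (Proposition \ref{order} gives $w^\beta \le w'$ for all $w' \in \beta$, so no $w' < w^\beta$ lies in $\beta$), the IC sheaf $\xi = \IC(\wdo^\beta)^+_\sL$ is clean: it is supported on the single stratum $\gkw^\beta/\biu$ and coincides with $\Delta(\wdo^\beta)^+_\sL = \nabla(\wdo^\beta)^+_\sL$. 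The support condition for glueability is then immediate (the convolution of a sheaf supported on the single stratum $w^\beta$ with one supported on $\ewlo$ lands in $w^\beta\ewlo = \beta$), and parity plays no role. Your parity argument is not only unnecessary but also fails as stated: purity of restrictions does not produce "cancellation" of stalk contributions on unwanted strata. You should replace this step by noting cleanness of $\xi$, after which the length identity $\ell(w^\beta v) = \ell(w^\beta) + \ell(v)$ (which does hold, by the characterization of $w^\beta$ used in Proposition \ref{order} and Lemma \ref{minelt}) makes the multiplication map $\gkw^\beta/J \times^{\bi/J}\gkv/\biu \to \gkw^\beta v/\biu$ a stratum-wise isomorphism, giving the stalk description directly. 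The rest of your argument (identifying the twist via restriction to the minimal stratum and uniqueness via scalar automorphisms, which is the content of the paper's $\Hom^0 = \ql$ step combined with right-convolution equivalence) is sound.
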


\begin{proof}
It is easy to check that both $\xi \conp \avltlm$ and $\avlptlpm \conp \xi$ are glueable complexes with respect to $\beta$. Hence they are isomorphic by Theorem \ref{avglue}. Since right convolution with $\xi$ is an equivalence, we have $\hom^0(\xi \conp \avltlm, \avlptlpm \conp \xi) =\ql$. The existence and uniqueness of $\tau(\xi)$ follow from that. 
\end{proof}

%Similar to the construction of the convolution product of negative sheaves and positive sheaves, we can consider the Hecke stack of $\buu$ with a modification at infinity. 
	
%Let $G_{\fq[t,t^{-1}]}$ be the base change of $G$ to $\fq[t,t^{-1}]$. 
%Fix  $\sL'' \in \lio$. We define the convolution product 
%\[\star_\infty: D^b_{(T \times T, \sL'' \boxtimes (\sL')^{-1}),m}(\bium \bsl G_{\fq[t,t^{-1}]} / \bium) \times \pha \lpdlm \ra \pha \lppdlm .\]

%The double cosets of $\bium \bsl G_{\fq[t,t^{-1}]} / \bium$ are indexed by $\extw$. Define $\IC(\dot{w}^\beta)_\sL^\infty$ to be the intermediate extension of constant sheaf $\ql \langle \ell(w^\beta) \rangle$ on the stratum corresponds to $w^\beta$ and with a rigidfication of stalk at $\dot{w}^\beta$. Then it is easy to show \[\IC(\dot{w}^\beta)_\sL^\infty \star_\infty \pha \ltlm \simeq \pha \lptlm \simeq \pha \lptlpm \conn \xi\] by analogs of Lemma \ref{equiv}. The isomorphism is canonical if we require the rigidfications of stalk at $\dot{w}^\beta$ coincide. Applying $\Av_!$ on both sides, it remains to check $\xi \conp \avltlm$ and $\Av_!(\IC(\dot{w}^\beta)_\sL^\infty \star_\infty \pha \ltlm)$ are canonically isomorphic. 

%Analogs of Lemma \ref{equiv} imply $\Delta(e)_{\sL'}^\infty \ulastar \IC(\dot{w}^\beta)_\sL^\infty$ is canonically isomorphic to $\xi \conp \Delta(e)_{\sL}^\infty$. We obtain the desired isomorphism by right convolution with $\ltlm$ and Lemma \ref{avass}.

Now let $\sL^0, \sL^1, \sL^2, \sL^3 \in \lio$. For $i=1,2,3$, let $\beta_i \in \pha _{\sL^i} \extw_{\sL^{i-1}}$ and $\xi_i$ be a minimal IC sheaf for $\beta_i$. The subscripts $\sL^j$ and identity map will be suppressed for the next lemma. 

\begin{lem}\label{coass}
\begin{enumerate}[{$(1)$}]
\item The composition $\Theta^-  \star \xi_2 \star \xi_1 \xra{\chi} \pha \Theta^- \star \Av_!\Theta^- \star \xi_2 \star \xi_1 \xra{\tau(\xi_2)} \Theta^- \star \xi_2 \star \Av_!\Theta^- \star \xi_1 \xra{\epsilon(\xi_2) \star \epsilon(\xi_1)} \xi_2 \star \xi_1$ is the same as $\epsilon(\xi_2 \star \xi_1)$.

\item The following diagram commutes. 
\[\begin{tikzcd}
\Theta^- \star \xi_2 \star \xi_1  \arrow [d,"\chi"] \arrow [r,"\alpha"] & \Av_!\Theta^- \star \xi_2 \star \xi_1 \arrow[r,"\tau(\xi_2)"]
& \xi_2 \star \Av_!\Theta^- \star \xi_1 \arrow[d, "= "]\\
\Theta^- \star \Av_!\Theta^-\star \xi_2 \star \xi_1 \arrow[r,"\tau(\xi_2)"] & \Theta^- \star \xi_2 \star \Av_!\Theta^-\star  \xi_1 \arrow[r, "\epsilon "] & \xi_2 \star \Av_!\Theta^- \star \xi_1.
\end{tikzcd}\]

\item The following two compositions are the same. \begin{enumerate}[(i)]
\item $\Theta^- \star \xi_3 \star \xi_2 \star \xi_1 \xra{\chi} \Theta^- \star \Av_!\Theta^- \star \xi_3 \star \xi_2 \star \xi_1 \xra{\tau(\xi_3 \star \xi_2)} \Theta^- \star \xi_3 \star \xi_2 \star \Av_!\Theta^- \star \xi_1$ \[ \xra{\chi} \Theta^- \star \Av_!\Theta^- \star \xi_3 \star \xi_2 \star \Av_!\Theta^- \star \xi_1 \xra{\tau(\xi_3)} \Theta^- \star \xi_3 \star \Av_!\Theta^- \star \xi_2 \star \Av_!\Theta^- \star \xi_1.\]
\item $\Theta^- \star \xi_3 \star \xi_2 \star \xi_1 \xra{\chi} \Theta^- \star \Av_!\Theta^- \star \xi_3 \star \xi_2 \star \xi_1 \xra{\tau(\xi_3)} \Theta^- \star \xi_3 \star \Av_!\Theta^- \star \xi_2 \star \xi_1$
\[ \xra{\Av_!\chi} \Theta^- \star \xi_3 \star \Av_!(\Theta^- \star \Av_! \Theta^-) \star \xi_2 \star \xi_1 \ra \Theta^- \star \xi_3 \star \Av_!\Theta^- \star \Av_! \Theta^- \star \xi_2 \star \xi_1 \]
$\xra{\tau(\xi_2)} \Theta^- \star \xi_3 \star \Av_!\Theta^- \star \xi_2 \star \Av_! \Theta^-\star \xi_1$.
\end{enumerate}
The second last map in $(ii)$ is given by Lemma $\ref{avass}$.
\end{enumerate}

\end{lem}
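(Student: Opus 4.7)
The plan is to exploit the uniqueness characterizations built into $\chi$, $\alpha$, and $\tau(\xi)$: each is defined as the unique map satisfying a compatibility with the counit $\epsilon_\sL$. Concretely, $\chi$ is the unique map with $(\id\conn\epsilon_\sL)\circ\chi=\id_{\ltlm}$; $\alpha$ is the unique map with $\epsilon_\sL\circ\alpha=\epsilon_\sL$; and $\tau(\xi)$ is the unique isomorphism with $\epsilon(\xi)\circ\tau(\xi)=\id_\xi\conp\epsilon_\sL$. In each part, both sides are morphisms in the appropriate pro-category, and the strategy will be either to unwind these defining relations directly or, when that only identifies the two sides after projection by a further $\epsilon$, to lift back using the one-dimensionality of the relevant $\Hom^0$ provided by the glueability framework of Theorem~\ref{avglue}.

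For part (1), I would compute the composite in stages using the functoriality of convolution. Because $\tau(\xi_2)$ acts only on the internal $\Av_!\Theta^-\conp\xi_2$ slot, its defining relation $\epsilon(\xi_2)\circ\tau(\xi_2)=\id_{\xi_2}\conp\epsilon_{\sL^1}$ collapses the middle $\Av_!\Theta^-$ against the subsequent $\epsilon(\xi_1)=\epsilon_{\sL^1}\conp\id_{\xi_1}$. What remains is $(\epsilon_{\sL^2}\conp\id)\circ(\id\conp\epsilon_{\sL^2})\circ\chi$ acting on the outer $\Theta^-$, and the defining property of $\chi$ makes the inner factor equal to $\id_{\ltlm}$. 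The whole composite therefore collapses to $\epsilon_{\sL^2}\conp\id_{\xi_2}\conp\id_{\xi_1}=\epsilon(\xi_2\conp\xi_1)$.

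For part (2), the $\xi_1$-slot is a bystander, so it suffices to prove the square commutes after removing $\xi_1$. I would post-compose both paths with $\id_{\xi_2}\conp\epsilon_{\sL^1}$; the top path becomes $(\epsilon_{\sL^2}^{\Av_!}\circ\alpha)\conp\id_{\xi_2}=\epsilon_{\sL^2}^{\Theta^-}\conp\id_{\xi_2}$ using the relation for $\tau(\xi_2)$ and the defining property of $\alpha$, and the bottom path becomes the same map by the computation of part (1) applied to $((\epsilon\conp\epsilon)\circ\chi)\conp\id_{\xi_2}$. The uniqueness of a lift along $\id\conp\epsilon_{\sL^1}$ then follows from the same glueability argument as in the proof of Lemma~\ref{inter}: both $\ltlm\conn\xi_2$ and $\xi_2\conp\avltlm$ are controlled by glueable data with respect to the appropriate block, which forces the relevant $\Hom^0$ to be one-dimensional.

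Part (3) is the main obstacle, because the two composites iterate $\chi$ and transport two copies of $\Av_!\Theta^-$ through $\xi_3,\xi_2$ in genuinely different orders. My strategy is to rewrite both sides as realizations of a single canonical map obtained by (a) an iterated coproduct $\Theta^-\to\ltlm\conn\avltlm\conn\avltlm$, whose two constructions via $(\chi\conn\id)\circ\chi$ and via $\chi$ followed by the $\Av_!$ of $\chi$ are identified by the isomorphism $\gamma$ of Proposition~\ref{triple} together with Corollary~\ref{avass}, and (b) transport of the two $\Av_!\Theta^-$-factors to their final positions via $\tau$. The relation $\tau(\xi_3\conp\xi_2)=\tau(\xi_2)\circ\tau(\xi_3)$ follows from uniqueness in Lemma~\ref{inter}, and the naturality of $\tau$ with respect to $\chi$ is essentially part (2). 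The technically demanding step is the book-keeping of the four monodromies $\sL^0,\sL^1,\sL^2,\sL^3$ and the interplay of $\gamma$ with the $\tau$-transports; once both (i) and (ii) are rewritten in the canonical form, a final uniqueness argument via glueability identifies them.
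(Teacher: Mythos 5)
Your treatment of parts (1) and (2) is in the same spirit as the paper's. For (1) the paper just writes down the commutative diagram you describe in words, collapsing the $\tau(\xi_2)$-relation and then the $\chi$-relation. For (2) the paper takes a slightly cleaner route: it observes that the target $\xi_2 \star \Av_!\Theta^- \star \xi_1$ has only scalar endomorphisms, so it is enough to check the square commutes after post-composing with $\epsilon$; your strategy of stripping $\xi_1$ and appealing to one-dimensionality of a $\Hom^0$ is a legitimate variant, though your appeal to ``glueability'' to get that one-dimensionality is not really what's needed -- the point is that right convolution with $\xi_1$ (and with $\xi_2$) is an equivalence, reducing to the scalar-endomorphism statement that is already in the construction of $\alpha$ and $\chi$.

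For part (3) there is a genuine gap in the closing step. Your strategy of rewriting both composites via $\gamma$ from Proposition~\ref{triple}, Corollary~\ref{avass}, and a $\tau$-coherence $\tau(\xi_3\conp\xi_2)=\tau(\xi_2)\circ\tau(\xi_3)$ is reasonable book-keeping, but the ``final uniqueness argument via glueability'' is the wrong tool. Theorem~\ref{avglue} characterizes $\Av_!\Theta^-$ as an \emph{object} up to isomorphism (and up to a one-dimensional Frobenius twist); it does not by itself show that two morphisms between two given (pro-)objects agree. The uniqueness argument the paper actually invokes (and what you should invoke) is the one underlying both the definition of $\chi$ and the proof of Proposition~\ref{triple}: at each finite level, the source $\Theta^- \star \xi_3 \star \xi_2 \star \xi_1$ is the lowest (degree-zero) perverse truncation of the target, and both composites (i) and (ii) are, by construction, precisely the canonical embedding of that truncation. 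Two such embeddings coincide because the perverse truncation is a (co)reflective subcategory inclusion, not because of glueability. So your plan for (3) is salvageable, but you should replace the last appeal to Theorem~\ref{avglue} by the perverse-truncation argument exactly as in Proposition~\ref{triple}; without that, the argument does not close.
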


\begin{proof}
$(1)$ follows from the following commutative diagram, 

\[\begin{tikzcd}
\Theta^- \star \xi_2  \arrow [rd, "="'] \arrow [r,"\chi"] & \Theta^- \star \Av_!\Theta^- \star \xi_2 \arrow[d, "\epsilon "] \arrow[r,"\tau(\xi_2)"] & \Theta^- \star \xi_2 \star \Av_!\Theta^- \arrow[d, "\epsilon "] \\
& \Theta^- \star \delta \star \xi_2  \arrow[r, "="] & \Theta^- \star \xi_2 \star \delta.
\end{tikzcd}\]

For $(2)$, we observe that $\xi_2 \star \Av_!\Theta^- \star \xi_1$ has only scalar endomorphisms. It suffices to check the diagram commutes after applying $\epsilon$, which is clear. 

Since the proof for $(3)$ is very similar to Proposition \ref{triple}, we will omit it. The crucial observation is that both maps are equal to the perverse truncation at degree zero. 
\end{proof}

\section{Affine monodromic Soergel functor}\label{sect7}

We construct the affine monodromic Soergel functors $\M$ and $\eM$ in this section by introducing the extended affine monodromic Hecke category. We identify semisimple complexes (resp. in the extended category) with a subcategory of $S$-modules (resp. $\eR$-bimodules) via $\M$ (resp. $\eM$). See Theorem \ref{fufa} for the precise statement. The proof is in parallel with Section $7$ in \cite{LY}. 

\subsection{Central extension} \label{cent}
\subsubsection{Central extension of loop group} \label{centsub}
We recall the construction of a central extension of the loop group $\gk$ in this subsection. We first consider the loop group of the general linear group $\gk=\glnk$. It is well-known that there is a determinant line bundle $\sL_{det}$ on $\glnk$. Let us describe its $k$-valued points $\sL_{det}(k)$. Points for other rings can be described similarly (cf. \cite[Section 2]{F}). 

Let $\co= k[[t]]$ and $K=k((t))$.  For any $\co$-lattices $\Lambda_1, \Lambda_2$ in the $n$-dimensional $K$-vector space $K^n$, there exists a third lattice $\Lambda_3 \subset \Lambda_1 \cap \Lambda_2$. The relative determinant line of $\Lambda_1$ with respect to $\Lambda_2$ is defined to be $\det(\Lambda_1:\Lambda_2)= \det(\Lambda_1/ \Lambda_3) \otimes_k \det(\Lambda_2/ \Lambda_3)^{\otimes -1}$. Different choices of $\Lambda_3$ give canonically isomorphic lines. Let $\Lambda_0$ be the standard lattice $\co^n \subset K^n$. $\sL_{det}(k)$ consists of pairs $(g, \gamma)$, where $g \in GL_n(k((t)))$ and $\gamma$ is a $k$-linear isomorphism $ k \xra{\sim} \det(g\Lambda_0 : \Lambda_0)$. When $g \in GL_n(k[[t]])$, we have $g\Lambda_0= \Lambda_0$. Hence, there is a canonical section of $\sL_{det}(k)$ over $GL_n(k[[t]])$. 

 %When $G$ is almost simple, $\widetilde{\go}$ splits canonically over $\go$. (See \cite[Lemma 4.2.2]{Z} or \cite[Proposition 1]{FL}).

From the above construction, we see that $\sL_{det}$ descends to $\Gr_{GL_n}$ the affine Grassmannian for $GL_n$ naturally and also that the complement of the zero section in the total space of $\ldet$ is a central $\gm$-extension of $\glnk$. In fact, $\ldet$ has a bi-$\go$-equivariant structure. Therefore, it descends to $\go \bsl \gk / \go$. Denote this central extension as $\eglnk$. Let us describe the multiplication of the $k$-valued points of $\eglnk$. The product of $(g_1,\gamma_1)$ and $(g_2,\gamma_2)$ is given by $(g_1g_2, g_1(\gamma_2)\gamma_1)$, where $g_1(\gamma_2) \gamma_1$ is the composition $k =k \otimes_k k \xra{g_1(\gamma_2) \otimes \gamma_1}\det(g_1g_2\Lambda_0 : g_1\Lambda_0)\otimes \det(g_1 \Lambda_0 : \Lambda_0)=\det(g_1g_2 \Lambda_0 : \Lambda_0)$. 

There are several ways to construct central $\gm$-extensions out of existing ones. 

\begin{con} \label{conloop}
	Let $G_i$ be an algebraic group. Let $\widetilde{\bold{G_{i,K}}}$ be a central $\gm$-extension of the loop group $\bold{G_{i,K}}$. 
	\begin{enumerate}
		\item For any homomorphism $\phi: G \ra G_1$, we can pullback $\widetilde{\bold{G_{1,K}}}$ to obtain a central extension $\egk$ via $\phi$.
		\item For any integer $m$, we can obtain another central extension of $\bold{G_{1,K}}$ by pushing out $\widetilde{\bold{G_{1,K}}}$ via the $m$-th power map $[m]: \gm \rightarrow \gm$.
		\item Let $G=\prod_{i=1}^n G_i$. Then $\prod \widetilde{\bold{G_{i,K}}}$ is a $\gm^n$-central extension of $\gk$. We can pushout $\prod \widetilde{\bold{G_{i,K}}}$ via the multiplication map $\prod: \gm^n \ra \gm$ to obtain a central extension of $\gk$. 
	\end{enumerate}
\end{con}

If $H$ is a subgroup of $\gk$, we denote the central extension of $H$ by $\widetilde{H}$. We denote the central torus by $\gmc$. We say a central extension $\egk$ is \emph{positively trivial} if $\ego$ is a trivial extension of $\go$. It is easy to check that the above constructions preserve positively trivial central extensions and that the central extension $\eglnk$ is positively trivial. 

%\begin{rem}
%	When $G$ is almost simple, it is well-known that every central extension is positively trivial. See [F]???
%\end{rem}	

\subsubsection{Affine Weyl group action and bilinear form}

The extended affine Weyl group $\extw=W \ltimes \X_*(T)$ acts on $\eT$. To describe the action, we select $\widetilde{\dot{w}}$ as a lift of $\dot{w}$ in $\widetilde{\gk}$ for every $w \in \extw$. As $T$ remains invariant under the conjugation of $\dot{w}$, it follows that $\eT$ is likewise invariant under the conjugation of $\widetilde{\dot{w}}$. Thus, the desired action is obtained. 

When $\egk$ is positively trivial, we can write down the action more explicitly by making use of the canonical isomorphism between $\eT$ and $T \times \gmc$. 	Let $K_{can}$ be the canonical generator of $\X_*(\gmc)$. 

\begin{lem}\label{bilinear}	
	For a positively trivial central extension $\egk$, there is a unique bilinear form $\langle \cdot, \cdot \rangle: \X_*(T) \times \X_*(T) \ra \Z$ such that the action of $\X_*(T) \subset \extw$ on $\X_*(\eT)$ is given as follows: $\lambda \in \X_*(T)$ sends $\eta \in \X_*(T) \subset \X_*(\eT)$ to $\lambda(\eta)=\eta+\langle \lambda, \eta \rangle K_{can} \in \X_*(\eT)$. Equivalently, we have $t^\lambda \eta(x) t^{-\lambda}=\eta(x)x^{\langle \lambda, \eta \rangle}$ for any $\lambda, \eta \in \X_*(T)$ and $x \in \gm$.
\end{lem}

\begin{proof}
	Since $t^{\lambda}$ commutes with $\eta(x)$ in $\gk$, their commutator in $\egk$ is inside $\gmc$. In other words, $\lambda$ sends $\eta$ to $\eta+c(\lambda, \eta) K_{can}$ for some constant $c(\lambda, \eta) \in \Z$. The bilinearity of $c(\lambda, \eta)$ follows from the fact that $K_{can}$ is invariant under the action of $\lambda$.
\end{proof}

\begin{lem}\label{finiteweyl}
	For a positively trivial central extension $\egk$ and $w \in W$, $w$ fixes $K_{can}$ and acts in the usual way on $\X_*(T)$. Hence, the bilinear form in Lemma \ref{bilinear} is $W$-invariant. 
\end{lem}

\begin{proof}
	Since $\widetilde{\go}$ is a trivial $\gm$-extension over $\go$, the commutator of $\dot{w}$ and $x \in T$ is computed in the usual way. It is clear that $w$ acts trivially on $\gmc$ and hence it fixes $K_{can}$. The second statement follows from the fact that $\extw$ is isomorphic to the semi-direct product $W \ltimes \X_*(T)$. 
\end{proof}

%\begin{lem}\label{biform}
%	There is a unique $W$-invariant bilinear map $\langle \cdot, \cdot \rangle: \X_*(T) \times \X_*(T) \ra \Z$ such that the action of $\extw$ on $\X_*(\eT)$ is as follows. 
%	\begin{enumerate}
%		%\item $\langle \lambda, K_{can} \rangle=0$ for any $\lambda \in \X_*(T)$
%		\item $w \in W$ fixes $K_{can}$, and acts in the usual way on $\X_*(T)$.
%		\item $\lambda \in \X_*(T)$ sends $\eta \in \X_*(T)$ to $\lambda(\eta)=\eta+\langle \lambda, \eta \rangle K_{can} \in \X_*(\eT).$
%	\end{enumerate}
%\end{lem}
%
%The second condition is equivalent to the following: for any $\lambda, \eta \in \X_*(T)$ and $x \in \gm$, we have $t^\lambda \eta(x) t^{-\lambda}=\eta(x)x^{\langle \lambda, \eta \rangle}$.
%
%\begin{proof}
%
%\end{proof}

\begin{lem}\label{bigln}
	Let $T_0$ be the maximal torus consisting of diagonal matrices in $G=GL_n$. The bilinear form on $\X_*(T_0)$ associated to $\eglnk$ is the standard Euclidean form. 
\end{lem}	

\begin{proof}
	Let $w_1, \ldots, w_n$ be the weights of $V$ with respect to $T_0$, where $V$ is the standard representation of $GL_n$. The standard Euclidean form on $X_*(T_0)$ is given by $\langle \lambda, \eta \rangle_{Euc}:= \sum w_i(\lambda)w_i(\eta)$. 
	To show that $\langle \lambda, \eta \rangle=\langle \lambda, \eta \rangle_{Euc}$, we may assume $\lambda$ to be negative, that is $w_i(\lambda) \leq 0$ for all $i$. In this case, we have $t^{\lambda} \Lambda_0 \supset \Lambda_0$. Moreover, $\det(t^{\lambda}\Lambda_0:\Lambda_0)= \bigotimes_{i=1}^{n} \bigotimes_{j=w_i(\lambda)}^{-1}t^{j}e_i$, where $e_i$ is the $i$-th coordinate vector in $V$. 
	
	Let $(x,1) \in \eT_0= T_0 \times \gmc $. We claim that the commutator $[(t^{\lambda},\gamma_1),(x,1)]$ is $\prod_{i=1}^n w_i(x)^{w_i(\lambda)} \in \gmc$. By definition, we have  $(t^{\lambda},\gamma_1)(x,1)=(t^{\lambda}x,t^\lambda(1)\gamma_1)$ and $(x,1)(t^{\lambda},\gamma_1)=(xt^{\lambda},x(\gamma_1)1)$. Note that $x \Lambda_0=\Lambda_0$. Moreover, both $t^{\lambda_1}(1)$ and $1$ are the identity of $\det(t^{\lambda}\Lambda_0:t^{\lambda}\Lambda_0)$ and $\det(\Lambda_0: \Lambda_0)$ respectively. The claim follows from the fact that the difference between $\gamma_1$ and $x(\gamma_1)$ on $t^je_i$ is precisely $w_i(x)^{-1}$. Therefore, $\lambda$ sends $(x,1) \in \eT_0$ to $(x, \prod_{i=1}^n w_i(x)^{w_i(\lambda)})$. Hence the statement is proved. 
\end{proof}

We study how Construction \ref{conloop} affects the bilinear forms. We choose a maximal torus $T_i$ for each group $G_i$ in the constructions and assume any morphism between algebraic groups preserves the maximal torus. Therefore a morphism $\phi: G_1 \ra G_2$ induces a linear map $ \phi_*: \X_*(T_1) \ra \X_*(T_2)$. Hence any bilinear form $\langle \cdot, \cdot \rangle$ on $\X_*(T_2)$ gives rise to a bilinear form $\phi^* \langle \cdot, \cdot \rangle$ on $\X_*(T_1)$ via pullback, that is $\phi^* \langle v, w \rangle= \langle \phi_*v, \phi_*w \rangle$ for $v,w \in \X_*(T_1)$.

\begin{lem}\label{bibase}
	We keep the notation in Construction \ref{conloop}. Let $\langle \cdot, \cdot \rangle_i$ be the bilinear form on $\X_*(T_i)$ associated to the central extension $\widetilde{\bold{G_{i,K}}}$. Then we have
	\begin{enumerate}
		\item $\langle \cdot, \cdot \rangle= \phi^*\langle \cdot, \cdot \rangle_1$.
		\item $\langle \cdot, \cdot \rangle_{new}= m\langle \cdot, \cdot \rangle_{old}$.
		\item $\langle \cdot, \cdot \rangle= \sum \langle \cdot, \cdot \rangle_i$.
	\end{enumerate}
\end{lem}	

\begin{proof}
	For (1), let $\lambda, \mu \in \X_*(T)$ and $x \in \gm$. 
	We notice that the commutator 
	$[t^\lambda,\eta(x)]=[(t^{\phi(\lambda)}, \phi(\eta(x))]=x^{\langle \phi(\lambda), \phi(\eta)\rangle}$. The other statements are clear.
\end{proof}	

Let $\phi: G \ra GL(V)$ be a representation of $G$. Let $wt(V)$ be the set of weights of $V$. For any weight $\alpha \in wt(V)$, we denote $n_\alpha \in \N_{\ge 1}$ to be the multiplicity of $\alpha$. 
We define $\langle \lambda_1 , \lambda_2 \rangle_V := \sum_{\alpha \in wt(V)} n_\alpha \alpha(\lambda_1)  \alpha(\lambda_2)$ for any $\lambda_1, \lambda_2 \in \X_*(T)$. It is clear that $\langle \cdot , \cdot \rangle_V : \X_*(T) \times \X_*(T) \ra \Z$ is a $W$-invariant bilinear form.  

\begin{cor}
	Let $\egk$ be pullback of $\widetilde{\bold{GL(V)_K}}$ via $\phi : G \ra GL(V)$. Then the bilinear form associated to $\egk$ is given by $\langle \cdot , \cdot \rangle_V$.
\end{cor}	

\begin{proof}
	We know the bilinear form associated to $GL(V)$ by Lemma \ref{bigln}. Apply Lemma \ref{bibase}(1) to $\phi$. 
\end{proof}

For a split reductive group $G$ over $\fq$, we consider the isogeny $G \sur G':=G/Z(G) \times G/[G,G]$. Suppose $G$ has $n$ simple factors $G_i$, that is $G/Z(G)= G_1 \times \cdots  \times G_n$, we choose a faithful representation $\phi_i: G_i \ra GL(V_i)$ and a non-zero integer $m_i$ for each $G_i$. Then we obtain a central extension $\widetilde{\bold{G_{i,K}}}$ for each $i$ by Construction \ref{conloop}(1) and (2). Hence, we obtain a central extension for the loop group of $G/Z(G)$ by Construction \ref{conloop}(3). By applying Construction \ref{conloop}(1) to the projection $G \ra G/Z(G)$, we obtain a central extension $\egk$ of $\gk$. 

The isogeny $G \sur G'$ induces an injection $\iota: \X_*(T) \inj \X_*(T_1)\times \X_*(T_2) \times \cdots \times \X_*(T_n) \times \X_*(G/[G,G])$. By using Lemma \ref{bibase}, we can explicitly describe the bilinear form $\langle \cdot, \cdot \rangle$ associated to $\egk$.

\begin{prop}\label{action}
	Let $\lambda, \eta \in \X_*(T)$. The bilinear form associated to $\egk$ is given by $\sum_{i=1}^{n}m_i \langle \iota (\lambda), \iota (\eta) \rangle_{V_i}$. In particular, if $\lambda$ is a coroot of $G$, there exists a cocharacter $\eta \in \X_*(T)$ such that $\langle \lambda, \eta \rangle$ is non-zero. 
\end{prop}

\subsubsection{Compatible $\gm$-torsor over $\Bun_{0,\infty}$} Let $\omega_{Bun}$ be the canonical bundle of $\Bun_G$. Let $\omega_{0,\infty}$ (resp. $\omega_{0',\infty'}$) be the pullback of $\omega_{Bun}$ along the projection map from $\Bun_{G,0,\infty}$ (resp. $\Bun_{0',\infty'}$) to $\Bun_{G}$. Let $\widetilde{\Bun_{0,\infty}^{\otimes n}}$ be the total space of the $\gm$-torsor associated to the pullback of $\omega_{0,\infty}^{\otimes n}$.

When $G$ is almost simple, we know that $\Pic(\Bun_G)$ is isomorphic to $\Z$ (cf. \cite[Theorem 4.2.1]{Z}). Therefore we can choose a faithful representation $V$, a non-zero $m$, and an integer $n$ such that there is a surjective map from $\widetilde{\gk}$ (depending on $V,m$) to $\widetilde{\Bun_{0,\infty}^{\otimes n}}$ making the following diagram Cartesian. 
\[\begin{tikzcd}
\widetilde{\gk} \arrow[d, "proj"] \arrow[r] & \widetilde{\Bun_{0,\infty}^{\otimes n}}  \arrow[d, "proj"]\\
	\gk \arrow[r]& \Bun_{0,\infty}.
\end{tikzcd}\]

We observe that the commutator subgroup $[\widetilde{\bim},\widetilde{\bim}]$ is a natural lift of $\bium$ in $\widetilde{\gk}$. This can be proven at the level of Lie algebra (see \cite[Chapter 7.2]{K}). Since $\Bun_{0,\infty}$ is isomorphic to $[\bium \bsl \gk / \biu]$, we can identify $\widetilde{\Bun_{0,\infty}^{\otimes n}}$ with $[\bium \bsl \widetilde{\gk} / \biu]$ via the above Cartesian diagram. 

For general $G$, we choose $(V_i, m_i, n_i)$ for each simple factor $G_i$ such that the above diagram commutes. Similar to Construction \ref{conloop}(3), we can construct a central $\gm$-extension of $\Bun_{0,\infty}$ from $\widetilde{\Bun_{G_i,0,\infty}^{\otimes n_i}}$. Hence, we obtain the following lemma. 

\begin{lem} \label{gmext}
There exists a central $\gm$-extension $\widetilde{\gk} \ra \gk$ and a $\gm$-torsor $\widetilde{\Bun_{0,\infty}} \ra \Bun_{0, \infty}$ such that the following diagram is Cartesian. 
\[\begin{tikzcd}
	\widetilde{\gk} \arrow[d, "proj"] \arrow[r] & {[\bium \bsl \widetilde{\gk} / \biu]}  \arrow[d, "proj"] \arrow[r,"\simeq"] & \widetilde{\Bun_{0,\infty}} \arrow[d, "proj"]\\
	\gk \arrow[r]& {[\bium \bsl \gk / \biu]} \arrow[r,"\simeq"] & \Bun_{0,\infty}.
\end{tikzcd}\]
\end{lem}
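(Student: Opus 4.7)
The strategy is indicated by the discussion preceding the statement: first handle the almost simple case using $\Pic(\Bun_G)\cong\Z$, then assemble the general case from the simple factors plus a trivial extension on the toral quotient.

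First I will treat an almost simple, simply connected $G$. The key input is that $\Pic(\Bun_G)$ is infinite cyclic, generated by a determinant line bundle (see \cite{Z}), so that both $\omega_{\Bun}$ and the line bundle obtained by descending $\sL_{det}$ (for a chosen faithful representation $\phi:G\to GL(V)$) along $\Bun_{0,\infty}\to\Bun_G$ must be integer powers of a common generator. Consequently, there exist positive integers $m,n$ such that $\omega_{Bun}^{\otimes n}$ pulls back, along $\gk\to\Bun_{0,\infty}\to\Bun_G$, to a power $\sL_{det}^{\otimes m}$ of the determinant line bundle on $\gk$. After replacing $\phi$ by a direct sum of copies, and $n$ by $nm$ if necessary, we may arrange $m=1$, i.e. the two line bundles on $\gk$ are canonically isomorphic. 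The $\gm$-torsor $\widetilde{\Bun_{0,\infty}^{\otimes n}}\to\Bun_{0,\infty}$ associated with $\omega_{0,\infty}^{\otimes n}$ therefore pulls back to the total space (minus the zero section) of $\sL_{det}$ on $\gk$, and the latter is by construction the central $\gm$-extension $\widetilde{\gk}$ of $\gk$. This produces the desired Cartesian square in the almost simple case; I then define $\widetilde{\Bun_{0,\infty}}:=\widetilde{\Bun_{0,\infty}^{\otimes n}}$, and the identification $[\bium\bsl\widetilde{\gk}/\biu]\simeq \widetilde{\Bun_{0,\infty}}$ follows from the identification $[\bium\bsl\gk/\biu]\simeq\Bun_{0,\infty}$ already recalled in \S \ref{modsec}, together with the fact that both sides are $\gm$-torsors over the same base whose pullbacks to $\gk$ agree.

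For general $G$, I follow exactly the recipe of the previous paragraphs. Let $G\to G':=G/Z(G)\times G/[G,G]$ be the chosen isogeny and write $G/Z(G)=G_1\times\cdots\times G_n$ with the $G_i$ almost simple. For each factor $G_i$ I apply the preceding construction to obtain a faithful representation $V_i$ and an integer $n_i>0$ together with a Cartesian diagram relating $\widetilde{(G_i)_K}$ and $\widetilde{\Bun_{G_i,0,\infty}^{\otimes n_i}}$. On the toral factor $G/[G,G]$ I take the trivial $\gm$-extension of the loop group and the trivial $\gm$-torsor over $\Bun_{G/[G,G],0,\infty}$; these are plainly compatible. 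Taking products yields a central $\gm^{n}$-extension of $\bold{G'_K}$ and a $\gm^n$-torsor over $\Bun_{G',0,\infty}$ sitting in a Cartesian diagram. Pushing out along a character $\gm^n\to\gm$ non-trivial on each coordinate and pulling back along the isogeny $\gk\to\bold{G'_K}$ (and the induced map $\Bun_{G,0,\infty}\to\Bun_{G',0,\infty}$) produces the desired $\widetilde{\gk}$ and $\widetilde{\Bun_{0,\infty}}$; since both pullback and pushout preserve Cartesian squares of $\gm$-torsors, the required diagram is Cartesian.

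The bookkeeping step I expect to be most delicate is matching the line bundle $\omega_{Bun}^{\otimes n}$ with the descent of $\sL_{det}^{\otimes m}$ as a line bundle on $\Bun_{0,\infty}$, rather than merely as an abstract isomorphism class: one has to check that the bi-$\go$-equivariant structure (which allows $\sL_{det}$ to descend through $\Bun_{0,\infty}\to\Bun_G$) is compatible with the canonical rigidification of $\omega_{Bun}$ coming from the cotangent complex. This amounts to verifying that the $\biu\times\bium$-equivariant structures agree up to a character that can be absorbed into $\phi$ and $n$; once this is done, the passage to the affine-flag variety quotient $[\bium\bsl\widetilde{\gk}/\biu]\simeq\widetilde{\Bun_{0,\infty}}$ is formal.
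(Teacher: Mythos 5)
Your argument follows essentially the same route as the paper: in the almost simple case, use $\Pic(\Bun_G)\cong\Z$ to match $\omega_{Bun}$ with a power of the determinant line bundle and pull back to identify the two $\gm$-torsors, then assemble the general case via the isogeny $G\to G/Z(G)\times G/[G,G]$, taking products over the simple factors and the trivial extension on the toral quotient, and pushing out along a character of $\gm^n$ nontrivial on each factor. One small caution: you phrase the base case for \emph{simply connected} $G$, but the simple factors $G_i$ of $G/Z(G)$ are adjoint, so the base case should be stated for almost simple $G$ (as the paper does), relying on the fact that $\Pic(\Bun_{G_i})\cong\Z$ still holds in that generality; the rest of your reasoning, including the final bookkeeping about the $\gm$-torsor over $\Bun_{0,\infty}$, then goes through as written.
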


%Consider the $\gm$-extension $\widetilde{\glvk}$ of $\glvk$ constructed using the determinant line bundle, as described in Section \ref{centsub}. Let $ \phi: G \rightarrow GL(V)$ be a representation. Define $\egk$ as the pullback of $\widetilde{\glvk}$ via $\phi$. We will associate a $W$-invariant $\Z$-valued bilinear form on $\X_*(T)$. 

From now on, we always fix a central $\gm$-extension $\widetilde{\gk} \ra \gk$ and a $\gm$-torsor $\widetilde{\Bun_{0,\infty}} \ra \Bun_{0, \infty}$ as in Lemma \ref{gmext}. There are natural stratifications of $\widetilde{\gk} $ and $\widetilde{\Bun_{0,\infty}}$ induced from $\gk$ and $\Bun_{0, \infty}$ respectively.

\subsection{Soergel functor}
Denote $R=\tH^*_{T_k}(\pt_k, \ql)$ and $\eR=\tH^*_{\widetilde{T}_k}(\pt_k, \ql)=\Sym(\X^*(\eT))_{\ql}$.
Let $\Delta \gmc$ be the diagonal embedding of $\gm^{\cen}$ to $\eT \times \eT$ and $A$ be the quotient torus $\frac{\widetilde{T} \times \widetilde{T}}{\Delta \gmc}$. Let $S$ be $ \tH^*_{A_k}(\pt_k, \ql)$. The ring $S$ is naturally a subring of $\eR \otimes \eR$. These are all polynomial algebras with generators placed at degree $2$. Moreover, they carry Frobenius actions which send each generator $x$ to $qx$. 

The action of $\extw$ on $\eT$ gives an action of $\extw \times \extw$ on $\eT \times \eT$. The latter action induces an action of $\extw \times \extw$ on the torus $A$. Hence there is a left and right $\extw$ action on both $S$ and $\eR \otimes \eR$. For $w$ in $\extw$ and a ring $B$ acted by $\extw$ on the right, we denote the $w$-invariant part of $B$ as $B^w$.

%When $G$ is commutative, the extended affine Weyl group is the cocharacter lattice, which contains no reflections. Therefore, $\ewlo$ is trivial. 

One can define several categories associated to $\eR$ and $S$ in a similar way to \cite{LY}. Denote $S$-gmod to be the category of graded $S$-modules; $(S, \Fr)$-gmod to be the category of graded $S$-modules with a compatible Fr action; $\eR \otimes \eR$-gmod to be the category of graded $\eR \otimes \eR$-modules; $(\eR \otimes \eR, \Fr)$-gmod to be the category of graded $(\eR \otimes \eR, \Fr)$-modules with a compatible Fr action.

There is a functor $\omega$ forgetting the Fr action. We denote $[-]$, $(-)$, and $\langle - \rangle$ as degree shift, weight twist, and their composition, respectively. $\Hom^\bullet(-,-)$ represents the inner Hom. These categories naturally carry monoidal structures, as described in Definition \ref{conv} below.

We denote $\Ind$ and $\Res$ as the adjoint pair of induction and restriction functors between $S$-mod and $\eR \otimes \eR$-mod. The same notations are also used for their graded counterparts and those with Frobenius action.

\begin{lem}\label{indiso}
	Let $p: \A_{\ql}^{n+1} \ra \A_{\ql}^n$ be a surjective linear map between affine spaces. Let $\sF$ and $\sG$ be two coherent sheaves on $\A^n$ such that $p^*(\sF)$ and $p^*(\sG)$ are isomorphic. Then $\sF$ and $\sG$ are isomorphic. In particular, if $M$ and $N$ are two $S$-modules such that $\Ind(M) \cong\Ind(N)$, then $M \cong N$. 
\end{lem}

\begin{proof}
	By selecting coordinates for the affine spaces, we let $M$ and $N$ be the corresponding $\ql[x_1,x_2,\dots, x_n]$-module for $\sF$ and $\sG$ respectively. Let $\phi$ be an isomorphism from $M \otimes_{\ql[x_1,x_2,\dots, x_n]}\ql[x_1,x_2,\dots ,x_n,x_{n+1}]$ to $N \otimes_{\ql[x_1,x_2,\dots ,x_n]}\ql[x_1,x_2,\dots, x_n,x_{n+1}]$. The isomorphism $\phi$ must send the submodule $M \otimes (x_{n+1})$ to $N \otimes (x_{n+1})$. By taking the quotient of submodules, $\phi$ induces an isomorphism from $M$ to $N$. 
\end{proof}

\begin{lem}\label{quotsh}
	We retain the notation used in Lemma \ref{indiso}. If $p^*(\sF)$ is a quotient of the structure sheaf $\mathcal{O}_{\A_{\ql}^{n+1}}$, then $\sF$ itself is a quotient of the structure sheaf $\mathcal{O}_{\A_{\ql}^{n}}$.
\end{lem}

\begin{proof}
	It suffices to show that if $M \otimes_{\ql[x_1,x_2,\dots, x_n]}\ql[x_1,x_2,\dots ,x_n,x_{n+1}]$ is generated by a single element as a $\ql[x_1,x_2,\dots ,x_n,x_{n+1}]$-module, then $M$ is also generated by a single element as a $\ql[x_1,x_2,\dots ,x_n]$-module. Let $\sum m_i \otimes x_{n+1}^i$ be a generator for the induced module. It is straightforward to see that $m_0$ is a generator for the module $M$. 
\end{proof}

\begin{defn}
	For each $w \in \extw$, let $\eR(w)$ be the graded $\eR$-bimodule which is the quotient of $\eR \otimes \eR$ by the ideal generated by $w(a) \otimes 1- 1 \otimes a$ for all $a \in \eR$. Let $\barr(w) \in (S, \Fr)$-gmod such that $\Ind(\barr(w)) \cong \eR(w)$. 
\end{defn}

Lemma \ref{indiso} shows that $\barr(w)$ is well-defined up to isomorphism. Lemma \ref{quotsh} shows that $\barr(w)$ can be regarded as a quotient of $S$. Under this identification, we define the element $1$ in $\barr(w)$ to be the image of the identity element in $S$. Similarly, we define $1 \in \eR(w)$ to be the image of $1$ in $\eR \otimes \eR$. Since the degree zero part of $\eR(w)$ is one-dimensional, the degree zero part of $\barr(w)$ is also one-dimensional. Hence, the choices of $1$ rigidify both $\barr(w)$ and $\eR(w)$. Note that Fr naturally acts on both $\barr(w)$ and $\eR(w)$.

Denote $\X_*(\eT) \otimes_{\Z} \ql$ to be $\elit$. By abuse of notation, let $\Delta_{\alge}^{\cen}$ be the diagonal embedding of $\X_*(\gm^{\cen}) \otimes_{\Z} \ql$ into either $\elit^2$ or $\elit^3$. We identify $S$-modules with quasi-coherent sheaves on $\elit \times \elit / \Delta_{\alge}^{\cen}$. For $1 \le i < j \le 3$, let $p_{ij}: \elit^3/ \Delta_{\alge}^{\cen} \ra \elit^2/ \Delta_{\alge}^{\cen}$ be the natural projections.

\begin{defn}\label{conv} Given two quasi-coherent sheaves $\sF$ and $\sG$ on $\elit \times \elit / \Delta_{\alge}^{\cen}$, define their convolution product $\sF \bullet \sG$ as  $p_{13*}(p_{12}^*\sF \otimes p_{23}^*\sG)$. In a similar fashion, we define the convolution product $\esF \bullet \esG$ for quasi-coherent sheaves $\esF$ and $\esG$ on $\elit^2$.
\end{defn}

It is easy to check that the convolution product commutes with $\Ind$. Let $M_1, M_2$ be two $\eR$-bimodules. The convolution product of $M_1 \bullet M_2$ is the tensor product $M_1 \otimes_{\eR} M_2$ of $M_1$ and $M_2$ with respect to the second $\eR$-action on $M_1$ and the first
$\eR$-action on $M_2$.

%An $S$-module is the same as an $R$-bimodule with an action induced by $z$. From this point of view, the convolution product $M \bullet N$ is $M \otimes_R N$ along with the action $z\otimes 1 + 1 \otimes z$. 

\begin{defn}The extended affine monodromic Hecke category $\lpedl$ is the $2$-limit of $D^b_{(\eT \times \eT, \sL' \boxtimes \sL^{-1}),m}(\biu \bsl \egksw / \biu)$, where $\sL' \boxtimes \sL^{-1}$ is viewed as a character sheaf on $\eT \times \eT $ via pullback of the natural projection $\eT \times \eT \ra T \times T$.
\end{defn}

There are no extra difficulties for defining the extended version of other categories and sheaves mentioned before. A tilde is added whenever necessary to indicate the extended analog, with the exception being the convolution product. Statements in previous sections can be easily generalized to the extended case. As an illustration, the first statement of Lemma \ref{stdstd} becomes 
$ \eDe (\wdo_1)^+_{w_2\sL} \conp \eDe (\wdo_2)^+_{\sL}\cong  \eDe (\wdo_1\wdo_2)^+_{\sL}$ if $\ell(w_1)+\ell(w_2)= \ell(w_1w_2)$.

The stack $[\ebi \bsl \egk / \ebi]$ can be regarded as the quotient of  $[\biu \bsl \egk / \biu]$ by the action of $\eT \times \eT$. Therefore the cohomology ring $\tH^*(\ebi \bsl \egk / \ebi)$ is an $\eR \otimes \eR$-algebra. Similarly, the stack $[\bi \bsl \gk / \bi]$ can be viewed as the quotient of $[\biu \bsl \egk / \biu]$ by the action of $A=\frac{\eT \times \eT}{\Delta\gmc}$. Therefore the cohomology ring $\tH^*(\bi \bsl \gk / \bi)$ is an $S$-algebra.

%There is a natural map $\pi: [\biu \bsl \egk / \biu] \ra [\biu \bsl \gk  / \biu]$ and the convolution product commutes with the pullback $\pi^*$. 

For $\esF, \esG \in \pha \lpedl$, $\Hom^\bullet(\esF, \esG)$ is a module over $\tH^*(\ebi \bsl \egk / \ebi)$. Hence it is a $\eR$-bimodule. Similarly, $\Hom^\bullet(\sF, \sG)$ is a $S$-module for $\sF, \sG \in \pha \lpdl$. The following lemma provides basic examples demonstrating the computation on the hom space. 

\begin{lem} \label{homstd}
	For $w \in \extw$, $\eR(w)$ is canonically isomorphic to $\Hom^\bullet(\widetilde{\Delta(\wdo)^+_\sL},\widetilde{\Delta(\wdo)^+_\sL})$ in $(\eR \otimes \eR, \Fr)$-gmod. Similarly, $\barr(w)$ is canonically isomorphic to $\Hom^\bullet(\Delta(\wdo)^+_\sL,\Delta(\wdo)^+_\sL)$ in $(S, \Fr)$-gmod. The analogous statements also hold for costandard sheaves. 
\end{lem}

\begin{proof}
	In the extended case, $\Hom^\bullet(\widetilde{\Delta(\wdo)^+_\sL},\widetilde{\Delta(\wdo)^+_\sL})$ computes the $\eT \times \eT$-equivariant cohomology of $\wdo \eT$, which is the same as $\eR(w)$. There is a unique isomorphism sending $id \in \Hom^{0}(\widetilde{\Delta(\wdo)^+_\sL},\widetilde{\Delta(\wdo)^+_\sL})$ to $1 \in \eR(w)$. The analogous reasoning applies to the other cases as well.
\end{proof}

\begin{defn}
Let $\beta \in \pha \lpewul$, $\xi$ (resp. $\xiu$) be a (resp. non-mixed) minimal IC sheaf for $\beta$.
\begin{enumerate}[$(1)$]
\item  Define $\M_{\xi}$ the \textbf{mixed Soergel functor} associated with $\xi$ to be the functor from $\lpdl^\beta$ to $(S, \Fr)$-$\gmod$, which sends $\sF$ to \[\M_{\xi}(\sF):= \Hom^\bullet(\lptlpm \conn \xi, \sF).\] 

\item Define $\Mu_{\xiu}$ the \textbf{non-mixed Soergel functor} associated with $\xiu$ to be the functor from $\lpdull^\beta$ to $S$-$\gmod$, which sends $\sFu$ to \[\Mu_{\xiu}(\sFu):= \Hom^\bullet(\lptulpm \conn \xiu, \sFu).\] 
\item When $\xi=\delta_\sL$ (resp. $\underline{\xi}=\underline{\delta}_\sL$), the corresponding Soergel functors are denoted as $\M^\circ$ (resp. $\underline{\M}^\circ$). 
\end{enumerate}
\end{defn}

It is straightforward to generalize the above definition to the extended case. Thanks to Corollary \ref{Avfor}, we may replace $\lptlpm$ with the averaging maximal sheaf $\avlptlpm$ in the above definition.

%\begin{lem} \label{equivhom}
%	Let $X$ be a variety and a torus $T$ acts on $X$ trivially. Let $\pi$ be the natural morphism from $[X/T]$ to $X$. Let $\sF$ and $\sG$ be two complexes in $D^b_m(X)$. There is a canonical isomorphism from $\Hom^\bullet(\sF,\sG) \otimes_{H^*(X)} H^*_T(X)$ to $\Hom^\bullet(\pi^*\sF, \pi^*\sG)$.
%\end{lem}

%\begin{proof}
%	Since $T$ acts on $X$ trivially, we know that the stack $[X/T]$ is canonically isomorphic to $X \times [\pt/T]$. The statement is derived from the compatibility of taking hom spaces of complexes with the product operation.
%\end{proof}

Let $\pi: [\ebi \bsl \egk / \ebi] \ra [\bi \bsl \gk / \bi]$ be the natural map of quotient stacks. It is a $\gmc$-gerbe. We have the following two commutative diagrams,
\[\begin{tikzcd}
	{[\ebi \bsl \egk / \ebi]} \arrow[d]  \arrow [r,"\pi"] & {[\bi \bsl \gk  / \bi]} \arrow[d] \\ {[\eT \bsl \pt / \eT]} \arrow[r] & {[\pt /A]},
\end{tikzcd} \qquad \qquad
\begin{tikzcd}
	\tH^*(\ebi \bsl \egk / \ebi)  & 	\tH^*(\bi \bsl \gk  / \bi) \arrow [l,"\pi^*"'] \\ \eR \otimes \eR \arrow[u]& S \arrow[u] \arrow [l] .
\end{tikzcd}\] 

Section 2.6 in \cite{LY} provides a pair of adjoint functors 
\[\pi^*: \pha \lpdl \rightleftarrows \pha  \lpedl : \pi_*.\]
By the construction of the functors, we know that $\pi^*$ preserves positive standard sheaves, IC sheaves, and costandard sheaves respectively. For instance, the functor $\pi^*$ sends $\delta_\sL$ to $\widetilde{\delta_\sL}$. 

Similarly, let $\pi^-: [\ebim \bsl \egk / \ebi] \ra [\bim \bsl \gk / \bi]$ be the natural map of quotient stacks. We have a pair of adjoint functors $(\pi^{-*},\pi^-_*)$ for the category of negative sheaves and the pullback functor $\pi^{-*}$ preserves negative standard sheaves, IC sheaves, and costandard sheaves respectively. In particular, it sends the maximal IC sheaf $\Theta$ to $\eTh$. 

The two Soergel functors $\M$ and $\eM$ are related by the following proposition.  

\begin{prop} \label{Soers}
Let $\sF \in \pha \lpdl^{\beta}$. There exists a natural $(\eR \otimes \eR, \Fr)$-gmod isomorphism from $\Ind(\M(\sF)) \cong  \M(\sF) \otimes _S (\eR \otimes \eR)$ to $\eM(\pi^*\sF)$.
\end{prop}

\begin{proof}
	From the discussion under Claim \ref{welldef}, we can regard $\Theta$ as a $T \times T$-monodromic sheaf on $\gk/ \biu$. Similarly, we can view $\eTh$ as a $\eT \times \eT$-monodromic sheaf on $\egk/ \biu$. Let $\pi': [\eT \bsl \egk / \ebi] \ra [T \bsl \gk / \bi]$. Then we know that  $\pi'^*\Theta$ is canonically isomorphic to $\eTh$. Hence, there is a morphism from $\Hom^{\bullet}(\Theta, \sF)$ to $\Hom^{\bullet}(\eTh,\pi^*\sF)$. This morphism is 	$\tH^*(T \bsl \gk  / \bi)=\tH^*(\bi \bsl \gk  / \bi)$-linear, where the $\tH^*(\bi \bsl \gk  / \bi)$-module structure of $\Hom^{\bullet}(\eTh,\pi^*\sF)$ is given by the transport of structure via the algebra map $\pi^*$. Hence we obtain a $(\eR \otimes \eR, Fr)$-linear morphism from $\M(\sF) \otimes _S (\eR \otimes \eR)$ to $\eM(\pi^*\sF)$. 
	
	From Lemma \ref{homstd} and Proposition \ref{stalkav}, we know that $\M(\nabla(w))$ is isomorphic to $\barr(w)$ as a graded $S$-module and $\eM(\widetilde{\nabla(w)})$ is isomorphic to $\eR(w)$ as a graded $\eR$-bimodule when $w \in \beta$. We observe that the degree zero parts of both $\M(\nabla(w))$ and $\eM(\widetilde{\nabla(w)})$ are one-dimensional $\ql$-vector spaces. The morphism constructed in the previous paragraph induces an isomorphism between the degree zero parts, both spanned by the restriction map. As a result, the morphism is an isomorphism when $\sF$ is a costandard sheaf. The statement follows from the fact that costandard sheaves are generators of $\lpdl$. 
\end{proof}

Here are some simple computations about Soergel functors. 

\begin{lem}\label{conmic}
There is a unique isomorphism in $(S,\Fr)$-$\gmod$, $\M_{\xi}(\xi) \xra{\sim} \barr(w^\beta)$, which sends $\epsilon(\xi)$ to $1$. The extended case holds similarly.  
\end{lem}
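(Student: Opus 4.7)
My plan is to follow the strategy of \cite[Lemma 7.7]{LY}, using Proposition~\ref{avstalk} to reduce the calculation to an endomorphism computation on a single Schubert cell.

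The first observation is that since $w^\beta$ is minimal, its reduced expression uses only simple reflections outside $\ewlo$, so iterating Lemma~\ref{equiv}(1) gives $\xi\cong\Delta(\dot w^\beta)^+_\sL\cong\nabla(\dot w^\beta)^+_\sL$. Applying Corollary~\ref{avass} together with Lemma~\ref{Avfor}, I will rewrite
\[\M_\xi(\xi)\;=\;\Hom^\bullet(\Theta^-\star\xi,\xi)\;\cong\;\Hom^\bullet(\Av_!\Theta^-\star\xi,\,\xi).\]

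The core step is a support and stalk analysis. By Proposition~\ref{avstalk} (applied with $\sL'$ in place of $\sL$), $\Av_!\Theta^-$ is supported on $\bigsqcup_{v\in\widetilde{W}^\circ_{\sL'}}\gkv/\biu$, with $i_e^*\Av_!\Theta^-\cong\delta_{\sL'}$. Hence $\Av_!\Theta^-\star\xi$ is supported on $\bigsqcup_{v\in\widetilde{W}^\circ_{\sL'}}\gk_{vw^\beta}/\biu$, and by Proposition~\ref{order} combined with the minimality of $w^\beta$, the cell $\gk_{w^\beta}/\biu$ is closed within this support. The length identity $\ell(vw^\beta)=\ell(v)+\ell(w^\beta)$ (following from $w^\beta$ preserving positivity of the $\widetilde{\Phi}^\vee_\sL$-coroots) makes the convolution clean at this stratum, so only the pair $(e,w^\beta)$ contributes, yielding
\[i_{w^\beta}^*(\Av_!\Theta^-\star\xi)\;\cong\;\delta_{\sL'}\star C(\dot w^\beta)^+_\sL\;\cong\;C(\dot w^\beta)^+_\sL.\]
Writing $\xi=i_{w^\beta,!}C(\dot w^\beta)^+_\sL=\iota_*j_!C(\dot w^\beta)^+_\sL$ through the Schubert variety and applying the standard closed-open orthogonality then produces
\[\Hom^\bullet(\Av_!\Theta^-\star\xi,\xi)\;\cong\;\End^\bullet(C(\dot w^\beta)^+_\sL)\;\cong\;\barr(w^\beta),\]
where the last isomorphism is the one recalled in the setup of Section~\ref{sect7}.

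Under these identifications, $\epsilon(\xi)=\epsilon_{\sL'}\star\id_\xi$ restricts to the identity on the $w^\beta$-stalk, because $\epsilon_{\sL'}$ is by construction the identity on the $e$-stalk of $\Av_!\Theta^-$; so it corresponds to $1\in\barr(w^\beta)$. Uniqueness follows from $\barr(w^\beta)$ being cyclic on $1$ in degree $0$ with trivial Frobenius weight. The extended case is handled identically, with $\eC(\dot w^\beta)^+_\sL$ and $\eR(w^\beta)$ in place of $C(\dot w^\beta)^+_\sL$ and $\barr(w^\beta)$; alternatively it follows from the non-extended statement by applying $\Ind$ via Lemma~\ref{Soers}. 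The main subtlety to track will be the compatibility of the $S$-module structures through the stalk-and-adjunction chain, which should follow from naturality of the restriction map $\tH^*(\bi\bsl\gk/\bi)\to\tH^*(\bi\bsl\gk_{w^\beta}/\bi)$.
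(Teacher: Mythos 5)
Your overall strategy is close to what the paper delegates to \cite{LY}, Lemma 7.3: reduce $\M_\xi(\xi)$ via $\Av_!$ and the adjunction $i_{w^\beta}^*\dashv i_{w^\beta,*}$ to the endomorphisms of $C(\dot w^\beta)^+_\sL$ on the cell $\gk_{w^\beta}/\biu$. But the key intermediate claim — that $\ell(vw^\beta)=\ell(v)+\ell(w^\beta)$ for $v\in\widetilde{W}^\circ_{\sL'}$, so that only the pair $(e,w^\beta)$ contributes to the $w^\beta$-stalk — is false. In the situation of Example \ref{example} with $G=Sp_4$ (so $\widetilde{W}^\circ_\sL$ is the affine Weyl group of $SO_4$), take $\beta$ to be the block of $s_{2L_2}$; then $w^\beta=s_{2L_2}$ and $\ell(w^\beta)=1$, while $v=s_{L_1+L_2}\in W_\sL^\circ$ has $\ell(v)=3$ in $W_{Sp_4}$ but $\ell(vw^\beta)=\ell(s_{L_1+L_2}s_{2L_2})=2\neq 3+1$. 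The correct statement is $\ell(vw^\beta)=\ell_{\sL'}(v)+\ell(w^\beta)$, with $\ell_{\sL'}$ the length in the Coxeter group $\widetilde{W}^\circ_{\sL'}$, but this does not make the map $\gkv/\biu\times^{\bi}\gk_{w^\beta}/\biu\ra\gk_{vw^\beta}/\biu$ an isomorphism, which is what your "clean convolution" argument needs. Without it, neither the claimed support of $\Av_!\Theta^-\star\xi$ nor the assertion that only $(e,w^\beta)$ contributes to its $w^\beta$-stalk is justified.

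The stalk formula $i_{w^\beta}^*(\Av_!\Theta^-\star\xi)\cong C(\dot w^\beta)^+_\sL$ is nevertheless correct, and the paper already supplies the right tool, with no length bookkeeping required. By Corollary \ref{avass} together with Lemma \ref{equiv}(5) iterated along a reduced expression of $w^\beta$ (all of whose letters lie outside the relevant $\widetilde{W}^\circ$), one has $\Av_!\Theta^-\star\xi\cong\Av_!(\Theta^-\star\xi)\cong\Av_!\IC(\dot w^\beta)^-_\sL$, the averaged maximal IC sheaf of the block $\beta$, and Proposition \ref{stalkav} then gives both the stalk at $w^\beta$ and the vanishing outside $\beta$ directly. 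Substituting this for your length argument makes the rest of your proof go through: the adjunction step, the identification of $\epsilon(\xi)$ with $1$, uniqueness, and the extended case via $\Ind$ are all fine as written.
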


\begin{proof}
The argument in the proof of Lemma 7.3 in \cite{LY} still works.
\end{proof}

\begin{lem}\label{conic}
Let $s \in \extw$ be a simple reflection such that $s \in \ewlo$ and let $\sF \in \pha \lpdl ^\beta$. There is a unique isomorphism in  $(S,\Fr)$-$\gmod$, $\M_{\xi}(\sF) \otimes_{S^s} S \xra{\sim}\M_{\xi}(\sF \conp \IC(s)^\dagger_\sL\langle -1 \rangle)$ such that the composition \[\M_{\xi}(\sF) \inj \M_{\xi}(\sF) \otimes_{S^s} S \xra{\sim} \M_{\xi}(\sF \conp \IC(s)^\dagger_\sL \langle -1 \rangle) \xrightarrow{\Lemma \ref{proj} } \M_{\xi}(\pi_s^*\pi_{s*}\sF) \xra{adj} \M_{\xi}(\sF)\] is the identity. 

Moreover, when taking $\sF=\delta_\sL$, this isomorphism sends $1 \otimes 1$ to $\theta^\dagger_s$ after pre-composing the isomorphism in Lemma \ref{conmic}. The extended case holds similarly. 
\end{lem}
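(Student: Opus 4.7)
The plan is to follow closely the proof of Lemma 7.4 in \cite{LY}, adapted from the reductive to the loop-group setting. First, via the isomorphism $\iota_s: \IC(s)_\sL^\dagger \xra{\sim} \IC(s)_\sL^+$ from the preceding lemma, I would reduce the problem to identifying $\M_\xi(\sF \conp \IC(s)_\sL^+ \langle -1 \rangle)$. By the positive-sheaf analog of Lemma \ref{proj} (right convolution with $\IC(s)_\sL^+$ realized as $\pi_s^*\pi_{s*}(-)\langle 1 \rangle$ for the smooth $\p^1$-fibration $\pi_s$ associated to the parahoric $P_s$), this identifies with $\M_\xi(\pi_s^*\pi_{s*}\sF)$.

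Next, using the adjunction $(\pi_s^*, \pi_{s*})$ I obtain
\[
\M_\xi(\pi_s^*\pi_{s*}\sF) \cong \Hom^\bullet(\pi_{s*}(\lptlpm \conn \xi), \pi_{s*}\sF).
\]
The key point is that the extra equivariant direction introduced by passage through $\pi_s$ contributes precisely a free $\barr(s)$-factor: the transition from $(T \times T)$-equivariance to $(T \times L_s)$-equivariance, together with the monodromy corresponding to $s \in \ewlo$, forces the extra bimodule of scalars to be $\barr(s)$. This yields a canonical natural transformation $\M_\xi(\sF) \bullet \barr(s) \to \M_\xi(\sF \conp \IC(s)_\sL^\dagger \langle -1 \rangle)$. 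To check it is an isomorphism, I would reduce by devissage to the standard sheaves $\sF = \Delta(\wdo)_\sL^+$, where Lemma \ref{conmic} together with Lemma \ref{stdstd} makes both sides free of rank one over compatible module structures, and the map becomes a direct identification. Uniqueness follows from one-dimensionality of $\Hom^0$ subject to the identity-composition constraint: in the composition of the statement, the final arrow $\M_\xi(\pi_s^*\pi_{s*}\sF) \to \M_\xi(\sF)$ given by the adjunction counit pins down the isomorphism up to scalar, and the scalar is fixed by requiring the overall composition to be the identity.

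For the final assertion with $\sF = \delta_\sL$: the element $1 \otimes 1 \in \M_\xi(\delta_\sL) \bullet \barr(s)$ is characterized by its behavior under restriction to the stratum over $s$, equivalently by its specialization at $e$. Since $\iota_s \circ \theta_s^\dagger$ is normalized to restrict to the identity on the stalk at $e$ (from the preceding lemma), and the isomorphism of Lemma \ref{conmic} sends $\epsilon(\xi)$ to $1$, a direct compatibility check at the stalk $e$ identifies the image of $1 \otimes 1$ with $\theta_s^\dagger$ after precomposition with the isomorphism of Lemma \ref{conmic}.

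The main obstacle is identifying the parahoric-pushforward contribution with $\barr(s)$ compatibly with Frobenius and with the $(T \times T) \to (T \times L_s)$ equivariance transition, since this is where the specific combinatorics of the Coxeter group $\ewlo$ (rather than $\extw$) enters: we must know that the extra factor is the Soergel bimodule $\barr(s)$ attached to the simple reflection $s$ viewed in $\ewlo$, not a different $\barr(w)$. A secondary, essentially bookkeeping, difficulty is verifying compatibility of all constructions with the ind-limit defining $\lpdl$, which is handled by noting that all relevant operations factor through finite-type quotients $\gksw / \biu$ and are compatible with the closed-embedding maps $i_{w,w'}$.
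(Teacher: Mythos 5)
Your outline follows the right template — reduce via $\iota_s$ and the parahoric pushforward-pullback identification, then identify the extra module factor — and for non-affine simple reflections $s$ this is indeed how the paper (and Lemma 7.4 of \cite{LY}) proceed. But you are missing the genuinely new difficulty of the loop-group setting: the $S$-module structure involves the central class $z$ coming from the determinant line bundle $\ldet$, and whether that class comes from $\gk/P_s$ depends on $s$. For a non-affine simple reflection, $\ldet$ is (up to the pullback of a line bundle on $\gk/\bi$) pulled back from $\gk/P_s$, so the ``extra factor'' argument you sketch goes through and the $z$-action matches on both sides. For an \emph{affine} simple reflection $s$, $\ldet$ does \emph{not} descend to $\gk/P_s$, and your claim that ``the transition from $(T\times T)$- to $(T\times L_s)$-equivariance forces the extra factor to be $\barr(s)$'' silently fails: one cannot just compute in $\lpdelm$ and expect the $z$-component of the $S$-action to be correct. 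The paper fixes this by choosing a character $\lambda\in\X^*(T)$ so that $\sL_\lambda\otimes\ldet$ descends to $\gk/P_s$, and running the argument for that twisted bundle. Without this step, the claimed isomorphism in $(S,\Fr)$-$\gmod$ is not established for affine $s$, which is precisely the case that distinguishes the loop group from the reductive group.

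A secondary issue: the intermediate step where you invoke the $(\pi_s^*,\pi_{s*})$ adjunction to pass from $\M_\xi(\pi_s^*\pi_{s*}\sF)$ to $\Hom^\bullet(\pi_{s*}(\lptlpm\conn\xi),\pi_{s*}\sF)$ is not the adjunction you need; the source $\lptlpm\conn\xi$ is not a priori of the form $\pi_s^*(-)$. The actual content of the paper's proof is the observation (from Lemma \ref{pbic}) that $\lptlpm\conn\xi$ \emph{is} of the form $\pi_s^*\overline{\Theta}$ for a shifted perverse sheaf $\overline{\Theta}$ on $\bsu$, after which the projection formula and the $\RuHom$ calculus carry everything down to $\lpdelm$. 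Your proposal gestures toward this via the ``equivariance transition,'' but makes it a statement about module scalars rather than a sheaf-level identification, which is not enough to pin down the compatibility with the $z$-action or to give the uniqueness/normalization precisely as in the statement. These two gaps — the affine-$s$ descent of $\ldet$, and making the key pullback identification of $\lptlpm\conn\xi$ explicit — are what would need to be filled to turn your sketch into a proof.
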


\begin{proof}
	The proofs for all cases are similar to that in Lemma $7.4$ in \cite{LY}. We briefly explain the proof for the non-extended case. The key observation is that $\lptlpm \conn \xi$ can be expressed as $\pi_s^*{\overline{\Theta}}$ for some shifted perverse sheaf $\overline{\Theta} \in \pha \lpdelm$ by Lemma \ref{pbic}. For any complex $\sK \in D^b_m(\bi \bsl \gk / P_s)$, there is a natural map in $(\tH^*(\bi \bsl \gk / \bi), \Fr)$-$\gmod$: \[ \tH^*(\bi \bsl \gk / P_s, \sK) \otimes_{\tH^*(\bi \bsl \gk / P_s)} \tH^*(\bi \bsl \gk / \bi) \ra  \tH^*(\bi \bsl \gk / \bi, \pi_{s}^*\sK).\] 
	This map induces a natural map in $(S , \Fr)$-$\gmod$:
	\[ \tH^*(\bi \bsl \gk / P_s, \sK) \otimes_{S^s} S \ra  \tH^*(\bi \bsl \gk / \bi, \pi_{s}^*\sK).\] One can check that this map is an isomorphism. We get the required isomorphism by applying this isomorphism to $\sK=\RuHom(\overline{\Theta}, \pi_{s*}\sF)$.
\end{proof}

\begin{cor}
	Let $s \in \extw$ be a simple reflection such that $s \in \ewlo$. There is a canonical isomorphism in $(\eR \otimes \eR, \Fr)$-$\gmod$ from 
	$\eM^\circ(\eIC(s)_\sL^{\dagger} \langle -1 \rangle)$ to $\eR \otimes_{\eR^s} \eR$. Hence $\eM_{\xi}(\sF) \bullet \eM^\circ(\eIC(s)_\sL^{\dagger} \langle -1 \rangle)$ is canonically isomorphism to $\eM_{\xi}(\sF \conp \eIC(s)^\dagger_\sL\langle -1 \rangle)$. The analogous statement holds for the non-extended case. There is a canonical isomorphism in $(S, \Fr)$-$\gmod$ from $\M^\circ(\IC(s)_\sL^{\dagger} \langle -1 \rangle )$ to $R \otimes_{\eR^s} \eR= R \otimes_{S^s} S$. 
\end{cor}

\begin{proof}
	For the extended case, the statement is proved by taking $\sF$ and $\xi$ to be the identity $\widetilde{\delta_\sL}$ in Lemma \ref{conic}. The non-extended case can be proved similarly. 
\end{proof}	

\subsection{Monoidal structure}
We construct the monoidal structure of $\M$.

Let $\beta_1 \in \pha \lpewul$ and $\beta_2 \in \pha \lppewulp$. Let $\xi_1, \xi_2$ be two minimal sheaves for $\beta_1, \beta_2$ respectively. Suppose $\sF \in \pha \lpdl, \sG \in \pha \lppdlp$ and we are given maps $g: \pha \lpptlppm \conn \xi_2 \ra \sG[i]$ and  $f: \pha \lptlpm \conn \xi_1 \ra \sF[j]$. We can define $\lambda(g, f)$, the product of $g$ and $f$ in $\Hom^{i+j}(\lpptlppm \conn \xi_2 \conn \xi_1, \sG \conp \sF)$, as follows. $\lambda(g,f)$ is the composition
\[\Theta \star \xi_2 \star \xi_1 \xra{\chi} \pha \Theta \star \Av_! \pha \Theta \star \xi_2 \star \xi_1 \xra{\tau(\xi_2)} \pha \Theta  \star \xi_2 \star \Av_! \pha \Theta \star \xi_1 \xra{g\star f} \sG \star \sF [i+j].\] We have identified $f$  as a homomorphism from $\Av_! \pha \Theta \star \xi_1$ to $\sF[j]$ via the isomorphism in Corollary \ref{Avfor}. We have omitted certain superscripts and subscripts. We would do so henceforth as long as it causes no confusion. 

Taking the direct sum over $i,j \in \Z$ gives a pairing \[(\cdot, \cdot): \M_{\xi_2}(\sG) \times \M_{\xi_1}(\sF) \ra  \M_{\xi_2 \star \xi_1}(\sG \star \sF),\]
which descends to \[\M_{\xi_2}(\sG) \otimes_R \M_{\xi_1}(\sF) \ra  \M_{\xi_2 \star \xi_1}(\sG \star \sF).\] Furthermore, it gives a functor \[c_{\xi_2,\xi_1}(\sG, \sF):\M_{\xi_2}(\sG) \bullet \M_{\xi_1}(\sF) \ra  \M_{\xi_2 \star \xi_1}(\sG \star \sF).\]

Lemma \ref{coass}(3) gives an associativity constraint. The two ways of composing from $\M_{\xi_3}(\sH) \bullet \M_{\xi_2}(\sG) \bullet \M_{\xi_1}(\sF)$ to $\M_{\xi_3 \star \xi_2 \star \xi_1}(\sH \star \sG \star \sF)$ are equal. 
All the constructions above are applicable to the extended case. 

\begin{lem} \label{cmin}
Both $c_{\xi_2, \xi_1}(\sG, \xi_1)$ and $c_{\xi_2, \xi_1}(\xi_2, \sF)$ are isomorphisms in $(S, \Fr)$-$\gmod$. The extended case holds similarly.
\end{lem}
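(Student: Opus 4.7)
The plan is to prove the two isomorphism statements by induction on $\ell(w^{\beta_1})$ (for $c_{\xi_2, \xi_1}(\sG, \xi_1)$) and $\ell(w^{\beta_2})$ (for $c_{\xi_2, \xi_1}(\xi_2, \sF)$) respectively. The key combinatorial input is that each minimal element admits a decomposition as a product of ``generic'' simple reflections: as in Lemma 4.6 of \cite{LY} transported to the affine setting via Section \ref{endodef}, $w^{\beta_1}$ admits a reduced expression $w^{\beta_1} = s_1 s_2 \cdots s_k$ such that for each $j$ the simple reflection $s_j$ does not lie in $\extw^{\circ}_{s_{j+1}\cdots s_k \sL}$. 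The partial products $s_j \cdots s_k$ remain minimal in their respective blocks by Lemma \ref{minelt}, so iterating Lemma \ref{equiv} gives $\xi_1 \cong \IC(\sdo_1)^+ \star_+ \cdots \star_+ \IC(\sdo_k)^+$. An analogous decomposition holds for $\xi_2$.

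First I dispose of the base cases. For $k = 0$, we have $\xi_1 = \delta_\sL$; Lemma \ref{conmic} identifies $\M_{\xi_1}(\xi_1)$ with $\barr(e)$ sending $\epsilon(\xi_1)$ to $1$, so the source of $c_{\xi_2, \xi_1}(\sG, \xi_1)$ is $\M_{\xi_2}(\sG)$. Since $\tau(\delta_\sL) = \mathrm{id}$ by the uniqueness in Lemma \ref{inter}, Lemma \ref{coass}(1) forces the defining composition $\lambda(g, \epsilon(\delta_\sL))$ to collapse to $g$, so $c_{\xi_2, \xi_1}(\sG, \delta_\sL) = \mathrm{id}$. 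The case $\xi_2 = \delta_{\sL'}$ for the second map is symmetric, using Lemma \ref{coass}(2) in place of (1). For $k = 1$ with $\xi_1 = \IC(\sdo)^+$ and $s \notin \ewlo$, Lemma \ref{equiv}(2) makes $(-) \star_+ \IC(\sdo)^+$ an equivalence, and combining Lemma \ref{conic} with the identification $\IC(s)^+_\sL \cong \IC(s)^\dagger_\sL$ via $\iota_s$ identifies $c_{\xi_2, \IC(\sdo)^+}(\sG, \IC(\sdo)^+)$ with the isomorphism of Lemma \ref{conic}.

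For the inductive step of $c_{\xi_2, \xi_1}(\sG, \xi_1)$ with $k \ge 2$, write $\xi_1 = \xi'_1 \star_+ \eta$ where $\eta = \IC(\sdo_k)^+$ and $\xi'_1$ is the minimal IC sheaf for the block containing $s_1 \cdots s_{k-1}$. The associativity constraint for the $c$-map, a direct consequence of Lemma \ref{coass}(3) after unwinding the definition of $\lambda$, yields the identity
\begin{equation*}
c_{\xi_2, \xi'_1 \star \eta}(\sG, \xi'_1 \star \eta) \circ \bigl(\mathrm{id} \bullet c_{\xi'_1, \eta}(\xi'_1, \eta)\bigr) = c_{\xi_2 \star \xi'_1, \eta}(\sG \star \xi'_1, \eta) \circ \bigl(c_{\xi_2, \xi'_1}(\sG, \xi'_1) \bullet \mathrm{id}\bigr).
\end{equation*}
On the right, $c_{\xi_2, \xi'_1}(\sG, \xi'_1)$ is an isomorphism by the induction hypothesis (the second argument has strictly shorter minimal length), and $c_{\xi_2 \star \xi'_1, \eta}(\sG \star \xi'_1, \eta)$ is an isomorphism since $\eta$ has length one (base case $k=1$). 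On the left, $c_{\xi'_1, \eta}(\xi'_1, \eta)$ is an isomorphism by the $k = 1$ base case with $\sG$ replaced by $\xi'_1$. Hence $c_{\xi_2, \xi_1}(\sG, \xi_1)$ is forced to be an isomorphism as well. The inductive step for $c_{\xi_2, \xi_1}(\xi_2, \sF)$ is parallel, decomposing $\xi_2$ instead and using the same associativity. The extended case follows verbatim after replacing $(\M, S, \barr)$ by $(\eM, \eR \otimes \eR, \eR)$.

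The main obstacle is the verification of the associativity constraint used in the display above, that is, that the two ways of composing $c$ over a triple of minimal IC sheaves agree. This rests squarely on Lemma \ref{coass}(3), but unwinding it requires careful tracking of where the rigidifications $\epsilon$, the coalgebra maps $\chi$, the transpositions $\tau$, and the averaging comparisons $\Av_!(\Theta \star \Av_!\Theta) \to \Av_!\Theta \star \Av_!\Theta$ (from Corollary \ref{avass}) enter, as well as matching the $S$-bimodule and Frobenius structures on both sides. The geometric content is already contained in Lemma \ref{coass}; the remaining work is a coherence argument rather than a new geometric computation.
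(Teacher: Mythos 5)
Your inductive strategy is a genuinely different route from the paper, which gives a short direct argument: for $c_{\xi_2,\xi_1}(\sG,\xi_1)$ it computes, via Lemma \ref{inter}, that the composite $\M_{\xi_2}(\sG)\bullet\barr(w^{\beta_1}) \to \M_{\xi_2\star\xi_1}(\sG\star\xi_1)\xra{(\star\xi_1)^{-1}}\M_{\xi_2}(\sG)$ sends $g\otimes 1$ to $g$; for $c_{\xi_2,\xi_1}(\xi_2,\sF)$ it uses Lemma \ref{coass}(2) to show an analogous composite involving $\tau(\xi_2)$ sends $1\otimes f$ to $f$. No induction, no decomposition of minimal IC sheaves, no appeal to the associativity constraint for $c$.

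However, your base case $k=1$ contains a real gap. The decomposition of $w^{\beta_1}$ coming from Lemma \ref{conj}/\cite{LY} produces simple reflections $s_j$ that do \emph{not} lie in $\extw^\circ_{s_{j+1}\cdots s_k\sL}$, yet you invoke Lemma \ref{conic} and the identification $\IC(s)^+_\sL\cong\IC(s)^\dagger_\sL$ via $\iota_s$, both of which are only defined (and only make sense) for $s\in\ewlo$: $\IC(s)^\dagger_\sL$ is only constructed for $w\in\ewlo$, and $\barr(s)\cong R\otimes_{R^s}R\langle 1\rangle$-type doubling is specific to $s\in\ewlo$, whereas for $s\notin\ewlo$ the module $\barr(s)$ is a rank-one twist and convolution with $\IC(\sdo)^+$ is an equivalence (Lemma \ref{equiv}), a structurally different situation. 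To repair the base case you would need to argue directly that the isomorphism $\M_{\xi_2\star\xi_1}(\sG\star\IC(\sdo)^+)\cong\M_{\xi_2}(\sG)\bullet\barr(s)$ holds, which in substance is exactly the paper's direct computation via Lemma \ref{inter} and the equivalence $(-)\star\xi_1$; at that point the general case follows immediately and the induction is superfluous. You also treat the two halves as "parallel," but they are not: the paper flags that $c_{\xi_2,\xi_1}(\xi_2,\sF)$ requires extra effort because the Soergel functor and $\star$ are not left/right symmetric, and the exchange map $\tau(\xi_2)$ together with Lemma \ref{coass}(2) does real work there that has no mirror image on the first-argument side. Finally, the associativity constraint you rely on in the inductive step is the hardest coherence statement in this circle of ideas; leaning on it to prove the base-case-plus-one instances is putting the cart before the horse, since in the paper that associativity is established alongside (not before) Lemma \ref{cmin} and used later in Corollary \ref{ssiso}.
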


\begin{proof}
Lemma \ref{inter} shows that the composition \[\M_{\xi_2}(\sG) \bullet \barr(w^{\beta_1}) \cong \M_{\xi_2}(\sG) \bullet \M_{\xi_1}(\xi_1) \xra{c_{\xi_2, \xi_1}(\sG, \xi_1)} \M_{\xi_2 \star \xi_1}(\sG \star \xi_1) \xra[\sim]{(\star \xi_1)^{-1}} \M_{\xi_2}(\sG)\] sends $g \otimes 1$ to $g$. Hence $c_{\xi_2, \xi_1}(\sG, \xi_1)$ is an isomorphism. 

It requires slightly more effort to prove that $c_{\xi_2, \xi_1}(\xi_2, \sF)$ is an isomorphism. Lemma \ref{coass} (2) implies the composition
\begin{align*}
	\barr(w^{\beta_2}) \bullet \M_{\xi_1}(\sF) & \xra{=} \M_{\xi_2}(\xi_2) \bullet \M_{\xi_1}(\sF) \\
	&\xra{c_{\xi_2, \xi_1}(\xi_2, \sF)} \M_{\xi_2 \star \xi_1}(\xi_2 \star \sF)  \\
	&\xra{adj} \Hom^\bullet(\Av_! \Theta\star \xi_2 \star \xi_1, \xi_2 \star \sF)\\
	& \xra{\tau(\xi_2)}\Hom^\bullet(\xi_2 \star \Av_! \Theta \star \xi_1, \xi_2 \star \sF)\\
	& \xra{(\xi_2 \star)^{-1}}\Hom^\bullet(\Av_! \Theta\star \xi_1,\sF) \\
	&\xra{\alpha}\M_{\xi_1}(\sF)	
\end{align*}
sends $1 \otimes f$ to $f$, where $adj$ is the isomorphism given in Corollary \ref{Avfor}. 
\end{proof}
%$\barr(w^{\beta_2}) \bullet \M_{\xi_1}(\sF) \xra{=} \M_{\xi_2}(\xi_2) \bullet \M_{\xi_1}(\sF) $

%$\xra{c_{\xi_2, \xi_1}(\xi_2, \sF)} \M_{\xi_2 \star \xi_1}(\xi_2 \star \sF) $

%$\xra{\alpha^{-1}} \Hom^\bullet(\Av_! \Theta\star \xi_2 \star \xi_1, \xi_2 \star \sF)$ 

%$\xra{\tau(\xi_2)}\Hom^\bullet(\xi_2 \star \Av_! \Theta \star \xi_1, \xi_2 \star \sF)$

%$\xra{(\xi_2 \star)^{-1}}\Hom^\bullet(\Av_! \Theta\star \xi_1,\sF)$

%$\xra{\alpha}\M_{\xi_1}(\sF)$ 

%sends $1 \otimes f$ to $f$. 
%\end{proof}

\begin{lem}\label{cic}
If $s \in \extw$ is a simple reflection such that $s \in \ewlo$, then $c_{\xi_2,  \delta}(\sG, \IC(s))$ is an isomorphism in $(S, \Fr)$-$\gmod$. The extended case holds similarly.
\end{lem}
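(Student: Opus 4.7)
The plan is to identify both the source and target of $c_{\xi_2, \delta}(\sG, \IC(s)_\sL)$ with the same free rank-two right-$R$-module $\M_{\xi_2}(\sG) \bullet \barr(s)$ by applying Lemma \ref{conic} twice, and then to verify that under these identifications $c_{\xi_2, \delta}(\sG, \IC(s)_\sL)$ is the identity. Since the pairing is compatible with Tate twists, one may replace $\IC(s)_\sL$ by $\IC(s)_\sL\langle -1\rangle$ throughout.

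For the first identification, I would apply Lemma \ref{conic} with $\sF = \xi = \delta_\sL$ and combine it with Lemma \ref{conmic} to obtain an isomorphism
\[ \eta \colon \barr(s) \cong \barr(e) \bullet \barr(s) \xra{\sim} \M_{\delta_\sL}(\IC(s)_\sL \langle -1\rangle); \]
by the ``moreover'' clause of Lemma \ref{conic}, $\eta$ sends $1 \otimes 1$ to $\theta^\dagger_s$ (after identifying $\IC(s)^\dagger_\sL \cong \IC(s)_\sL$ via $\iota_s$). Applying Lemma \ref{conic} again, now with $\xi = \xi_2$ and $\sF = \sG$, yields
\[ \eta_\sG \colon \M_{\xi_2}(\sG) \bullet \barr(s) \xra{\sim} \M_{\xi_2}(\sG \star \IC(s)_\sL \langle -1\rangle). \]
Form the composite
\[ \Phi := \eta_\sG^{-1} \circ c_{\xi_2, \delta_\sL}(\sG, \IC(s)_\sL \langle -1\rangle) \circ (\id_{\M_{\xi_2}(\sG)} \bullet\, \eta). \]
It then suffices to prove that $\Phi$ is the identity, for then $c_{\xi_2, \delta_\sL}(\sG, \IC(s)_\sL \langle -1\rangle)$ is a composite of three isomorphisms.

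By $R \otimes R$-bilinearity and the fact that $\barr(s)$ is generated over $R \otimes R$ by $1 \otimes 1$, it suffices to evaluate $\Phi$ on elements $g \otimes 1 \otimes 1$ with $g \in \M_{\xi_2}(\sG)$. Tracing through: $(\id \bullet \eta)$ sends $g \otimes 1 \otimes 1$ to $g \otimes \theta^\dagger_s$, and then $c_{\xi_2, \delta_\sL}$ produces $\lambda(g, \theta^\dagger_s)$, which by definition is the composition
\[ \Theta_{\sL''} \star \xi_2 \xra{\chi \star \id} \Theta_{\sL''} \star \Av_! \Theta_{\sL'} \star \xi_2 \xra{\tau(\xi_2)} \Theta_{\sL''} \star \xi_2 \star \Av_! \Theta_\sL \xra{g \star \theta^\dagger_s} \sG \star \IC(s)_\sL\langle -1\rangle. \]
I would then unfold the construction of $\eta_\sG^{-1}$ from Lemma \ref{conic}, which is built from the $(\pi_s^*, \pi_{s*})$-adjunction together with the projection isomorphism of Lemma \ref{proj}, and match the resulting element term-by-term with $g \otimes 1 \otimes 1$, using Lemma \ref{coass}(2) to rewrite the composite above as one involving $\alpha \circ \chi$ followed by $\tau(\xi_2)$.

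The main obstacle is precisely this last bookkeeping step: matching the coherence datum $\tau(\xi_2)$ (which transports $\Av_!\Theta$ past $\xi_2$) with the explicit form of the adjunction-plus-projection-formula identification that gives $\eta_\sG$. Once the two presentations are aligned, the ``moreover'' clause of Lemma \ref{conic} (pinning $1 \otimes 1 \leftrightarrow \theta^\dagger_s$) and Lemma \ref{cmin} (which identifies the analogous pairing for $\IC(s)$ replaced by $\delta_\sL$ with the tautology $g \otimes 1 \mapsto g$) complete the verification. This is the affine monodromic analog of the argument in \cite[Lemma 7.6]{LY}, and the structure of the coherence diagrams transfers without essential change.
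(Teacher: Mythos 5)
Your overall strategy agrees with the paper's: identify source and target with $\M_{\xi_2}(\sG)\bullet\barr(s)$ via Lemma \ref{conic}, form the composite $\Phi$, and reduce to showing $\Phi$ is the identity. The logic of this reduction is sound. However, the step you flag as "the main obstacle" is exactly where the content of the proof lives, and the tool you reach for, Lemma \ref{coass}(2), is not the right one. Coass(2) concerns the composite $\chi \to \tau(\xi_2) \to \epsilon$ where $\epsilon$ \emph{collapses} the outer $\Theta^-$ factor; but in $\lambda(g,\theta_s^\dagger)$ the outer $\Theta^-$ is not collapsed -- instead $g$ is applied to $\Theta^-\star\xi_2$ as a whole. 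So coass(2) does not rewrite the composite you need.

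The paper resolves the bookkeeping by a different mechanism. First, it passes between $g\colon \Theta\star\xi_2 \to \sG$ and its $(\pi_s^*,\pi_{s*})$-adjoint $\overline{g}\colon \overline{\Theta}\to\pi_{s*}\sG$ (with $\overline{\Theta}$ as in the proof of Lemma \ref{conic}, so $\pi_s^*\overline{\Theta}=\Theta\star\xi_2$), and records two commutative naturality squares: one for the unit of the adjunction against $\overline{g}$, one for the Lemma \ref{proj} isomorphism against $g$. Diagram chasing with these squares reduces checking $\Phi(g\otimes 1)=g\otimes 1$ for all $g$ to the single \emph{universal case}, i.e.\ to showing that the composite $\chi\to\tau(\xi_2)\to\theta_s^\dagger$ out of $\Theta\star\xi_2=\pi_s^*\overline{\Theta}$ equals $\pi_s^*(\text{unit})\colon \pi_s^*\overline{\Theta}\to\pi_s^*\pi_{s*}\pi_s^*\overline{\Theta}$. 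This is then checked by post-composing with the natural map $\Theta\star\xi_2\star\IC(s)\langle -1\rangle\to\Theta\star\xi_2\star\delta=\Theta\star\xi_2$, reducing to the assertion that the unit--counit composite $\pi_s^*\overline{\Theta}\to\pi_s^*\pi_{s*}\pi_s^*\overline{\Theta}\to\pi_s^*\overline{\Theta}$ is the identity, which follows because its stalk at $e$ is the identity and $\End^0(\Theta\star\xi_2)=\ql$. This universal-object reduction together with the scalar-endomorphism argument is the missing idea in your proposal; without it the "term-by-term matching" does not go through, because $\eta_\sG$ is only characterized by a uniqueness property and cannot be unfolded element-by-element directly.
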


\begin{proof}
We use the same notation as Lemma \ref{conic}. 
Let $\overline{g}: \overline{\Theta} \ra \pi_{s*}\sG$ be a morphism and $g: \Theta \ra \sG$ be its adjoint. Notice that the following two diagrams commute. 

\[\begin{tikzcd}
\overline{\Theta} \arrow[d, "\overline{g}"]  \arrow [r,"adj"] & \pi_{s*}\pi_s^*\overline{\Theta} \arrow[d, "\pi_{s*}\pi_s^* \overline{g} "] \\ \pi_{s*}\sG \arrow[r, "adj"] & \pi_{s*}\pi_s^*\pi_{s*} \sG,
\end{tikzcd} \qquad \qquad
\begin{tikzcd}
\Theta \star \xi_2 \star \IC(s) \langle -1 \rangle \arrow[d, "g"]  \arrow [rr," \text{Lemma } \ref{proj}"] & & \pi_s^*\pi_{s*}(\Theta \star \xi_2) \arrow[d, "\pi_s^*\pi_{s*} g "] \\ \sG \star \IC(s) \langle -1 \rangle \arrow[rr, "\text{Lemma } \ref{proj}"] & & \pi_s^*\pi_{s*} \sG.
\end{tikzcd}\] 

Hence, by diagram chasing, it suffices to check that the composition \[\pi_s^*\Theta= \pha \Theta \star \xi_2 \xra{\chi} \Theta \star \Av_!\Theta \star \xi_2 \xra{\tau(\xi_2)} \Theta \star \xi_2 \star \Av_!\Theta\xra{\theta_s^\dagger} \Theta \star \xi_2 \star \IC(s) \langle -1 \rangle =\pi_s^*\pi_{s*}\pi_s^*\overline{\Theta}\] is equal to the one induced by the adjoint map $\overline{\Theta}\ra \pi_{s*}\pi_s^*\overline{\Theta}$. To see this, we compose it with the natural map $\Theta \star \xi_2 \star \IC(s) \langle -1 \rangle \ra \Theta \star \xi_2 \star \delta=\Theta \star \xi_2$ and reduce to show that $\pi_s^*\overline{\Theta} \ra \pi_s^* \pi_{s*}\pi_s^*\overline{\Theta} \ra \pi_s^*\overline{\Theta}$ is the identity map. This is clear as its stalk at $e$ is the identity and the degree zero part of the endomorphism ring of $\Theta \star \xi_2$ is $\ql$.
\end{proof}

\begin{cor}\label{ssiso}
The map $c_{\xi_2, \xi_1}(\sG, \sF)$ is an isomorphism whenever $\sF$ is a semisimple complex. The extended case holds similarly.
\end{cor}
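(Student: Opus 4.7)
The plan is to reduce the statement to Lemma \ref{cic} by iterating the associativity of $c$ (which follows from Lemma \ref{coass}(3)) in three stages. First, since both sides of $c_{\xi_2, \xi_1}(\sG, \sF)$ are additive in $\sF$ and the map is natural with respect to direct sum decompositions, it suffices to treat $\sF = \IC(\wdo)^+_\sL$ for a single $w \in \beta_1$.

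Second, I would reduce from the arbitrary block $\beta_1$ to the neutral block for $\sL'$. Pick a reduced expression $w^{\beta_1} = s_1 \cdots s_k$; because $w^{\beta_1}$ is the minimal element of its block, no $s_i$ lies in the $\ewlo$ for the corresponding character sheaf at its stage, so Lemma \ref{equiv}(1) and Lemma \ref{stdstd} combine to yield $\xi_1 \cong \IC(\sdo_1)^+ \star \cdots \star \IC(\sdo_k)^+$, and iterating Lemma \ref{equiv}(2),(4) shows that $(-) \star \xi_1$ is an equivalence between the neutral block for $\sL'$ and $\lpdl^{\beta_1}$ sending IC sheaves to IC sheaves. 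Hence $\sF = \sF_0 \star \xi_1$ with $\sF_0$ an IC sheaf in the neutral block for $\sL'$. Applying the associativity of $c$ to $\M_{\xi_2}(\sG) \bullet \M_{\delta_{\sL'}}(\sF_0) \bullet \M_{\xi_1}(\xi_1)$ produces a commutative square whose two edges with $\xi_1$ in the second $c$-slot, namely $c_{\xi_2, \xi_1}(\sG \star \sF_0, \xi_1)$ and $c_{\delta_{\sL'}, \xi_1}(\sF_0, \xi_1)$, are isomorphisms by Lemma \ref{cmin}. Commutativity then forces $c_{\xi_2, \xi_1}(\sG, \sF)$ to be an isomorphism if and only if $c_{\xi_2, \delta_{\sL'}}(\sG, \sF_0)$ is.

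Third, write $\sF_0 = \IC(\wdo')^+_{\sL'}$ with $w' \in \ewlo'$, fix a reduced expression $w' = s_1 \cdots s_l$ in simple reflections of $\ewlo'$, and invoke the standard Bott--Samelson argument (as in the proof of Lemma $6.3$ of \cite{LY}) to express $\sF_0$ as a direct summand of the semisimple complex $\sF_l := \IC(\sdo_1)^+ \star \cdots \star \IC(\sdo_l)^+$. By stage one it remains to verify the claim for $\sF_l$, which I would do by induction on $l$. The base case $l=1$ is Lemma \ref{cic}. For the inductive step, set $\sF_{l-1} := \IC(\sdo_1)^+ \star \cdots \star \IC(\sdo_{l-1})^+$ and apply the associativity of $c$ to $\M_{\xi_2}(\sG) \bullet \M_{\delta_{\sL'}}(\sF_{l-1}) \bullet \M_{\delta_{\sL'}}(\IC(\sdo_l)^+)$; three of the four edges of the resulting square are isomorphisms (one by the induction hypothesis and two by Lemma \ref{cic}, applied once directly and once with $\sG$ replaced by $\sG \star \sF_{l-1}$), and commutativity forces the fourth, namely $c_{\xi_2, \delta_{\sL'}}(\sG, \sF_l)$, to be an isomorphism.

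The extended case follows by the same argument with tildes, since every invoked lemma has an extended counterpart; alternatively it can be deduced from the non-extended case via Lemma \ref{Soers}. I expect the main obstacle to be the bookkeeping in stage two: because $\xi_1$ acts on the right, the associativity square mixes its interaction with both $\sG$ and $\sF_0$, so I must verify that the two edges which Lemma \ref{cmin} handles are exactly the ones allowing passage between $c_{\xi_2, \xi_1}(\sG, \sF)$ and $c_{\xi_2, \delta_{\sL'}}(\sG, \sF_0)$.
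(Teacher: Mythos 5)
Your argument is correct and follows essentially the same route as the paper: reduce to direct summands, present an IC sheaf as a summand of a Bott--Samelson product $\IC(s_{i_1})\star\cdots\star\IC(s_{i_n})$, and iterate the associativity square peeling off one factor at a time, applying Lemma~\ref{cmin} when $s_{i_j}\notin\ewlo$ and Lemma~\ref{cic} when $s_{i_j}\in\ewlo$. The paper's one-line proof is exactly this iteration; you have merely reorganized it into two explicit phases (first collect the non-$\ewlo$ reflections into $\xi_1$, then induct on the $\ewlo$ ones), which is the same content with more bookkeeping. One small caution: in your stage two you assert an \emph{if and only if}, but what the commutativity of the square together with Lemma~\ref{cmin} gives you cleanly is the forward implication (if $c_{\xi_2,\delta_{\sL'}}(\sG,\sF_0)$ is an isomorphism then so is $c_{\xi_2,\delta_{\sL'}}(\sG,\sF_0)\bullet\mathrm{id}$, hence so is $c_{\xi_2,\xi_1}(\sG,\sF)$); the reverse direction would require knowing that $(-)\bullet\barr(w^{\beta_1})$ is conservative, which is true (since $\eR(w^{\beta_1})$ is free over $\eR$) but unnecessary for the proof. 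Only the forward direction is needed, so the argument goes through.
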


Note that the statement is asymmetric, unlike its reductive group analog. We do not claim that $c_{\xi_2, \xi_1}(\sG, \sF)$ is an isomorphism when $\sG$ is a semisimple complex. 

\begin{proof}
By the decomposition theorem, $\omega \sF$ can be written as a sum of shifted direct summands of certain $\omega(\IC(s_{i_1})_{\sL_1} \star \IC(s_{i_2})_{\sL_2} \star \cdots \star \IC(s_{i_n})_{\sL_n})$ for some simple reflections $s_{i_j}$ and suitable $\sL_j$. Applying Lemma \ref{cmin} and Lemma \ref{cic} $n$ times will give the result.  
\end{proof}

Now we are going to prove the main result in this section that the Soergel functor is fully faithful on semisimple complexes. The theorem is false if we only consider the $R$-bimodule structure instead of the $S$-module or $\eR$-bimodule structure. More precisely, $m(\sF, \sG)$ fails to be surjective. This indicates the consideration of the central torus is necessary. 

\begin{thm}\label{fufa}
Let $\beta \in \pha \lpewul$ and $\xi$ be a minimal sheaf in the block $\beta$. Let $\sF$ and $\sG$ be two semisimple complexes in $\lpdl^{\beta}$. Then the natural map
\[m(\sF, \sG):\Hom^\bullet(\sF, \sG) \ra \Hom^\bullet_{S\text{-}\gmod}(\M_{\xi}(\sF),\M_{\xi}(\sG)) \] is an isomorphism in $(S, \Fr)$-$\gmod$. The extended case holds similarly. That is, \[\widetilde{m}(\esF, \esG):\Hom^\bullet(\esF, \esG) \ra \Hom^\bullet_{\eR \otimes \eR \text{-}\gmod}(\eM_{\exi}(\esF),\eM_{\exi}(\esG)) \] is an isomorphism in $(\eR \otimes \eR, \Fr)$-$\gmod$ for any extended minimal sheaf $\exi$ and semisimple complexes $\esF, \esG$ in $\lpedl^{\beta}$.
\end{thm}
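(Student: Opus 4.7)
The approach I would take is to reduce the statement to a small set of ``base cases'' using the monoidal structure of the Soergel functor developed via the $c$-maps in Section \ref{conmax}, combined with the self-adjointness of convolution with $\IC(\sdo)_\sL$ on both the sheaf and the algebra sides. The plan has three steps: a base case, a flipping lemma, and an induction using the decomposition theorem.

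\textbf{Step 1 (Base case $m(\xi,\xi)$).} Start with the pair $(\xi,\xi)$. By Lemma \ref{conmic}, $\M_\xi(\xi)\cong\barr(w^\beta)$ in $(S,\Fr)$-$\gmod$, and its $S$-endomorphism ring is easily seen to be $R$ (coming from the right $R$-action on $\barr(w^\beta)$), which matches $\Hom^\bullet(\xi,\xi)=R$ since $\xi$ is a shifted simple perverse sheaf on a single stratum. A direct unwinding of $m$ identifies it with the canonical identity.

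\textbf{Step 2 (Flipping lemma).} For a simple reflection $s\in\extw$ and semisimple $\sF,\sG$, I would produce a commutative square whose vertical arrows are $m(\sF\conp\IC(\sdo)_\sL,\sG)$ and $m(\sF,\sG\conp\IC(\sdo)_\sL)$, whose top row is the geometric adjunction coming from the self-$\D^+$-duality of $\IC(\sdo)_\sL$, and whose bottom row is the algebraic adjunction coming from the Frobenius $(R,R^s)$-bimodule structure of $\barr(s)$, combined with Corollary \ref{ssiso} to identify $\M_\xi(\sF\conp\IC(\sdo)_\sL)\cong\M_\xi(\sF)\bullet\barr(s)$. Commutativity of this square is the core compatibility to verify; it should follow from the associativity constraint in Lemma \ref{coass}(3), Proposition \ref{triple}, and the well-definedness of $\tau(\xi)$ from Lemma \ref{inter}, all of which were set up precisely to make such diagram chases go through.

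\textbf{Step 3 (Induction).} By the decomposition theorem and Lemma \ref{equiv}(5), every semisimple $\sF\in\lpdl^\beta$ is a direct summand of a Bott-Samelson type convolution $\xi\conp\IC(\sdo_{i_1})_{\sL_1}\conp\cdots\conp\IC(\sdo_{i_k})_{\sL_k}$. Since $\Hom^\bullet$, $\Hom_S^\bullet$, and $m$ are all compatible with direct summands, one can reduce to the case that $\sF$ itself is such a Bott-Samelson object. Iterated application of the flipping lemma then transfers every $\IC(\sdo_{i_j})_{\sL_j}$ factor onto $\sG$, reducing to proving $m(\xi,\sG')$ is an isomorphism for all semisimple $\sG'\in\lpdl^\beta$. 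For this residual statement I would expand $\sG'$ as a Bott-Samelson object, use Lemmas \ref{cmin}, \ref{cic} together with Corollary \ref{ssiso} to compute $\M_\xi(\sG')$ as an explicit iterated $\bullet$-product of $\barr(s)$'s starting from $\barr(w^\beta)$, and then identify the ``$w^\beta$-isotypic part'' $\Hom_S^\bullet(\barr(w^\beta),\M_\xi(\sG'))$ with $\Hom^\bullet(\xi,\sG')$ via the pairing $\phi\mapsto\phi\circ\epsilon(\xi)$ from Lemma \ref{Avfor}.

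\textbf{Main obstacle and extended case.} The hardest part I expect to be Step 2: the geometric and algebraic adjunctions arise from structurally very different mechanisms (Verdier duality versus a Frobenius bimodule pairing), and showing they intertwine $m$ requires chasing the comultiplication $\chi$, the counit $\epsilon_\sL$, and the ``braiding'' $\tau(\xi)$ through a sizeable commutative diagram. A secondary subtlety in Step 3 is that matching the ``$w^\beta$-isotypic part'' of an iterated $\barr(s)$-product with the geometric Hom-space essentially encodes the Soergel Hom formula in this monodromic setting, and may require computing explicitly how the distinguished element $1\in\barr(w^\beta)$ lifts through $c$. The extended statement should follow by running the same argument with the extended analogs of all lemmas; alternatively, for objects of the form $\pi^*\sF$, Lemma \ref{Soers} together with the fact that $\Ind$ is fully faithful on $(S,\Fr)$-modules reduces the extended case to the non-extended one.
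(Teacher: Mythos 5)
Your three-step outline (small base case, flipping lemma, Bott--Samelson induction via the decomposition theorem) matches the paper's overall strategy. The paper organizes the argument as an induction on the dimension of the support of $\sF$, and your Step~2 "flipping lemma" is exactly the commutative square the paper constructs in its inductive step: the top row is the geometric adjunction for $(-)\conp\IC(\sdo)^+_\sL$ (Lemma~\ref{equiv} if $s\notin\ewlo$, Lemma~\ref{proj} if $s\in\ewlo$), the bottom row is an algebraically constructed adjunction for $(-)\bullet\barr(s)$, and the verification uses the $c$-maps and Corollary~\ref{ssiso}.

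However, your treatment of the residual case $m(\xi,\sG')$ has a genuine gap, and labelling it a "secondary subtlety" misjudges where the difficulty lies: this residual case \emph{is} the paper's base case, and the paper devotes a dedicated argument to it that your proposal does not reproduce. Concretely, the paper first reduces to $\usG$ an IC sheaf and exploits the "chessboard" filtration of $\M^\circ(\usG)$ from Lemma~\ref{chess}, with $\Gr^F_n$ a direct sum of $\barr(w')$ for $\ell(w')=n$, and then uses the key vanishing $\Hom_S(\barr(e),\barr(w'))=0$ for $w'\neq e$ (which follows from $\eR(w')$ being a free $\eR$-bimodule and the faithfulness of the $\extw$-action on $\eR$) to show $\Hom_S(\barr(e),F_0)\to\Hom_S(\barr(e),M)$ is bijective. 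Your proposal instead cites the adjunction of Lemma~\ref{Avfor}, which only produces the map $m(\xi,\sG')$, not its bijectivity, and your alternative of explicitly computing $\Hom_S(\barr(w^\beta),\M_\xi(\sG'))$ for Bott--Samelson $\sG'$ without a priori control on the filtration essentially amounts to reproving the Soergel Hom formula — i.e.\ the content of the theorem. This residual step needs the filtration and vanishing machinery, which you should import rather than leave as a "subtlety."
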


\begin{proof}
It is clear that $m(\sF, \sG)$ is $\Fr$-equivariant, hence it suffices to prove that the non-mixed map is an isomorphism in $S$-$\gmod$. Therefore we may and will assume $\usF, \usG \in \pha \lpudl^{\beta}$. We induct on the dimension of the support of $\sFu$ and do not fix a particular block, that is $\sL$ and $\sL'$ can change.  

When the support of $\sFu$ is zero-dimensional, $\usF$ is a direct sum of shifted $\ICu(u)$ for $\ell(u)=0$. We assume that $\usF$ is $\underline{\delta}_\sL$ and $\usG$ is $\underline{\IC}(w)_\sL$ as the general case can be proved similarly. Consider the following commutative diagram,
\[\begin{tikzcd}
\Hom^\bullet(\underline{\delta}_\sL, i_{e*}i_e^!\usG) \arrow[d, "a"]  \arrow [rr, "m(  \underline{\delta}_\sL \text{, } i_{e*}i_e^!\usG)"] & &  \Hom^\bullet_{S\text{-}\gmod}(\barr(e), \Mu^\circ(i_{e*}i_e^!\usG)) \arrow[d, "\Mu^\circ(a)"] \\ \Hom^\bullet(\underline{\delta}_\sL, \usG) \arrow [rr, "m(  \underline{\delta}_\sL \text{, }\usG)"] & & \Hom^\bullet_{S \text{-}\gmod}(\barr(e), \Mu^\circ(\usG)),
\end{tikzcd}\]
where $a$ is induced by the adjunction map $i_{e*}i_e^!\usG \ra \usG$. 

It is clear that $a$ is an isomorphism. Since $i_e^!\usG$ is a direct sum of shifted $\underline{\delta}_\sL$, the top horizontal map is an isomorphism. To see $\Mu^\circ(a)$ is an isomorphism, notice that we have showed $\Mu^\circ(\usG)$ has an increasing filtration with associated graded $\Gr^F_n=\oplus_{\ell(w')=n}\Hom^{\bullet}(i_{w'}^*\Av_!\underline{\Theta}, i_{w'}^! \ICu(w)_\sL^+)$ in Lemma \ref{chess}. Apply $\Hom(\barr(e),-)$ to the exact sequence, $0 \ra F_{\le n-1} \ra F_{\le n} \ra \Gr^F_n \ra 0$ for $1 \le n \le \ell(w)$. Note that $\Gr^F_n$ is the direct sum of $\barr(w')$ for $w' \le w$ with length $n$. From the $\extw$-action on $\eR$, we know that $\eR(w')$ is both a left and right free $\eR$-module. Since this action is faithful, there are no non-zero maps from $\eR(e)$ to $\eR(w')$ when $\ell(w') \ge 1$. The analogous statement holds for $\barr(e)$ and $\barr(w')$. In other words, $\Hom(\barr(e), F_{\le n-1}) \ra \Hom(\barr(e),F_{\le n})$ is bijective for all $1 \le n \le \ell(w)$. Therefore $\Mu^\circ(a):\Hom(\barr(e), F_0) \ra \Hom(\barr(e),F_{\le \ell(w)})$ is bijective. This completes the base case. 

Suppose now the dimension of the support of $\usF$ is $n$ and the statement holds for any $\usF$ of dimension of support smaller than $n$. We first prove the case when the support of $\usF$ is contained in the neutral component of the affine flag variety. By the decomposition theorem, $\ICu(w)_\sL^+$ is a direct summand of $\underline{\IC}(\wu)_\sL^+=\ICu(s_{i_1})_{\sL_1}^+ \star \ICu(s_{i_2})_{\sL_2}^+ \star \cdots \star \ICu(s_{i_n})_{\sL_n}^+$ for some reduced word expression $\wu=(s_{i_1},s_{i_2}, \cdots, s_{i_n})$ of $w$ and suitable $\sL_i$. It suffices to show the statement holds for $\ICu(\wu)^+_\sL$. Let $\wu'=(s_{i_1}, s_{i_2}, \cdots, s_{i_{n-1}})$ be a reduced word expression of $w'=s_{i_1}s_{i_2}\cdots s_{i_{n-1}}$ and $s=s_{i_n}$, then $\ICu(\wu)^+_\sL \cong \ICu(\wu')^+_{s\sL} \star \ICu(s)^+_{\sL}$. Consider the following diagram where each solid arrow is well-defined up to a nonzero scalar.  

\[ \hspace*{-0.0cm}\begin{tikzcd}[column sep = small]
\Hom^\bullet(\ICu(\wu')^+_{s\sL} \star \ICu(s)^+_{\sL}, \usG) \arrow[d, "m"]  \arrow [r, "adj"] &  \Hom^\bullet(\ICu(\wu')^+_{s\sL} , \usG \star \ICu(s)^+_{s\sL}) \arrow[d, "m'"] 
\\ \Hom^\bullet(\Mu(\ICu(\wu')^+_{s\sL} \star \ICu(s)^+_{\sL}),\Mu ( \usG)) \arrow[d, "c"] & \Hom^\bullet(\Mu(\ICu(\wu')^+_{s\sL} ),\Mu ( \usG\star \ICu(s)^+_{s\sL})) \arrow[d, "c'"]
\\ \Hom^\bullet(\Mu(\ICu(\wu')^+_{s\sL})  \bullet \Mu(\ICu(s)^+_{\sL}),\Mu ( \usG)) \arrow [r, dotted, "b"] & \Hom^\bullet(\Mu(\ICu(\wu')^+_{s\sL}) ),\Mu ( \usG) \bullet \Mu(\ICu(s)^+_{\sL}),
\end{tikzcd}\]
where $adj$ is an isomorphism given by Lemma \ref{equiv} if $s \notin \ewlo$ or Lemma \ref{proj} if $s \in \ewlo$; $m$ and $m'$ are induced by the Soergel functor; $c$ and $c'$ are given by Corollary \ref{ssiso}. Every solid arrow except for $m$ is an isomorphism. If we can define an isomorphism $b$ so that the diagram commutes up to a non-zero scalar, then $m$ will be also an isomorphism. 

When $s \notin \ewlo$, $ \Mu(\ICu(s)^+_{\sL})$ is isomorphic to $\barr(s)$. Notice that there exists a natural isomorphism $R \cong \barr(e) \ra \barr(s) \bullet \barr(s) \cong R$ as $S$-module. We define the map $b$ to be induced by this isomorphism. To check the commutativity, it suffices to check for the universal object, that is when $\usG= \ICu(\wu)^+_\sL$ and these two compositions send the identity to the same element. It is true because $adj$ is also induced by an isomorphism $\underline{\delta}_{s\sL} \ra \ICu(s)^+_{\sL} \star \ICu(s)^+_{s\sL}$.

When $s \in \ewlo$, we prove the general statement: for $M_1, M_2 \in S$-$\gmod$, there is a bifunctorial isomorphism of $S$-$\gmod$ 
\[\Hom^\bullet_{S \text{-}\gmod}(M_1, M_2 \bullet \Mu(\ICu(s)^+_{\sL})) \cong \Hom^\bullet_{S \text{-}\gmod}(M_1 \bullet \Mu(\ICu(s)^+_{\sL}), M_2).\] It suffices to construct the unit and counit map. %We use the $R$-bimodule with $z$-action description for $S$-module. From this perspective, we have $\Mu(\ICu(s)^+_{\sL}) \cong R \otimes_{R^s} R \langle 1 \rangle$ as an $R$-bimodule, where the element $z$ acts as $0$ when $s$ is a non-affine simple reflection and non-trivially otherwise.  

Let $\alpha_s$ be a degree two element in $\eR$ corresponding to $s$, that is $s(\alpha_s)=-\alpha_s$. Here $\alpha_s$ is unique up to a scalar and $\alpha_s \in R$. The unit map is the grading-preserving $S$-linear map from $\barr(e)$ to $\barr(e) \otimes_{\eR^s} \eR \otimes_{\eR^s} \eR \langle 2 \rangle$, which sends $1$ to $\alpha_s \otimes 1 \otimes 1 + 1 \otimes 1 \otimes \alpha_s$. The counit map is the grading-preserving $S$-linear map $\barr(e) \otimes_{\eR^s} \eR \otimes_{\eR^s} \eR \ra \barr(e)$, which sends $r_1 \otimes 1 \otimes r_2$ to zero and $r_1 \otimes \alpha_s \otimes r_2$ to $r_1r_2$. The map $b$ is obtained by specializing $M_1=\Mu(\ICu(\wu')^+_{s\sL})$ and $M_2=\Mu(\usG)$. To check commutativity, it suffices to check when $\usG= \ICu(\wu)^+_\sL$ and these two compositions send the identity to the same element. This is true because the tensor product and the forgetful functor correspond to $\pi_s^*$ and $ \pi_{s*}$ respectively. 

If the support of $\sFu$ is not in the neutral component of the affine flag variety, we only need to replace $\ICu(w)$ with $\ICu(u) \star \ICu(w)$ for some length-zero element $u$ in the above argument.

The proof for the extended case is similar (and easier), hence omitted. 
\end{proof}

\section{Soergel bimodule}\label{sectSB}
In this section, we recall some basics of Soergel bimodules without proofs. Details can be found in \cite[Section 8]{LY} and \cite{Ha}. The main result is to identify $\M^\circ(\IC(w)^\dagger_\sL)$ and $\eM^\circ(\eIC(w)^\dagger_\sL)$ with certain extended Soergel bimodules. 

Let $\sL \in \lio$. Denote $H_\sL$ to be the endoscopic group of $G$ corresponding to $\sL$. It is a reductive group sharing the same maximal torus with $G$ (cf. \cite[Section 9.1]{LY}). The subscript would be dropped whenever there is no ambiguity.

From this point onward, we fix a representation of $H$ and a $\gm$-extension of $\hk$ as outlined in Section \ref{cent}. It is worth noting that $\elit:=\X_*(\eT)\otimes_{\Z}\ql$ depends on the chosen representations of $G$ and $H$.  Nevertheless, we have the following proposition.

\begin{prop}\label{isoCox}
$(\ewlo, \elit)$ and $(\ewho, \elit)$ are isomorphic as representations of Coxeter groups. 
\end{prop}	

\begin{proof}
	As stated in Section \ref{endodef}, we know that both $\ewlo$ and $\ewho$ are generated by the same set of reflections, namely the set of reflections corresponding to real affine coroots in $\widetilde{\Phi} ^\vee_\sL$. Proposition \ref{action} describes the actions of $\ewlo$ and $\ewho$ on $\X_*(\eT)$ explicitly. The two actions coincide when restricted to the finite Weyl group, as stated in Lemma \ref{finiteweyl}.  
	
	Let $\langle \cdot, \cdot \rangle_G$ and $\langle \cdot, \cdot \rangle_H$ be the bilinear forms associated with the $\gm$-extensions of $\gk$ and $\hk$ respectively, as described in Lemma \ref{bilinear}. Suppose $\lambda_1, \lambda_2$ are two coroots for different simple factors $W_1,W_2$ of $\ewlo \cong \ewho$. We claim that $\langle \lambda_1, \lambda_2 \rangle_G=\langle \lambda_1, \lambda_2 \rangle_H=0$. Hence it suffices to prove the statement for each simple factor for $\ewlo$ and $\ewho$. 
	
	Lemma \ref{finiteweyl} implies that $\langle \lambda_1, \lambda_2 \rangle_G=\langle w_1\lambda_1, w_1\lambda_2 \rangle_G=\langle w_1\lambda_1, \lambda_2 \rangle_G$ for any $w_1 \in W_1$. Therefore $|W_1| \langle \lambda_1, \lambda_2 \rangle_G=\langle \sum_{w_1 \in W_1}w_1\lambda_1, \lambda_2 \rangle_G=0$ as $\sum_{w_1 \in W_1}w_1\lambda_1=0$. Similarly, we have $\langle \lambda_1, \lambda_2 \rangle_H=0$. Hence, the claim is proved.
	
	For any simple Lie algebra, Chevalley's theorem implies that the reflection representation of its Weyl group $W$ is irreducible. Hence, there is a unique up to scalar $W$-invariant non-trivial bilinear form on $\X_*(T)$. Proposition \ref{action} guarantees that both $\langle \cdot, \cdot \rangle_G$ and $\langle \cdot, \cdot \rangle_H$ are non-trivial when restricted to coroot lattice of each simple factor. Therefore both bilinear forms are isomorphic (over $\Q$ and $\ql$) for each simple factor. Hence, the proposition is proved. 
\end{proof}

The standard theory of Soergel bimodules works perfectly for $(\ewho, \elit)$. Let $SB(\ewho)$ be the category of Soergel bimodules for $(\ewho, \elit)$. For any sequence of simple reflections $(s_{i_n}, s_{i_{n-1}}, \cdots, s_{i_1})$ in $\ewho$, we define the Bott-Samelson bimodule $\So(s_{i_n}, s_{i_{n-1}}, \cdots, s_{i_1})_{\ewho}$ as the graded $\eR$-bimodule $\eR \otimes _{\eR^{s_{i_n}}} \eR \otimes _{\eR^{s_{i_{n-1}}}} \cdots \eR \otimes _{\eR^{s_{i_1}}} \eR$. The following proposition is well-known and we will only sketch the proof. Details can be found in \cite[Satz in Einleitung]{Ha}.

\begin{prop}\label{SBendo}
		For each $w \in \extw_H^\circ$, there is a unique (up to isomorphism) indecomposable graded $\eR \otimes \eR$-module $\So(w)_{\ewho}$ such that \begin{enumerate}
		\item $\Supp(\So(w)_{\ewho}) \supset \Gamma(w)$ as a subset of $\Spec (\eR \otimes \eR)$, where $\Gamma(w)=\{(wx,x) : x \in \elit \}$ is the graph of the $w$ action on $\elit$.
		\item For some reduced expression $w=s_{i_n}s_{i_{n-1}}\cdots s_{i_1}$ in $\ewho$, $\So(w)_{\ewho}$ is a direct summand of $\So(s_{i_n}, s_{i_{n-1}}, \cdots, s_{i_1})_{\ewho}$.
	\end{enumerate}
\end{prop}

\begin{proof}
	The $\eR$-bimodule $\So(s_{i_n}, s_{i_{n-1}}, \cdots, s_{i_1})_{\ewho}$ can be regarded as the $\eT \times \eT$-equivariant intersection cohomology of a Bott-Samelson variety in the affine flag variety of $H$. By the decomposition theorem, the $\eT \times \eT$-equivariant intersection cohomology of a Schubert variety in the affine flag variety of $H$ satisfies the conditions (c.f. \cite[Satz in Einleitung]{Ha}). 
	
	Since the global section functor is fully-faithful for semisimple complexes (see Theorem \ref{fufa}), $\So(w)_{\ewho}$ must be the equivariant cohomology of a summand of the Bott-Samelson sheaf on the Schubert variety by condition $(2)$. Condition $(1)$ guarantees the summand has to be the IC sheaf of the Schubert variety. 
\end{proof}	

	Because of Proposition \ref{isoCox}, the theory of Soergel bimodules for $(\ewlo, \elit)$ exists through the transport of structure. Let $SB(\ewlo)$ be the category of Soergel bimodules for $(\ewlo, \elit)$. By utilizing the canonical identification between $\ewlo$ and $\ewho$, we define $\So(x)_{\ewlo}$ as the graded $\eR$-bimodule $\So(x)_{\ewho}$ introduced in Proposition \ref{SBendo} for $x \in \ewlo \cong \ewho$.

Let $\sL \in \lio$ and $w \in \extw$. Let $\beta \in \pha _{w\sL} \ulextw_\sL$ be the block containing $w$. Write $w=xw^\beta$ for $x \in \ewwlo$. Similar to Section 8.3 in \cite{LY}, we define the extended Soergel bimodule \[\So(w)_{\sL}:=\So(x)_{\ewwlo} \otimes_{\eR} \eR(w^\beta).\] 

Since $\So(x)_{\ewwlo}$ is the equivariant intersection cohomology of a Schubert variety in the affine flag variety of $H_{w \sL}$, the degree zero part of $\So(x)_{\ewwlo}$ is one-dimensional. Moreover, the endomorphism ring of $\So(x)_{\ewwlo}$ consists of scalars. 

For each $x \in \ewwlo$, we fix a non-zero element $\textbf{1}$ in the degree zero part of $\So(x)_{\ewwlo}$. Then $\So(w)_{\sL}$ carries a rigidification with the degree zero element $\textbf{1} \otimes 1$.  For any sequence $(s_{i_n},s_{i_{n-1}}, \cdots, s_{i_1})$ of simple reflections in $\extw$ and a length-zero element $u$, let $\sL_j=s_{i_j}\cdots s_{i_1}u\sL$. Define the extended Bott-Samelson bimodule \[\So(s_{i_n},s_{i_{n-1}}, \cdots, s_{i_1};u)_{\sL}:=\So(s_{i_n})_{\sL_{n-1}} \otimes_{\eR}\So(s_{i_{n-1}})_{\sL_{n-2}} \otimes_{\eR} \cdots \otimes_{\eR} \So(s_{i_1})_{\sL_1} \otimes_{\eR} \eR(u).\]
When $\sL$ is trivial and $u=e$, the modules $\So(s_{i_n},s_{i_{n-1}}, \cdots, s_{i_1};e)_{\sL}$ and $\So(w)_{\sL}$ become the usual Bott-Samelson bimodules and Soergel bimodules for the affine Weyl group $\ewlo$. 

There is a criterion for an indecomposable graded $\eR \otimes \eR$-module to be an extended Soergel bimodule, similar to the conditions stated in Proposition \ref{SBendo}. 

\begin{lem}\label{extSB}
Let $\sL \in \lio$ and $w \in \extw$. Let $u$ be the unique length-zero element in $\extw$ such that $y=wu^{-1}$ is in the affine Weyl group. Let $M$ be an indecomposable graded $\eR \otimes \eR$-module such that \begin{enumerate}
\item $\Supp(M) \supset \Gamma(w)$ as a subset of $\Spec (\eR \otimes \eR)$, where $\Gamma(w)=\{(wx,x) : x \in \elit \}$ is the graph of the $w$ action on $\elit$.
\item For some reduced expression $y=s_{i_n}s_{i_{n-1}}\cdots s_{i_1}$ in $\extw$, $M$ is a direct summand of $\So(s_{i_n}, s_{i_{n-1}}, \cdots, s_{i_1};u)_\sL $.
\end{enumerate}
Then $M \cong \So(w)_\sL$. 
\end{lem}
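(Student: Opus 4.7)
The plan is to reduce Lemma \ref{extSB} to the classical Soergel bimodule theory for the Coxeter group $\ewlo$ by means of the autoequivalence $(-)\otimes_{\eR}\eR(w^\beta)$ of $\eR\otimes\eR$-$\gmod$. Write $w=x\,w^\beta$ with $x\in\ewlo$, where $\beta$ is the block of $w$. The bimodule $\eR(w^\beta)$ is invertible with inverse $\eR((w^\beta)^{-1})$, since a direct inspection of the defining relations gives $\eR(u)\otimes_{\eR}\eR(v)\cong\eR(uv)$; hence the functor $(-)\otimes_{\eR}\eR(w^\beta)$ is an autoequivalence, preserves indecomposability, and its effect on supports sends $\Gamma(x)$ to $\Gamma(w)=\Gamma(x\,w^\beta)$. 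In particular, the support condition $\Gamma(w)\subset\Supp(M)$ will be equivalent to $\Gamma(x)\subset\Supp(M')$ whenever $M\cong M'\otimes_{\eR}\eR(w^\beta)$.

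The key step will be to rewrite the extended Bott-Samelson $\So(s_{i_n},\ldots,s_{i_1})_\sL$, up to the autoequivalence above, as a classical Bott-Samelson bimodule for $\ewlo$ associated to a reduced expression of $x$. For each index $j$, the factor $\So(s_{i_j})_{\sL_{j-1}}$ is either the classical Soergel bimodule $\eR\otimes_{\eR^{s_{i_j}}}\eR$ when $s_{i_j}\in\extw^\circ_{\sL_{j-1}}$ (the \emph{neutral} case), or equals the invertible bimodule $\eR(s_{i_j})$ when $s_{i_j}\notin\extw^\circ_{\sL_{j-1}}$ (the \emph{non-neutral} case, consistent with the definition of $\So(s)_{\sL'}$ via $w=e\cdot s$). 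I will commute each non-neutral factor past the neutral ones to the right end, using the identity
\[
\eR(s)\otimes_{\eR}\bigl(\eR\otimes_{\eR^t}\eR\bigr)\;\cong\;\bigl(\eR\otimes_{\eR^{sts^{-1}}}\eR\bigr)\otimes_{\eR}\eR(s),
\]
verifiable directly from the defining relations. After collecting all the $\eR(s_{i_j})$ factors on the right, they multiply to $\eR(w^\beta)$ by Lemma \ref{minelt} and the combinatorics of Section 2 of \cite{L}, while the remaining (possibly conjugated) neutral factors assemble into a classical Bott-Samelson bimodule $\So(y_1,\ldots,y_m)_{\ewlo}$ for a reduced expression $y_1\cdots y_m$ of $x$ in the Coxeter group $\ewlo$.

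Granting this rewriting, we obtain $M\cong M'\otimes_{\eR}\eR(w^\beta)$ where $M'$ is an indecomposable summand of the classical Bott-Samelson $\So(y_1,\ldots,y_m)_{\ewlo}$ whose support contains $\Gamma(x)$. By the classical Soergel categorification theorem applied to the reflection-faithful representation of $\ewlo$ on $\elit$ (see \cite{S}), the unique such summand is $\So(x)_{\ewlo}$. Therefore
\[
M\;\cong\;\So(x)_{\ewlo}\otimes_{\eR}\eR(w^\beta)\;=\;\So(w)_\sL,
\]
which completes the argument.

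The main obstacle will be the rearrangement in the second paragraph: one must track how the sequence of blocks evolves under the chosen reduced expression, verify that the non-neutral simple reflections (in the order they appear, after conjugation by the commutations) multiply to the minimal element $w^\beta$, and confirm that the conjugated neutral reflections constitute a genuine reduced expression for $x$ in $\ewlo$. This combinatorial input is the extended-Weyl-group analog of Lemma 4.2 of \cite{LY} and rests ultimately on Lemma \ref{minelt} ($w^{\gamma\beta}=w^\gamma w^\beta$).
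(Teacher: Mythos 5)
Your proposal is correct and takes the same approach as the paper's source (the paper proves this by citing Lemma 8.1 of \cite{LY}, whose argument is exactly the reduction you describe): write the extended Bott--Samelson as a classical Bott--Samelson twisted on the right by the invertible bimodule $\eR(w^\beta)$, then invoke Soergel's classification for the reflection-faithful action of the Coxeter group on $\elit$. A couple of small points worth tightening. First, the intertwining isomorphism $\eR(s)\otimes_{\eR}(\eR\otimes_{\eR^{t}}\eR)\cong(\eR\otimes_{\eR^{sts^{-1}}}\eR)\otimes_{\eR}\eR(s)$ is correct, but to land in a genuine Bott--Samelson object one must also know that each conjugate $sts^{-1}$ is again a \emph{simple} reflection of the appropriate $\extw^{\circ}_{\sL_j}$; this follows because a non-neutral $s$ sends the set of positive coroots of the relevant $\widetilde{\Phi}^{\vee}_{\sL_j}$ bijectively to the positive coroots of $\widetilde{\Phi}^{\vee}_{s\sL_j}$ (the unique positive coroot it negates lies outside the subsystem), hence preserves simplicity. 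Second, the conjugated neutral reflections assemble into a reduced word in $\extw^{\circ}_{\sL'}$ rather than literally in $\ewlo=\extw^{\circ}_{\sL}$; these two Coxeter systems are canonically identified by conjugation by $w^{\beta}$, which preserves the Coxeter structure, so the conclusion is unaffected but the bookkeeping should say so. Finally, the combinatorial fact that the non-neutral factors multiply to $w^{\beta}$ and that the neutral part gives a reduced word for $x$ is not really a consequence of Lemma \ref{minelt} alone; it is the content of Lemma 4.2 in \cite{LY} (equivalently the discussion in Section~2 of \cite{L}), which is the precise citation to give here.
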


\begin{proof}
The proof is similar to Lemma 8.5 in \cite{LY}. We will only sketch the proof. By replacing $M$ with $M \otimes_{\eR} \eR(u^{-1})$, we may assume that $u=e$. Let $M':= M \otimes_{\eR} \eR(w^{\beta,-1})$ and $v=yw^{\beta,-1}$. The analog of Lemma 8.4 in \cite{LY} implies that there exists a reduced expression $v=t_m t_{m-1} \cdots t_1$ in $\ewwlo$ such that $\So(s_{i_n}, s_{i_{n-1}}, \cdots, s_{i_1};e)_\sL $ is isomorphic to $\So(t_m, t_{m-1}, \cdots, t_1)_{\ewwlo} \otimes_{\eR} \eR(w^\beta)$. By Proposition \ref{SBendo}, we know that $M'$ is isomorphic to $\So(v)_{\ewwlo}$.
\end{proof}

Let $(\eR \otimes \eR, \Fr)$-$\gmod_{\pure}$ be the full subcategory of $(\eR \otimes \eR, \Fr)$-$\gmod$ containing those $M= \bigoplus_{n \in \Z} M^n$ such that $M^n$ is pure of weight $n$ as $\Fr$-module. The forgetful functor $\omega : (\eR \otimes \eR, \Fr)$-$\gmod_{\pure} \ra \eR \otimes \eR$-$\gmod$ has an one-side inverse \[(-)^\natural: \eR \otimes \eR \text{-gmod} \ra (\eR \otimes \eR, \Fr)\text{-gmod}_{\pure}\] defined by declaring $\Fr$ to act on the $n$-th graded piece by $q^{n/2}$. Let $\SB_m(\ewlo) \subset (\eR \otimes \eR, \Fr)\text{-gmod}_{\pure}$ be the full subcategory consisting of those $M$ such that $\omega M \in \SB(\ewlo)$. It inherits a monoidal structure from $(\eR \otimes \eR, \Fr)\text{-gmod}_{\pure}$.

\begin{prop}\label{icso}
Let $\sL \in \lio$ and $w \in \ewlo$. Then there is a unique isomorphism in $( \eR \otimes \eR, \Fr)$-$\gmod$ \[\eM^\circ (\eIC (w)_\sL^\dagger \langle -\ell_{\sL}(w) \rangle) \cong \So(w)^\natural_{\ewlo}\] which sends $\widetilde{\theta}_w^\dagger$ to $\mathbf{1} \in \So(w)^\natural_{\ewlo}$.
\end{prop}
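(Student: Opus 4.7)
The plan is to realize $\eIC(w)_\sL^\dagger$ as a direct summand of a convolution of simple-reflection IC sheaves, translate this to the Soergel-bimodule side by iterated application of (the extended version of) Lemma \ref{conic}, and then invoke the characterization of indecomposable extended Soergel bimodules in Lemma \ref{extSB}. First I would fix a reduced expression $w = s_{i_n} s_{i_{n-1}} \cdots s_{i_1}$ in $\ewlo$ and set $\sL_j = s_{i_j} \cdots s_{i_1} \sL$. Starting from $\esF = \delta_\sL$ and iterating the extended form of Lemma \ref{conic} $n$ times, I obtain a canonical isomorphism in $(\eR \otimes \eR, \Fr)$-$\gmod$
\[
\eM^\circ\bigl(\eIC(s_{i_n})_{\sL_{n-1}}^\dagger \star \cdots \star \eIC(s_{i_1})_\sL^\dagger \langle -n \rangle \bigr) \cong \So(s_{i_n}, s_{i_{n-1}}, \ldots, s_{i_1})_\sL^\natural,
\]
under which the iterated $\widetilde{\theta}^\dagger$-element goes to $\mathbf{1} \otimes \cdots \otimes \mathbf{1}$.

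Next, since the convolution above is a semisimple complex pure of weight $0$ whose support contains $\egkw / \biu$, the decomposition theorem exhibits $\eIC(w)_\sL^\dagger$ as an indecomposable direct summand. Applying $\eM^\circ$ and using Theorem \ref{fufa} (full faithfulness on semisimple complexes), I deduce that $\eM^\circ(\eIC(w)_\sL^\dagger \langle -\ell_\sL(w) \rangle)$ is a direct summand of $\So(s_{i_n}, \ldots, s_{i_1})_\sL^\natural$ in $(\eR \otimes \eR, \Fr)$-$\gmod$, and that its endomorphism ring is local (being the image of the local ring $\End(\eIC(w)_\sL^\dagger)$), hence it is indecomposable.

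To apply Lemma \ref{extSB}, it remains to verify the support condition. For this I would argue that restriction to the open stratum $\biu \bsl \egkw / \biu$ carries $\eIC(w)_\sL^\dagger \langle -\ell_\sL(w) \rangle$ to $\eC(\wdo)_\sL^+$ (by the normalization of $\dagger$ together with Proposition \ref{stalkav}), whose Soergel bimodule is $\eR(w)$; consequently the support of $\eM^\circ(\eIC(w)_\sL^\dagger \langle -\ell_\sL(w) \rangle)$, which surjects onto that of the restriction, contains $\Gamma(w) \subset \Spec(\eR \otimes \eR)$. Lemma \ref{extSB} then identifies this summand with $\So(w)_\sL$ as $\eR \otimes \eR$-module, and the purity of the Bott--Samelson bimodule upgrades this to an isomorphism with $\So(w)_\sL^\natural$ in $(\eR \otimes \eR, \Fr)$-$\gmod$. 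For the normalization, I would use that $\widetilde{\theta}_w^\dagger$ is a distinguished degree-zero element of $\eM^\circ(\eIC(w)_\sL^\dagger \langle -\ell_\sL(w) \rangle)$; its image in $\So(s_{i_n}, \ldots, s_{i_1})_\sL^\natural$ is $\mathbf{1} \otimes \cdots \otimes \mathbf{1}$ (by the Bott--Samelson isomorphism above) and projects to a nonzero multiple of $\mathbf{1}$ in the $\So(w)_\sL^\natural$-summand, since the degree-zero part of $\So(w)_\sL^\natural$ is one-dimensional. Rescaling gives the unique isomorphism sending $\widetilde{\theta}_w^\dagger$ to $\mathbf{1}$.

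The main obstacle I anticipate is the bookkeeping that the iterated application of Lemma \ref{conic} really sends $\widetilde{\theta}^\dagger$ to $\mathbf{1} \otimes \cdots \otimes \mathbf{1}$; this relies on the normalization clause in Lemma \ref{conic} and the characterization of $\widetilde{\theta}_w^\dagger$ by restriction to $e$ in Lemma \ref{chess}, and should be verified inductively on $n$. A secondary (but easier) technical point is checking purity: the Bott--Samelson complex is pure of weight zero by the decomposition theorem, so every direct summand under $\eM^\circ$ is automatically pure, which lets us stay inside $(\eR \otimes \eR, \Fr)$-$\gmod_{\pure}$ throughout.
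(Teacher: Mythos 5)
Your proposal is correct and takes essentially the same route as the paper's proof: realize $\eIC(w)_\sL^\dagger$ as a direct summand of a Bott--Samelson convolution, translate to the Soergel side via iterated applications of Lemma \ref{conic} (with Lemma \ref{conmic} for the base), use Theorem \ref{fufa} to get scalar endomorphisms and hence indecomposability, verify the support condition, and invoke Lemma \ref{extSB}. The only notational slip is in the support step: you write that ``the support \ldots surjects onto that of the restriction,'' whereas what you mean (and what the paper makes precise via the filtration of Lemma \ref{chess}) is that the module $M$ itself surjects onto the graded piece $\Hom^\bullet(i_w^*\Av_!\Theta, i_w^!\eIC(w)_\sL^\dagger) \cong \eR(w)$ corresponding to the open stratum, whence $\Gamma(w) = \Supp(\eR(w)) \subset \Supp(M)$; quoting Lemma \ref{chess} directly makes this cleaner than an ad hoc restriction argument. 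Your treatment of the normalization by tracing $\widetilde{\theta}_w^\dagger$ through the Bott--Samelson isomorphism and rescaling is an adequate unpacking of what the paper delegates to the reductive-group analogue (Proposition 8.7 of \cite{LY}).
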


\begin{proof}
For $y \in \extw$, let $\beta \in \pha _{y\sL} \ulextw_\sL$ be the unique block containing $y$. Let $\xiu$ be a minimal IC sheaf for $\beta$ and let $M$ be $\eMu_{\xiu}(\eICu(y)_\sL[-\ell_\beta(y)])$. Theorem \ref{fufa} shows $\End(M)=\ql$, hence $M$ is indecomposable. The support of the last piece of the filtration of $M$ in Lemma \ref{chess} corresponds to $\Gamma(y)=\Supp(\eR(y)) \subset \Supp(M)$. $M$ satisfies condition $(2)$ in Lemma \ref{extSB} by the decomposition theorem combined with Lemma \ref{conmic} and Lemma \ref{conic}. Hence, we can apply Lemma \ref{extSB} to prove that $\eMu_{\xiu}(\eICu(y)_\sL[-\ell_\beta(y)]) \cong \So(y)_\sL$. The rest of the proof is similar to Proposition $8.7$ in \cite{LY}.
\end{proof}

\begin{prop}\label{filt}
Let $M \in \SB_m(\ewlo)$. There exists a finite filtration $0=F_0M \subset F_1M \subset \cdots \subset F_nM=M$ in $\SB_m(\ewlo)$ satisfying \begin{enumerate}[$(1)$]
\item For $1 \le i \le n$, $\Gr^F_iM \cong \So(w_i)^\natural \langle n_i \rangle \otimes V_i$ for some $w_i \in \ewlo$, $n_i \in \Z$ and finite-dimensional $\Fr$-module $V_i$ pure of weight zero. 
\item The filtration $\omega F_\bullet M$ of $\omega M$ splits in $\eR \otimes \eR$-$\gmod$.
\end{enumerate}
\end{prop}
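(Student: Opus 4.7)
\emph{Proof proposal.} The plan is to build the filtration via support in $\Spec(\eR \otimes \eR)$, then refine by grading shift. Applying Krull--Schmidt in $\SB(\ewlo)$, write $\omega M \cong \bigoplus_j \So(w_j)\langle n_j \rangle$ as a finite direct sum, and enumerate the finite set of distinct $w_j$ occurring as $u_1, u_2, \ldots, u_r$ compatibly with the Bruhat order of $\ewlo$ (so that $u_a < u_b$ forces $a < b$).

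For each $i$, let $F'_i M \subset M$ be the maximal graded $\eR \otimes \eR$-submodule whose support in $\Spec(\eR \otimes \eR)$ is contained in $Z_i := \bigcup_{l \le i} X_{\le u_l}$, where $X_{\le v} := \bigcup_{u \le v} \Gamma(u)$. Because the Frobenius of $\eR \otimes \eR$ rescales each graded piece by a power of $q$, it preserves every homogeneous ideal setwise and hence preserves supports of sections; therefore $F'_i M$ is automatically Frobenius-stable. Using $\Supp \So(v) = X_{\le v}$ together with the enumeration property, one checks
\[
\omega F'_i M \;=\; \bigoplus_{j \,:\, w_j \in \{u_1, \ldots, u_i\}} \So(w_j)\langle n_j \rangle,
\]
so $\omega F'_i M$ is a direct summand of $\omega M$ in $\eR \otimes \eR$-$\gmod$. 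Consequently $F'_i M \in \SB_m(\ewlo)$, the inclusions $\omega F'_i M \subset \omega F'_{i+1} M$ split canonically, $F'_0 M = 0$, and $F'_r M = M$.

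The subquotient $F'_i M / F'_{i-1} M$ satisfies $\omega(F'_i M / F'_{i-1} M) \cong \bigoplus_{j \,:\, w_j = u_i} \So(u_i)\langle n_j \rangle$. Since the graded bimodules $\So(u_i)\langle k \rangle$ are pairwise non-isomorphic for different $k$ and satisfy $\End^0_{\eR \otimes \eR\text{-}\gmod}(\So(u_i)) = \ql$, the decomposition of this sum into grading-shift isotypic components $\So(u_i)\langle k \rangle^{\oplus m_{i,k}}$ is canonical and therefore Frobenius-stable, yielding
\[
F'_i M / F'_{i-1}M \;\cong\; \bigoplus_k \So(u_i)^\natural \langle k \rangle \otimes V_{i,k}
\]
in $(\eR \otimes \eR, \Fr)$-$\gmod$, with each $V_{i,k}$ a finite-dimensional $\Fr$-module (nonzero for only finitely many $k$). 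Refining $F'_\bullet$ by splitting off one summand $\So(u_i)^\natural \langle k \rangle \otimes V_{i,k}$ at a time produces the filtration $F_\bullet M$ demanded in (1); property (2) follows because both the support filtration and the grading-shift refinement split canonically upon applying $\omega$.

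The main obstacle I expect is the purity check: one must verify that the multiplicity $\Fr$-modules $V_{i,k}$ are pure of weight zero, by comparing degree-by-degree weights of $F'_i M / F'_{i-1} M$ (inherited from $M \in \SB_m(\ewlo)$) against those of $\So(u_i)^\natural \langle k \rangle$. This is the point where the hypothesis $M \in \SB_m(\ewlo)$, rather than merely $\omega M \in \SB(\ewlo)$, plays its essential role.
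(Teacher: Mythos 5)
Your approach is genuinely different from the paper's, and unfortunately it breaks at the support-filtration step. The paper (following Proposition~8.10 of \cite{LY}) builds the filtration as an analogue of the \emph{perverse} filtration: one transports the problem to the sheaf side via Theorem~\ref{fufa} and Proposition~\ref{icso}, where the relevant $\Hom$-spaces vanish in the right degrees for weight/perversity reasons, and that vanishing is exactly what produces the filtration with Soergel subquotients. You instead filter by support in $\Spec(\eR\otimes\eR)$.

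The gap is in the identity
\[
\omega F'_i M \;=\; \bigoplus_{j\,:\,w_j\in\{u_1,\dots,u_i\}} \So(w_j)\langle n_j\rangle .
\]
The right-hand side is certainly a submodule supported on $Z_i$, but it is generally \emph{not} the maximal one, because an indecomposable Soergel bimodule $\So(v)$ with $v\notin\{u_1,\dots,u_i\}$ can contain nonzero sub-bimodules supported on smaller closed sets. The simplest counterexample: for a simple reflection $s$, the element $x=\alpha_s\otimes 1+1\otimes\alpha_s\in\eR\otimes_{\eR^s}\eR=\So(s)$ satisfies $(r\otimes 1-1\otimes r)\cdot x=0$ for all $r$, so $\eR\cdot x\cong\eR(e)\langle\cdot\rangle$ is a nonzero sub-bimodule of $\So(s)$ supported on $\Gamma(e)$. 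Hence for $M$ with $\omega M\cong\So(e)\oplus\So(s)$ and $u_1=e$, your $\omega F'_1 M$ is $\So(e)\oplus\eR\cdot x\supsetneq\So(e)$. Worse, the next subquotient is then $\So(s)/\eR\cdot x\cong\eR(s)$, a twisted bimodule of graded rank one, which is \emph{not} an indecomposable Soergel bimodule; so the support filtration produces graded pieces outside $\SB(\ewlo)$ and the construction cannot satisfy property $(1)$. (This is exactly the content of the standard/costandard filtrations of Soergel bimodules: supports chop $\So(w)$ into twisted bimodules $\eR(v)$, not into smaller Soergel bimodules.) A secondary issue, even granting the filtration, is that the ``grading-shift isotypic'' decomposition of $\bigoplus_k\So(u_i)\langle k\rangle^{m_k}$ is not canonical: $\Hom^0(\So(u_i)\langle k\rangle,\So(u_i)\langle k'\rangle)=\Hom^{k-k'}(\So(u_i),\So(u_i))$ is typically nonzero for $k>k'$, so Frobenius need not respect that splitting; one only gets a filtration by shift, not a canonical direct sum.

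To repair the argument you would need to abandon the support filtration and instead filter by a quantity attached to the shifts $n_j$ (the perverse degree on the sheaf side), using the key vanishing $\Hom^1$ in the mixed category between pure objects of the wrong relative weight --- this is precisely what the paper's references to Theorem~\ref{fufa}, Proposition~\ref{icso}, and the ``perverse filtration'' are encoding.
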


\begin{proof}
The proof is similar to Proposition 8.10 in \cite{LY}. By Theorem \ref{fufa} and Proposition \ref{icso}, the homomorphism space between two Soergel bimodules at certain degrees vanish due to perverse degree consideration. The filtration in the proposition is analogous to the perverse filtration. 
\end{proof}

There are analogous definitions and results for the $S$-module case. Proposition \ref{Soers} and Proposition \ref{icso} guarantee the
existence of $\bSo(w)_{\sL}$, the Soergel bimodules for $S$ and $ \bSo(s_{i_n},s_{i_{n-1}}, \cdots, s_{i_1}; u)_{\sL}$, the Bott-Samelson bimodules for $S$. They are sent to $\So(w)_{\sL}$ and $ \So(s_{i_n},s_{i_{n-1}}, \cdots, s_{i_1}; u)_{\sL}$ under the induction functor $\Ind$. They are unique up to unique isomorphism after rigidification. 
After defining Soergel bimodules for $S$, we can define the Frobenius version $(S, \Fr)$-$\gmod_{\pure}$ and its full subcategory $\bSB_m(\ewlo)$. The above results and proofs can be generalized to $S$-modules by replacing $\otimes_{\eR}$ with $\bullet$, and $\So$ with $\bSo$ respectively. For completeness, let us state the analog of Proposition \ref{icso} for the $S$-module case. 

\begin{prop}\label{icso2}
Let $\sL \in \lio$ and $w \in \ewlo$. Then there is a unique isomorphism in $(S, \Fr)$-$\gmod$ \[\M^\circ (\IC (w)_\sL^\dagger \langle -\ell_{\sL}(w) \rangle) \cong \bSo(w)^\natural_{\ewlo}\] which sends $\theta_w^\dagger$ to $\mathbf{1} \in \bSo(w)^\natural_{\ewlo}$.
\end{prop}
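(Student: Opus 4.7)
The plan is to follow the strategy of Proposition \ref{icso} in the $S$-module setting and then transport the rigidification via induction using Lemma \ref{Soers}. Set $N := \M^\circ(\IC(w)_\sL^\dagger \langle -\ell_\sL(w)\rangle) \in (S,\Fr)\text{-}\gmod$; the goal is to construct the unique isomorphism $N \cong \bSo(w)^\natural_{\ewlo}$ matching $\theta_w^\dagger$ with $\mathbf{1}$.

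First I would show $N$ is indecomposable. This is immediate from Theorem \ref{fufa}, since $\End^0(N) \cong \Hom^0(\IC(w)_\sL^\dagger, \IC(w)_\sL^\dagger) = \ql$ as $\IC(w)_\sL^\dagger$ is a simple perverse sheaf up to an explicit Tate twist. Next, the filtration constructed in the proof of Lemma \ref{chess} on $\Hom^\bullet(\avltlm, \IC(w)_\sL^\dagger) = N$ (up to a degree shift) has a subquotient containing $\barr(w)$, so $\Gamma(w) \subset \Supp(N) \subset \Spec(S)$. Choosing a reduced expression $w = s_{i_1}\cdots s_{i_n}$ in $\ewlo$ and setting $\sL_j = s_{i_j}\cdots s_{i_1}\sL$, the decomposition theorem exhibits $\IC(w)_\sL^\dagger$ as a direct summand of $\IC(s_{i_1})_{\sL_1}^\dagger \star \cdots \star \IC(s_{i_n})_{\sL_n}^\dagger$, and iterating Lemmas \ref{conmic} and \ref{conic} (in their $S$-module form, as promised just before the proposition) identifies $\M^\circ$ of this convolution with the Bott--Samelson bimodule $\bSo(s_{i_1},\ldots,s_{i_n})_\sL$. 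Thus $N$ is an indecomposable direct summand of a Bott--Samelson bimodule whose support contains $\Gamma(w)$, which by the $S$-analog of Lemma \ref{extSB} pins down $\omega N \cong \bSo(w)_\sL$.

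The cleanest way to finish (and to identify the rigidification canonically) is to pass through induction. Lemma \ref{Soers} yields $\Ind(N) \cong \eM^\circ(\eIC(w)_\sL^\dagger\langle -\ell_\sL(w)\rangle)$, which by Proposition \ref{icso} is canonically $\So(w)^\natural_{\ewlo}$. Since $\bSo(w)_\sL$ is characterized (up to unique isomorphism) as the $S$-module inducing $\So(w)_\sL$, and $\Ind$ is faithful on such modules, this forces $N \cong \bSo(w)^\natural_{\ewlo}$ in $(S,\Fr)\text{-}\gmod$. For the rigidification, tracing $\theta_w^\dagger$ through $\Ind$ gives $\widetilde{\theta}_w^\dagger$, which by Proposition \ref{icso} maps to $\mathbf{1} \in \So(w)^\natural_{\ewlo}$; since $\Ind(\mathbf{1}_{\bSo(w)^\natural}) = \mathbf{1}_{\So(w)^\natural}$ by construction, $\theta_w^\dagger$ must correspond to a nonzero scalar multiple of $\mathbf{1}_{\bSo(w)^\natural_{\ewlo}}$, and rescaling yields the required unique isomorphism. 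The main obstacle I anticipate is bookkeeping rather than substance: verifying that the $S$-module versions of Lemmas \ref{extSB}, \ref{conmic}, and \ref{conic} really are in place with compatible rigidifications, and that the passage to $\Ind$ is functorial on the chosen generators $\theta_w^\dagger$. Both are routine once the $\eR\otimes\eR$ case is established, but omitting them would leave a gap in the argument.
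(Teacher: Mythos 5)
The paper gives no separate proof of Proposition~\ref{icso2}; it simply declares that ``the above results and proofs can be generalized to $S$-modules by replacing $\otimes_{\eR}$ and $\So$ by $\bullet$ and $\bSo$.'' Your write-up fills in exactly what that remark leaves implicit, so the content is essentially the paper's, but you make a choice worth noting: after replicating the icso argument (indecomposability via Theorem~\ref{fufa}, the support bound $\Gamma(w)\subset\Supp(N)$ from the Lemma~\ref{chess} filtration, the Bott--Samelson summand, and the $S$-analog of Lemma~\ref{extSB}), you pass through $\Ind$ and Lemma~\ref{Soers} to transport both the identification and the rigidification from the already-proved Proposition~\ref{icso}. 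This is cleaner than re-running the LY~Prop.~8.7 argument that the paper's icso proof cites, and it fits naturally with the paper's own definition of $\bSo(w)_\sL$ as the $S$-module that induces to $\So(w)_\sL$; in effect you exploit the defining property rather than redoing the downstairs computation. Two small points: the word ``faithful'' should be ``conservative'' (or ``reflects isomorphism classes'') --- faithfulness is about morphisms, and what you use is the paper's explicit assertion that $\Ind(M)\cong\Ind(N)$ implies $M\cong N$; and for the uniqueness claim in $(S,\Fr)$-$\gmod$ you should say a word about why the rescaled isomorphism is Frobenius-equivariant (both $\theta_w^\dagger$ and $\mathbf{1}$ are $\Fr$-invariant degree-zero generators, and $\End^0=\ql$ with trivial $\Fr$-action, so the normalizing scalar is $\Fr$-invariant). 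With those bookkeeping remarks added, the argument is complete.
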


%The support of $\So(w)_{\sL}$ lies inside the hyperplane $\{z_1=z_2\}\subset \Spec (\hatr \otimes \hatr)$, where $z_1,z_2$ are the generators of two copies of $\X^*(\gmr)$ in $\hatr \otimes \hatr$. As a result, $\So(w)_{\sL}$ is naturally a graded $R \otimes \ql[z] \otimes R$-module. On the other hand, any $R \otimes \ql[z] \otimes R $ module $M$ can be thought as a $\hatr \otimes \hatr$ module via the natural isomorphism $\hatr \otimes \hatr / (z_1-z_2) \ra R \otimes \ql[z] \otimes R$. This also works for graded and $\Fr$-equivariant counterparts. By abuse of notation, we use same symbol to denote the resulting module. Notice that $\Hom^\bullet_{R \otimes \ql[z] \otimes R \text{-gmod}} (M_1, M_2)=\Hom^\bullet_{\hatr \otimes \hatr \text{-gmod}} (M_1, M_2)$.

\section{Equivalence for the neutral block}\label{sect9}

In this section we prove the main statement, which is the equivalence between neutral blocks of monodromic Hecke categories. It relies heavily on the machinery developed in \cite[Appendix B]{BY}.

Let $\sL \in \lio$ and $H$ be the endoscopic group of $G$ corresponding to $\sL$. Let $\hk$ be the loop group of $H$ and $\bi_H$ be its standard Iwahori subgroup. 
The usual affine Hecke category for $\hk$ is \[D_{\hk}:=D^b_m(\bi_H \bsl \hk / \bi_H).\] It is the affine monodromic Hecke category for $H$ with the trivial character sheaf on $T$. Hence, all constructions before are applicable to $D_{\hk}$. We denote the IC sheaf, standard sheaf, and costandard sheaf in $D_{\hk}$ by $\IC(w)_\hk, \Delta(w)_\hk$, and $ \nabla(w)_\hk$ respectively. Unlike its reductive group analog, $D_{\hk}$ can usually be decomposed into smaller categories. We define $D_{\hk}^{\circ}$ to be its neutral block, which is a special case in Definition \ref{blockdef}. One can check that $D_{\hk}^{\circ}$ contains the sheaves supported on the neutral connected component of $\bi_H \bsl \hk / \bi_H$. The orbits in this component are those whose index is in $\ewlo$.

As in the previous sections, adding a tilde (resp. an underline) means its extended (resp. non-mixed) counterparts. Here are the main theorems of the paper. 

\begin{thm}\label{mainthm}
Let $\sL \in \Ch(T)$ and $\hk$ be the loop group of the endoscopic group of $G$ associated with $\sL$. There is a natural monoidal equivalence of triangulated categories \[ \Psi_\sL^\circ: D_{\hk}^{\circ} \xra{\sim} \pha \ldl^\circ\] such that \begin{enumerate}[$(1)$]
\item For all $w \in \ewlo$, \[\Psi_\sL^\circ(\IC(w)_\hk) \cong \IC(w)^\dagger_\sL, \quad \Psi_\sL^\circ(\Delta(w)_\hk) \cong \Delta(w)^\dagger_\sL, \quad \Psi_\sL^\circ(\nabla(w)_\hk) \cong \nabla(w)^\dagger_\sL.\] 
In particular, $\Psi_\sL^\circ$ is t-exact for the perverse t-structures.
\item There is a bi-functorial isomorphism of graded $(S, \Fr)$-modules for all $\sF, \sG \in D_{\hk}^{\circ}$ \[\Hom^\bullet(\sF, \sG) \xra{\sim} \Hom^\bullet(\Psi_\sL^\circ(\sF), \Psi_\sL^\circ(\sG)).\]
\end{enumerate}
\end{thm}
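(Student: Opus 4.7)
The plan is to construct the equivalence in two stages: first on the subcategories of semisimple complexes via the Soergel functor, then to extend to the full triangulated categories via the DG enhancement machinery of Bezrukavnikov and Yun.

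First I would identify the semisimple subcategories of $D_{\hk}^\circ$ and $\ldl^\circ$ through the Soergel functor. For the right-hand side, Proposition \ref{icso2} sends $\IC(w)^\dagger_\sL \langle -\ell_{\sL}(w) \rangle$ to a Soergel bimodule for $S$, and Theorem \ref{fufa} guarantees full faithfulness on semisimple complexes. For the left-hand side, the analogous statement is the specialization of the same results to the trivial monodromy on $H$: here the relevant Coxeter group is $\widetilde{W_H} = \ewlo$, and since $H$ shares the maximal torus $T$ with $G$, the reflection representations on both sides literally coincide. Sending $\IC(w)_{\hk}$ and $\IC(w)^\dagger_\sL$ to the same Soergel bimodule therefore yields a monoidal equivalence $\Psi_{\sL,\mathrm{ss}}^\circ$ between the full additive subcategories of semisimple complexes, preserving IC sheaves up to shift and Tate twist, and recovering Hom complexes in a bi-functorial way.

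Next I would apply the DG model formalism of \cite[Appendix B]{BY} to promote $\Psi_{\sL,\mathrm{ss}}^\circ$ to an equivalence of triangulated categories. The construction there equips each (extended) monodromic Hecke category with a canonical DG enhancement whose morphism complexes are computed by the Soergel functor via a Koszul-type resolution; a monoidal equivalence between semisimple parts that already recovers Hom spaces extends uniquely to a monoidal equivalence of the pretriangulated hulls. The filtration produced by Proposition \ref{filt} (together with its $S$-module analog) ensures every object of $D_{\hk}^\circ$ or $\ldl^\circ$ is an iterated extension of semisimple complexes, so these hulls are the whole derived categories.

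Finally, to verify that $\Psi_\sL^\circ$ matches standard, costandard, and IC sheaves, I would use the recollement determined by the Bruhat stratification: Proposition \ref{stalk} and the hyperbolic localization computation of Proposition \ref{avstalk} imply that $\Delta(w)^\dagger_\sL$ and $\nabla(w)^\dagger_\sL$ have the same Soergel-theoretic descriptions as $\Delta(w)_{\hk}$ and $\nabla(w)_{\hk}$, so the equivalence identifies them; t-exactness for the perverse t-structures then follows. Part $(2)$ is built into the DG construction. The step I expect to be the principal obstacle is showing that the monoidal structure assembled at the semisimple level lifts coherently through the DG enhancement. Concretely, one must verify that the natural maps $c_{\xi_2,\xi_1}$ of Section \ref{conmax} on the $G$-side are compatible with the associator for $\bullet$ on Soergel bimodules, and that the analogous compatibility holds for $\hk$; the coassociativity encoded in Lemma \ref{coass} and Proposition \ref{triple} supplies the central coherence data, and the BY framework packages these into a single compatibility producing the monoidal lift. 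Once this coherence is in place, the monoidal property and t-exactness of $\Psi_\sL^\circ$ both follow from the corresponding properties of $\Psi_{\sL,\mathrm{ss}}^\circ$.
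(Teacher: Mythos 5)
Your overall strategy is the same as the paper's: identify the pure (semisimple) subcategories with (mixed, $S$-linear) Soergel bimodules via the Soergel functor and its full faithfulness (Theorem~\ref{fufa}, Propositions~\ref{icso2} and~\ref{filt}), then use the realization-functor machinery of \cite[Appendix~B]{BY} to pass to the full triangulated categories and compose the two equivalences. The paper literally defines $\Psi_{\sL,\le w}^\circ=\varphi_{\sL,\le w}^{-1}\circ\varphi_{H,\le w}$ and takes a limit; the monoidal structure comes from \cite[Remark~B3.2]{BY}, and the matching of standard and costandard objects follows from the criterion in \cite[Section~9.8]{LY}. So the skeleton is right.

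There is, however, a genuine gap in the step where you assert that "the reflection representations on both sides literally coincide" because $G$ and $H$ share the torus $T$. This is not automatic. The ring $S$ (equivalently $\eR\otimes\eR$) records not only the $T$-action but also the central $\gm^{\cen}$-direction, which depends on a choice of central extension of the loop group --- and the chosen extensions of $\gk$ and of $\hk$ are built from different faithful representations. In other words, the $\ewlo$-action and the $\widetilde{W_H^\circ}$-action on $\elit$ could a priori differ in the $z$-direction. The paper disposes of this by invoking the uniqueness, up to a nonzero $\Q$-multiple, of a nontrivial central extension of the loop group of a simple group (and the normalization built into the pushout character in Section~\ref{cent}), which is exactly what lets one declare $\bSB_m(\ewlo)$ and $\bSB_m(\widetilde{W_H^\circ})$ to be the \emph{same} category. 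Without that comparison your construction of $\Psi_\sL^\circ$ has a dangling identification and does not close. Two smaller imprecisions: the passage from pure complexes to the full derived category is via the equivalence $K^b(\lcswlo)/K^b(\lcswlo)_0\xra{\sim}\ldswlo$ of the realization functor (and its $H$-analog), not via an informal claim that every object is an iterated extension of semisimple complexes; and the identification of $\Delta(w)^\dagger_\sL$ and $\nabla(w)^\dagger_\sL$ with their $H$-side counterparts is carried out by the recognition criterion in \cite[Section~9.8]{LY}, which is cleaner than reconstructing it directly from Propositions~\ref{stalk} and~\ref{avstalk}.
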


\begin{thm}[Extended version of Theorem \ref{mainthm}]\label{mainthm2}
We use the same notation as Theorem \ref{mainthm}. There is a natural monoidal equivalence of triangulated categories \[ \ePsi_\sL^\circ: \eD_{\hk}^{\circ} \xra{\sim} \pha \ledl^\circ\] such that \begin{enumerate}[$(1)$]
\item For all $w \in \ewlo$, \[\ePsi_\sL^\circ(\eIC(w)_\hk) \cong \eIC(w)^\dagger_\sL, \quad \ePsi_\sL^\circ(\eDe(w)_\hk) \cong \eDe(w)^\dagger_\sL, \quad \ePsi_\sL^\circ(\ena(w)_\hk) \cong \ena(w)^\dagger_\sL.\] 
In particular, $\ePsi_\sL^\circ$ is t-exact for the perverse t-structures.
\item There is a bi-functorial isomorphism of graded $(\eR \otimes \eR, \Fr)$-modules for all $\esF, \esG \in \eD_{\hk}^{\circ}$ \[\Hom^\bullet(\esF, \esG) \xra{\sim} \Hom^\bullet(\ePsi_\sL^\circ(\esF), \ePsi_\sL^\circ(\esG)).\]
\end{enumerate}
\end{thm}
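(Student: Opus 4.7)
\smallskip

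\noindent\textbf{Proof proposal for Theorem \ref{mainthm2}.} The plan is to run the Lusztig--Yun strategy that is sketched for Theorem \ref{mainthm} directly in the extended setting, using the extended Soergel functor $\eM^\circ$ in place of $\M^\circ$ and relying on the reconstruction machinery of Appendix B in \cite{BY}. The whole argument is organized around the coincidence $\ewlo = \widetilde{W}_H$ as Coxeter groups acting on the common reflection representation $\elit$, so that the extended Soergel-bimodule categories attached to the two sides of the equivalence are literally equal.

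First I would compare pure semisimple subcategories. Apply $\eM^\circ$ to the $\sL$-side and its trivial-monodromy analogue to the $\hk$-side. By Theorem \ref{fufa} both are fully faithful monoidal embeddings into $(\eR\otimes\eR,\Fr)$-$\gmod$. By Proposition \ref{icso} the $\sL$-side sends $\eIC(w)^\dagger_\sL\langle -\ell_\sL(w)\rangle$ to $\So(w)^\natural_{\ewlo}$, while the same statement for trivial $\sL$ on the endoscopic group sends $\eIC(w)_\hk\langle -\ell(w)\rangle$ to $\So(w)^\natural_{\widetilde{W}_H}$. Since $\ewlo=\widetilde{W}_H$ and both functors identify extended torus cohomologies with the same $\eR$, the two essential images agree. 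This gives a monoidal equivalence between the pure semisimple parts of $\eD_{\hk}^\circ$ and $\pha\ledl^\circ$, matching IC sheaves and compatible with $\langle 1\rangle$; under this equivalence $\widetilde{\theta}_w^\dagger$ corresponds to the analogously defined map on the $\hk$-side, and the monoidal unit is preserved by Lemma \ref{conmic}.

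Next I would invoke \cite[Appendix~B]{BY}: under appropriate purity and semisimplicity hypotheses, the whole triangulated monoidal category is recovered from its pure semisimple subcategory as the derived category of a canonical DG-enhancement built out of Bott--Samelson-type objects. A monoidal equivalence of the pure semisimple parts therefore extends, uniquely up to unique monoidal isomorphism, to a monoidal equivalence $\ePsi_\sL^\circ$ of the triangulated categories. Perverse $t$-exactness is then automatic because the Soergel functors detect the perverse $t$-structure and the bimodule model on both sides is the same. Since $\eDe(w)_\hk$ and $\ena(w)_\hk$ are characterized up to unique isomorphism by their mapping properties against IC sheaves, the extension sends them to $\eDe(w)^\dagger_\sL$ and $\ena(w)^\dagger_\sL$ respectively; the bi-functorial Hom-isomorphism in part~(2) is then part of the output of the reconstruction.

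The main obstacle will be verifying the BY hypotheses in this affine, monodromic, centrally extended setting. Concretely, one must establish: (i) a parity-type vanishing for Ext groups between IC sheaves, which one pulls back through $\eM^\circ$ to a vanishing between shifted Soergel bimodules and which is ensured by Proposition \ref{parity}; (ii) a Bott--Samelson-style filtration of every pure semisimple object whose associated graded consists of Tate twists of $\So(w)^\natural_{\ewlo}$, which is Proposition \ref{filt}; (iii) semisimplicity of the Frobenius action on Ext groups between pure complexes, which reduces via Theorem \ref{fufa} to the manifest Tate-grading decomposition of $\So(w)^\natural_{\ewlo}$. Once these are in place the extension exists and has the listed properties, completing the proof; Theorem \ref{mainthm} then follows either by running the same argument with $\M^\circ$, or by descending $\ePsi_\sL^\circ$ along the central $\gm^{\cen}$-equivariant structure via Lemma \ref{Soers}.
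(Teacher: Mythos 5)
Your proposal follows essentially the same route the paper takes: identify the pure weight-zero semisimple hearts on both sides with the extended Soergel-bimodule category via the (extended) Soergel functor (Theorem \ref{fufa}, Proposition \ref{icso}), then invoke the DG/realization machinery of \cite[Appendix B]{BY} applied to $K^b$ of the pure hearts modulo the ``null-homotopic after $\omega$'' kernel to upgrade this to a monoidal equivalence of the full triangulated categories, and finally match IC, standard, and costandard objects using the fully faithful Hom-comparison. The ingredients you list for the BY hypotheses --- parity (Proposition \ref{parity}), the Bott--Samelson-type filtration (Proposition \ref{filt}), and Tate purity of Ext's detected through Theorem \ref{fufa} --- are exactly the ones the paper uses (through Proposition \ref{puresh} and Theorem \ref{gensh}) in the proof of Theorem \ref{mainthm}, which it then declares to carry over verbatim to the extended case.

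The one spot where you are too quick is the opening claim that $\ewlo=\widetilde{W_H^\circ}$ act on a ``common reflection representation $\elit$,'' so that the two extended Soergel-bimodule categories are ``literally equal.'' This is not automatic: $\elit=\X_*(\eT)\otimes_{\Z}\ql$ depends on the chosen faithful representations used to construct the central $\gm$-extensions of $\gk$ and $\hk$ (Section \ref{cent}), and the resulting $z$-direction and the way affine reflections of $\ewlo$ versus $\widetilde{W_H^\circ}$ act on it need not coincide on the nose. The paper explicitly flags this in the proof of Theorem \ref{mainthm}: it checks that the two Soergel-bimodule categories are \emph{isomorphic} rather than equal, using the fact that a nontrivial central $\gm$-extension of the loop group of a simple group is unique up to a nonzero rational multiple, together with the requirement that the pushed-out character be nontrivial on every simple factor. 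You would need to insert this identification (or an equivalent rescaling argument on $z$) before your comparison of essential images goes through; without it, the assertion that the two Soergel categories coincide is unjustified. Once that is added, the rest of your argument is correct and matches the paper's, and your closing remark that Theorem \ref{mainthm} could alternatively be recovered from Theorem \ref{mainthm2} via Lemma \ref{Soers} is a reasonable (if unexploited in the paper) observation.
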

Their proofs are very similar. We only discuss the proof for the first theorem, which occupies the rest of the section.

%Unless specified otherwise, $\le$ is understood as the Bruhat order in $\ewlo$ in this section. 

Let us introduce several related categories as in Section 9.3 in \cite{LY}. For $w \in \ewlo$, let $\ldswlo \subset \pha \ldlo$ be the full subcategory consisting of complexes whose support lies on $ \biu \bsl \gksw / \biu$. Define $D(\le w)_{\hk}^{\circ} \subset D_{\hk}^{\circ}$ in a similar fashion. 
We also let $\lcswlo \subset \pha \ldswlo$ (resp. $\lclo \subset \pha \ldlo$) be the full subcategory consisting of objects that are pure of weight zero. From Proposition \ref{parity}, we observe that any object $\sF$ in $\lclo$ is very pure in the sense that $i_w^*\sF$ and $i_w^!\sF$ are pure of weight zero. Let $\lcuswlo$ (resp. $\lculo)$ be the essential image of $\lcswlo$ (resp. $\lclo$) under $\omega: \pha \ldswlo \ra \pha \lduswlo$ (resp. $\omega: \pha \ldlo \ra \pha \ldulo$). Let $K^b(\lclo)$ be the homotopy category of bounded complexes in $\lclo$ and $K^b(\lclo)_0 \subset K^b(\lclo)$ be the thick subcategory consisting of the complexes that are null-homotopic when mapped to $K^b(\lculo)$. Similarly, one can define $K^b(\lcswlo)$ and $K^b(\lcswlo)_0$. 

It is clear that the above categories have extended counterparts, we denote them with a tilde. 

Let $\SB(\ewlo)(\le w)$ be the full subcategory of $\SB(\ewlo)$ consisting of Soergel bimodules supported on the union of $\Gamma(v)$ for $v \le w$. Let $\SB_m(\ewlo)(\le w)$ be its mixed analog, which is the full subcategory of $\SB_m(\ewlo)$ consisting of those $M$ such that $\omega M \in \SB(\ewlo)(\le w)$. Since the graph $\Gamma(v)$ is $\Delta_{\alge}^{\cen}$-invariant, it makes sense to define $\bSB(\ewlo)(\le w)$ and $\bSB_m(\ewlo)(\le w)$ for $S$-module counterparts. 

The formalism in \cite[Appendix B]{BY}  shows that there is a triangulated functor (the realization functor) $\rho_{\le w}: K^b(\lcswlo) \ra \pha \ldswlo$. The arguments in Lemma 9.4 in \cite{LY} imply the following lemma.

\begin{lem}For any $w \in \ewlo$, the functor $\rho_{\le w}$ descends to an equivalence
\[\rho_{\le w}: K^b(\lcswlo)/ K^b(\lcswlo)_0 \xra{\sim} \pha \ldswlo.\] The extended analog holds similarly. 
\end{lem}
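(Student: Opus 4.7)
The plan is to adapt the arguments of \cite[Section 9.4]{LY} to the loop-group setting, which in turn relies on the general machinery of \cite[Appendix B]{BY}. The main inputs are: (i) a realization functor $\rho_{\le w}$ has already been constructed from $K^b(\lcswlo)$ to $\pha \ldswlo$; (ii) the Soergel functor identifies $\lcswlo$ with a subcategory of $\bSB_m(\ewlo)(\le w)$ via Proposition \ref{icso2} and Theorem \ref{fufa}; (iii) by purity all morphisms between semisimple complexes split in the non-mixed category, so any null-homotopy after applying $\omega$ is controlled combinatorially.

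First I would verify essential surjectivity. Every object of $\ldswlo$ can be reached from the pure generators $\IC(v)^\dagger_\sL$ with $v \le w$ by iterated cones and shifts: use induction on the support, the open-closed triangles attached to the Bruhat stratification, and Proposition \ref{parity} together with Lemma \ref{pbic} to show that the successive extensions land in the image of pure complexes. Since $\IC(v)^\dagger_\sL \in \lcswlo$, essential surjectivity of $\rho_{\le w}$ follows.

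Next, full faithfulness. By the BY template it suffices to verify that for pure objects $P, Q \in \lcswlo$ the map
\[
\Hom^{\bullet}_{K^b(\lcswlo)}(P,Q) \longrightarrow \Hom^{\bullet}_{\ldswlo}(\rho_{\le w}(P),\rho_{\le w}(Q))
\]
becomes an isomorphism after quotienting by the morphisms that vanish in the non-mixed derived category. Applying the Soergel functor $\M^\circ$ converts both sides into morphisms between Soergel bimodules in $\bSB_m(\ewlo)(\le w)$: on the right by Theorem \ref{fufa}, and on the left by combining Proposition \ref{icso2} with the filtration provided by Proposition \ref{filt}. The key point is that the non-mixed version of $\bSB$ is semisimple as a category (every $\omega$-reduction of a Soergel bimodule decomposes as a direct sum of shifts of indecomposables), so $K^b(\lculo)$ modulo the null-homotopy ideal recovers $\ldulo$; twisting back in the Frobenius structure via the weight grading yields the mixed statement.

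The main obstacle I expect is verifying compatibility of $\rho_{\le w}$ with the quotient by $K^b(\lcswlo)_0$ and checking that no extra morphisms appear under the realization than those detected by the Soergel side; concretely, one must show that every morphism in $\ldswlo$ between pure complexes lifts to $K^b(\lcswlo)$ and that two such lifts differ by an element of $K^b(\lcswlo)_0$. This amounts to the parity and purity input of Proposition \ref{parity} combined with the degeneration \`a la chessboard already used in Lemma \ref{chess}, which ensures that the spectral sequence computing $\Hom^{\bullet}(\rho(P),\rho(Q))$ from the weight filtration degenerates. Once these technical points are in place, the extended analog follows \emph{mutatis mutandis} by replacing $\M^\circ$ with $\eM^\circ$, $\bSo$ with $\So$, and appealing to the extended versions of Theorem \ref{fufa} and Propositions \ref{icso}, \ref{filt}.
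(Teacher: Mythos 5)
Your overall framing---essential surjectivity via generation by pure objects, full faithfulness reduced to a statement about $\Hom$ between pure complexes, and purity controlling the relevant spectral sequences---matches the BY template that the paper invokes. However, there is a genuine gap in the core of your full-faithfulness argument, and your route also departs from what the paper does in a way that creates additional problems.

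The problematic claim is ``the non-mixed version of $\bSB$ is semisimple as a category \dots{} so $K^b(\lculo)$ modulo the null-homotopy ideal recovers $\ldulo$.'' This is false: for pure weight-zero semisimple complexes $P,Q \in \lculo$, one has $\Hom_{K^b}(P,Q[n]) = 0$ for all $n \neq 0$ (since $P,Q$ sit in a single cohomological degree), whereas $\Ext^n_{\ldulo}(P,Q)$ is typically nonzero for $n>0$ (these are exactly the higher cohomology groups the Soergel functor is computing in Lemma \ref{chess}). Quotienting by null-homotopic maps only kills morphisms; it cannot create the missing $\Ext^{>0}$. Being a Krull--Schmidt category whose objects decompose into indecomposables is not semisimplicity, and in particular does not imply the realization functor is fully faithful. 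The equivalence $K^b(\lcswlo)/K^b(\lcswlo)_0 \rasim \ldswlo$ is genuinely a mixed phenomenon: it is the combination of (i) purity of $\Ext^n(P,Q)$ in weight $n$ for $P,Q \in \lcswlo$ (Proposition \ref{parity}), forcing $\hom^n(P,Q)=0$ for $n\neq 0$ in the mixed category, and (ii) the way the Frobenius-module structure makes the stupid filtration spectral sequence degenerate (the ``chessboard'' argument of Lemma \ref{chess}). You cannot first prove a non-mixed version and then ``twist back via the weight grading''; the direction of the argument must be reversed.

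Beyond that gap, note that your proposal routes full faithfulness through the Soergel functor (Theorem \ref{fufa}, Proposition \ref{icso2}, Proposition \ref{filt}), whereas the lemma under discussion is established directly by verifying the BY DG-model axioms on the geometric side---the Soergel functor enters only later, in Proposition \ref{puresh} and Theorem \ref{gensh}, when one identifies the geometric category with $\bSB_m(\ewlo)$. The Soergel-side detour can be made to work, but it is an extra layer, and at the point where you invoke it you have not actually reduced the realization-functor statement to anything easier; you have just transported it. The cleaner path, which is what the citation to \cite[Section 9.4]{LY} is pointing to, is to check directly that $\Ext^i(P,Q)$ is pure of weight $i$ for $P,Q\in\lcswlo$ (this follows from the parity of stalks/costalks in Proposition \ref{parity} and the $*$-purity of $\Av_!\pha\ltlm$ shown via hyperbolic localization) and then apply the BY formalism. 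Your essential-surjectivity sketch is on the right track, though you should be explicit that ``shifts'' must include Tate twists $\langle n\rangle$ so that the weight-filtration pieces land in $\lcswlo$.
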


From the construction of $\rho_{\le w}$, we know that $\rho_{\le w}$ is compatible with $\rho_{\le w'}$ whenever $w \le w'$.

\begin{prop}\label{puresh}
The restriction of $\M^\circ$ gives a monoidal equivalence \[\varphi_0: \pha \lclo \xra{\sim} \bSB_m(\ewlo)\] such that for $\sF, \sG \in \pha \lclo$, there is a canonical isomorphism in $(S, \Fr)$\text{-}$\gmod$ \[\Hom^\bullet(\sF, \sG) \cong \Hom^\bullet_{S \text{-}\gmod}(\varphi_0(\sF), \varphi_0(\sG)).\] Moreover $\varphi_0$ restricts to an equivalence $\varphi_{0,\le w}: \pha \lcswlo \xra{\sim} \bSB_m(\ewlo)(\le w)$ for any $w\in \ewlo$. The extended case holds similarly. 
\end{prop}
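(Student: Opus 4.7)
The plan is to establish the four pieces of the statement in order: (i) $\M^\circ$ maps $\lclo$ into $\bSB_m(\ewlo)$, (ii) the induced functor $\varphi_0$ is fully faithful, (iii) it is essentially surjective, and (iv) it is monoidal and compatible with the supports $\le w$. The extended version will follow from exactly the same steps, replacing $\bSo$, $\bSB_m$, $S$ with $\So$, $\SB_m$, $\eR \otimes \eR$, and using Lemma \ref{Soers} to transfer between the two.

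First, by the decomposition theorem in the mixed setting, every $\sF \in \pha \lclo$ decomposes as a finite direct sum $\bigoplus_i \IC(w_i)^\dagger_{\sL}\langle n_i \rangle \otimes V_i$ with $w_i \in \ewlo$, $n_i \in \Z$ and $V_i$ a finite-dimensional $\Fr$-module pure of weight $0$ (this uses that $\IC(w)^\dagger_\sL \cong \IC(\wdo)_\sL^+$ up to a $1$-dimensional $\Fr$-module, with appropriate normalization). Applying $\M^\circ$ and invoking Proposition \ref{icso2} then shows $\varphi_0(\sF) \in \bSB_m(\ewlo)$ and, after replacing $\wdo$ by $w$ via the natural rigidification, identifies it with $\bigoplus_i \bSo(w_i)^\natural \langle n_i + \ell_\sL(w_i)\rangle \otimes V_i$. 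In particular the image lies in $\bSB_m(\ewlo)$, and the support statement needed for $\varphi_{0,\le w}$ is immediate from the support condition on each $\IC(w_i)^\dagger_\sL$.

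Full faithfulness of $\varphi_0$ (and of $\varphi_{0,\le w}$) is a direct consequence of Theorem \ref{fufa}: pure complexes are in particular semisimple, so the theorem supplies the functorial $(S,\Fr)$-graded isomorphism on $\Hom^\bullet$. For essential surjectivity, we argue by Krull--Schmidt. It suffices to realize every indecomposable $\bSo(w)^\natural_{\ewlo} \otimes L$, with $L$ a $1$-dimensional pure $\Fr$-module, as a summand of some $\varphi_0(\sF)$. Choose a reduced expression $w = s_{i_n}\cdots s_{i_1}$ in $\ewlo$ and set $\sF = \IC(s_{i_n})^\dagger_{\sL_{n-1}} \star_+ \cdots \star_+ \IC(s_{i_1})^\dagger_\sL$. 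This $\sF$ is pure of weight zero (Lemma 5.2), and by iterating Lemma \ref{conic} and Lemma \ref{conmic}, together with Lemma \ref{Soers}, one identifies $\varphi_0(\sF)$ with the Bott--Samelson bimodule $\bSo(s_{i_n},\dots,s_{i_1})^\natural_{\sL}$ up to an overall shift. By Lemma \ref{extSB} (applied through $\Ind$) the indecomposable $\bSo(w)^\natural$ occurs as a direct summand. Twisting by the appropriate $\Fr$-module $L$ is achieved by tensoring $\sF$ with $L$ on the sheaf side; this produces the required preimage. Combining with (i), $\varphi_0$ is an equivalence, and the same argument within $\gksw / \biu$ gives the restricted equivalence $\varphi_{0,\le w}$.

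Finally, for the monoidal structure we use the canonical natural transformation $c_{\delta_\sL,\delta_\sL}(\sG,\sF) : \M^\circ(\sG) \bullet \M^\circ(\sF) \to \M^\circ(\sG \star_+ \sF)$ constructed in Section \ref{sect7}. Corollary \ref{ssiso} asserts that $c_{\delta,\delta}(\sG,\sF)$ is an isomorphism whenever $\sF$ is semisimple; since pure weight-zero complexes are semisimple, this holds on all of $\pha \lclo \times \pha \lclo$, giving a monoidal constraint for $\varphi_0$. The associativity and unit axioms for this constraint are precisely Lemma \ref{coass}(3) and Lemma \ref{cmin} respectively. The hard part is really the bookkeeping around the Frobenius twists and the normalizations linking $\IC(w)^\dagger_\sL$ to $\IC(\wdo)_\sL^+$ via $\theta_w^\dagger$, which must be tracked carefully so that the isomorphism in Proposition \ref{icso2} matches the rigidifications $\mathbf{1}$ of the Bott--Samelson bimodules appearing in the essential surjectivity step; once this is in place, the remaining verifications are formal.
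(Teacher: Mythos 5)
Your proof rests on the claim that, by ``the decomposition theorem in the mixed setting,'' every $\sF \in \pha\lclo$ decomposes as a finite direct sum $\bigoplus_i \IC(w_i)^\dagger_\sL\langle n_i\rangle \otimes V_i$. This is false: the decomposition theorem holds only after applying $\omega$, i.e.\ over $\overline{\F_q}$. A pure weight-zero object in the mixed category $D^b_m$ need not split into shifted, Tate-twisted simple perverse sheaves; the obstruction lives in $\Ext^0(\omega\sF,\omega\sG)_{\Fr}$ via the exact sequence (5.1.2.5) of \cite{BBD}, and these coinvariants are generally nonzero when the geometric simple constituents repeat. The same issue reappears in your essential-surjectivity step, since objects of $\bSB_m(\ewlo)$ likewise need not split into $\bSo(w)^\natural\langle n\rangle\otimes V$; Proposition~\ref{filt} only gives a filtration whose $\omega$-image splits, not a splitting in $\SB_m$.

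The paper avoids this by never decomposing. To land in $\bSB_m(\ewlo)$ it argues purity of $\Ext^i(\ltlm,\sF)$ directly: $\Av_!\ltlm$ is $*$-pure of weight zero (combining Proposition~\ref{avstalk} with the weight-preservation of Braden's hyperbolic localization), $\sF$ is $!$-pure of weight zero by Lemma~\ref{parity}, and the two together force $\Ext^i$ to be pure of weight $i$ (\cite[Lemma 3.1.5]{BY}). For essential surjectivity it builds the preimage $\sF$ from the filtration in Proposition~\ref{filt} by matching extension classes through Theorem~\ref{fufa}, rather than realizing indecomposable summands one at a time. Your treatment of full faithfulness (via Theorem~\ref{fufa} and the Frobenius-invariants identification) and of the monoidal structure (via Corollary~\ref{ssiso} and Lemma~\ref{coass}) agrees with the paper and is fine. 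The fix for your argument is to replace the fictitious mixed decomposition with the purity and filtration arguments above.
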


\begin{proof}
Let $\varphi_0: \pha \lclo \ra S$-gmod be the restriction of $\M^\circ$. Corollary \ref{ssiso} shows that $\varphi_0$ is a monoidal functor. 

First, we show that the image of $\varphi_0$ lies in $\bSB_m(\ewlo)$. By Proposition \ref{icso}, we know that $\varphi_0(\omega \sF) \cong \omega \varphi_0(\sF) \in \bSB(\ewlo)$. It remains to show that $\Ext^i(\ltlm, \sF)= \Ext^i(\Av_! \pha\ltlm, \sF)$ is pure of weight $i$ for any $\sF \in \pha \lclo$. By using the "filtration" induced by the Schubert stratification, it suffices to prove that $\Av_! \pha\ltlm$ is $*$-pure of weight zero and $\sF$ is $!$-pure of weight zero (cf.\pha \cite[Lemma 3.1.5]{BY}). The $!$-purity statement of $\sF$ has already been shown in Proposition \ref{parity}. We know that $i_w^*\Av_! \pha\ltlm$ is a one-dimensional local system by Proposition \ref{avstalk}. Since the hyperbolic localization functor preserves the purity of weight (cf.\pha\cite[Theorem 8]{Br}), the stalk at $\wdo$ is pure of weight zero and hence $\Av_! \pha\ltlm$ is $*$-pure of weight zero. Moreover, it is clear that $\varphi_0$ sends $\lcswlo$ to $\bSB_m(\ewlo)(\le w)$.

Note that $\Ext^i(\sF, \sG)$ is pure of weight $i$ because of the $*$-purity of $\sF$ and $!$-purity of $\sG$ by Proposition \ref{parity}. This shows $\hom_{\lclo}(\sF, \sG)=\Hom(\sF, \sG)^{\Fr}=\Hom^\bullet(\sF, \sG)^{\Fr}$. Meanwhile, we have  $\hom_{(S, \Fr)\text{-gmod}}(M,M')=\Hom_{S \text{-gmod}}(M,M')^{\Fr}$ for $M, M'\in \bSB_m(\ewlo)$. Theorem \ref{fufa} states that the map $\varphi_0$ sending $\Hom^\bullet(\sF, \sG)$ to $ \Hom^\bullet_{S \text{-}\gmod}(\varphi_0(\sF), \varphi_0(\sG))$ is an isomorphism. Taking the Frobenius invariant of both sides implies $\varphi_0$ is fully faithful. 

Finally, we show that if $M \in \bSB_m(\ewlo)(\le w)$, then there exists $\sF \in \pha \lcswlo$ such that $\varphi_0(\sF) \cong M$. The argument for the reductive group case works perfectly well here. The idea is to utilize the filtration in Proposition \ref{filt} and to compare the extension classes of sheaves of pure weight zero and the Soergel bimodules by Theorem \ref{fufa}. Notice that the support of an extension between two sheaves are contained in the union of that of the two sheaves. Hence $\sF$ satisfies the support condition.
The extended case can be proved similarly. 
\end{proof}

We introduce some notation before stating the next theorem. For $\sF, \sG \in \pha \ldlo$, let $\Ext^n(\sF, \sG)_m$ be the weight $m$ summand of the $\Fr$-module $\Ext^n(\sF, \sG)$. For $M,M' \in K^b(S$-mod), their morphism space $\HOM_{K^b(S \text{-mod})}(M,M')$ in $K^b(S$-mod) is the space of homotopy classes of $S$-linear chain maps $M \ra M'$. Denote the degree shift of complexes in $K^b(S$-mod) as $\{1\}$. Denote \[\HOM^\bullet_{K^b(S \text{-mod})}(M,M')=\bigoplus_{n \in \Z}\HOM_{K^b(S\text{-mod})}(M,M'\{n\}).\]

If $M,M' \in K^b(S$-gmod), $\HOM_{K^b(S \text{-mod})}(M,M')$ has a natural internal grading, whose $m$-th graded piece is denoted by $\HOM_{K^b(S \text{-mod})}(M,M')_m$. Moreover, when $M,M' \in K^b((S, \Fr)$-gmod), $\HOM_{K^b(S \text{-mod})}(M,M')_m$ carries a Frobenius action naturally. Replacing $S$ by $\eR \otimes \eR$ will give the extended analog.

\begin{thm}\label{gensh} \begin{enumerate}[$(1)$]
\item 

Let $K^b(\bSB_m(\ewlo)(\le w))_0 \subset K^b(\bSB_m(\ewlo)(\le w))$ be the thick subcategory consisting of complexes that become null-homotopic after applying $\omega$. Then $\varphi_{0,\le w}$ induces an equivalence of triangulated categories, \[ K^b(\lcswlo)/K^b(\lcswlo)_0 \xra{K^b(\varphi_{0,\le w})} K^b(\bSB_m(\ewlo)(\le w))/ K^b(\bSB_m(\ewlo)(\le w))_0.\] 
\item 
Let $\varphi_{\sL,\le w}$ be the composition $K^b(\varphi_{0,\le w}) \circ \rho_{\le w}^{-1}$, then for any $\sF, \sG \in \pha \ldswlo$, we have a functorial isomorphism of $(S, \Fr)$-modules \[\Hom^\bullet(\sF, \sG) \xra{\sim}\HOM_{K^b(S \text{-mod})}(\omega \varphi_{\sL,\le w} (\sF), \omega \varphi_{\sL,\le w} (\sG)),\] which sends $\Ext^n(\sF, \sG)_m$ to $\HOM_{K^b(S \text{-mod})}(\omega \varphi_{\sL,\le w} (\sF), \omega \varphi_{\sL,\le w} (\sG)\{n-m\})_m$ for all $m,n \in \Z$.
\end{enumerate}
\end{thm}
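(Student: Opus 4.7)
The plan is to handle part (1) by a formal quotient argument built on Proposition \ref{puresh}, and then deduce part (2) from the general realization functor machinery of \cite[Appendix B]{BY} applied to the purity facts already established.

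For part (1), I would leverage Proposition \ref{puresh}, which gives a monoidal equivalence $\varphi_{0,\le w}: \pha \lcswlo \xra{\sim} \bSB_m(\ewlo)(\le w)$. Passing to bounded homotopy categories yields an equivalence $K^b(\varphi_{0,\le w}): K^b(\lcswlo) \xra{\sim} K^b(\bSB_m(\ewlo)(\le w))$. To descend to the quotients, I need to check that the thick subcategories $K^b(\lcswlo)_0$ and $K^b(\bSB_m(\ewlo)(\le w))_0$ correspond under this equivalence. Both are defined as the kernels of applying the forgetful functor $\omega$ followed by imposing null-homotopy. The key compatibility is that $\omega \circ \varphi_{0,\le w}$ factors through the non-mixed Soergel functor on $\lcuswlo$ (this is the non-mixed analog of Proposition \ref{puresh}, which can be extracted from the same proof, since the arguments of Theorem \ref{fufa} and Proposition \ref{icso2} are equally valid after forgetting Frobenius). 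Since the non-mixed Soergel functor itself is an equivalence onto the non-mixed Soergel bimodules and $K^b$ is functorial, the two thick subcategories match, and the quotient equivalence follows.

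For part (2), I define $\varphi_{\sL,\le w}:= K^b(\varphi_{0,\le w}) \circ \rho_{\le w}^{-1}$ using part (1) and the realization equivalence. The content is now to compare $\Hom^\bullet(\sF, \sG)$ with the homotopy Hom on the Soergel side. The realization functor in the sense of \cite[Appendix B]{BY} always comes with a canonical comparison map $\HOM^n_{K^b(\lcswlo)}(C_1, C_2) \to \Hom^n(\rho_{\le w} C_1, \rho_{\le w} C_2)$ for each $n$, compatible with the $(S,\Fr)$-module structures coming from equivariant cohomology. The strategy is to show this comparison is an isomorphism, and then to translate along $\varphi_{0,\le w}$ using part (1). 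The critical input is the purity statement proved inside Proposition \ref{puresh}: for $\sF, \sG \in \pha \lclo$ one has $\Ext^n(\sF, \sG)$ pure of weight $n$, because such sheaves are simultaneously $*$-pure and $!$-pure of weight zero by Proposition \ref{parity} combined with the hyperbolic-localization computation of Proposition \ref{avstalk}. This purity forces the hypercohomology spectral sequence computing $\Ext^\bullet(\rho C_1, \rho C_2)$ from the $E_1$-term $\bigoplus_i \Ext^\bullet(C_1^i, C_2^{i+\bullet})$ to degenerate at $E_2$ in a chessboard pattern, yielding the identification with homotopy Hom and the claimed matching $\Ext^n(\sF,\sG)_m \leftrightarrow \HOM_{K^b(S\text{-mod})}(\omega\varphi_{\sL,\le w}(\sF), \omega\varphi_{\sL,\le w}(\sG)\{n-m\})_m$.

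The main obstacle is the weight/grading bookkeeping in part (2): one must verify that the realization functor $\rho_{\le w}$ constructed abstractly in \cite[Appendix B]{BY} intertwines the three gradings in play (cohomological degree $[1]$, internal degree shift $\{1\}$, and Frobenius weight twist), in a way consistent with our convention $\langle 1 \rangle = [1](1/2)$. Once this weight normalization is pinned down, full faithfulness on pure objects is automatic from Theorem \ref{fufa}, and the spectral sequence degeneration above promotes this to full faithfulness in each degree with the predicted weight. The argument then closes by observing that both sides of the comparison are generated under extensions and shifts by pure complexes of weight zero, so verification on these generators suffices.
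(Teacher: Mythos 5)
Your proposal follows essentially the same route as the paper: part (1) descends the equivalence $\varphi_{0,\le w}$ of Proposition \ref{puresh} to the two Verdier quotients by observing that the non-mixed Soergel functor is still fully faithful (the non-mixed version of Theorem \ref{fufa}), so null-homotopies correspond; and part (2) uses the spectral sequence coming from the stupid filtrations of $\rho_{\le w}^{-1}\sF$ and $\rho_{\le w}^{-1}\sG$, with $E_2$-degeneration forced by purity. These are the paper's two key steps.

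Two corrections on the phrasing of part (2). You write that the strategy is to show the canonical comparison map $\HOM^n_{K^b(\lcswlo)}(C_1,C_2)\to\Hom^n(\rho_{\le w}C_1,\rho_{\le w}C_2)$ is an isomorphism — but it is not: this map has a nontrivial kernel, which is precisely why $K^b(\lcswlo)_0$ has to be quotiented out in part (1). What the spectral sequence degeneration actually yields is the degraded matching $\Ext^n(\sF,\sG)_m\cong\HOM_{K^b(S\text{-mod})}(\omega\varphi_{\sL,\le w}(\sF),\omega\varphi_{\sL,\le w}(\sG)\{n-m\})_m$, which you do state at the end; but the mixed homotopy $\HOM$ and the mixed geometric $\Hom$ are genuinely different. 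Second, the degeneration here is a weight argument, not the parity "chessboard" of Lemma \ref{chess}: because objects of $\lcswlo$ are simultaneously $*$- and $!$-pure of weight zero, $E_1^{p,q}$ is pure of weight $q$, and $d_r$ for $r\ge 2$ would shift $q$ and hence vanish. With these repairs to the wording, your argument is the paper's.
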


\begin{proof}
Both statements can be proved by methods in the reductive group analog (c.f. \cite[Theorem 9.6]{LY}). For $(1)$, it is easy to see that any null-homotopy maps for the objects in $K^b(\bSB_m(\ewlo)(\le w))_0$ can be transported to the Soergel bimodule side via $\varphi_0$ and vice versa. 

For $(2)$, the crucial observation is the spectral sequence induced by stupid filtrations of $\rho_{\le w}^{-1}\sF$ and $\rho_{\le w}^{-1}\sG$, which abuts to $\Ext(\sF, \sG)$, degenerates on the $E_2$ page because differentials are weight preserving. 
\end{proof}

\begin{proof}[Proof of Theorem  \ref{mainthm}]

%$\extw$ acts on $X_*(\elit)$ and $X_*(\elit')$ are isomorphic if $T \sur T'$ finite isogeny, 
%Therefore some scaling for each  

%We denote objects  corresponding to $H$ by adding a prime. Since $\Q z$ is differ by a $\Q$ multiple. We shall check that the Soergel bimodule category for $(\ewlo, \elit)$ and $(\ewlo, \elit')$ are isomorphic. 

We apply Theorem \ref{gensh} to the endoscopic group $H$ with the trivial character sheaf on $T$. We obtain an equivalence \[\varphi_{H,\le w}: D(\le w)_{\hk}^{\circ} \xra{\sim} K^b(\bSB_m(\widetilde{W_H^\circ})(\le w))/ K^b(\bSB_m(\widetilde{W_H^\circ})(\le w))_0, \] where $\widetilde{W_H^\circ}$ is the (non-extended) affine Weyl group associated to $H$, which is the same as $\ewlo$ as a subset in $\extw$. However, $\le$ is the Bruhat order in $\extw$ induced by this canonical isomorphism, rather than the Bruhat order of $\widetilde{W_H^\circ}$. The two Bruhat orders are different as stated in Remark \ref{orderdif}.

For $\sF, \sG \in D(\le w)_{\hk}^{\circ}$, there is a natural isomorphism of $(S, \Fr)$-modules \[\Hom^\bullet(\sF, \sG) \xra{\sim}\HOM_{K^b(S \text{-mod})}(\omega \varphi_{H,\le w} (\sF), \omega \varphi_{H,\le w} (\sG)).\] 
Let $\Psi_{\sL, \le w}^\circ=\varphi_{\sL,\le w}^{-1} \circ \varphi_{H,\le w}$. Thanks to Proposition \ref{isoCox}, we know that the two Soergel bimodule categories for $(\ewlo, \elit)$ and $(\ewho, \elit)$ are isomorphic by definition. Therefore $\Psi_{\sL, \le w}^\circ$ is an equivalence of triangulated categories between $D(\le w)_{\hk}^{\circ}$ and $\ldswl^\circ$. Moreover, $\Psi_{\sL, \le w}^\circ$ is compatible with natural inclusions of triangulated categories when $w$ runs through $\ewlo$ (cf. \cite[Proposition B3.1]{BY}). Hence  the limit of $\Psi_{\sL, \le w}^\circ$ is well-defined, which we denote by $\Psi_{\sL}^\circ$. It is an equivalence between $D_{\hk}^{\circ}$ and $\ldl^\circ$. Furthermore, it is clear that from construction of $\Psi_{\sL}^\circ$, the map in $(2)$ is an isomorphism. 

It is clear from \cite[Remark B3.2]{BY} that $\Psi_{\sL}^\circ$ is a monoidal functor. By Proposition \ref{icso}, $\Psi_\sL^\circ(\IC(w)_\hk) \cong \IC(w)^\dagger_\sL$. It remains to show $\Psi_\sL^\circ(\Delta(w)_\hk) \cong \Delta(w)^\dagger_\sL$ and $\Psi_\sL^\circ(\nabla(w)_\hk) \cong \nabla(w)^\dagger_\sL$. Section 9.8 in \cite{LY} gives a criterion for a sheaf being the standard sheaf (or costandard sheaf). The criterion remains true for the affine case. This is because the information of knowing the homomorphism space from a sheaf to all IC sheaves is enough to determine whether the sheaf is the standard sheaf or not. Hence the proof is complete. 
\end{proof}

\end{document}